\documentclass[journal,twoside,web]{ieeecolor}
\usepackage{generic}

\usepackage{cite}
\usepackage{amsmath,amssymb,amsfonts, mathtools}
\usepackage{mathrsfs}   %
\usepackage{accents}
\usepackage{algorithmic}
\usepackage{graphicx}
\usepackage{algorithm,algorithmic}
\usepackage{hyperref}
\hypersetup{hidelinks=true}
\usepackage{textcomp}

\usepackage{verbatim}
\usepackage{etoolbox} %
\AtEndEnvironment{remark}{\hfill$\blacklozenge$}
\AtEndEnvironment{example}{\hfill$\blacktriangle$}

\usepackage{url}
\let\labelindent\relax
\usepackage{enumitem}
\usepackage{xcolor}
\usepackage{subcaption}
\usepackage[ %
    draft,
    commandnameprefix=ifneeded, 
    xcolor=dvipdf,
    todonotes={colorinlistoftodos,
                prependcaption,
                textsize=footnotesize,
                backgroundcolor=orange!10,
                textcolor=black,
                linecolor=orange,
                bordercolor=orange}
]{changes}

\usepackage{marginnote}

\usepackage{tcolorbox}
 
\newif\ifinappendix
\inappendixfalse

\newtheorem{theorem}{Theorem}[section]
\newtheorem{assumption}[theorem]{Assumption}
\newtheorem{definition}[theorem]{Definition}

\newtheorem{lemma}[theorem]{Lemma}

\newtheorem{remark}[theorem]{Remark}
\newtheorem{example}[theorem]{Example}

\newtheorem{proposition}[theorem]{Proposition}

\definecolor{mygreen}{RGB}{0,0,0}
\definecolor{mymagenta}{RGB}{255,0,255}

\usepackage{import}

\newcommand{\crit}[1]{\operatorname{crit}#1}

\newcommand{\diffFunc}[1]{\mathrm{d}{#1}}

\newcommand{\dom}{\mathrm{dom \:}}

\newcommand{\R}[1]{\mathbb{R}^{#1}}

\newcommand{\brackets}[1]{\left(#1\right)}

\newcommand{\T}[2]{\mathrm{T}_{#1}{#2}}

\newcommand{\neighborhood}{\operatorname{Nbd}}
\newcommand{\mobius}{\mathscr{B}}

\newcommand{\rectangle}{\mathscr{R}}
\newcommand{\tangentEq}{\T{}{\mobius}}

\newcommand{\M}{{\mathcal{M}}}
\newcommand{\N}{{\mathcal{N}}}
\newcommand{\X}{\mathcal{X}}

\newcommand{\A}{\mathcal{A}}
\newcommand{\U}{\mathcal{U}}
\newcommand{\hybrid}{\mathcal{H}}
\newcommand{\PD}[1]{\mathcal{PD}(#1)}
\renewcommand{\T}[2]{\mathrm{T}_{#1}{#2}}
\newcommand{\Tcone}[2]{\mathrm{T}^{\M}_{#1}({#2})}
\newcommand{\TconeEuclidean}[2]{\mathrm{T}_{#1}({#2})}

\renewcommand{\dom}[1]{\operatorname{dom}{#1}}
\newcommand{\interior}[1]{\operatorname{int}#1}
\newcommand{\distfromA}[1]{|#1|_\A}

\newcommand{\vlift}[2]{\operatorname{vlft}_{#1}(#2)}
\newcommand{\grad}[2]{\operatorname{grad} {#1}({#2})}
\newcommand{\sublevelSet}[1]{L_{#1}}
\newcommand{\rge}{\operatorname{rge}}
\newcommand{\graph}[1]{\operatorname{gph} {#1}}
\newcommand{\graphlim}[2]{\mathrm{gph \mbox{-}lim}_{#1}{#2}}
\newcommand{\maximalSol}[2]{\mathcal{S}_{{#1}}(#2)}
\newcommand{\setofSol}[2]{\widehat{\mathcal{S}}_{{#1}}(#2)}
\newcommand{\constraintSet}{S} %

\renewcommand{\cal}[1]{\mathcal{#1}}
\renewcommand{\frak}[1]{\mathfrak{#1}}

\newcommand{\angvel}{z_2}

\makeatletter
\newcommand{\myitem}[1]{%
\item[#1]\protected@edef\@currentlabel{#1}%
}
\makeatother

\def\BibTeX{{\rm B\kern-.05em{\sc i\kern-.025em b}\kern-.08em
    T\kern-.1667em\lower.7ex\hbox{E}\kern-.125emX}}
\begin{document}
\title{Geometric Hybrid Dynamical Systems:\\ Part I -- Modeling and Stability}
\author{
Piyush P.\ Jirwankar,
Daniel E.\ Ochoa, and 
Ricardo G.\ Sanfelice\vspace{-15pt}
\thanks{\scriptsize{Under review at IEEE Transactions on Automatic Control.}}
\thanks{\scriptsize The authors are with the Electrical and Computer Engineering Department, University of California Santa Cruz, CA 95060 USA. Emails: \{pjirwank, dochoatamayo, ricardo\}@ucsc.edu.}}

\maketitle

\begin{abstract}
    We present a framework for the modeling and analysis of geometric hybrid dynamical systems as hybrid inclusions on $C^1$-manifolds. Using tools from nonsmooth and set-valued analysis on manifolds, we derive coordinate-independent sufficient conditions for the existence of nontrivial solutions to this type of systems. We present geometric notions of uniform stability and attractivity of compact sets, and establish their equivalence to metric-based stability notions when the manifold is endowed with a Riemannian structure. We also introduce nonsmooth Lyapunov functions and a hybrid Lyapunov theorem providing sufficient conditions for uniform global asymptotic stability of compact sets. Finally, by characterizing $\omega$-limit sets of precompact solutions, we derive a hybrid invariance principle for geometric hybrid dynamical systems. The results are demonstrated through several running examples.
\end{abstract}

\begin{IEEEkeywords}
    Hybrid dynamical systems, Lyapunov stability, Invariance principle,  Intrinsic manifolds. 
\end{IEEEkeywords}

\section{Introduction}
\label{sec:introduction}

We study a class of hybrid dynamical systems that evolve on intrinsic manifolds; namely, \emph{geometric hybrid dynamical systems}. Several modeling frameworks exist for hybrid systems that evolve on the Euclidean space~\cite{goebel_hybrid_2012,lygeros2003dynamical,haddad2014impulsive}. Among these, the hybrid inclusions framework~\cite{goebel_hybrid_2012} provides 
a mathematically well-developed setting that accommodates differential and difference inclusions with state constraints. However, these frameworks do not readily transfer to numerous mechanical and aerospace applications where states evolve on nonlinear manifolds, thus motivating the search for a geometric framework for hybrid dynamical systems on manifolds.  

\subsection{On Embeddings and Metrizations}

A central issue in geometric models of hybrid systems is how the state-space manifold is treated. Although every smooth manifold can be embedded in a higher-dimensional Euclidean space by the Whitney embedding theorem~\cite[Thm.~6.15]{Lee}, its intrinsic properties do not depend on any particular embedding. Many frameworks~\cite{CasauCompact,leeLeokMcClamroch2017global} assume an embedding in $\R{n}$ to use Euclidean analytical and computational tools. However, such an embedding adds structure that may be unavailable, noncanonical, or inconvenient.
This issue is particularly relevant for robotic systems with quotient configuration spaces~\cite{orthey2018quotient} or symplectic structure~\cite{hofer1994symplectic}, where symmetries can make Euclidean embeddings impractical or distort the geometry relevant to the dynamics. For example, Nesterov-type acceleration on Riemannian manifolds depends critically on manifold curvature~\cite{alimisis2020continuous}, which may be obscured by a non-isometric embedding.

An intrinsic approach may also require additional metric structure for stability analysis. A common metric choice is a Riemannian metric, which provides a finite geodesic distance on connected manifolds~\cite[Prop.~16.14]{gallierDifferentialGeometry2020}. Hybrid state spaces, however, are often disconnected, with separate components representing different discrete modes~\cite{mayhew2011quaternion}. Since geodesics do not connect these components, other distance functions must be introduced~\cite[Cor.~13.30]{Lee}. Such distances may be noncanonical, causing the resulting Lyapunov analysis to depend on choices beyond the system's intrinsic geometry.

\subsection{Existing Models of Hybrid Systems on Manifolds}
The foregoing discussion underscores the need for a purely topological and geometric framework for modeling hybrid dynamical systems on \emph{intrinsic} manifolds. The existing literature in this direction remains relatively limited. Early work in~\cite{Simic2000_hybrifold} models hybrid automata on submanifolds embedded in Euclidean space, where discrete ``jumps'' are represented through \emph{hybrifolds} that transform hybrid dynamics into nonsmooth dynamics via certain quotient constructions. More recently, a fiber bundle–based formulation for hybrid automata with control inputs has been proposed in~\cite{BARBEROLINAN2020100935}. A more general modeling approach, namely ``topological hybrid systems'', is introduced in~\cite{Kvalheim_Conley_2020}.

While these contributions provide novel and insightful modeling paradigms, their dynamics are typically expressed through differential equations or semiflow relations, and are not naturally suited to capturing nonuniqueness of solutions or accommodating perturbations~\cite{freeman2008robust}. The hybrid inclusions framework addresses these issues systematically in the Euclidean setting, where nonuniqueness is incorporated through set-valued inclusion models~\cite{goebel_hybrid_2012}. An extension of this framework for hybrid systems on connected Riemannian manifolds is proposed in~\cite{ForniAngeli_hybridSmoothManifolds} to analyze multistable inclusions. However, this framework does not model hybrid automata, which is a large class of systems that exhibit disconnected underlying manifolds.
Thus, to the best knowledge of the authors, no readily available framework combines the set-valued generality of hybrid inclusions with an intrinsic treatment of manifolds that admits disconnected state spaces.

\vspace{-10pt}
\subsection{Contributions}

Motivated by the limitations of embedding-based and Riemannian methods, this paper develops a coordinate-independent framework for hybrid systems on $C^1$-manifolds. 
We call these systems \emph{geometric hybrid dynamical systems}. The main contributions are as follows.

\begin{enumerate}[label=\arabic*),leftmargin=*]
    \item We provide sufficient conditions for the existence of solutions using regularity properties of set-valued maps between topological spaces (Proposition~\ref{prop:existence}).
    \item We introduce topological notions of uniform global stability and uniform attractivity for compact sets (Definitions~\ref{def:UGS} and~\ref{def:uniformGlobalAtt}). These notions use neighborhoods~rather than a chosen distance. For a class of Riemannian manifolds, we show that the proposed stability notion is~equivalent to a metric-based characterization (Proposition~\ref{prop:UGS}).
    
    \item We develop nonsmooth analysis tools on $C^1$-manifolds, including tangent cones, locally Lipschitz functions, and present a Rademacher's Theorem in manifolds as well as  generalized directional derivatives (Definitions~\ref{def:tangentConeManifolds},~\ref{def:Lipschitz},-\ref{def:generalizedDerivative}). We use these tools with a nonsmooth Lyapunov function to prove a hybrid Lyapunov theorem for uniform global asymptotic stability of compact sets (Theorem~\ref{theorem:hybridLyapunovTheorem-manifolds}).
    
    \item We provide conditions ensuring a closure property of the solution set (Theorem~\ref{theorem:basicConditions => nominalWellPosedness}), characterize omega-limit sets (Proposition~\ref{prop:omegaLimitSets}), and derive a hybrid invariance principle on $C^1$-manifolds (Theorem~\ref{theorem:invariance}).

    \item We illustrate the framework through two running examples. The first is hybrid attitude stabilization on $\mathbb{S}^3$ (Examples~\ref{ex:quaternion},~\ref{ex:quaternion-solutions}, and~\ref{ex:quaternion-stability}). The second is stabilization of billiard ball on the M\"obius band (Examples~\ref{ex:mobius-hybridDynamics},~\ref{ex:mobius-solutions}, and~\ref{ex:mobius-stability}), which is a second-order system with impacts.
\end{enumerate}

We highlight that our presentation does not require choosing a metric or an embedding. However, it allows using one when convenient, as demonstrated through the examples. To the best of the authors' knowledge, this is the first work to collect and extend the above results to hybrid dynamical systems evolving on abstract manifolds in a fully metric-free, embedding-free, and coordinate-independent setting.

\emph{Relation with previous work:} Preliminary results from this work were presented in the conference papers~\cite{jirwankar2025lyapunov, jirwankar2025invariance}, which omitted intermediate steps, detailed proofs, and additional results. We improve over those versions in the following ways: (i) we introduce Theorem~\ref{theorem:basicConditions => nominalWellPosedness}, which generalizes~\cite[Thm.~2]{jirwankar2025invariance} by allowing a broader class of solution sequences; (ii) we generalize~\cite[Prop.~3]{jirwankar2025lyapunov} to the Riemannian manifold setting in Propositions~\ref{prop:UGS}, showing that metric-based and topological notions of uniform global stability coincide. We also obtain several auxiliary results, including (iii) the coordinate independence of the geometric definition of tangent cones (Lemma~\ref{lemma:coordInvariant-tangentCone}); and (iv) a revised definition of locally Lipschitz functions between $C^1$-manifolds from~\cite[Def.~5]{jirwankar2025lyapunov} (Definition~\ref{def:Lipschitz}), and a manifold version of Rademacher's theorem (Proposition~\ref{prop:rademacher}).

\emph{Organization:} Section~\ref{sec:preliminaries} introduces notation and the necessary preliminaries from differential geometry and set-valued analysis on manifolds. Section~\ref{sec:hybridSystems} introduces geometric hybrid systems and related notions. Existence of solution and properties of solution sets are studied in Section~\ref{sec:solutionSetProperties}. Section~\ref{sec:lyapunovTheroemSection} presents topological notions of (uniform) asymptotic stability and a hybrid Lyapunov theorem. Section~\ref{sec:invariance} presents a hybrid invariance principle. Some proofs and auxiliary results are presented in the Appendix. 

\section{Preliminaries}
\label{sec:preliminaries}

\subsection{Notation}

The set of real, positive real, and nonnegative real numbers is denoted by $\R{}$, $\R{}_{>0}$, and $\R{}_{\geq 0}$, respectively. The set of natural numbers, including $0$, is denoted by $\mathbb{N}$.{\color{mygreen}\relax{} The closed unit ball in $\R{n}$ is denoted by $\mathbb{B}\coloneqq \{\eta\in \R{n}: |\eta|\leq 1\}$, where $|\eta|$ denotes the Euclidean norm of a vector $\eta\in \R{n}$. The distance from a point $x\in \R{n}$ to a nonempty set $\A\subset \R{n}$ is defined as $\distfromA{x}\coloneqq \inf_{y\in \A}|x - y|$.} Given a topological space $(\X, \tau)$,  the closure of a set $S\subset \X$ is denoted by $\overline{S}$ and its interior by $\interior{S}$. The set $S$ is compact if every open cover of $S$ has a finite subcover, and it is precompact if $\overline{S}$ is compact. A set $S \subset \X$ is said to be a compact neighborhood of $\A \subset \X$ if $S$ is compact and $\A \subset \interior{S}$. {\color{mygreen}\relax{}Given a set $S$, we use $\cal P(S)$ to denote the set of all subsets of $S$. }

A function $\alpha: \R{}_{\geq 0}\to\R{}_{\geq 0}$ is a class-$\mathcal{K}$ function, denoted by $\alpha\in\mathcal{K}$, if $\alpha$ is zero at zero, continuous, and strictly increasing.  
The function $\alpha$ is a class-$\mathcal{K}_{\infty}$ function, denoted by $\alpha\in\mathcal{K}_\infty$, if $\alpha \in \mathcal{K}$ and $\lim_{r\to \infty}\alpha(r) =\infty$.
A function $\sigma : X \to \R{}_{\geq 0}$ is positive definite with respect to $\A \subset X$, denoted by $\sigma\in \PD{\A}$, if $\sigma(x) = 0$ if and only if $x\in \A$. 
A map $f:\Omega\to\R{m}$, with $\Omega\subset\R{n}$ open, is said to be $C^k$, $k\in\mathbb{N}\cup\{\infty\}$, if all its partial derivatives of order $j\leq k$ exist and are continuous on $\Omega$. A $C^k$-diffeomorphism on $\R{n}$ is a bijection $f$ such that both $f$ and $f^{-1}$ are $C^k$.
The composition of maps $f:X\to Y$ and $g:Y\to Z$ is denoted by $g\circ f : X \to Z$. A single-valued function $f:X \to\R{}$ is lower semicontinuous if, at each $x\in \M$, $f(x)\leq \liminf_{y\to x} f(y)$, and it is proper if the preimage $\{x\in X: f(x)\leq c\}$ is compact for each $c\in \R{}$. A set-valued map $F:X\rightrightarrows Y$ maps each point $x\in X$ to a subset $F(x)\subset Y$. The domain, range, and graph of $F$ are defined as $\dom{F} \coloneqq \{x\in X : F(x)\neq \varnothing\}$, $\rge F \coloneqq \{y \in Y : \exists x \in \dom{F} \text{ s.t. } y\in F(x)\}$, and $\graph{F} \coloneqq \{(x,y) \in X\times Y : y \in F(x)\}$, respectively. Given a nonempty set $S \subset \R{n}$, the tangent cone $\TconeEuclidean{S}{x}$ to $S$ at $x\in S$ is the set of all vectors $w\in \R{n}$ for which there exist sequences $x_i\in S$, $\tau_i > 0$ with $x_i \to x$, $\tau_i \searrow 0$, and $w = \lim_{i\to \infty} \frac{x_i - x}{\tau_i}$.

\vspace{-6pt}
\subsection{Differential geometry}
\label{sec:differentialGeometry}
An $n$-dimensional topological manifold $\M$ is a second-countable Hausdorff space that is locally Euclidean of dimension $n$. A coordinate chart of $\M$ at $x$ is a pair $(U, \varphi)$ where $x\in U\subset \M$, with $U$ an open set and $\varphi:U\to \varphi(U)\subset\R{n}$ a homeomorphism. A collection of coordinate charts $\{(U_{\alpha}, \varphi_{\alpha}) \}_{\alpha\in \mathcal{I}}$, where $\mathcal{I}$ denotes some index set, such that $\bigcup_{\alpha}U_\alpha = \M$ is called an \emph{atlas}.
For a topological manifold $\M$ with topology $\tau$, we define the set of neighborhoods of $x\in \M$ as $\operatorname{Nbd}(x) \coloneqq \{V\subset\M : \exists U\in \tau \textrm{ such that } x\in U \subset V\}$. Two charts $(U, \varphi)$ and $(W, \psi)$ of $\M$ are $C^k$-compatible if $U\cap W = \varnothing$ or $\psi\circ\varphi^{-1}: \varphi(U\cap W)\to \psi(U\cap W)$ is a $C^k$-diffeomorphism. A $C^k$-atlas  of $\M$ is one whose charts are $C^k$-compatible. A $C^k$-manifold  $\M$ is a topological manifold endowed with a maximal $C^k$-atlas. The tangent space to~$\M$ at $x\in\M$ is denoted by $\T{x}{\M}$ and the tangent bundle of $\M$ is denoted by {$\T{}{\M}\coloneqq \{(x,v) :x\in \M, v\in \T{x}{\M}\}$.}  
A map $f:\M\to\N$ between $C^k$-manifolds is $C^k$ if,~at each $x\in\M$, there exists a coordinate chart $(U,\varphi)$ at $x\in \M$ and a chart $(W, \psi)$ at $f(x)\in \N$ such that the composition $\psi\circ f \circ \varphi^{-1}$ is $C^k$ in the usual Euclidean sense. 
The differential of $f$ at $x\in\M$ is denoted by~$\diffFunc{f_x}:\T{x}{\M}\to\T{f(x)}{\N}$ and defined as $\diffFunc{f_x}(v)\coloneqq  \left.\frac{df}{dt}(\gamma(t)) \right\vert_{t=0}$ for each $v\in \T{x}{\M}$, where $\mathcal{I}\ni t\mapsto \gamma(t)$, with $\mathcal{I}\subset \R{}$ such that $0\in\mathcal{I}$, is a smooth curve on $\M$ satisfying $\gamma(0)=x$ and $\gamma'(0)=v\in\T{x}{\M}$.
For more details on differentiable manifolds, we refer the reader to \cite{Lee}.

A $C^k$-Riemannian manifold~\cite{Lee_Riemannian} is a pair $(\M,g)$, where $\M$ is a $C^k$-manifold and $g$ is a Riemannian metric on $\M$ whose value at each $x\in\M$ defines an inner-product on~$\T{x}{\M}$. The gradient of a $C^k$-function $f\!:\!\M\!\to\!\R{}$ at $x$, denoted~by $\grad{f}{x}$, satisfies $\diffFunc{f_x}(v) \!\!= \!\!g(\grad{f}{x}, v)$ for all $v\in \T{x}{\M}$.

Next, we present an example of a smooth manifold, which will be used throughout this paper to illustrate our results. 

    \begin{example}[The M\"{o}bius band]
    \label{ex:MobiusDef}
        Fix $\varepsilon > 0$ and consider the rectangle $\rectangle \coloneqq [0,1]\times (-1-\varepsilon, 1 + \varepsilon) \subset\R{2}$. The open M\"{o}bius band is given by $\mobius \coloneqq \rectangle / \sim$, where the equivalence relation is given by $(0, z_2) \sim (1, -z_2)$ for each $z_2\in (-1-\varepsilon, 1 + \varepsilon)$; see Figure~\ref{fig:mobius-quotient} for an illustration. Due to this relation, $\mobius$ is a quotient space~\cite[Ex.~10.3]{Lee}. We denote an element in $\mobius$ corresponding to $z \coloneqq (z_1, z_2)\in \rectangle$ by $[z]_{\mobius}$, where $[\cdot]_{\mobius}$ denotes the equivalence class. Equipped with smooth coordinate charts according to~\cite[Ex.~10.3]{Lee}, $\mobius$ is a smooth manifold of dimension two. We also endow $\mobius$ with a Riemannian metric $g_{\mobius}$ according to~\cite[Ex.~2.35]{Lee_Riemannian}, and the associated Levi-Civita connection $\nabla$; see~\cite[Ch.~15.3]{gallierDifferentialGeometry2020}.

        The tangent bundle $\T{}{\mobius}$ is then defined by an induced equivalence relation as $\T{}{\mobius} = \T{}{\rectangle} /\sim$ such that, for each $z_2\in (-1-\varepsilon, 1 + \varepsilon)$ and each $u\coloneqq (u_1, u_2)\in \T{(0,z_2)}{\rectangle} = \R{2}$, one has $\T{(0, z_2)}{\rectangle} \ni (u_1, u_2) \sim (u_1, -u_2) \in \T{(1, -z_2)}{\rectangle}$. We endow $\T{}{\mobius}$ with the natural topology and the smooth structure~\cite[Prop.~3.18]{Lee}, making $\T{}{\mobius}$ a smooth manifold of dimension four. An element in $\T{}{\mobius}$ corresponding to $(z,u)\in \T{}{\rectangle}$, i.e., $z\in \rectangle$ and $u\in \T{z}{\rectangle}$, is denoted by $[z,u]_{\T{}{\mobius}}$. Using the canonical projection map $\pi : \T{}{\mobius} \to \mobius$, we equivalently write $[z,u]_{\T{}{\mobius}}$ as $([z]_{\mobius}, [u]_{\tangentEq})$, where $ [z]_{\mobius} = \pi([z,u]_{\T{}{\mobius}})$, and, with some abuse of notation, $[u]_{\tangentEq} \in \T{[z]_{\mobius}}{\mobius}$ denotes the corresponding tangent vector.         
    \end{example}
\begin{figure}[t]
        \centering
        \includegraphics[width=0.99\linewidth]{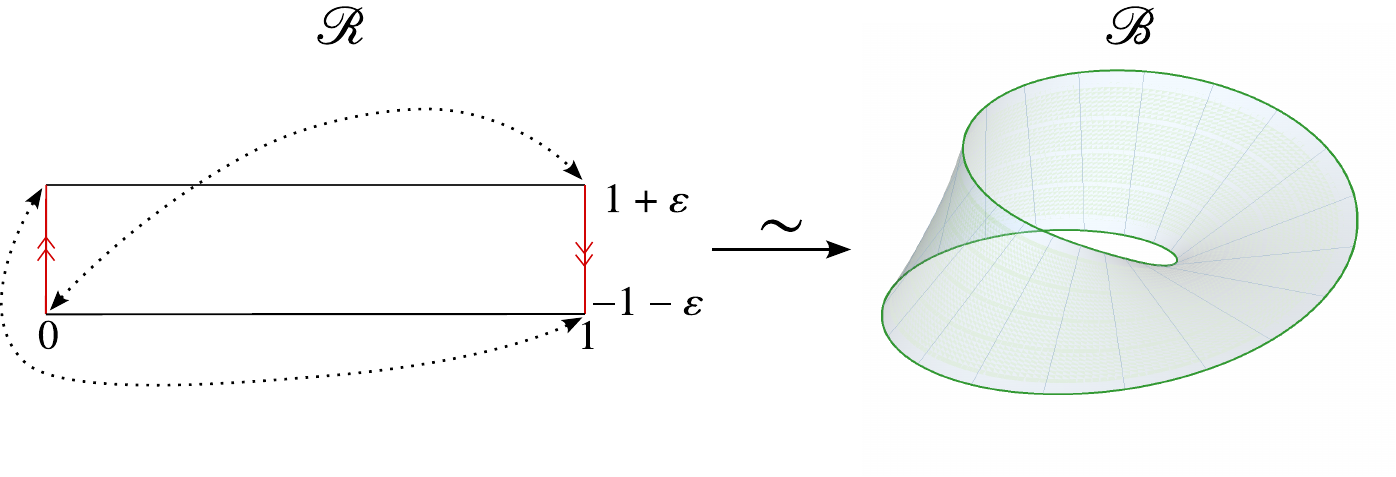}

            \caption{
            The quotient operation defining the open M\"{o}bius band $\mobius$ as described in Example~\ref{ex:MobiusDef}.
            } 
        \label{fig:mobius-quotient}
        \vspace{-0.6cm}
    \end{figure}

\vspace{-3pt}
\subsection{Set-valued analysis}
\label{sec:set-valued}
Set-valued analysis has been foundational for analyzing the behavior of solutions in the hybrid inclusions framework in Euclidean spaces \cite[Ch. 5,6]{goebel_hybrid_2012}. To study set-valued analysis tools in manifolds, we start with a notion of set convergence that reduces to \cite[Def. 5.1]{goebel_hybrid_2012} when $\M=\R{m}$. %

\begin{definition}[{Set convergence~\cite[p.209]{Willard_GeneralTopology}}]%
    \label{def:setConvergence}
   Let $\M$ be a topological manifold and $\{S_i\}_{i=1}^{\infty}$ a sequence of sets in $\M$.
    \begin{itemize}[leftmargin=12pt]
        \item The \emph{inner limit} of the sequence~$\{S_i\}_{i=1}^\infty$ is defined as 
        \(
        \liminf_{n\to \infty} S_i \coloneqq \{x\in \M : \forall V\in \operatorname{Nbd}(x), \exists i_0 > 0 \text{ s.t. } V\cap S_i \neq\varnothing \;\forall i > i_0 \}
        \)
        \item The \emph{outer limit} of the sequence $\{S_i\}_{i=1}^\infty$ is defined as 
        \(
        \limsup_{n\to \infty} S_i \coloneqq \{x\in \M : 
             \forall V\in \neighborhood(x), \forall i_0 > 0,
              \exists i > i_0 \textrm{ s.t. } V\cap S_i \neq \varnothing\}.
        \)
    \end{itemize}
     The sequence $\{S_i\}_{i=1}^\infty$ is said to converge if its inner limit and outer limit are equal, and the limit is given by
        $
            \lim_{i\to \infty} S_i = \liminf_{i\to \infty} S_i = \limsup_{i\to \infty} S_i. 
        $
\end{definition}

By extrapolating the notion from the Euclidean definition \cite[Cor. 4.11]{Rockafellar1997}, we say that the sequence $\{S_i\}_{i=1}^\infty$ of nonempty subsets of $\M$ \emph{escapes to the horizon} if for each compact set $K\subset \M$, there exists $i_0 > 0$ such that $S_i \cap K \!=\! \varnothing$ for all $i\geq i_0$. If $\M$ is compact, one can set $K = \M$, and therefore, every sequence of sets never escapes to the horizon. 

The following lemma shows that, similar to the Euclidean case \cite[Theorem 4.81]{Rockafellar1997}, a sequence of sets in topological manifolds either has a convergent subsequence or escapes to the horizon. The proof follows from \cite[Theorem 5.2.12]{Beer1993}.

\begin{lemma}
    \label{lemma:sequentialCompactness_beer}
    Given a topological manifold $\M$, every sequence $\{S_i\}_{i=1}^\infty \subset \M$ of nonempty sets either escapes to the horizon or has a subsequence converging to a nonempty set. %
\end{lemma}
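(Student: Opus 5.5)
The plan is to reduce the statement to sequential compactness of the hyperspace of closed sets under Painlevé–Kuratowski convergence. This holds in any second-countable space: a countable base $\{B_k\}$ can be diagonalized. First I will check that a topological manifold $\M$ is second countable, Hausdorff and locally compact. So $\M$ has a countable base $\{B_k\}_{k\in\mathbb{N}}$ of precompact open sets, and it admits an exhaustion by compact sets $K_1\subset \interior{K_2}\subset K_2\subset\cdots$ with $\bigcup_j K_j=\M$.

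\textbf{Dichotomy.} Suppose $\{S_i\}$ does not escape to the horizon. Then there is a compact set $K$ and a subsequence, still denoted $\{S_i\}$, with $S_i\cap K\neq\varnothing$ for all $i$. Pick $x_i\in S_i\cap K$. Compact subsets of second-countable Hausdorff spaces are sequentially compact, so after passing to a further subsequence $x_i\to \bar x\in K$. This point will force the eventual limit to be nonempty: for every $V\in\neighborhood(\bar x)$, $V\cap S_i\neq\varnothing$ for all large $i$, so $\bar x\in\liminf S_i$.

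\textbf{Diagonal extraction.} Enumerate the base $\{B_k\}$. For $k=1$, either $B_1\cap S_i\neq\varnothing$ for infinitely many $i$ or for only finitely many. In the first case I pass to a subsequence along which $B_1$ meets every $S_i$; in the second case I pass to a subsequence along which $B_1$ meets no $S_i$. Repeating this for $k=2,3,\dots$ and taking the diagonal subsequence $\{S_{i_m}\}$ gives the following: for each $k$, either $B_k\cap S_{i_m}\neq\varnothing$ for all large $m$, or $B_k\cap S_{i_m}=\varnothing$ for all large $m$. The inclusion $\liminf\subset\limsup$ is immediate from Definition~\ref{def:setConvergence}, so it remains to show the reverse. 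Take $x\in\limsup_m S_{i_m}$ and $V\in\neighborhood(x)$, and choose a base element $B_k$ with $x\in B_k\subset V$. Since $B_k$ meets $S_{i_m}$ for infinitely many $m$, the dichotomy for $B_k$ must be the first alternative. Hence $V\cap S_{i_m}\neq\varnothing$ for all large $m$, so $x\in\liminf_m S_{i_m}$. The limit therefore exists and contains $\bar x$, as the subsequence refines the one chosen above.

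\textbf{Main obstacle.} Nothing is deep here, but the bookkeeping needs care. The point $\bar x$ must be obtained before the diagonal extraction, and the diagonal subsequence must be a subsequence of the one along which $x_i\to\bar x$. Alternatively, this whole step can be cited from \cite[Theorem 5.2.12]{Beer1993}, since $\M$ is locally compact and second countable.
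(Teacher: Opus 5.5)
Your argument is correct, but it takes a different route from the paper. The paper handles the lemma in one line by citing \cite[Theorem 5.2.12]{Beer1993}. You instead reprove the underlying selection principle for Kuratowski--Painlev\'e convergence directly, by diagonalizing over a countable base.

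You also supply a step that the bare citation leaves implicit. A selection theorem of this type only produces \emph{some} convergent subsequence, and that subsequence may converge to $\varnothing$ even when $\{S_i\}$ does not escape to the horizon. For example, alternate a fixed singleton with sets escaping to the horizon; the subsequence of escaping terms converges to $\varnothing$. So one must first pass to the terms meeting the compact set $K$ and extract $x_i\to\bar x$ before diagonalizing. Your bookkeeping remark enforces exactly this order.

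Your version also makes clear that this direction uses only second countability. The precompact base and the compact exhaustion you set up are never used. The proof therefore works verbatim in any second-countable space, including $\R{}_{\geq 0}\times\mathbb{N}\times\M$, where the lemma is later applied in Lemma~\ref{lemma:convergentSubsequence}; that space is a manifold with boundary rather than a manifold.

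Local compactness would only be needed to show that the two alternatives are mutually exclusive. A point of a nonempty limit has a compact neighborhood that the sets must eventually meet, cf.\ Lemma~\ref{lemma:escapeToHorizon}. The statement does not require this.

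The citation buys brevity and places the result in the Fell-topology framework. Your proof buys self-containment and shows which hypotheses actually matter.
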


A set-valued map $F : \M \rightrightarrows \N$ between topological manifolds is \emph{outer semicontinuous} if, for each $x\in \M$, 
\(
    \limsup_{y \to x} F(y) \coloneqq \bigcup_{\{x_i\}\to x} \limsup_{i\to \infty} F(x_i)\subset F(x).
\)
{Given a nonempty set $S\subset \M$, $F$ is \emph{outer semicontinuous relative to $S$} if the map $\widetilde{F} : \M \rightrightarrows \N$, defined for each $x\in\M$ as $\widetilde{F}(x) \coloneqq F(x)$ if $x\in S$ and empty otherwise, is outer semicontinuous.}
{\color{mygreen}\relax{}{\color{mygreen}The map $F$ is \emph{upper semicontinuous} if, for each $x\in \M$ and each open set $W\subset \N$ with $F(x)\subset W$, there exists an open set $U \subset \M$ with $x\in U$ such that $F(U) \subset W$.}} The map $F$ is locally precompact if, for each $x\in \M$, there exists a neighborhood $\U$ of $x$ such that $F(\U)$ is precompact.\footnote{This notion is called ``local boundedness'' in the Euclidean setting; see~\cite[Def.~5.14]{Rockafellar1997}. The change in nomenclature stems from conflating definitions of boundedness and precompactness. The containment in a precompact set in the Euclidean case is enforced by containment in a ``bounded'' metric ball of finite radius. Such metric-based notion cannot be defined on general topological manifolds. Even on Riemannian manifolds where a metric is available, boundedness and precompactness of a set may not be equivalent; see Heine-Borel theorem. This inconsistency is averted by renaming the notion.
} Given a nonempty set $S\subset \M$, $F$ is \emph{locally precompact relative to~$S$} if $\widetilde{F}$ is locally precompact. 

Using Definition \ref{def:setConvergence}, we introduce the notion of graphical convergence; see \cite[Def. 5.18]{goebel_hybrid_2012} for the Euclidean case.

\begin{definition}[Graphical convergence]
\label{def:graphicalConvergence}
Given topological manifolds $\M$ and $\N$, we say that a sequence $\{M_i\}_{i=1}^{\infty}$ of set-valued mappings $M_i: \M\rightrightarrows \N$ converges \textit{graphically} if the sequence of sets $\{\graph{M_i}\}_{i=1}^{\infty}$ converges in $\M\times \N$ in the sense of Definition~\ref{def:setConvergence}. 
The graphical limit $\graphlim{i\to\infty}{M_i}$ of a graphically convergent sequence $\{M_i\}_{i=1}^{\infty}$ is the map $M: \M \rightrightarrows \N$ such that $\graph{M} = \lim_{i \to \infty} \graph{M_i}$.
\end{definition}

{
The following useful result extends~\cite[Ex.~5.19]{goebel_hybrid_2012} to manifolds. The proof follows similarly, and is presented} in~\cite{jirwankarTAC2026}.

{}

{\color{mygreen}\relax{}

{\color{mygreen}
    \begin{lemma}
        \label{lemma:domainAndRange}
        Let $\M$ and $\N$ be topological manifolds. Consider a sequence $\{M_i\}_{i=1}^\infty$ of set-valued map $M_i : \M \rightrightarrows \N$ that is graphically convergent, and let $M = \graphlim{i\to\infty}{M_i}$. Then, 
        \begin{align}
            \label{eq:domainRangeInclusion}
            \dom{M} \subset \lim_{i\to\infty}\dom{M_i} && \textrm{and} && \rge{M} \subset \lim_{i\to\infty}\rge{M_i}. 
        \end{align}
        If, in addition, the sequence $\{M_i\}_{i=1}^\infty$ is locally eventually precompact,\footnote{The sequence $\{M_i\}_{i=1}^\infty$ is locally eventually precompact if for each compact set $K\subset \M$, there exist $i_0 > 0$ and a compact set $K' \subset \N$ such that $M_i(K)\subset K'$ for all $i > i_0$.} then 
        \begin{align}
            \label{eq:domainRangeEquality}
            \dom{M} = \lim_{i\to\infty}\dom{M_i}.  %
        \end{align}
    \end{lemma}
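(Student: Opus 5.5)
The plan is to argue directly from Definition~\ref{def:setConvergence}, following the Euclidean argument of~\cite[Ex.~5.19]{goebel_hybrid_2012}. Throughout I read the limits in \eqref{eq:domainRangeInclusion} as inner limits. The inclusions then hold whether or not $\{\dom M_i\}$ and $\{\rge M_i\}$ converge, and in \eqref{eq:domainRangeEquality} the convergence of $\{\dom M_i\}$ is part of the conclusion. Since $\{M_i\}$ converges graphically, $\graph M = \liminf_{i}\graph M_i = \limsup_i \graph M_i$.

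For \eqref{eq:domainRangeInclusion}, take $x \in \dom M$ and pick $y \in M(x)$, so that $(x,y) \in \liminf_i \graph M_i$. Given any $V \in \neighborhood(x)$, the set $V \times \N$ is a neighborhood of $(x,y)$ in $\M\times\N$. Hence there is $i_0$ with $(V\times\N)\cap \graph M_i \neq \varnothing$ for all $i > i_0$. Projecting to the first factor gives $V \cap \dom M_i \neq \varnothing$ for all $i>i_0$, so $x \in \liminf_i \dom M_i$. The range inclusion is identical, using the neighborhood $\M \times W$ with $W \in \neighborhood(y)$ and projecting to the second factor.

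For \eqref{eq:domainRangeEquality}, the chain $\liminf_i \dom M_i \subset \limsup_i \dom M_i$ is automatic. It therefore suffices to show $\limsup_i \dom M_i \subset \dom M$. Combined with the first part, this gives
\[
\limsup_i \dom M_i \subset \dom M \subset \liminf_i \dom M_i \subset \limsup_i \dom M_i ,
\]
so $\{\dom M_i\}$ converges and its limit equals $\dom M$.

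So fix $x \in \limsup_i \dom M_i$. Because $\M$ is locally Euclidean, I will choose a compact neighborhood $K$ of $x$ and a nested countable neighborhood base $V_1 \supset V_2 \supset \cdots$ of $x$ with $V_k \subset K$. By the definition of the outer limit, I can pick indices $i_1 < i_2 < \cdots$ and points $x_k \in V_k \cap \dom M_{i_k}$. Then $x_k \to x$, and I choose $y_k \in M_{i_k}(x_k)$.

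Local eventual precompactness, applied to $K$, yields a compact $K' \subset \N$ with $y_k \in K'$ for all large $k$. Since $\N$ is a manifold, it is metrizable, so the compact set $K'$ is sequentially compact. Passing to a subsequence gives $y_k \to y$ for some $y \in \N$.

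It remains to check that $(x,y) \in \limsup_i \graph M_i$. Every neighborhood of $(x,y)$ contains $(x_k,y_k) \in \graph M_{i_k}$ for all large $k$, and the indices $i_k$ are unbounded, so the outer-limit condition holds. Thus $(x,y) \in \graph M$, i.e., $x \in \dom M$.

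The main obstacle is this last step: in a general topological setting, one must replace Euclidean boundedness arguments by the extraction of convergent sequences. That extraction relies on first countability, local compactness, and sequential compactness of compact subsets, all of which hold because $\M$ and $\N$ are (metrizable) topological manifolds.
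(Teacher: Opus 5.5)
Your proof is correct, and its skeleton matches the paper's. You project neighborhoods of graph points to get the inclusions, and you use local eventual precompactness to produce a point of $M(x)$ for the reverse domain inclusion. It departs from the paper's argument in two places.

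First, reading the limits in \eqref{eq:domainRangeInclusion} as inner limits is necessary, not just convenient. The paper's proof opens by arguing that $\lim_{i\to\infty}\dom{M_i}$ and $\lim_{i\to\infty}\rge M_i$ always exist under graphical convergence, and this is false. With $\M=\N=\R{}$, take $\graph{M_i}=\{(0,0)\}$ for even $i$ and $\graph{M_i}=\{(0,0),(1,i)\}$ for odd $i$. The graphs converge to $\{(0,0)\}$ because $(1,i)$ escapes to the horizon, yet $\liminf_{i}\dom{M_i}=\{0\}\neq\{0,1\}=\limsup_{i}\dom{M_i}$. Using $(i,1)$ instead breaks convergence of the ranges in the same way. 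The paper's contradiction at that step pits two ``infinitely often'' statements against each other, and these are compatible; moreover, the point it tests need not lie in $\dom{M}$.

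Your formulation is the one that actually holds: inclusions into inner limits, with convergence of $\{\dom{M_i}\}$ obtained as a conclusion from the chain $\limsup_i\dom{M_i}\subset\dom{M}\subset\liminf_i\dom{M_i}$ in the precompact case. Your $V\times\N$ argument is also the precise version of the paper's claim that a point of $\liminf_i\graph{M_i}$ ``appears in all but finitely many'' graphs, which is not literally true.

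Second, in the precompact part the paper assumes a sequence $x_{i_k}\to x$ inside a compact set without justification. It then applies the set-convergence compactness of Lemma~\ref{lemma:sequentialCompactness_beer} to the sets $M_{i_k}(x_{i_k})\subset K'$ and asserts that the limit set lies in $M(x)$. You instead justify the sequence from first countability and local compactness. You then extract a single convergent sequence of points $y_k\in K'$ by sequential compactness and verify $(x,y)\in\limsup_i\graph{M_i}$ directly. Your route is more elementary and avoids Beer's theorem. The set-based route gains nothing here, since showing that a point of the limit set belongs to $M(x)$ requires exactly your pointwise verification.
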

}

{\newcommand{\PN}[1]{\Pi_{\N}\brackets{#1}}
\newcommand{\PM}[1]{\Pi_{\M}\brackets{#1}}
\begin{proof}
    Firstly, we show that $\lim_{i\to\infty}\dom{M_i}$ and $\lim_{i\to\infty}\rge{M_i}$ exist. Let $\tau_\M$ denote the topology on $\M$ and $\tau_\N$ the topology on $\N$. Let $z\coloneqq (x,y) \in \lim_{i\to\infty}\graph{M_i}$, and consider $U\in \tau_\M$ and $V\in\tau_{\N}$ such that $z\in U\times V$. The existence of such $U$ and $V$ is guaranteed by the definition of the product topology on $\M\times\N$ \cite[p. 86]{Munkres2000}. Then, since $z\in\limsup_{i\to\infty}\graph{M_i}$, it follows from Definition~\ref{def:setConvergence} that for each $i_0 > 0$, there exists $i > i_0$ satisfying $(U\times V) \cap M_i \neq \varnothing$. Then, since $M_i \subset \dom{M_i}\times \rge{M_i}$,
    \begin{align}\label{eq:contradiction}
        & (U\cap \dom{M_i}) \times (V\cap \rge{M_i}) \neq \varnothing \nonumber\\
        & \implies \quad \left\{ \begin{array}{lcl}
            U\cap \dom{M_i} & \neq & \varnothing, \\ 
            V\cap \rge{M_i} & \neq & \varnothing.
        \end{array} \right.
    \end{align}
    
    Then, to prove that $\lim_{i\to\infty}\dom{M_i}$ exists, assume the opposite, i.e., assume that $\lim_{i\to\infty}\dom{M_i}$ does not exist. Therefore, without loss of generality, let $x\in \limsup_{i\to\infty}\dom{M_i}$  satisfy $x\notin \liminf_{i\to\infty}\dom{M_i}$. Consequently, from Definition~\ref{def:setConvergence}, there exists an open neighborhood $U'$ of $x$ such that, for each $i_0 > 0$, there exists $i > i_0$ satisfying 
    \begin{align}\label{eq:contradiction2}
        U' \cap \dom{M_i} = \varnothing.    
    \end{align}
    Since \eqref{eq:contradiction} holds for each set $U\in\tau_{\M}$ and each $V\in\tau_{\N}$ satisfying $z\in U\times V$, we can set $U = U'$ in \eqref{eq:contradiction}, resulting in a contradiction with \eqref{eq:contradiction2}, which proves that $\lim_{i\to\infty}\dom{M_i}$ exists. The existence of $\lim_{i\to\infty}\rge{M_i}$ is shown similarly.

    Next, we prove the second inclusion in \eqref{eq:domainRangeInclusion}. The proof of the first inclusion will follow accordingly. Let $\Pi_{\M}$ and $\Pi_{\N}$ denote the projection of any $S\subset \M\times\N$ onto $\M$ and $\N$, respectively. In particular, define 
    \begin{align*}
        \PM{S}\coloneqq \left\{ x \in \M : \exists y\in \N \textrm{ such that } (x,y)\in S \right\}. 
    \end{align*}
    The projection map $\Pi_{\N}$ is defined accordingly. Therefore, $\dom{M} = \PM{\graph{M}}$ and $\rge{M} = \PN{\graph{M}}$. Similarly, for each $i=1,2,\ldots$, $\dom{M_i} = \PM{\graph{M_i}}$ and $\rge{M_i} = \PN{\graph{M_i}}$. Using Definition~\ref{def:setConvergence}, we have
    \begin{align*}
        \rge{M} = \PN{\graph{M}} &= \PN{\lim_{i\to\infty} \graph{M_i}} \\ 
        &= \PN{\liminf_{i\to\infty} \graph{M_i}}.
    \end{align*}
    Pick any point $z\coloneqq (x,y)\in \liminf_{i\to\infty}\graph{M_i} \subset \M\times\N$. Then, $\PN{z} = y$. Since $z$ belongs to the limit inferior of the sequence of graphs of $M_i$, it appears in all but finitely many $\graph{M_i}$. Therefore, $y$ appears in all but finitely many $\PN{\graph{M_i}}$. Consequently, $y \in \liminf_{i\to\infty} \PN{\graph{M_i}}$. Since $y\in \rge{M}$ and $\PN{\graph{M_i}} = \rge{M_i}$, we have $\rge{M}\subset \liminf_{i\to\infty}\rge{M_i}$. Then, since $\lim_{i\to\infty}\rge{M_i}$ exists, it follows that $\rge{M}\subset \lim_{i\to\infty}\rge{M_i}$. 

    To prove \eqref{eq:domainRangeEquality}, suppose that the sequence $\{M_i\}_{i=1}^\infty$ is locally eventually precompact. Consider a point $x\in \limsup_{i\to\infty}\dom{M_i}$ and a subsequence $\{x_{i_k}\}$ of points $x_{i_k}\in \dom{M_{i_k}}$ that belongs to a compact set $K\subset \M$ and converges to~$x$. Using local eventual precompactness, the sequence of sets $\{M_{i_k}(x_{i_k})\}$ lies in a compact set $K'\subset\N$, and therefore, has a subsequence that converges to a nonempty set. The limit of this subsequence is a subset of $M(x)$ because $M = \graphlim{i\to\infty}{M_i}$, causing $x\in \dom{M}$. As a result, $\limsup_{i\to\infty}\dom{M_i} \subset \dom{M}$. This, along with \eqref{eq:domainRangeInclusion}, proves~\eqref{eq:domainRangeEquality}. 
\end{proof}
}
}

\section{Geometric Hybrid Dynamical Systems}
\label{sec:hybridSystems}

We define geometric hybrid dynamical systems as
\begin{equation}\label{eq:HS}
    \mathcal{H}:\: \left\{\begin{aligned}
        \dot{x}\phantom{^+} &\in F(x) & x\in C \\
        x^+ &\in G(x) & x\in D
    \end{aligned}\right. 
\end{equation}
where $x\in \M$ is the state,  $C\subset\M$ and $D\subset\M$ are the flow set and the jump set, respectively, $G:\M\rightrightarrows\M$ is the jump map, and $\M$ is a finite-dimensional $C^1$-manifold. The flow map  $F: \M\rightrightarrows \T{}{\M}$ is a set-valued map satisfying $F(x)\subset \T{x}{\M}$ for each $x\in \M$. We use $\mathcal{H} = (C,F,D,G, \M)$ to refer to a hybrid system defined with this data. 

Solutions to $\hybrid$ are defined on hybrid time domains \cite[Def.~2.26]{HybridFeedbackControl}, parametrized by ordinary time $t\in\R{}_{\geq 0}$, that denotes the amount of time for which the solution has flowed, and a jump counter $j\in\mathbb{N}$ that denotes the number of jumps that have occurred. 
A set $E\subset \R{}_{\ge 0}\times \mathbb{N}$ is a compact hybrid time domain if there exists $J\in \mathbb{N}$ such that
\(
    E = \bigcup_{j=0}^{J} [t_j, t_{j+1}]\times  \{j\}
\)
for some finite sequence of times $\{t_j\}_{j=0}^{J+1}$ satisfying $0 = t_0 \leq t_1\leq t_2 \leq ... \leq t_{J} \leq t_{J+1}$. A set $E \subset \mathbb{R}_{\geq 0} \times \mathbb{N}$ is a hybrid time domain if it is the union of compact hybrid time domains $E_j$ such that $E_0 \subset E_1 \subset E_2 \subset \ldots \subset E_j \ldots$.
To define the behavior of solutions during the flows of geometric hybrid systems, we employ the concept of local absolute continuity.

\begin{definition}[{Local absolute continuity~\cite[Sec.~A.2.1]{BulloLewis}}]
    \label{def:localAbsoluteContinuity}
    A function $\gamma: \R{}\supset I \to \M$ is said to be locally absolutely continuous if, for each $C^1$-function $\psi : \M \to \R{}$, the composition $\psi \circ \gamma : I \to \R{}$ is locally absolutely continuous. 
\end{definition}

\begin{definition}[Hybrid arc]
A function $\phi: E\to \M$ is a hybrid arc if $E$ is a hybrid time domain and, for each $j\in \mathbb{N}$, the curve $t\mapsto \phi(t, j)$ is locally absolutely continuous on the interval $I^j = \{t: (t, j)\in E\}$.     
\end{definition}

We define solutions to a geometric hybrid system $\hybrid$ as hybrid arcs whose flow and jumps are appropriately consistent with the data defining the system.

\begin{definition}[Solutions to geometric hybrid systems]\label{def:solution}
A map $\phi:\dom{\phi} \to \M$ is a solution to the geometric hybrid system $\hybrid = (C, F, D, G, \M)$ if $\phi$ is a hybrid arc and 
\begin{enumerate}[label=$(S_{\arabic*})$, start=0]
    \item $\phi(0,0)\in \overline{C}\cup D$;
    \item for each $j\in \mathbb{N}$ such that $\interior{I^j}\neq \varnothing$, $\phi$ satisfies
    \vspace{-3pt}
    \begin{align*}
        \begin{array}{lcl}
            \phi(t, j) \in C  & \textrm{for all} & t \in \mathrm{int\:}{I^j},\\
           \displaystyle{\frac{d\phi}{dt}}(t, j) \in F(\phi(t, j))  & \textrm{for {almost all }} & t\in I^j ;
        \end{array}
    \end{align*}
    \item for each $(t, j)\in \dom{\phi}$ such that $(t, j+1)\in \dom{\phi}$, 
    \vspace{-5pt}
    \begin{align*}
        \phi(t,j) \in D && \text{and} && 
        \phi(t,j+1) \in G(\phi(t,j)).
    \end{align*}
\end{enumerate}    
\end{definition}

Given a solution $\phi$, its total time of flow is $\sup_{t}\dom{\phi} \coloneqq \sup\{t : \exists j \textrm{ s.t. }(t,j)\!\in\!\dom{\phi}\}$. Similarly, the number of jumps experienced by $\phi$ is $\sup_{j}\dom{\phi}\coloneqq \sup\{j : \exists t \textrm{ s.t. }(t,j)\in\dom{\phi}\}$. We define $\sup\dom{\phi}\coloneqq (\sup_t\dom{\phi}, \sup_{j}\dom{\phi})$ and $\operatorname{length}(\dom{\phi}) \coloneqq \sup_{t}\dom{\phi} + \sup_{j}\dom{\phi}$.

A solution $\phi$ is \emph{nontrivial} if $\dom{\phi}$ contains at least two points. 
It is \emph{maximal} if there does not exist another solution $\psi$ to $\hybrid$ such that $\dom{\phi}$ is a proper subset of $\dom{\psi}$ and $\phi(t,j)= \psi(t,j)$ for all $(t,j) \in \dom{\phi}$. 
It is \emph{complete} if $\mathrm{length}(\dom{\phi})=\infty$.
It is \emph{precompact} if $\rge{\phi}\subset {\M}$ is precompact. 
The set of all solutions starting from $K \subset \M$ by $\widehat{\cal S}_{\hybrid}(K)$. The sets of all maximal solutions $\mathcal{S}_{\hybrid}$, and those starting from $K$, namely, $\maximalSol{\hybrid}{K}$, are defined analogously.

Now, extending~\cite[Assumption~6.5]{goebel_hybrid_2012} to the geometric setting, we provide regularity conditions that impart desirable properties to~$\hybrid$. These properties are leveraged in Sec.~\ref{sec:solutionSetProperties}.

\begin{definition}[Geometric hybrid basic conditions]\label{ass:hybrid_basic_conditions}
    The geometric hybrid dynamical system $\hybrid = (C, F, D, G, \M)$ is said to satisfy the {\em geometric hybrid basic conditions} if
    \begin{enumerate}[label = (A\arabic*), leftmargin=*]
        \item \label{item:HBC1} $\M$ is a $C^1$-manifold;
        \item $C $ and $D$ are subsets of $\M$ that are closed relative to~$\M$;
        \item \label{HBC:F}$F:\M \rightrightarrows \T{}{\M}$ is outer semicontinuous and locally~precompact relative to $C$, $F(x)$ is convex for all $x\in C$, and $C \subset \dom{F}$; 
        \item $G:\M \rightrightarrows \M$ is outer semicontinuous and locally~precompact relative to $D$, and $D \subset \dom{G}$.
    \end{enumerate}
\end{definition}

If $\M = \R{n}$ and the hybrid system $\hybrid$ satisfies Definition~\ref{ass:hybrid_basic_conditions}, it is shown in \cite[Chapter 6]{goebel_hybrid_2012} that $\hybrid$ is well-posed; in particular, the set of solutions to $\hybrid$ with compact time domains is sequentially compact and dynamical properties of $\hybrid$ are robust to arbitrarily small perturbations. This notion of well-posedness requires knowledge of a distance function on the manifold. Since we assume $\M$ to be a $C^1$-manifold, a distance function on $\M$ is not available. Consequently, a similar notion is not yet defined for the class of systems in \eqref{eq:HS}. A weaker, metric-independent, notion requiring only sequential closedness of the set of compact solutions to $\hybrid$ is defined in~\cite{jirwankar2025invariance}.

The following examples illustrate modeling of geometric hybrid dynamical systems on manifolds. %

{
\renewcommand{\angvel}{\xi}
\begin{example}[Quaternion stabilization]
    \label{ex:quaternion}
    We recall the quaternion kinematics in~\cite{mayhew2011quaternion}, given by
    \begin{align*}
        \dot{q} = \frac{1}{2}q \otimes v(\angvel) \quad (q, \angvel)\in\mathbb{S}^3 \times \R{3},
    \end{align*}
    where $\mathbb{S}^3 \coloneqq \{w \in \R{4} : |w| = 1\}$ is the unit sphere in $\R{4}$, $q\in\mathbb{S}^3$ denotes the quaternion describing the attitude of a rigid body, $\angvel\in\R{3}$ denotes the angular velocity of the rigid body in its body-fixed frame, $v(\angvel) \coloneqq \begin{pmatrix}
        0 & \angvel^\top
    \end{pmatrix}^\top$, and the quaternion multiplication~$\otimes$ is defined as $q_1 \otimes q_2 \coloneqq \begin{psmallmatrix}
            \eta_1\eta_2 - \varepsilon_1^\top \varepsilon_2\\
            \eta_1\varepsilon_2 + \eta_2\varepsilon_1 + \varepsilon_1\times\varepsilon_2
        \end{psmallmatrix},$
    where $q_i = {\begin{pmatrix}
        \eta_i &
        \varepsilon_i^\top
    \end{pmatrix}}^\top$ for $i=1,2$. Let $H \coloneqq \{-1,1\}$ denote the set of values for the logic variable $h$. Controlling $\angvel$ with the logic-based hybrid controller in~\cite{mayhew2011quaternion} and {}{\color{mygreen}\relax{}Appendix~\ref{app:QuaternionController}}, the resulting closed-loop system is a geometric hybrid system $\hybrid = (C, F, D, G, \M)$ with $\M \coloneqq \mathbb{S}^3\times H$, state $x = (q, h) \in \M$, flow and jump sets
    \begin{align*}
        C \coloneqq \{(q,h)\in \M : h\eta \geq -\delta\}, && D \coloneqq \overline{\M \setminus C}
    \end{align*}
    for some $\delta\in (0,1)$, and, for each $x\in \M$, 
    \begin{align*}
        F(x) \coloneqq \begin{pmatrix}
                \frac{1}{2}q\otimes v(-hK_\varepsilon \varepsilon),
                &
                0
            \end{pmatrix},
        &&
        G(x) \coloneqq \begin{pmatrix}
                q,
                &
                -h
        \end{pmatrix},
    \end{align*}
    respectively, where $K_\varepsilon \in \R{3\times 3}$ is a symmetric, positive definite matrix. As $C$ and $D$ are closed, and $F$ and $G$ are single-valued and continuous, $\hybrid$ satisfies the geometric hybrid basic conditions from Definition~\ref{ass:hybrid_basic_conditions}.
\end{example}
}

The following example models a billiard ball on a M\"{o}bius band. This is a geometric hybrid system that exhibits two types of jumps, with second-order continuous-time dynamics that are transformed into the first-order form in~\eqref{eq:HS}.

\begin{example}[Billiard on the M\"{o}bius band]
    \label{ex:mobius-hybridDynamics}
    Recall $\mobius$ and $\rectangle$ from Example~\ref{ex:MobiusDef}. Define $\rectangle_{\constraintSet} \coloneqq [0,1]\times [-1,1]\subset \rectangle$,~and consider a point mass moving on $\mobius$ in a compact set $S\subset \mobius$, where $\constraintSet \coloneqq \rectangle_{\constraintSet}/\sim$ and ``$\sim$'' denotes the equivalence relation on $\rectangle$. We denote the position and velocity of the point mass by $y_1 \in \constraintSet$ and $y_2\in \T{y_1}{\mobius}$, respectively, and let $y\coloneqq (y_1, y_2)\in \T{}{\mobius}$. We model the motion of this point mass as a geometric hybrid system $\hybrid$ that evolves continuously inside $S$, has impacts at the boundary of $S$, and approaches a nonempty, compact set $\tilde{\A} \subset \interior{S}$.     

    To this end, let $Q\coloneqq \{-1, 1\}$ and define the smooth, disconnected manifold $\M \coloneqq \T{}{\mobius} \times Q$. For each $q\in Q$, we define a smooth function $V_q \in \PD{\tilde{\A}}$ on $\mobius$. Suppose
    \begin{align}\label{eq:mu}
        \mu\coloneqq \min_{\substack{q\in Q\\ y_1 \in \crit{V_q}\setminus \tilde{\A}}} \left\{ V_q(y_1)- V_{-q}(y_1)\right\} \;\; > \; 0,
    \end{align}
    where $\crit{V_q} \subset \mobius$ denotes the set of critical points of $V_q$. Condition~\eqref{eq:mu} is standard in synergistic control~\cite[Ch.~7]{HybridFeedbackControl}. It amounts to the existence of a family $\{V_q\}_{q\in Q}$ of potential fields whose undesired critical points are separated by $\mu > 0$. With this construction, pick $\delta\in (0, \mu)$. 
    
    Now, let $x\coloneqq (y, q)\in \M$ denote the state of the point mass. 
    While the point mass is moving in the set $S$ with velocity pointing ``inward,'' it evolves  under the action of a dissipative friction force \cite[Def.~4.65]{BulloLewis} and the smooth potential field $V_q$ while keeping $q$ constant, leading to the following dynamics:
    \begin{align*}
        \begin{array}{lr}
             \dot{x} = F(x)\coloneqq \left(\hspace{-0.15cm}
             \begin{array}{c}
             \mathfrak{S}(y) + \vlift{y}{-\grad{V_q}{y_1} - b y_2} \\ 0
             \end{array}\hspace{-0.15cm}\right) & \hspace{-0.2cm} x \in {C}
        \end{array}
    \end{align*}
    where $F: \M \rightrightarrows \T{}{\M}$ is the flow map defined as above {for each $x\in \M$.}
    The map $\mathfrak{S}: \T{}{\mobius}\to \T{}{\T{}{\mobius}}$ denotes the \emph{geodesic spray} on $\mobius$, namely, the unique vector field on $\T{}{\mobius}$ whose integral curves, when projected onto $\mobius$ using the canonical projection map $\pi$, are geodesics on $\mobius$ under the Levi-Civita connection; see~\cite[Ch.~3, Lem.~2.3]{doCarmo1992}. For each $y = (y_1, y_2)\in \T{}{\mobius}$, the map $\operatorname{vlft}_{y} : \T{y_1}{\mobius} \to \T{y}{\T{}{\mobius}}$ denotes the \emph{vertical lift}%
    \footnote{The vertical lift at $y =(y_1, y_2)\in \T{}{\mobius}$ of a vector $w\in \T{y_1}{\mobius}$ is defined as $\vlift{y}{w}\coloneqq \frac{d}{dt}(y_2 + tw) \big|_{t=0}$ for each $w\in \T{y_1}{\mobius}$; see~\cite[Def.~3.73]{BulloLewis}.}
    at $y$. Noting that $\dim{\mobius} = 2$, we denote by $\Gamma^i_{jk}$, $i,j,k\in\{1,2\}$, the Christoffel symbols of the Levi-Civita connection $\nabla$ under a given coordinate chart on $\mobius$. In such a chart, the coordinate expressions for $\mathfrak{S}$ and $\mathrm{vlft}$ around $x = (y,q)$ are\footnote{We have used the Einstein summation convention. In particular, $a^i b_i$ denotes  $\Sigma_{i}a^i b_i$ since summation over $i$ is implied; see~\cite[Sec.~2.2]{BulloLewis}.}
    \begin{align}
        \mathfrak{S}(y) &= \left.\widetilde{y_2}^{(i)} \frac{\partial}{\partial \widetilde{y_1}^{(i)}} \right|_{y} - \left.\Gamma^{i}_{jk} \widetilde{y_2}^{(j)} \widetilde{y_2}^{(k)} \frac{\partial}{\partial \widetilde{y_2}^{(i)}}\right|_{y},  \label{eq:coord_spray}
        \\
        \vlift{y}{w} &= \left.\widetilde{w}^{(i)} \frac{\partial}{\partial \widetilde{y_2}^{(i)}}\right|_{y}, \label{eq:coord_vlift}
    \end{align}
    where $\widetilde{y_1}^{(i)}$, $\widetilde{y_2}^{(i)}$, and $\widetilde{w}^{(i)}$ denote the coordinate representations of $y_1\in \mobius$, $y_2\in \T{y_1}{\mobius}$, and a tangent vector $w \in \T{y_1}{\mobius}$, respectively. The constant $b > 0$ in the flow map $F$ characterizes the dissipative force. 
    
    The set $C \subset\M$ allows flow when the point mass does not collide with $\partial \constraintSet$, while also ensuring that the current choice of $q$ is the \emph{better one}, in the sense that the value of $V_q$ is smaller than $V_{-q}$ by at least $\delta$.
    Alternatively, a jump is triggered when the point mass collides with $\partial \constraintSet$ with velocity pointing ``outwards,'' or if a better choice of $q$ exists in the sense described above. These conditions are captured by the sets $D_1 \subset \M$ and $D_2 \subset \M$, respectively. The jump set of the hybrid system is defined by $D\coloneqq D_1 \cup D_2$. The formal construction of the sets $C$, $D_1$, and $D_2$ is deferred to Example~\ref{ex:mobius-solutions} after we introduce the geometric notion of tangent cones to sets on $C^1$-manifolds, which formally characterizes the notion of an ``outward'' direction used above.

    When the point mass collides with $\partial \constraintSet$, its position $y_1$ and velocity $y_2$ are reset according to the map $G_1 : D_1\rightrightarrows \M$:
    \begin{align*}
             x^+ \in G_1(x) \coloneqq \{ (g_1(y_1), P_{0\mapsto 1}^{\gamma}(y_2^{||} - \lambda y_{2}^{\perp}), q) : \lambda\in [0,\lambda_{\max}]\} 
    \end{align*}
    for all $x \in D_1$, where $\lambda_{\max} \in (0,1)$ is a constant. For each $x\in D_1$, the position $y_1$ above is reset to $g_1(y_1) \coloneqq [z_1, z_2- \operatorname{sign}(z_2)\varepsilon]_{\mobius}$, where $\varepsilon > 0$ is arbitrarily small and, recall from Example~\ref{ex:MobiusDef} that  $y_1 = [z_1, z_2]_{\mobius}$ for $(z_1, z_2)\in \rectangle_S$. The map $G_1$ keeps $q$ unchanged after jumps. For the velocity update, the term $y_2^{||}\in \T{y_1}{\mobius}$ (resp., $y_2^{\perp}$) denotes the component of $y_2$ parallel (resp., orthogonal) to $\partial S$ at $y_1$ obtained using the metric $g_{\mobius}$ on $\mobius$ from Example~\ref{ex:MobiusDef}. In particular, only the perpendicular component of the velocity is scaled by the coefficient of restitution $\lambda$.
    {
        If $\lambda\in [0, \lambda_{\max}]$ is unknown, its uncertainty is captured by the set-valued nature of the map $G_1$. 
    }
    \noindent
    {The scaled velocity is parallelly transported from the tangent space at $y_1$ to the tangent space at $g_1(y_1)$ along the smooth curve $\gamma:[0,1]\to \mobius$, defined as $\gamma(t)\coloneqq [z_1, z_2 -t \operatorname{sign}(z_2)\varepsilon]_{\mobius}$ for each $t\in[0,1]$, such that $\gamma(0)=y_1$ and $\gamma(1) = g_1(y_1)$. 
    The parallel transport map along $\gamma$ is denoted by $P_{0\mapsto 1}^{\gamma} : \T{y_1}{\mobius} \to \T{g_1(y_1)}{\mobius}$. Instead of an explicit expression, it is defined as follows: for each $v\in \T{y_1}{\mobius}$, $P^{\gamma}_{0\mapsto 1}(v) \coloneqq X(\gamma(1))$, where $X$ is the unique $C^1$ vector field that is ``parallel along $\gamma$'' and satisfies $X(\gamma(0))=v$; see~\cite[Ch.~4]{Lee_Riemannian} for more details}.
    
    When jumps are triggered in $D_2$, $q$ is reset while keeping $y$ constant using the map $G_2 : D_2 \to \M$ as follows:
    \begin{align*}
        \begin{array}{cc}
             x^+ = G_2(x) \coloneqq  (y,  -q) & x \in D_2.
        \end{array}
    \end{align*}
    {}
    {\color{mygreen}\relax{}{\color{mygreen}Then, the jump map $G:\M \rightrightarrows \M$ is defined as follows:
    \begin{align*}
        G(x) \coloneqq \left\{
        \begin{array}{ccl}
           G_1(x)  & \text{if } & x\in D_1 \setminus D_2\\
           G_2(x)  & \text{if } & x\in D_2 \setminus D_1\\
           G_1(x) \cup G_2(x) & \text{if } & x\in D_1 \cap D_2\\
           \varnothing & \text{if } & x\notin D
        \end{array}
        \right. \quad \forall x\in \M. 
    \end{align*}}}
    
Combining the flow and the jump behavior yields the hybrid system $\hybrid = (C, F, D, G, \M)$ of the form~\eqref{eq:HS}. 
\end{example}

\section{Basic Properties of the Solution Set of \texorpdfstring{$\hybrid$}{H}}
\label{sec:solutionSetProperties}

We establish desirable properties of the set of solutions $\mathcal{S}_{\hybrid}$ when $\hybrid$ satisfies the geometric hybrid basic conditions in Definition~\ref{ass:hybrid_basic_conditions}. In particular, we provide sufficient conditions for the existence of solutions to $\hybrid$ from a point. Then, we show that the set $\mathcal{S}_\hybrid$ is closed using a sequential argument.

\begin{figure*}[t]
    \centering
    {\includegraphics[width = \linewidth, trim = 0cm 0cm 0cm 16.5cm]{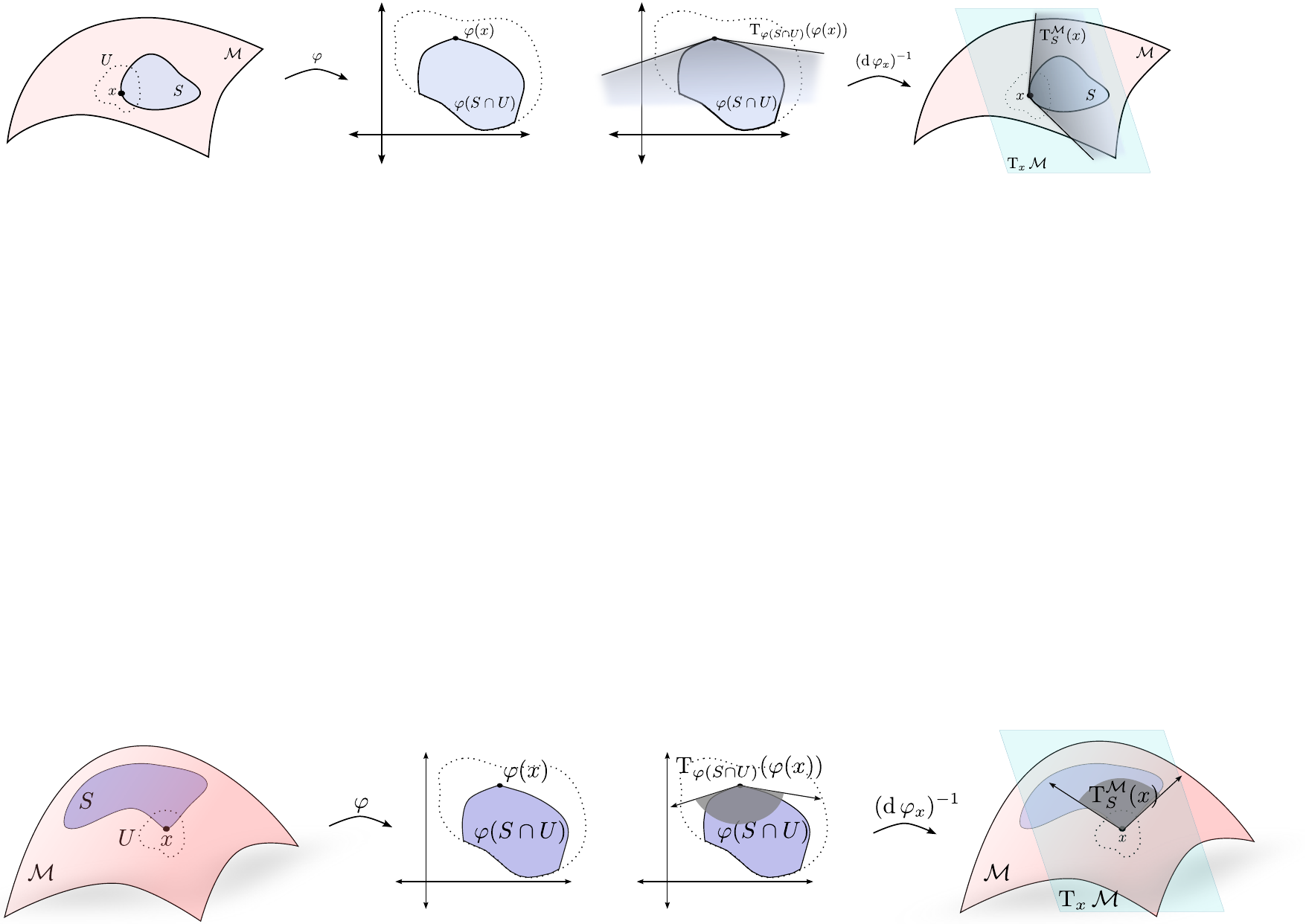}}
    \caption{Construction of the geometric tangent cone $\Tcone{S}{x}$ using a coordinate chart $(U, \varphi)$ as in Definition~\ref{def:tangentConeManifolds}.\vspace{-6pt}}
    \label{fig:TangentCone}
\end{figure*}

\subsection{Existence of Solutions to $\hybrid$}
\label{sec:existence}

We establish the existence of nontrivial solutions to hybrid systems of the form in \eqref{eq:HS}. A standard approach to obtaining such a result in the Euclidean case asks that, near an initial condition belonging to the flow set $C$, the values of $F$ contain a vector tangent to $C$; see \cite[Prop. 2.34]{HybridFeedbackControl}. On Euclidean space, this tangency condition is captured by the contingent cone \cite[Prop.~6.2]{Rockafellar1997}. Since in our case $C$ is a subset of the manifold $\M$, we transport the contingent cone construction to $\M$ via coordinate charts, generalizing the result in \cite[Def.~3.2]{MOTREANU1982116} which carried Clarke's tangent cone to $C^1$-manifolds.
\begin{definition}[{Tangent cone to subsets of manifolds}]
    \label{def:tangentConeManifolds}
    Let $\M$ be an $n$-dimensional $C^1$-manifold and let $S \subset \M$ be nonempty. The tangent cone $\Tcone{S}{x}$ at $x\in S$ is defined as
    \begin{align*}
        \Tcone{S}{x} \coloneqq \left(\diffFunc{\varphi_x}\right)^{-1}(\TconeEuclidean{\varphi(S\cap U)}{\varphi(x)}),
    \end{align*}
    where\footnote{Note that the inverse of the map $\diffFunc{\varphi_x}:\T{x}{\M} \to \R{n}$ exists as it is an isomorphism~\cite[Prop.~3.6(d)]{Lee}.} $(U, \varphi)$ is any coordinate chart at $x$, and $\TconeEuclidean{\varphi(S\cap U)}{\varphi(x)}$ denotes the Euclidean tangent cone to the set $\varphi(S\cap U)\subset \mathbb{R}^n$ at the point $\varphi(x)$; see Figure \ref{fig:TangentCone} for an illustration.
\end{definition}

That Definition~\ref{def:tangentConeManifolds} does not depend on the chosen coordinate chart, and is therefore well-defined on $\M$, is shown in Appendix~\ref{appendix:B}. Equipped with the tangent cone on $\M$, we return to the existence of nontrivial solutions to geometric hybrid systems and state the following result.

\begin{proposition}[Existence of solutions]
    \label{prop:existence}
Consider the geometric hybrid dynamical system $\hybrid = (C, F, D, G, \M)$ that satisfies the geometric hybrid basic conditions in Definition~\ref{ass:hybrid_basic_conditions}. Pick $\xi \in {C}\cup D$. If $\xi\in D$ or
\begin{enumerate}[label=(${VC}^{g}$), leftmargin=1.2cm]
    \item  there exists an open neighborhood $U$ of $\xi$ such that, for each $x\in U \cap C$, \label{VC}
    \(
        F(x) \cap \Tcone{C}{x} \neq \varnothing,
    \)
\end{enumerate}
then, there exists a nontrivial solution $\phi \in \setofSol{\hybrid}{\xi}$. If \ref{VC} holds for every $\xi\in {C}\setminus D$, then there exists a nontrivial solution to $\hybrid$ from each point of ${C}\cup D$, and each maximal solution~$\phi$ to $\hybrid$ satisfies exactly one of the following conditions:

\begin{enumerate}[label=\alph*)]
    \item $\phi$ is complete; \label{item:a-bullet}
    
    \item $\dom \phi$ is bounded, the interval $I^J = \{t : (t,J)\in\dom{\phi}\}$, where $J = \sup_j \dom{\phi}$, has a nonempty interior, and $t\mapsto \phi(t,J)$ is a maximal solution to $\dot{x} \in F(x)$  $\;x\in C$ such that there does not exist a compact set $K \subset \M$ satisfying $\lim_{t \nearrow T}\phi(t,J) \in \interior{K}$, where $T = \sup_t \dom{\phi}$; \label{item:b-bullet} %

    \item $\phi(T,J) \notin C\cup D$, where $(T,J)\coloneqq \sup\dom{\phi}$. \label{item:c-bullet}
    
\end{enumerate}
Furthermore, the following hold:
\begin{enumerate}[label=\roman*)]
    \item If $G(D)\subset C\cup D$, then item c above does not hold. \label{item:1-bullet}
    \item If $C$ is compact, then item b above does not hold.  \label{item:2-bullet}
\end{enumerate}
\end{proposition}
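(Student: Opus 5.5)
The plan is to reduce everything to the Euclidean result \cite[Prop.~2.34]{HybridFeedbackControl} (equivalently \cite[Prop.~6.10]{goebel_hybrid_2012}) by working in a coordinate chart, and then patch local solutions together using a maximality argument.

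\emph{Local existence.} If $\xi\in D$, then $G(\xi)\neq\varnothing$ because $D\subset\dom{G}$. Picking $\xi^+\in G(\xi)$ and setting $\phi(0,0)=\xi$, $\phi(0,1)=\xi^+$ on the domain $\{(0,0),(0,1)\}$ gives a nontrivial solution. If instead $\xi\in C$ and \ref{VC} holds, fix a chart $(W,\varphi)$ at $\xi$ and take a compact neighborhood $K\subset W\cap U$ of $\xi$. Define the Euclidean data
\[
\widetilde C\coloneqq\varphi(C\cap K),\qquad \widetilde F(z)\coloneqq \diffFunc{\varphi}_{\varphi^{-1}(z)}\bigl(F(\varphi^{-1}(z))\bigr).
\]
The set $\widetilde C$ is closed. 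Because $\diffFunc{\varphi}$ is a continuous vector-bundle isomorphism over $W$, $\widetilde F$ inherits outer semicontinuity, local boundedness (from local precompactness) and convex values. By Definition~\ref{def:tangentConeManifolds} and Lemma~\ref{lemma:coordInvariant-tangentCone}, \ref{VC} becomes $\widetilde F(z)\cap\TconeEuclidean{\widetilde C}{z}\neq\varnothing$ for $z$ near $\varphi(\xi)$, since $K$ is a neighborhood of $\xi$ and the Euclidean tangent cone is local. The Euclidean viability theorem then gives an absolutely continuous $z:[0,\epsilon]\to\widetilde C$ with $\dot z\in\widetilde F(z)$ almost everywhere. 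The curve $\varphi^{-1}\circ z$ is locally absolutely continuous in the sense of Definition~\ref{def:localAbsoluteContinuity}, because $\psi\circ\varphi^{-1}$ is $C^1$ and hence locally Lipschitz. It also satisfies $(S_1)$, so it is a nontrivial flow solution.

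\emph{Extension and trichotomy.} Partially order the solutions from $\xi$ by graph extension. Zorn's lemma, with the union of a chain as its upper bound, gives a maximal solution. Now take a maximal $\phi$ with $\dom\phi$ bounded and not complete, and let $(T,J)=\sup\dom\phi$. There are two cases.
\begin{itemize}[leftmargin=12pt]
\item If $(T,J)\in\dom\phi$ and $\phi(T,J)\in C\cup D$, then local existence from $\phi(T,J)$, which requires \ref{VC} whenever $\phi(T,J)\in C\setminus D$, yields a nontrivial extension by concatenation. This contradicts maximality, so c) holds.
\item If $(T,J)\notin\dom\phi$, then $I^J=[t_J,T)$ has nonempty interior and $t\mapsto\phi(t,J)$ is a maximal flow solution. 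Suppose $\lim_{t\nearrow T}\phi(t,J)$ existed and lay in $\interior K$ for a compact $K$. Closedness of $C$ puts the limit in $C$, and appending it produces an absolutely continuous extension, again contradicting maximality. This is case b).
\end{itemize}
Exclusivity is immediate: a) is unbounded, b) has $\sup\dom\phi\notin\dom\phi$, and c) has it in $\dom\phi$.

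\emph{Items i) and ii).} For i), the endpoint $\phi(T,J)$ is either the initial point, which lies in $\overline C\cup D=C\cup D$, or the result of a jump, which lies in $G(D)\subset C\cup D$, or the end of a flow interval, which lies in $C$ by closedness. In every case c) fails. For ii), if $C$ is compact then $\phi(t,J)\in C$ on $I^J$, and the following argument gives a contradiction with b). Since $F$ is locally precompact relative to $C$ and $C$ is compact, $F(C)$ lies in a compact subset of $\T{}{\M}$. Covering $C$ by finitely many charts, the velocities in coordinates are uniformly bounded. Hence $\phi(\cdot,J)$ is uniformly continuous in charts and the limit as $t\nearrow T$ exists in $C$. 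Because $\M$ is locally compact, this limit lies in the interior of some compact set $K$, which is exactly what b) rules out.

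The main obstacle is this last step and the extension at $T$ in case b). Establishing existence of the limit requires Cauchy-type estimates that pass between charts, since no metric is available. I would handle it by a Lebesgue-number argument on the finite chart cover: on each chart $\varphi\circ\phi(\cdot,J)$ is Lipschitz with a uniform constant, so the curve cannot oscillate between disjoint compact neighborhoods.
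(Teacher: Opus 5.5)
Your proposal is correct and follows essentially the same route as the paper. You pull the flow back to a chart, apply a Euclidean viability theorem, and push the solution forward. You then run the three-case maximality argument, using the coordinate speed bound from local precompactness of $F$ near the limit point to make the appended arc absolutely continuous at $T$. That bound, which you only name at the end, is genuinely needed: a locally absolutely continuous arc with a limit at $T$ need not extend absolutely continuously.

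The differences from the paper are minor. You intersect with a compact neighborhood so the constraint set is closed and the Euclidean hybrid existence result applies directly, whereas the paper invokes \cite[Thm.~2.3]{carjua2000viability} for the locally closed set $\varphi(C\cap V)$. Your finite-chart argument for item ii), showing the limit exists when $C$ is compact, is more explicit than the paper's one-line justification.
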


\begin{proof}   
    If $\xi\in D$, then there exists a nontrivial solution that jumps. To prove the existence of a nontrivial solution to $\hybrid$ if \ref{VC} holds, we obtain equivalent flow dynamics in local coordinates. Then, we prove the existence of a nontrivial solution to those dynamics if an equivalent viability condition in local coordinates, denoted \ref{VC_local} below, holds. Finally, we map the resulting nontrivial solution to the flow dynamics in local coordinates back to the manifold $\M$ and show that it is indeed a nontrivial solution to the original hybrid system~$\hybrid$. 
    
    If $\xi \in C$, consider a coordinate chart $(V, \varphi)$ of $\M$ around $\xi$. We denote the coordinates of $x \in C\cap V$ by $y \coloneqq \varphi(x) \in \R{n}$. Then, the local representation of $F$, denoted by $\widetilde{F}$ and obtained using the differential\footnote{The differential of $\varphi$ exists as $\varphi$ is a $C^1$-diffeomorphism onto its image because $\M$ is a $C^1$-manifold; see~\cite[Ex.~2.14(b)]{Lee}.} of $\varphi$, denotes the flow map for the dynamics of $y$.
    {}
    {\color{mygreen}\relax{}{\color{mygreen}In particular, for the purpose of constructing $\widetilde{F}$, if there exists a solution $\phi$ to $\dot{x} \in F(x) \;x\in C$, then define $\widetilde{\phi} = \varphi\circ\phi : \dom{\phi} \to \R{n}$ so that $\widetilde{\phi}(t,j) = \varphi(\phi(t,j))$ for all $(t,j)\in\dom{\phi}$. We observe that $\frac{d\widetilde{\phi}}{dt} (t,j) = \diffFunc{\varphi}_{\phi(t,j)}(\frac{d\phi}{dt}(t,j))$ for almost all $(t,j)\in \dom\phi$. It is easy to check that $\widetilde{\phi}$ is a solution to the flow dynamics of $y$ given as follows:}}
\begin{align}\label{eq:localFlowDynamics}
    \dot{y} \in \widetilde{F}(y) \qquad  y\in \varphi(C\cap V),
\end{align}
where $\widetilde{F} : \varphi(C\cap V) \subset \R{n} \rightrightarrows \R{n}$ is defined by $\widetilde{F}(y) \coloneqq \diffFunc{\varphi_{\varphi^{-1}(y)}}(F(\varphi^{-1}(y))) = \{\diffFunc{\varphi_{\varphi^{-1}(y)}}(f) \in \R{n} : f \in F(\varphi^{-1}(y))\}$ for each $y\in\dom{\widetilde{F}}$. If the viability condition~\ref{VC} holds, then Definition~\ref{def:tangentConeManifolds} yields the following coordinate representation of \ref{VC}:
\begin{enumerate}[label=($VC^g_{\mathrm{local}}$), leftmargin=1.5cm]
    \item  there exists an open neighborhood $\widetilde{V}$ of $\varphi(\xi)$ such that, for each $y\in \widetilde{V} \cap \varphi(C \cap V)$, \label{VC_local}
    \(
    \widetilde{F}(y) \cap \TconeEuclidean{\varphi(C\cap V)}{y} \neq \varnothing.
    \) 
\end{enumerate}
{}{\color{mygreen}\relax{}{\color{mygreen}We now show that if \ref{VC_local} holds, then there exists a nontrivial solution to \eqref{eq:localFlowDynamics} from $\varphi(\xi)$. As $F$ is outer semicontinuous and locally precompact relative to $C$, \cite[Prop. 6.3.2]{Beer1993} implies that $F$ is upper semicontinuous relative to $C$. Then, as $\varphi$ is a diffeomorphism~\cite[Ex.~2.14(b)]{Lee}, $\diffFunc{\varphi}$ is a linear isomorphism \cite[Prop. 3.6]{Lee} and $\widetilde{F}$ is, therefore, upper semicontinuous relative to $\varphi(C\cap V)$. Furthermore, since $F(x)$ is convex for each $x\in C$ and $\diffFunc{\varphi}_x$ is a linear isomorphism, $\widetilde{F}$ is convex valued due to \cite[p. 36]{Boyd_Vandenberghe_2004}. Therefore, Assumption (A)-(C) in~\cite{carjua2000viability} hold. Assumption (D) in~\cite{carjua2000viability} holds as $\widetilde{F}$ is locally precompact relative to $\varphi(C\cap V)$. Additionally, as $\varphi$ is a diffeomorphism, $\varphi(C\cap V) = \varphi(C) \cap \varphi(V)$ is locally closed. }}
Then, by~\cite[Thm.~2.3]{carjua2000viability}, there exist $T > 0$ and a solution $\widetilde{\phi} : [0, T] \to \R{n}$ to~\eqref{eq:localFlowDynamics} with $\widetilde{\phi}(0) = \varphi(\xi)$ such that $\widetilde{\phi}(t)\in \varphi(C\cap V)$ for all $t\in (0,T)$. Next, define $\phi : [0,T]\times \{0\} \to \M$ as $\phi(t,0)\coloneqq \varphi^{-1}\circ\widetilde{\phi}(t)$ for all $t\in [0, T]$. We show that $\phi$ is a nontrivial solution to $\hybrid$ with $\phi(0,0)=\xi$. As $\widetilde{\phi}(t)\in \varphi(C\cap V)$ for all $t\in (0,T)$, it follows that $\phi(t,0)\in C\cap V$ for all $t\in (0,T)$. Furthermore, as $\widetilde{\phi}$ is a solution to \eqref{eq:localFlowDynamics}, we have $\frac{d\widetilde{\phi}}{dt}(t)\in \widetilde{F}(\widetilde{\phi}(t))$ for almost all $t\in (0, T)$. Using the definition of $\widetilde{F}$ and the fact that $\varphi$ is a diffeomorphism onto its image, the dynamics in \eqref{eq:localFlowDynamics} results in $\frac{d\phi}{dt}(t,0) = \diffFunc{\varphi_{\widetilde{\phi}(t)}^{-1}}(\frac{d\widetilde{\phi}}{dt}(t)) \in \diffFunc{\varphi}^{-1}_{\widetilde{\phi}(t)}(\widetilde{F}(\widetilde{\phi}(t))) =  F(\phi(t,0))$ for almost all $t\in (0,T)$. Finally, since $\tilde{\phi}$ is a solution to~\eqref{eq:localFlowDynamics}, it is locally absolutely continuous. As $\varphi$ is a $C^1$-diffeomorphism, {\color{mygreen}\relax{}using Lemma~\ref{lemma:locAbsCont_coord},} $t\mapsto \phi(t,0)$ is locally absolutely continuous.  Consequently, as $T > 0$, $\phi$ is a nontrivial solution to $\hybrid$.

{}
{\color{mygreen}\relax{}
    Next, assume that \ref{VC} holds for each point in $C\setminus D$. From the steps above, a nontrivial solution to $\hybrid$ exists from each point of $C\cup D$. Let $\phi$ be a maximal solution to $\hybrid$ and let $(T,J)\coloneqq\sup\dom\phi$. Since $\phi$ is complete if and only if $\dom\phi$ is unbounded, exactly one of the following holds. Either $\dom\phi$ is unbounded, or $\dom\phi$ is bounded and $(T,J)\in\dom\phi$, or $\dom\phi$ is bounded and $(T,J)\notin\dom\phi$. We show that these three cases correspond to items~\ref{item:a-bullet}, \ref{item:c-bullet}, and \ref{item:b-bullet}, respectively. The first case is item~\ref{item:a-bullet} by the definition of a complete solution. In the second case, $\max\{T,J\} < \infty$ and $\phi(T,J)$ is defined. If $\phi(T,J)\in D$, then $\phi$ can be extended through a jump, and if $\phi(T,J)\in C\setminus D$, then $\phi$ can be extended through flow, since \ref{VC} holds at $\phi(T,J)$ and the first part of this proof yields a nontrivial solution from that point. Both scenarios contradict maximality of $\phi$, so $\phi(T,J)\notin C\cup D$, which is item~\ref{item:c-bullet}.
    
    In the third case, $\interior{I^J}\neq\varnothing$ and $I^J$ has the form $[\min I^J, T)$. Furthermore, $t\mapsto\phi(t,J)$ is a maximal solution to $\dot{x}\in F(x)$ $x\in C$, since any extension of it would extend $\phi$. Proceeding by contradiction, suppose that $\xi^\star\coloneqq\lim_{t\nearrow T}\phi(t,J)$ exists and that $\xi^\star\in\interior{K}$ for some compact set $K\subset\M$. Let $(V,\varphi)$ be a chart around $\xi^\star$. Since $\widetilde{F}$ as above is locally precompact relative to $\varphi(C\cap V)$, there exist a neighborhood $\widetilde{V}$ of $\varphi(\xi^\star)$ and $M > 0$ such that $\widetilde{F}(\widetilde{V}\cap\varphi(C\cap V)) \subset M\mathbb{B}$. Given that $\phi(t,J)\to\xi^\star$ as $t\nearrow T$, there exists $t_0\in I^J$ such that $\varphi(\phi(t,J))\in\widetilde{V}$ for all $t\in [t_0, T)$, and hence $\varphi(\phi(t,J)) - \varphi(\phi(s,J)) \in M|t-s|\mathbb{B}$ for each $s,t\in [t_0,T)$. Then, letting $t\nearrow T$ yields $\varphi(\xi^\star) - \varphi(\phi(s,J)) \in M|T-s|\mathbb{B}$ for all $s\in [t_0,T)$. Therefore,  $t\mapsto\varphi(\phi(t,J))$ with the value $\varphi(\xi^\star)$ at $T$ is Lipschitz, and therefore absolutely continuous, on $[t_0,T]$.
    
    Given that $C$ is closed and $\phi(t,J)\in C$ for all $t\in\interior{I^J}$, we obtain $\xi^\star\in C$. Hence setting $\phi(T,J)\coloneqq\xi^\star$ extends $\phi$ to a solution to $\hybrid$ with domain $\overline{\dom\phi}$, which contradicts maximality of $\phi$ and proves item~\ref{item:b-bullet}.
    
    The three cases above therefore correspond to items~\ref{item:a-bullet}, \ref{item:c-bullet}, and \ref{item:b-bullet}, as claimed, so each maximal solution satisfies at least one of them. The correspondence also holds in the converse direction, since item~\ref{item:a-bullet} implies the first case, item~\ref{item:c-bullet} presupposes $(T,J)\in\dom\phi$ and so implies the second, and item~\ref{item:b-bullet} implies the third, as an arc satisfying it admits no extension to $t = T$. Hence each maximal solution to $\hybrid$ satisfies exactly one of items~\ref{item:a-bullet}--\ref{item:c-bullet}.
}

Finally, if $G(D)\subset C\cup D$, item i holds as $\rge{\phi} \subset C\cup D$. Item ii holds as $C$ compact precludes item b.
\end{proof}

\begin{example}[Quaternion stabilization, revisited]
    \label{ex:quaternion-solutions}
    Recall $\hybrid = (C, F, D, G, \M)$ from Example~\ref{ex:quaternion}. %
    To prove existence of solutions to $\hybrid$ from each point in $\M$, we show that the viability condition~\ref{VC} in Proposition~\ref{prop:existence} holds everywhere in $C\setminus D$. As $C\setminus D = \interior{C}$, it follows that $\Tcone{\interior{C}}{x} = \T{x}{C}$ for all $x\in \interior{C}$. Therefore, $F(x)\in \Tcone{\interior{C}}{x}$ for each $x\in C\setminus D$, thus proving the existence of nontrivial solutions to $\hybrid$ from each point in $\M$ since $C\cup D = \M$. Furthermore, as $\M$ is compact, items i and ii of Proposition~\ref{prop:existence} hold. Consequently, each maximal solution to $\hybrid$ is complete.
\end{example}

\begin{example}[Billiard on the M\"{o}bius band, revisited]
    \label{ex:mobius-solutions}
    Recall $\hybrid = (C, F, D, G, \M)$ from Example~\ref{ex:mobius-hybridDynamics}. The description of the flow set $C$ therein is captured as follows:
    \begin{align}
        \label{eq:mobius-flowSet}
        {C} \coloneqq \left\{x\in \M :
        \begin{array}{c}
        y_1 \in \constraintSet, {y_2 \in \T{y_1}{\mobius},}\\
        V_q(y_1) - V_{-q}(y_1)\leq \delta
        \end{array} \right\}, 
    \end{align}
    Similarly, the jump set $D = D_1 \cup D_2$ is captured as follows:
    \begin{align}
        \begin{aligned}
            D_1 & \coloneqq \{x \in \M : y_1 \in \partial \constraintSet, y_2\in \overline{\T{y_1}{\mobius} \setminus \operatorname{T}_{\constraintSet}^{\mobius}(y_1)} \}, 
            \\
            D_2 & \coloneqq \{x\in \M : y_1 \in \constraintSet, V_q(y_1) - V_{-q}(y_1)\geq \delta\}. 
        \end{aligned}
    \end{align}
    {}
    {\color{mygreen}\relax{}Note that $C \cup D = \M$. To show existence of solutions to $\hybrid$ using Proposition~\ref{prop:existence}, note that $\hybrid$ satisfies the geometric hybrid basic conditions as $C$ and $D$ are closed, $F$ and $G_2$ are single valued and continuous, and $G_1$ is outer semicontinuous and locally precompact relative to $D_1$.
    
    {\color{mygreen}Indeed, to establish the properties of $G_1$, construct the map $\Phi : D_1\times [0, \lambda_{\max}] \to \M$, defined as
    \begin{align}
    \label{eq:Phi}
        \Phi(x,\lambda)\coloneqq (g_1(y_1), \mathfrak{P}(y_1, y_2^{||}-\lambda y_2^{\perp}), q)
    \end{align}
    for each $(x,q)\in D_1\times [0, \lambda_{\max}]$, where $\mathfrak{P}(y_1, v)\coloneqq P^{\gamma}_{0\mapsto 1}(v) = X(\gamma(1))$ for each $y_1\in \partial S$ and each $v\in \T{y_1}{\mobius}$, where recall from Example~\ref{ex:mobius-hybridDynamics} that $X$ is the vector field ``parallel along $\gamma$.'' The argument ``$y_1$'' in $\mathfrak{P}$ specifies that the curve $\gamma$ starts from $y_1\in\mobius$. We establish in Appendix~\ref{app:Phi_continuous} that $\Phi$ is continuous. 

    Then, to show that $G_1$ is outer semicontinuous relative to $D_1$, observe that $G_1(x)= \{\Phi(x,\lambda) : \lambda\in [0, \lambda_{\max}]\}$ for each $x\in D_1$. Consider any sequence $\{x_i\}_{i\in\mathbb{N}}\subset D_1$ such that $x_i \to x$ and any sequence $y_i\in G_1(x_i)$ such that $y_i \to y \in \M$. As $D_1$ is closed, $x\in D_1$. For each $i\in\mathbb{N}$, there exists $\lambda_i\in [0, \lambda_{\max}]$ such that $y_i = \Phi(x_i, \lambda_i)$. As $\lambda_i\in [0, \lambda_{\max}]$ for each $i\in\mathbb{N}$ and using compactness of $[0, \lambda_{\max}]$, we extract a convergent subsequence $\{\lambda_{i_k}\}_{k\in\mathbb{N}}$ such that $\lambda_{i_k} \to \lambda \in [0, \lambda
    _{\max}]$. Using continuity of $\Phi$,
    \begin{align*}
        y = \lim_{k\to\infty}y_{i_k} = \lim_{k\to\infty} \Phi(x_{i_k}, \lambda_{i_k}) = \Phi(x ,\lambda). 
    \end{align*}
    Since $\lambda\in[0, \lambda_{\max}]$, $\Phi(x,\lambda)\in G_1(x)$, causing $y\in G_1(x)$. Therefore, $G_1$ is outer semicontinuous relative to $D_1$. Next, local precompactness of $G_1$ relative to $D_1$ follows using continuity of $\Phi$ and compactness of $[0,\lambda_{\max}]$, together with the fact that $\M$ is locally compact as it is a smooth manifold. Therefore, it is indeed true that $\hybrid$ satisfies the geometric hybrid basic conditions in Definition~\ref{ass:hybrid_basic_conditions}. 
    
    }
    }

    Then, note that
    \begin{align}
        \label{eq:CminusD}
        \hspace{-0.2cm} 
        C\setminus D \coloneqq \left\{x\in \M :
        \begin{array}{c}
        y_1 \in \constraintSet, y_2 \in \interior{\operatorname{T}_{\constraintSet}^{\mobius}(y_1)}, \\
        V_q(y_1) - V_{-q }(y_1) < \delta
        \end{array} \right\}.
    \end{align}
    Pick $\xi\in C\setminus D$. If $\xi\in \interior{(C\setminus D)}$,~\ref{VC} from Proposition~\ref{prop:existence} holds trivially with $U\coloneqq \interior{(C\setminus D)}$ as, for each $x\in U$, $\Tcone{C}{x} = \T{x}{\M}$. 
    Alternatively, pick $\xi\in \partial(C\setminus D)$ and a coordinate chart $(U, \varphi)$ of $\M$ at $\xi$, with $U$ small enough so that $U\cap D = \varnothing$; i.e., $C \cap U = (C\setminus D) \cap U$. For each $x\in \interior{(C\cap U)}$, $\Tcone{C}{x} = \T{x}{\M}$, causing $F(x) \in \Tcone{C}{x}$. Alternatively, pick any $x\in \partial (C\cap U)$. Then,
    \begin{align}
        \Tcone{C}{x} & = (\diffFunc{\varphi_{x}})^{-1}\left( \TconeEuclidean{\varphi((C\setminus D)\cap U)}{\varphi(x)}\right). \label{eq:Tcone-local}
    \end{align}
    Now, we denote the coordinate representation of $x = ((y_1, y_2), q)$ in the chart $(U, \varphi)$ by $\widetilde{x} = \varphi(x) =  ((\widetilde{y}_1, \widetilde{y}_2), q)$ such that $\widetilde{y}_1 = (\widetilde{y}_1^{(1)}, \widetilde{y}_1^{(2)})\in \R{2}$, and $\widetilde{y}_1^{(1)}$ denotes the component of $\widetilde{y}_1$ parallel to $\partial S$ and $\widetilde{y}_1^{(2)}$ denotes the component of $\widetilde{y}_1$ perpendicular to $\partial S$. 
    Then, as $S$ is a smooth manifold with boundary (see~\cite[Ex.~10.3]{Lee}) and a submanifold\footnote{We refer the reader to~\cite[Ch.~5]{Lee} for a detailed treatment of submanifolds.} of $\mobius$, the projection of $\varphi(C\cap U)$ onto the first two components is a half-space in $\R{2}$. Therefore, assume {\color{mygreen}\relax{}{\color{mygreen}without loss of generality }}that $\widetilde{y}_1^{(2)} \leq 0$, and denote the projection of $\varphi(C\cap U)$ onto the first two components by
    \begin{align*}
        \widetilde{S} \coloneqq \left\{ (\widetilde{y}_1^{(1)}, \widetilde{y}_1^{(2)})\in \R{2} : \widetilde{y}_1^{(2)} \leq 0\right\}.
    \end{align*}
    Now, computing the Euclidean tangent cone to $\varphi((C\setminus D) \cap U)$ at $\widetilde{x} = \varphi(x)$, we obtain that
    \begin{align}
        \TconeEuclidean{\varphi((C\setminus D) \cap U)}{\widetilde{x}} &= \TconeEuclidean{\widetilde{S}}{\widetilde{y}_1} \times \R{2} \times \{0\} \label{eq:mobius-tangentCone}
        \\
        &= \{(v_1, v_2)\in \R{2} : v_2 \leq 0\} \times \R{2} \times \{0\}.  \nonumber
    \end{align}
    Now, to establish $F(x) \in\Tcone{C}{x}$, we use~\eqref{eq:Tcone-local} and show that
    \begin{align}
    \label{eq:mobius-inclusion}
        \diffFunc{\varphi_x}(F(x)) \in \TconeEuclidean{\varphi(U\cap (C\setminus D))}{\varphi(x)}.
    \end{align}

    Indeed, note from~\eqref{eq:CminusD} and $x\in \partial (C\cap U) = \partial ((C\setminus D) \cap U)$ that $\widetilde{y}_2 = (\widetilde{y}_2^{(1)}, \widetilde{y}_2^{(2)}) \in \interior{\TconeEuclidean{\widetilde{S}}{\widetilde{y}_1}}$, resulting in $\widetilde{y}_2^{(2)} < 0$. From~\eqref{eq:coord_spray} and \eqref{eq:coord_vlift}, note also that $\widetilde{y}_2^{(2)}$, i.e. the coordinate of $F(x)$ corresponding to the basis vector $\partial/\partial \widetilde{y}_1^{(2)}|_{y}$, is the second component of the coordinate representation of $F(x)$. Using this fact with~\eqref{eq:mobius-tangentCone} implies that~\eqref{eq:mobius-inclusion} holds. Then, as $\diffFunc{\varphi_x}$ is an isomorphism onto its image, applying $\diffFunc{\varphi_x}^{-1}$ to both sides in~\eqref{eq:mobius-inclusion} results in $F(x) \in \Tcone{C}{x}$. 

    As the choice of $x\in \partial (C\cap U)$ was arbitrary,~\ref{VC} holds for the chosen $\xi \in C\setminus D$. Since this choice of $\xi$ is arbitrary,~\ref{VC} holds for each $\xi\in C\setminus D$. Therefore, following Proposition~\ref{prop:existence}, there exists a nontrivial solution to $\hybrid$ from each point in $C\cup D$. Furthermore, as $G(D_2) \subset C\cup  D_1$ and $G(D_1) \subset C \cup D_2$, it follows that $G(D) \subset C\cup D$. Therefore, item c of Proposition~\ref{prop:existence} does not hold. By showing that item b also does not hold, we establish in forthcoming Example~\ref{ex:mobius-stability} that maximal solutions to $\hybrid$ are complete. 
\end{example}

\subsection{Sequential Closedness of the Solution Set of \texorpdfstring{$\hybrid$}{H}}
\label{sec:basicConditionsimplyNominalWellPosedness}

In this section, we study sequences of solutions to $\hybrid$ that are graphically convergent in the sense of Definition~\ref{def:graphicalConvergence}. These results will be crucial to establish the hybrid invariance principle in Section~\ref{sec:invariance}. Much like the Euclidean case~\cite[Thm.~6.8]{goebel_hybrid_2012}, we show that if $\hybrid$ satisfies the geometric hybrid basic conditions, then the graphical limit of a convergent sequence of solutions is also a solution to $\hybrid$. 
First, we specialize the previously defined notion of local eventual precompactness of a sequence of set-valued mappings to a sequence of hybrid arcs.

\begin{definition}[\hspace{-0.3pt}{\cite[Def.~5.24]{goebel_hybrid_2012}}]
    \label{def:locallyEventuallyPrecompctHybridArcs}
    A sequence $\{\phi_i\}_{i=1}^{\infty}$ of hybrid arcs $\phi_i : \dom{\phi_i} \to \M$ is locally eventually precompact if, for each $m > 0$, there exists $i_0 > 0$ and a compact set $K \subset \M$ such that, for each $i > i_0$ and each $(t,j)\in\dom{\phi_i}$ with $t+j < m$, $\phi_i(t,j)\in K$. 
\end{definition}

\begin{theorem}
\label{theorem:basicConditions => nominalWellPosedness}
    Suppose $\hybrid = (C, F, D, G, \M)$ satisfies the geometric hybrid basic conditions in Definition~\ref{ass:hybrid_basic_conditions}. Consider a graphically convergent sequence $\{\phi_i\}_{i=1}^\infty$ of solutions $\phi_i : \dom{\phi_i} \to \M$ to $\hybrid$ that does not escape to the horizon.  Then, 
    \begin{enumerate}[label=(\alph*)]
        \item if the sequence $\{\phi_i\}_{i=1}^\infty$ is locally eventually precompact, then $\phi\coloneqq \graphlim{i\to\infty}{\phi_i}$ is a solution to $\hybrid$;
        \item if the sequence $\{\phi_i\}_{i=1}^\infty$ is not locally eventually precompact, then there exist $\tau > 0$ and a sequence $\{(t_i, j_i)\}_{i=1}^\infty$ of times $(t_i, j_i)\in \dom{\phi_i}$ with $t_i+j_i\nearrow \tau$ such that
        \begin{enumerate}[label=\roman*), leftmargin=*]
            \item the sequence $\{\phi_i(t_i, j_i)\}_{i=1}^\infty$ escapes to the horizon,
            \item the hybrid arc $\phi \coloneqq \left(\graphlim{i\to\infty}\phi_i \right)|_{<\tau}$ is a maximal solution to $\hybrid$ with $\operatorname{length}(\dom{\phi})=\tau$,
            \item for each compact set $K \subset \M$, there exists $\tau' \in (0, \tau)$ such that $K \cap \rge{\phi}|_{>\tau'} = \varnothing$. 
        \end{enumerate}
    \end{enumerate}
\end{theorem}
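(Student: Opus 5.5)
The plan is to reduce to the Euclidean result \cite[Thm.~6.8]{goebel_hybrid_2012} through coordinate charts, with the charts doing all the work of replacing metric arguments. First we fix the structure of the graphical limit. Since the sequence does not escape to the horizon and converges graphically, Lemma~\ref{lemma:domainAndRange} gives $\dom{\phi}\subset\lim_i\dom{\phi_i}$. Under local eventual precompactness it gives equality, because the arcs restricted to $t+j<m$ satisfy the eventual-precompactness hypothesis of that lemma. A limit of hybrid time domains is again a hybrid time domain: this is a statement in $\R{}_{\ge0}\times\mathbb{N}$ only, so \cite[Lemma~5.25]{goebel_hybrid_2012} applies verbatim.

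Next we show that $\phi$ is single-valued and locally absolutely continuous on each interval $I^j$. Fix $(t,j)\in\dom{\phi}$ with $\phi(t,j)$ in a chart $(V,\varphi)$, and choose a compact neighborhood $K\subset V$. By eventual precompactness and outer semicontinuity and local precompactness of $F$ relative to $C$, the local flow maps $\widetilde F$ of \eqref{eq:localFlowDynamics} are bounded by some $M$ on $\varphi(K\cap C)$. Hence, while $\phi_i$ flows inside $K$, the arcs $\varphi\circ\phi_i(\cdot,j)$ are $M$-Lipschitz. A bounded Lipschitz constant also shows that the arcs cannot leave $K$ in arbitrarily short time, so for large $i$ they stay in $K$ on a uniform time window around $t$.

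On such a window, the localized arcs $\varphi\circ\phi_i$ are solutions of the Euclidean hybrid system with data $(\varphi(C\cap V),\widetilde F,\varphi(D\cap V),\varphi\circ G\circ\varphi^{-1})$. Since $d\varphi$ is a linear isomorphism, this data satisfies the Euclidean hybrid basic conditions relative to $\varphi(K)$. The Euclidean argument then applies: Arzel\`a--Ascoli, weak convergence of derivatives, and the convexity plus outer semicontinuity of $\widetilde F$, i.e. the convergence theorem \cite[Thm.~5.29/Lemma~5.28]{goebel_hybrid_2012}. It shows the local limit is a Euclidean solution, and pulling back with $\varphi^{-1}$ together with Lemma~\ref{lemma:locAbsCont_coord} gives $(S_1)$ on that window. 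Jumps are handled pointwise: if $(t,j),(t,j+1)\in\dom{\phi}$, then approximating jump pairs $\phi_i(t_i,j)\in D$, $\phi_i(t_i,j+1)\in G(\phi_i(t_i,j))$ converge. Closedness of $D$ and outer semicontinuity of $G$ give $(S_2)$, and $(S_0)$ follows from closedness of $C\cup D$.

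Covering each compact piece $[t_j,t_{j+1}]\times\{j\}$ by finitely many chart windows, using compactness of its image, proves (a). I expect the main obstacle to be patching the chart-local limits into a global solution, specifically getting a uniform window length from compactness where the Euclidean proof uses a global bound.

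For (b), failure of local eventual precompactness gives some $m$ such that no compact set eventually contains $\phi_i$ on $\{t+j<m\}$. Let $\tau$ be the infimum of such $m$; then $\tau>0$, since the sequence does not escape to the horizon and the initial points accumulate. Exhausting $\M$ by compacts $K_1\subset K_2\subset\cdots$ (possible by second countability and local compactness) and diagonalizing yields times $(t_i,j_i)$ with $t_i+j_i\nearrow\tau$ and $\phi_i(t_i,j_i)\notin K_i$, which is (i). Part (a) applied on $[0,\tau')$ for every $\tau'<\tau$ shows $\phi|_{<\tau}$ is a solution of length $\tau$. For (iii), if $\rge\phi|_{>\tau'}$ met a compact $K$ for all $\tau'$, then using the Lipschitz bound near a point of $\operatorname{int}$ of a slightly larger compact set, a bounded speed would keep nearby $\phi_i$ inside that set up to $\tau$, contradicting the choice of $\tau$. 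Maximality then follows from (iii), since any extension would require $\lim\phi$ to exist in $\M$, so the arc would eventually lie in a compact neighborhood of that limit.
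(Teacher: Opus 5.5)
Your part (a) is essentially the paper's argument. You obtain the domain via Lemma~\ref{lemma:domainAndRange}, chart-wise speed bounds from local precompactness of $F$, a Euclidean convergence theorem for $\varphi\circ\phi_i$ on flow windows, pull-back via Lemma~\ref{lemma:locAbsCont_coord}, and jumps via closedness of $D$ and outer semicontinuity of $G$.

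Part (b) takes a different route. The paper first isolates the smallest jump index $j^*$ at which precompactness fails. It then uses local precompactness and outer semicontinuity of $G$ to get $\phi_i(a_i,j^*)\to\phi(a,j^*)$, and applies the flow-only Proposition~\ref{prop:nominalWellPosedness_inclusion_nonlocalEventualBoundedness}. You instead define $\tau$ directly as an infimum of bad horizons in $t+j$. That is a legitimate alternative. Your (iii) and maximality arguments also work once (i) holds in the right form, because no jump fits in a $t+j$-window of length less than one.

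The gap is the diagonal step that is supposed to give (i). Failure of local eventual precompactness at a horizon $m$ only says: for every compact $K$ and every $i_0$ there is \emph{some} $i>i_0$ for which $\phi_i$ leaves $K$ on $\{t+j<m\}$.
\begin{itemize}
\item Diagonalizing over $K_1\subset K_2\subset\cdots$ therefore gives escaping points only along a subsequence. Item (i) requires $\phi_i(t_i,j_i)$ to leave every compact set for \emph{all} large $i$ of the original sequence.
\item The same issue affects $\tau>0$: ``the initial points accumulate'' must become eventual containment of every $\phi_i(0,0)$ in one compact set.
\item Since $\tau$ is only an infimum, your times may approach $\tau$ from above unless you show $\tau$ is itself a bad horizon. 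The phrase ``contradicting the choice of $\tau$'' in (iii) relies on exactly that.
\end{itemize}
Neither fact follows from the definition of $\tau$. Both need graphical convergence of the \emph{whole} sequence together with local precompactness of $G$, and your part (b) never uses the latter. In the paper they come from Proposition~\ref{prop:nominalWellPosedness_inclusion_nonlocalEventualBoundedness}(i), which proves attainment of $t^*$ and escape along every sequence $t_i\to t^*$, together with the $j^*$ step.

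Both facts can be repaired within your framework.

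\emph{Attainment.} Suppose the arcs eventually lie in a compact $K$ on $\{t+j<\tau\}$. Then every point with $t+j\in[\tau,\tau+\delta)$, $\delta<1$, is reached from $K$ in one of two ways: by a flow of duration less than $\delta$, or by one jump from $K\cap D$ followed by such a flow. Since $G(K\cap D)$ is precompact and speeds are uniformly bounded near compact sets, the arcs eventually stay in a compact set on $\{t+j<\tau+\delta\}$. This contradicts the definition of $\tau$.

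\emph{Whole sequence.} Suppose a subsequence stayed in a compact $K$ on $\{t+j<m\}$ for some $m>\tau$. Then the common graphical limit has range in $K$ there. Let $N$ be a compact neighborhood of $K$ and pick $m'\in(\tau,m)$. Since $m'$ is a bad horizon, infinitely many arcs, which eventually start in $\interior{N}$, leave the compact set $N\cup\overline{G(N\cap D)}$ before horizon $m'$. Each must first leave $\interior{N}$, either through $\partial N$ by flow or by a jump into $\overline{G(N\cap D)}$. This produces graph points at times below $m'$ with values in a compact set disjoint from $K$. Any cluster point of these contradicts $\lim_{i\to\infty}\graph{\phi_i}=\graph{\phi}$.

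With these two additions your direct route works and avoids the $j^*$ bookkeeping. The paper's decomposition instead confines the role of $G$ to the single entry into index $j^*$ and proves the escape statement once, in the pure-flow setting.
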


The proof of Theorem~\ref{theorem:basicConditions => nominalWellPosedness} is presented in Appendix~\ref{app:proofs_nominal_wellposedness}.

\begin{remark}
    The property that the graphical limit of any graphically convergent sequence of solutions to~$\hybrid$ is itself a solution is known as \emph{nominal well-posedness} of~$\hybrid$; see~\cite{jirwankar2025invariance} and~\cite[Ch.~6]{goebel_hybrid_2012}. Recent work~\cite{ochoa2026Topological} studies several topological consequences of this property for the solution set map~$\xi\rightrightarrows\mathcal{S}_{\hybrid}(\xi)$. A related, non-topological treatment in the Euclidean setting appears in~\cite[Ch.~6]{goebel_hybrid_2012}.
\end{remark}

\section{Lyapunov Theorem for \\ Geometric Hybrid Dynamical Systems}
\label{sec:lyapunovTheroemSection}
To obtain a hybrid Lyapunov theorem that certifies uniform global asymptotic stability of a compact set for $\hybrid$, we first define several topological notions of uniform stability, attractivity, and asymptotic stability. Unlike their Euclidean counterparts~\cite[Def.~3.6]{goebel_hybrid_2012}, the proposed notions are independent of any metric on $\M$, and rely solely on the topology of $\M$. 
\vspace{-8pt}
\subsection{Stability and Attractivity Notions on \texorpdfstring{$C^1$}{C1}-Manifolds}
\label{sec:topologicalStability}

We begin by proposing notions of (uniform) stability, attractivity, and asymptotic stability of a compact set $\A \subset \M$ for $\hybrid$. Recall that as $\M$ is a $C^1$-manifold, we may not know the Riemannian metric on it. Hence, the distance between points on this manifold may not be defined. The following definitions are formulated considering this fact.

\begin{definition}[Uniform Global Stability]
    \label{def:UGS}
	Given a geometric hybrid system $\cal H=(C,F,D,G,\M)$, a nonempty, compact set $\A \subset \M$ is said to be
    \begin{enumerate}[leftmargin=18pt]
        \item \emph{uniformly Lyapunov stable} (ULS) for $\hybrid$ if, for each neighborhood $U$ of $\A$, there exists a neighborhood $W$ of $\A$ such that, for {each $\phi\in \maximalSol{\hybrid}{W}$}, $\rge \phi \subset U$;
        \item \emph{uniformly Lagrange stable} (ULaS) for $\hybrid$ if, for each compact neighborhood $W$ of $\A$, there exists a compact neighborhood $U$ of $\A$ such that, {for each $\phi\in \maximalSol{\hybrid}{W}$}, $\rge \phi \subset U$;
        \item \emph{uniformly globally stable} (UGS) for $\cal H$ if it is Lyapunov stable and uniformly Lagrange stable for $\hybrid$.
    \end{enumerate}
\end{definition}
\begin{remark}\label{remark:lyapunovStabilityCompactSets}
    In metric spaces, uniform global stability can equivalently be introduced through a direct $\cal K_\infty$ bound on solutions \cite[Def.~3.6]{goebel_hybrid_2012}, or through growth conditions on the dependence of $\delta$ on $\varepsilon$ in an $\varepsilon$-$\delta$ definition of Lyapunov stability formulated via metric inflations of $\cal A$ \cite[Prop.~2]{Andriano1997-LagrangeStability}. Instead, in Definition~\ref{def:UGS}, we follow the decomposition into uniform Lyapunov and uniform Lagrange stability as in \cite[Thm.~1]{Andriano1997-LagrangeStability} for nonlinear systems. Replacing metric inflations by suitable neighborhoods recasts this decomposition as a separation between topological and bornological (boundedness) content~\cite{Angeli2025_LyapunovLagrange} and admits a metric-independent realization.
\end{remark}

To relate Definition~\ref{def:UGS} to an easily verifiable notion for systems on Riemannian manifolds, we show in  Proposition~\ref{prop:UGS} that, when $\A$ is compact and $\M$ is a complete Riemannian manifold, the above definition is equivalent to the existence of a class-$\mathcal{K}_{\infty}$ function that upper bounds, at each hybrid time, the Riemannian distance from each solution to the $\A$.~Finally, we present notions of attractivity and asymptotic stability.

\begin{definition}[Pre-attractivity]
    \label{def:attractivity}
    A nonempty, closed set $\A\subset \M$ is pre-attractive (pA) for $\hybrid$ if there exists an open neighborhood $U\subset \M$ of $\A$ such that
    \begin{enumerate}
        \item for each $\phi\in \maximalSol{\hybrid}{U}$ with $\operatorname{length}(\dom{\phi}) < \infty$, $\rge{\phi}\subset\M$ is precompact;
        \item for each complete $\phi\in \maximalSol{\hybrid}{U}$ and each open neighborhood $W\subset \M$ of $\A$, there exists $N \geq 0$ such that $(t,j)\in \dom{\phi}$ and $t+j \geq N$ implies that $\phi(t,j)\in W$. 
    \end{enumerate}
\end{definition}

The uniform counterpart of the above definition is provided below when $\A$ is compact. 

\begin{definition}[Uniform global pre-attractivity]
    \label{def:uniformGlobalAtt}
    A non-empty, compact set $\A$ is uniformly globally pre-attractive (UGpA) for $\mathcal{H}$ if, for each compact neighborhoods $U,W\subset\M$ of $\A$, there exists $N \geq 0$ such that for each $\phi\in \maximalSol{\hybrid}{U}$, $(t,j)\in\dom{\phi}$ and $t+j\geq N$ implies $\phi(t,j)\in W$.
\end{definition}

\begin{definition}[Pre-asymptotic stability]
    \label{def:asymptoticStability}
    A nonempty, compact set $\A\subset \M$ is pre-asymptotically stable (pAS) for $\hybrid$ if it is ULS and pA for $\hybrid$.
\end{definition}

\begin{definition}[Uniform global pre-asymptotic stability]
    A nonempty, compact set $\A\subset \M$ is uniformly globally pre-asymptotically stable (UGpAS) for $\hybrid$ if it is UGS and UGpA for $\hybrid$.
\end{definition}

In the definitions above, the prefix ``pre-'' enables maximal solutions to not be complete. If each maximal solution to $\hybrid$ is complete, then the prefix is dropped.

\subsection{Hybrid Lyapunov Theorem on $C^1$-Manifolds}
\label{sec:Lyapunov}

In this section, we present Lyapunov-based sufficient conditions to certify uniform global pre-asymptotic stability of a set $\A$ for the geometric hybrid system $\hybrid$. Inspired by the existing results in the Euclidean case \cite[Thm. 3.19]{HybridFeedbackControl}, we admit Lyapunov function candidates that are merely locally Lipschitz rather than continuously differentiable. This relaxation calls for additional nonsmooth analysis tools on manifolds, which we develop before stating the stability certificates.

We begin by introducing a suitable definition of locally Lipschitz functions. This notion extends the one in~\cite{MOTREANU1982116} and specializes~\cite[Def.~3]{kvalheim2021existence}, and differs from the one in~\cite{jirwankar2025lyapunov}.
\begin{definition}[Locally Lipschitz function] 
    \label{def:Lipschitz}
    Given two $C^1$-manifolds $\M$ and $\N$, a function $f: \M\to \N$ is locally Lipschitz if, for each $x \in \M$, there exist a coordinate chart $(U,\varphi)$ on $\M$ at $x$ and a coordinate chart $(W,\psi)$ on $\N$ at $f(x)$ such that the function $\psi\circ f \circ \varphi^{-1} : \varphi(U\cap f^{-1}(W))  \to \psi(W)$ is locally Lipschitz as a function between Euclidean spaces.\footnote{The preimage of a set $W\subset \N$ under a function $f:\M\to \N$ is defined by $f^{-1}(W)\coloneqq \{x\in\M : f(x)\in W\}$.}
\end{definition}

{That Definition~\ref{def:Lipschitz} is equivalent to the metric-based definition on Riemannian manifolds is shown in~\cite[Lem.~E.6]{jirwankarTAC2026}.} 

\begin{remark}
    For Definition~\ref{def:Lipschitz} to be invariant under the choice of coordinate charts, the transition map~\cite[Ch.~1]{Lee} between coordinate charts on $\M$ (resp., $\N$) should be locally Lipschitz. This property holds since each transition map is a $C^1$-diffeomorphism onto its image.
\end{remark}

Using the geometric version of Rademacher's theorem in Proposition~\ref{prop:rademacher}, a locally Lipschitz function $f : \M \to \N$ may fail to be differentiable at certain points. The following~notion of generalized directional derivative measures the rate of change of $f$ at those points along a specified direction; see also \cite[Eq.~(3.8)]{MOTREANU1982116}.

\begin{definition}[Generalized directional derivative]
    \label{def:generalizedDerivative}
    Let $\M$ be an $n$-dimensional $C^1$-manifold and $f : \M\to\R{}$ be locally Lipschitz. Then, the generalized directional derivative of $f$ at $x\in\M$ in the direction $v\in\T{x}{\M}$ is defined as
    \begin{align}\label{eq:defGeneralizedDerivative}
        f^\circ (x, v) \coloneqq \limsup_{\substack{y \to x\\  t \searrow 0}} \frac{\widetilde{f}(\varphi(y) + t\diffFunc{\varphi_x}(v)) - \widetilde{f}(\varphi(y))}{t}
    \end{align}
    where $(U, \varphi)$ is a chart at $x$ and $\widetilde{f}\coloneqq f \circ\varphi^{-1}$.%
\end{definition}

\begin{remark}
    Following \cite[Lemma~3.3]{MOTREANU1982116}, Definition~\ref{def:generalizedDerivative} is independent of the choice of coordinate chart.
\end{remark}

Equipped with the above nonsmooth analysis tools, we define the following notion of a nonsmooth Lyapunov function candidate $V$ for $\hybrid$. Then, forthcoming Definition~\ref{def:VdotDeltaV} and Lemma~\ref{lemma:bound-VdotDeltaV} together use generalized directional derivatives to bound the rate of change of $V$ along solutions to $\hybrid$, which is crucial to establish the forthcoming hybrid Lyapunov theorem.

\begin{definition}[Lyapunov function candidate]
    \label{def:lyapunovFunctionCandidate}
    Let $\hybrid = (C,F,D,G,\M)$ be a geometric hybrid dynamical system. Given nonempty sets $\U, \A\subset \M$, a function $V: \dom{V} \to \R{}$ defines a Lyapunov function candidate on $\U$ with respect to $\A$ for $\hybrid$ if the following conditions hold:
    \begin{enumerate}
        \item $(\overline{C} \cup D \cup G(D)) \cap \U \subset \dom{V}$;
        \item $\U$ contains an open neighborhood of $\A \cap ({C} \cup D \cup G(D))$;
        \item $V$ is continuous on $\U$ and locally Lipschitz on an open set containing $\overline{C}\cup \U$;
        \item $V$, restricted to $\overline{C \cup D \cup G(D)}$, satisfies $V\in\PD{\A}$.
    \end{enumerate}
\end{definition}

Due to item 3 in the definition above, $V$ is differentiable almost everywhere on $\overline{C}\cup \U$. Then, the rate of change of $V$ along flows of the solutions to $\mathcal{H}$ is defined using the generalized directional derivative.

\begin{definition}[$\dot V$ and $\Delta V$]
    \label{def:VdotDeltaV}
    Given a geometric hybrid dynamical system $\mathcal{H} = (C, F, D, G, \M)$, sets $\U, \A \subset \M$, and a function $V : \dom{V}\to \R{}$ that defines a Lyapunov function candidate on $\U$ with respect to $\A$ for $\mathcal{H}$,
    \begin{itemize}
        \item the change of $V$ along flows is given by
        \begin{align}
            \label{eq:def_Vdot}
            \dot{V}(x) \coloneqq \sup_{v\in F(x)\cap \Tcone{C}{x}} V^\circ(x, v) \quad \forall x\in C\cap \U.
        \end{align}
        \item the change of $V$ at jumps is given by
        \begin{align}
            \label{eq:def_DeltaV}
            \Delta V(x) \coloneqq \sup_{g\in G(x)} V(g) - V(x) \quad \forall x\in D\cap\U.
        \end{align}
    \end{itemize}
\end{definition}

{\color{mygreen}\relax{}
{\color{mygreen}\begin{remark}    
    For any $(x, v)\in \T{}{\M}$, by using the upper one-sided contingent derivative of $V$ at $x$ along $v$, denoted by $D^+V(x, v)$, instead of the generalized directional derivative $V^\circ(x, v)$ in Definition~\ref{def:VdotDeltaV}, we can relax the local Lipschitz regularity of $V$ in Definition~\ref{def:lyapunovFunctionCandidate} to simply continuity. The latter approach, however, requires that $D^+V(x, v)$ is finite for each $(x, v)\in \T{}{\M}$, whereas finiteness of $V^\circ(x, v)$ is guaranteed by Lipschitz $V$; see~\cite[Prop.~2.1.2]{Clarke1998}. The use of upper one-sided contingent derivatives in the context of differential inclusions in $\R{n}$ is explored in~\cite[Sec.~9.4]{goebel_setvaluedAnalysis}. 
\end{remark}}
}

The following result proves that $\dot{V}$ and $\Delta V$, as defined in~\eqref{eq:def_Vdot} and~\eqref{eq:def_DeltaV}, respectively, upper bound the rate of change of the Lyapunov function candidate $V$ along flows and during jumps, respectively. %

\begin{lemma}
\label{lemma:bound-VdotDeltaV}
    For each solution $\phi$ to $\hybrid$ and each $(T,J)\in\dom{\phi}$, let $0\leq t_0\leq t_1\leq \ldots\leq t_{J+1} = T$ satisfy $ \dom{\phi}\cap ([0, T]\times \{0, 1, \ldots, J\}) = \cup_{j=0}^{J} [t_j, t_{j+1}]\times  \{j\}$. Then, 
        
    \begin{enumerate}[label=\roman*)]
        \item for each $j\in\{0,1,\ldots, J\}$ and almost all $t\in [t_j, t_{j+1}]$, 
        \begin{align*}
            \frac{dV}{dt}\brackets{\phi(t,j)} \leq \dot{V}(\phi(t,j));
        \end{align*}
        
        \item for each $j\in \{0,1,\ldots J\}$ with $(t_{j+1}, j+1)\in\dom{\phi}$,
        \vspace{-3pt}
        \begin{align*}
            V(\phi(t_{j+1}, j+1)) - V(\phi(t_{j+1}, j)) \leq \Delta V(\phi(t_{j+1}, j)).
        \end{align*}
    \end{enumerate}
\end{lemma}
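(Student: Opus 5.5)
Item ii) is immediate, so I dispose of it first. If $(t_{j+1},j)$ and $(t_{j+1},j+1)$ both lie in $\dom{\phi}$, then item $(S_2)$ of Definition~\ref{def:solution} gives $\phi(t_{j+1},j)\in D$ and $\phi(t_{j+1},j+1)\in G(\phi(t_{j+1},j))$. Hence
\[
V(\phi(t_{j+1},j+1)) \leq \sup_{g\in G(\phi(t_{j+1},j))} V(g),
\]
which is the claim after subtracting $V(\phi(t_{j+1},j))$. I will implicitly assume the relevant points lie in $\U$, so that $\Delta V$ and $V$ are defined there; this is ensured by item 1 of Definition~\ref{def:lyapunovFunctionCandidate}.

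For item i), I fix $j$ with $t_j<t_{j+1}$; otherwise the statement is vacuous. I then localize to a chart. Since $t\mapsto\phi(t,j)$ is continuous, I cover the compact interval $[t_j,t_{j+1}]$ by finitely many subintervals whose images lie in a single chart $(U,\varphi)$. It therefore suffices to argue on one such subinterval $I$.

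On $I$, set $\widetilde\phi\coloneqq\varphi\circ\phi(\cdot,j)$. By Lemma~\ref{lemma:locAbsCont_coord} (used in the proof of Proposition~\ref{prop:existence}), $\widetilde\phi$ is absolutely continuous. Set $\widetilde V\coloneqq V\circ\varphi^{-1}$; this is locally Lipschitz near $\widetilde\phi(I)$ by Definition~\ref{def:Lipschitz}, because $\phi(t,j)\in \overline{C}$ for $t\in I$. Consequently $t\mapsto \widetilde V(\widetilde\phi(t)) = V(\phi(t,j))$ is absolutely continuous and differentiable a.e.

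I then take $t$ in the full-measure set of interior points where four things hold simultaneously: $\widetilde\phi$ is differentiable, $V\circ\phi$ is differentiable, $\frac{d\phi}{dt}(t,j)\in F(\phi(t,j))$, and $\phi(t,j)\in C$. At such $t$, with $v\coloneqq\frac{d\phi}{dt}(t,j)$, one has $\dot{\widetilde\phi}(t)=\diffFunc{\varphi_x}(v)$ where $x=\phi(t,j)$. Writing $\widetilde\phi(t+h)=\widetilde\phi(t)+h\,\dot{\widetilde\phi}(t)+o(h)$ and using the Lipschitz constant $L$ of $\widetilde V$,
\[
\frac{\widetilde V(\widetilde\phi(t+h))-\widetilde V(\widetilde\phi(t))}{h}\leq \frac{\widetilde V(\widetilde\phi(t)+h\,\diffFunc{\varphi_x}(v))-\widetilde V(\widetilde\phi(t))}{h}+L\frac{|o(h)|}{h}.
\]
Letting $h\searrow0$ gives $\frac{d}{dt}V(\phi(t,j))\leq V^\circ(x,v)$, taking $y=x$ in \eqref{eq:defGeneralizedDerivative}.

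The main obstacle is that $\dot V$ in \eqref{eq:def_Vdot} takes the supremum only over $v\in F(x)\cap\Tcone{C}{x}$, so I must also show $v\in\Tcone{C}{x}$ for a.e.\ $t$. In coordinates, $\widetilde\phi(t+h)\in\varphi(C\cap U)$ for small $h>0$, and $(\widetilde\phi(t+h)-\widetilde\phi(t))/h\to\diffFunc{\varphi_x}(v)$ with $h\searrow0$. This is exactly the sequential definition of the Euclidean tangent cone $\TconeEuclidean{\varphi(C\cap U)}{\varphi(x)}$, and Definition~\ref{def:tangentConeManifolds} (chart independent by Lemma~\ref{lemma:coordInvariant-tangentCone}) then yields $v\in\Tcone{C}{x}$. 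Combining this with the previous bound gives $\frac{dV}{dt}(\phi(t,j))\leq V^\circ(x,v)\leq\dot V(x)$ for almost all $t$. This holds on each subinterval and hence on $[t_j,t_{j+1}]$, since a finite union of null sets is null.
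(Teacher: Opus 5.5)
Your proof is correct, and for item i) it takes a different route from the paper's. The paper uses its manifold Rademacher theorem (Proposition~\ref{prop:rademacher}) to obtain a null set $S_\M(V)\subset\M$ off which $V$ is differentiable. It then writes $\frac{dV}{dt}(\phi(t,j))=\diffFunc{V}_{\phi(t,j)}(v)$, bounds $\diffFunc{V_x}(f)\leq V^\circ(x,f)$ at points of differentiability, and concludes from $S_\M(V)$ being null. It also passes straight to the supremum over $F(x)\cap\Tcone{C}{x}$ without checking that the velocity is tangent to $C$.

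You instead work in a chart and use only differentiability in $t$ of $\varphi\circ\phi(\cdot,j)$ and of $V\circ\phi(\cdot,j)$. You control the difference quotient by a first-order expansion and the Lipschitz constant of $\widetilde V$, and you derive $v\in\Tcone{C}{x}$ from the sequential definition of the Euclidean tangent cone.

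What your route buys is rigor. The paper's last step tacitly needs $\{t:\phi(t,j)\in S_\M(V)\}$ to be null, and this can fail: take $V(x)=|x_1|$ on $\R{2}$ and a flow along the line $x_1=0$. Your estimate never needs $V$ to be differentiable at $\phi(t,j)$; it is the standard bound of the one-sided difference quotient by the Clarke derivative, and it also supplies the missing tangency step. The paper's version is shorter and phrased intrinsically through $\diffFunc{V_x}$, but it needs exactly the repair you give.

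A minor wording point in item ii): item~1 of Definition~\ref{def:lyapunovFunctionCandidate} only guarantees that $V$ is defined at points of $\overline{C}\cup D\cup G(D)$ that lie in $\U$. It does not put $\phi(t,j)$ or $G(\phi(t_{j+1},j))$ in $\U$. That membership is an implicit standing assumption of the lemma, exactly as in the paper, and it is automatic when $\U=\M$, as in Theorem~\ref{theorem:hybridLyapunovTheorem-manifolds}.
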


\begin{proof}
    Pick a solution $\phi$ and $(T,J)\in\dom{\phi}$ that results in $\{t_j\}_{j=0}^{J+1}$ according to the statement of the lemma. Pick $j\in \mathbb{N}$ and let $I_{\phi}^j\coloneqq [t_j, t_{j+1}]$. Due to Proposition~\ref{prop:rademacher}, let $S(I_{\phi}^j)$ and $S_{\M}(V)$ denote, resp., the measure zero sets in $\interior{I_{\phi}^j}$ and $\M$ where $t \mapsto V\circ \phi(t,j)$ and $V$ are not differentiable. 
    For each $t\in {I^j_\phi}\setminus \{\partial I^j_\phi \cup S(I^j_\phi)\}$, let $v\in F(\phi(t, j))$ be such that $\frac{d\phi}{dt}(t,j) = v$.
    Then, for each $t\in I_{\phi}^j \setminus \{\partial I^j_\phi \cup S(I^j_\phi)\}$,
    
    \vspace{-0.2cm}
    \begin{align*}%
        \frac{dV}{dt} (\phi(t,j)) & = \diffFunc{V}_{\phi(t,j)}(v) \leq \sup_{f\in F(\phi(t,j))\cap \Tcone{C}{\phi(t,j)}} \diffFunc{V_{\phi(t,j)}}(f).
    \end{align*}
    Next, for each $x\in \M\setminus S_\M(V)$ and each $f\in \T{x}{\M}$,
    \begin{align*}
        \diffFunc{V_x}(f) &= \lim_{\substack{t\to 0}} \frac{V\circ \varphi^{-1}(\varphi(x) + t\diffFunc{\varphi_x}(f)) - V\circ \varphi^{-1}(\varphi(x))}{t}\\
        &\leq \limsup_{\substack{y \to x\\  t \searrow 0}} \frac{\widetilde{V}(\varphi(y) + t\diffFunc{\varphi_x}(f)) - \widetilde{V}(\varphi(y))}{t} \leq V^\circ(x, f)
    \end{align*}
    where $(U, \varphi)$ is a chart at $x\in\M$ and $\widetilde{V}\coloneqq V\circ\varphi^{-1}$. Using the above inequalities with the fact that $S_{\M}(V)$ has zero Lebesgue measure proves item i. 
    The proof of item ii follows by noting that $V(\phi(t,j+1))-V(\phi(t,j)) \leq \sup_{g\in G(\phi(t,j))} V(g)-V(\phi(t,j))$ for each jump time $(t,j)\in\dom{\phi}$.
\end{proof}

Following \cite[Thm. 3.18]{goebel_hybrid_2012} and using the results in \cite{PaulACC2025}, we obtain the following result. Note that it does not require the hybrid basic conditions.

\begin{theorem}[Hybrid Lyapunov Theorem]
    \label{theorem:hybridLyapunovTheorem-manifolds}
    Consider a nonempty, compact set $\A\subset \M$ and a function $V : \M \to \R{}_{\geq 0}$ that defines a Lyapunov function candidate on $\M$ with respect to $\A$ for $\hybrid = (C, F, D, G, \M)$. The set $\A$ is uniformly globally pre-asymptotically stable for $\hybrid$ if $V$ is proper %
    and one of the following conditions hold:
    \begin{enumerate}[label=(\alph*)]
        \item \emph{Strict decrease during flows and jumps}: there exist lower semicontinuous functions $\rho_\mathrm{C}, \rho_\mathrm{D}\in \PD{\A}$ such that
        \begin{align}
            \dot{V}(x) &\leq -\rho_\mathrm{C}(x)  \qquad \forall x\in C,
            \label{eq:V_dot_theorem}
                \\
            \Delta V(x) &\leq -\rho_\mathrm{D}(x)  \qquad \forall x\in D.
            \label{eq:DeltaV_theorem}
        \end{align} 

        \item \emph{Strict decrease during flows and no increase at jumps}: there exists a lower semicontinuous function $\rho_{\mathrm{C}}\in \PD{\A}$ such that \eqref{eq:V_dot_theorem} holds, \eqref{eq:DeltaV_theorem} holds with $\rho_{\mathrm{D}} \equiv 0$, and, for each compact neighborhood $R$ of $\A$, there exist $\gamma \in \mathcal{K}_{\infty}$ and $N \geq 0$ such that, for each solution $\phi \in \setofSol{\hybrid}{R\setminus \A}$, $(t,j)\in \dom{\phi}$ implies $t \geq \gamma(t+j) - N$.

        \item \emph{Strict decrease at jumps and no increase during flows}: there exists a lower semicontinuous function $\rho_{\mathrm{D}}\in \PD{\A}$ such that \eqref{eq:DeltaV_theorem} holds, \eqref{eq:V_dot_theorem} holds with $\rho_{\mathrm{C}} \equiv 0$, and, for each compact neighborhood $R$ of $\A$, there exists $\gamma \in \mathcal{K}_{\infty}$ and $N \geq 0$ such that, for each solution $\phi\in \setofSol{\hybrid}{R\setminus \A}$, $(t,j)\in \dom{\phi}$ implies $j \geq \gamma(t+j) - N$.

        \item \emph{Increase balanced by decrease}: there exist constants $\lambda_c \in \R{}$ and $\lambda_d \in \R{}$ such that
            \begin{align}
            \label{eq:V_increaseDecrease}
            \begin{array}{lll}
                \phantom{\Delta}\dot{V}(x) \leq \lambda_c V(x) & &\forall x\in C,\\
                \Delta V(x) \leq \brackets{\exp(\lambda_d)-1} V(x) & & \forall x\in D,
            \end{array}
        \end{align}
        and there exist $M, \gamma>0$ such that, for each solution $\phi$ to $\hybrid$ and each $(t,j)\in\dom{\phi}$, $\lambda_c t + \lambda_d j \leq M - \gamma(t+j)$.

        \item \emph{Strict decrease at jumps and bounded time of flow}: there exist a lower semicontinuous function $\rho_D\in \PD{\A}$ and $\lambda \in \R{}$ such that \eqref{eq:DeltaV_theorem} holds, $\dot{V}(x) \leq \lambda V(x)$ for each $x\in C$, and, for each compact neighborhood $R$ of $\A$, there exists~$T \geq 0$ such that each $\phi\in \setofSol{\hybrid}{R\setminus \A}$ satisfies $\sup_{t}\dom{\phi} \leq T$.

        \item \emph{Strict decrease during flows and finite number of jumps}: there exist a lower semicontinuous function $\rho_{\mathrm{C}}\in \PD{\A}$ and $\lambda\in \mathcal{K}_\infty$ such that \eqref{eq:V_dot_theorem} holds, $V(\chi) \leq \lambda(V(x))$ for each $x\in D$ and each $\chi\in G(x)$, and, for each compact neighborhood $R$ of $\A$, there exists $J > 0$ such that each solution $\phi\in \setofSol{\hybrid}{R\setminus \A}$ satisfies $\sup_j \dom{\phi}\leq J$. 
        
    \end{enumerate}
\end{theorem}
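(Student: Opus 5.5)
The plan is to reduce everything to scalar comparison arguments along solutions, using Lemma~\ref{lemma:bound-VdotDeltaV} to transfer the pointwise bounds on $\dot V$ and $\Delta V$ into bounds on $(t,j)\mapsto V(\phi(t,j))$. Properness of $V$ will do the work that radial unboundedness does in the Euclidean proof \cite[Thm.~3.18]{goebel_hybrid_2012}: sublevel sets $\sublevelSet{c}\coloneqq\{x: V(x)\le c\}$ are compact. Positive definiteness with respect to $\A$, together with continuity and compactness of $\A$, should then give that these sublevel sets form a neighborhood basis of $\A$, at least relative to $\overline{C\cup D\cup G(D)}$, where solutions live. Precisely, I will show two facts. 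First, for each neighborhood $U$ of $\A$ there is $c>0$ with $\sublevelSet{c}\cap\overline{C\cup D\cup G(D)}\subset U$; this follows by contradiction, since otherwise the compact sets $\sublevelSet{1/k}\setminus\interior{U}$ would be nested and nonempty, hence have a common point, which lies outside $\A$ but has $V=0$. Second, each $\sublevelSet{c}$ with $c>0$ contains an open neighborhood of $\A$, by continuity of $V$. This replaces every metric $\varepsilon$–$\delta$ step by a topological one.

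\textbf{UGS.} Under each of (a)--(f), I first show that $V\circ\phi$ is bounded along any solution by a function of its initial value. In (a)--(c), integrating the flow inequality (valid a.e.\ by item i of Lemma~\ref{lemma:bound-VdotDeltaV}, with $t\mapsto V(\phi(t,j))$ absolutely continuous by the Lipschitz property and local absolute continuity) and using item ii at jumps gives $V(\phi(t,j))\le V(\phi(0,0))$. In (d), a Gronwall/discrete-Gronwall estimate gives $V(\phi(t,j))\le e^{\lambda_c t+\lambda_d j}V(\phi(0,0))\le e^{M}V(\phi(0,0))$. In (e), the bounded flow time gives the factor $e^{|\lambda|T}$ (for solutions from a compact neighborhood $R$), and jumps are nonincreasing. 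In (f), at most $J$ jumps, each composed with $\lambda\in\mathcal K_\infty$, yield a bound $\lambda^{\circ J}$ (after taking the max with the identity). Then ULS follows: given $U$, pick $c$ with $\sublevelSet{c}\subset U$ (on the relevant set), and take $W$ an open neighborhood inside the sublevel set of level $c$ mapped back through the bound. ULaS follows because for a compact $W$ the value $\max_W V$ is finite, and the bounding sublevel set is compact; I then enlarge it to a compact neighborhood of $\A$ using local compactness.

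\textbf{UGpA.} Fix compact neighborhoods $U,W$. By UGS, solutions from $U$ stay in a compact $K\subset\sublevelSet{c_1}$, and I pick $c_2>0$ with $\sublevelSet{c_2}$ inside $W$. On the compact set $K\setminus\{V<c_2\}$, lower semicontinuity and positive definiteness give $\min\rho_C,\min\rho_D\ge\epsilon>0$. In (a), $V$ then drops by at least $\epsilon\min\{1,\cdot\}$ per unit of $t+j$ while $V\ge c_2$, so $N=(c_1-c_2)/\epsilon+1$ works; the monotonicity of $V$ shows that once $V<c_2$, the solution stays there. In (b) and (c), the $\mathcal K_\infty$ lower bound converts $t+j$ large into flow time, respectively jump count, large. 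In (d), $V\le e^{M-\gamma(t+j)}c_1$. In (e), the bounded flow time forces many jumps as $t+j$ grows. In (f), the bounded jump count forces long flow, after which strict decrease during flows is applied after the last jump.

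\textbf{Main obstacle.} The delicate step is the conversion between sublevel sets and neighborhoods when $V$ is only positive definite on $\overline{C\cup D\cup G(D)}$. In particular, $\rge\phi$ sits in $\overline{C}\cup D\cup G(D)$, which equals $\overline{C\cup D\cup G(D)}$ only up to the initial point, and that point is in $\overline{C}\cup D$ anyway. I also have to handle the uniformity over nonunique, possibly noncomplete maximal solutions, which is exactly the ingredient \cite{PaulACC2025} is cited for. I would handle it by working with the compact sets $\sublevelSet{c}\cap\overline{C\cup D\cup G(D)}$ throughout.
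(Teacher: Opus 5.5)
Your overall strategy is the paper's. You integrate the bounds of Lemma~\ref{lemma:bound-VdotDeltaV}, use properness and positive definiteness on $\overline{C\cup D\cup G(D)}$ to trade compact neighborhoods of $\A$ for sublevel sets, and get a uniform rate from lower semicontinuity of $\rho_{\mathrm{C}},\rho_{\mathrm{D}}$ on a compact annulus $\{c_2\le V\le c_1\}\cap\overline{C\cup D\cup G(D)}$. Your nested-compact-sets argument is the paper's proof that $\inf_{\overline{X}\setminus\interior{W}}V>0$. For (a)--(d) this works as you describe.

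The gap is in the pre-attractivity step for (e) and (f), where you tacitly reuse the mechanism of (a). In (a), monotonicity of $V$ does two jobs: it keeps a solution in $\{V<c_2\}$ once it enters, and it guarantees that every earlier point lies in the annulus where the rate $\epsilon$ applies. Neither holds in (e) with $\lambda>0$, since $V$ can grow along flows. Neither holds in (f), since $V$ can grow at jumps when $\lambda(s)>s$. In (f), ``strict decrease after the last jump'' also does not give a uniform $N$. The hybrid time of the last jump is not bounded uniformly over solutions from $U$, and before it $V$ can drop below $c_2$ during a long flow and then be pushed back above the $W$-level by a later jump. The paper is equally terse here: it settles (e) by ``similar arguments to item a'' and omits (f).

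The repair uses the growth bounds you already derived for UGS, applied to tails of solutions. Tails inherit the flow-time bound $T$ in (e) and the jump bound $J$ in (f), both taken for $R=U$. Set it up as follows.
\begin{itemize}
\item Fix $c_3$ with $\{V\le c_3\}\cap\overline{C\cup D\cup G(D)}\subset W$.
\item Choose $c>0$ with $e^{|\lambda|T}c\le c_3$ in (e), resp.\ $\bar\lambda^{\circ J}(c)\le c_3$ in (f), where $\bar\lambda(s)\coloneqq\max\{s,\lambda(s)\}$.
\item Let $\epsilon>0$ be the minimum of $\rho_{\mathrm{D}}$, resp.\ $\rho_{\mathrm{C}}$, on $\{c\le V\le c_1\}\cap\overline{C\cup D\cup G(D)}$, with $c_1$ your UGS bound for solutions from $U$.
\end{itemize}

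In (e), suppose every jump point before $(t,j)$ has $V\ge c$. The flow--jump recursion then gives $0\le V(\phi(t,j))\le e^{|\lambda|T}\max_U V-\epsilon j$; for $\lambda\ge 0$ the flow factors $e^{\lambda s}\ge 1$ only enlarge the subtracted terms. Hence $t+j\le T+e^{|\lambda|T}\max_U V/\epsilon$. Beyond that bound, some earlier jump point has $V<c$, and the tail bound gives $V(\phi(t,j))\le e^{|\lambda|T}c\le c_3$.

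In (f), at most $J+1$ flow intervals precede $(t,j)$ and $t\ge t+j-J$, so one of them has length at least $(t+j-J)/(J+1)$. Once this exceeds $c_1/\epsilon$, $V$ must drop below $c$ on that interval. The at most $J$ subsequent jumps then keep $V\le\bar\lambda^{\circ J}(c)\le c_3$.

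A smaller point: both of your ``two facts'' must be read relative to $\overline{C\cup D\cup G(D)}$.
\begin{itemize}
\item In the first, intersect the nested compact sets with $\overline{C\cup D\cup G(D)}$; otherwise their common zero of $V$ need not lie in $\A$.
\item In the second, positive definiteness only gives $V=0$ on $\A\cap\overline{C\cup D\cup G(D)}$. Take $\{V<c\}\cup(\M\setminus\overline{C\cup D\cup G(D)})$ as the open neighborhood; this costs nothing because solutions start in $\overline{C}\cup D$.
\end{itemize}
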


\begin{proof}
    {} 
    Pick $\phi \in \maximalSol{\hybrid}{\overline{C}\cup D}$ and  $(t,j)\in\dom{\phi}$. Let $\{t_i\}_{i=0}^{j+1}$ be a nondecreasing sequence obtained from hybrid time domain structure such that $t_0 = 0$ and $t_{j+1}=t$. {}
    {\color{mygreen}\relax{}{\color{mygreen}
    For each $i\in\{0,1,\dots,j\}$ and almost all $s\in[t_i, t_{i+1}]$, $\phi(s, i)\in C$. Then, for each $i\in\{0,1,\dots,j\}$ and almost all $s\in[t_i, t_{i+1}]$, Lemma~\ref{lemma:bound-VdotDeltaV} and \eqref{eq:V_dot_theorem} imply that
    \(
        \frac{d}{ds}V(\phi(s, i)) \leq -\rho_{\mathrm{C}}(\phi(s,i)). 
    \)
    Integrating both sides, we have, for each $i\in\{0,1,\dots,j\}$, that
    \begin{align*}
        V(\phi(t_{i+1},i)) - V(\phi(t_i, i)) \leq - \int_{t_i}^{t_{i+1}} \rho_\mathrm{C}({\phi(s,i)}) ds.
    \end{align*}
    Similarly, since $\phi(t_i,i-1)\in D$ for each $i\in\{1,2,\dots, j\}$, the following holds for each $i\in\{1,2,\ldots, j\}$:
    \begin{align*}
        V(\phi(t_i, i)) - V(\phi(t_i, i-1)) \leq -\rho_\mathrm{D}({\phi(t_i, i-1)}). 
    \end{align*}
    Using the above two inequalities and summing over $i$'s,}}
    \begin{align}
        \label{eq:ineq_manifoldCase}
        V(\phi(t,j)) + \sum_{i=0}^{j} \int_{t_i}^{t_{i+1}}\rho_{\mathrm{C}}({\phi(s,i)})ds &+ \sum_{i=1}^{j} \rho_{\mathrm{D}}({\phi(t_i, i-1)}) \nonumber\\
        &\leq V(\phi(0,0)). 
    \end{align}
    Let $X \coloneqq \overline{C} \cup D \cup G(D)$ and $\widetilde{\A} \coloneqq \A \cap X$ for brevity. Using~\eqref{eq:ineq_manifoldCase} with item 4 of Definition~\ref{def:lyapunovFunctionCandidate} and noting that $\rge{\phi} \subset X$ by definition of a solution, we have $0 \leq V(\phi(t,j))\leq V(\phi(0,0))$ for all $(t,j)\in \dom{\phi}$. For each $c \geq 0$, let $\sublevelSet{V}(c)\coloneqq \{x\in \M : V(x) \leq c\}$ denote the $c$-sublevel set of $V$, which is compact as $V$ is proper.  Then, for each $(t,j)\in \dom{\phi}$, $\sublevelSet{V}(V(\phi(t,j))) \cap X \subset \sublevelSet{V}(V(\phi(0,0))) \cap X$, leading to
    \begin{align}\label{eq:FI_sublevelSets}
        \rge{\phi} \subset L_V(V(\phi(0,0))) \cap X \subset L_V(V(\phi(0,0))). 
    \end{align}

    To show UGS of $\A$ for $\hybrid$, by local compactness of $\A$, pick any compact neighborhood $W$ of $\A$. Since $\dom{V} = \M$ by construction, $V$ is defined on $\overline{X} \setminus \interior{W}$, which is closed and disjoint from $\widetilde{\A}$. 
    Together with items~3 and 4 of Definition~\ref{def:lyapunovFunctionCandidate}, a proof by contradiction yields $\inf_{x\in \overline{X} \setminus \interior{W}} V(x) >0$.
    
    Then, pick $u > 0$ such that $u < \inf_{x \in \overline{X}\setminus \interior{W}} V(x)$ and let $U \coloneqq L_V(u)$. Note that $U \cap X \subset W \cap X \subset W$. Similarly, noting that $\max_{x\in W}V(x)$ is finite as $V$ is continuous and $W$ is compact, pick $r > \max_{x\in W}V(x)$ and let $R \coloneqq L_V(r)$. By construction, $W \subset R$. Figure~\ref{fig:stability} illustrates the construction of the sets above. Pick any solution $\phi \in \maximalSol{\hybrid}{U}$ and note that $\phi\in \maximalSol{\hybrid}{U \cap X}$. Following~\eqref{eq:FI_sublevelSets}, $\rge{\phi} \subset U \subset W$. Therefore, $\A$ is ULS for $\hybrid$. Similarly, pick any $\phi\in \maximalSol{\hybrid}{W} = \maximalSol{\hybrid}{W \cap X}$. As $W \subset R$, $\phi\in \maximalSol{\hybrid}{R}$ and note from~\eqref{eq:FI_sublevelSets} that $\rge{\phi} \subset R$. Therefore, $\A$ is ULaS for $\hybrid$. Consequently, $\A$ is UGS for $\hybrid$.

        \begin{figure}
        \vspace{0.3cm}
        \centering
        {\includegraphics[width=0.6\columnwidth, trim = 0.2cm 13.5cm 17cm 0.2cm]{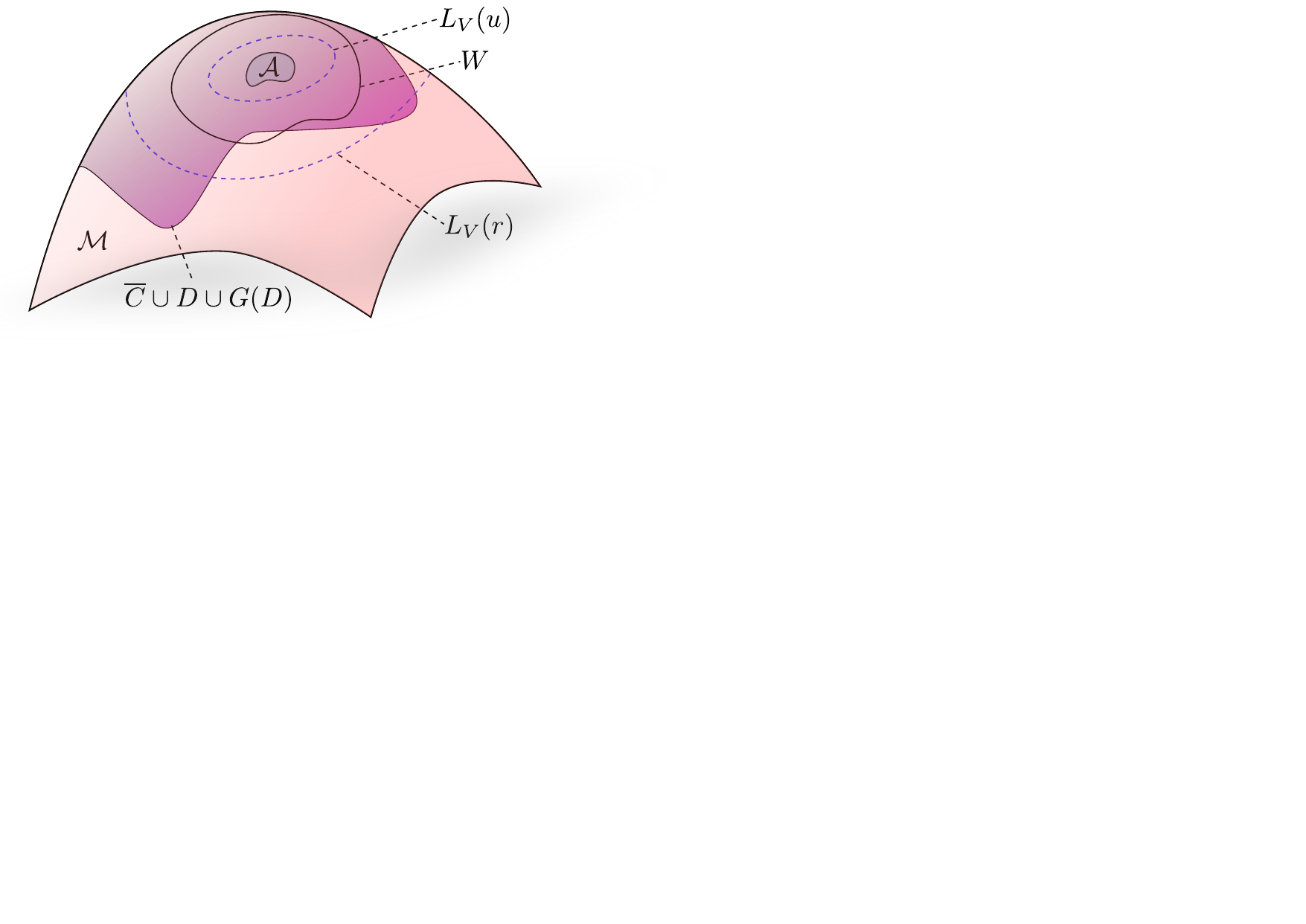}}
        \caption{Set construction in the UGS proof of Theorem~\ref{theorem:hybridLyapunovTheorem-manifolds}.\vspace{-16pt}}
        \label{fig:stability}
    \end{figure}

    To prove uniform global pre-attractivity of $\A$ for $\hybrid$, we leverage positive definiteness and properness of $V$ to equivalently replace the compact neighborhoods of $\A$ in Definition~\ref{def:uniformGlobalAtt} to nested sublevel sets of $V$. Consequently, we show that for each $u', r' > 0$, there exists $N' \geq 0$ such that each maximal solution $\phi$ to $\hybrid$ with $\phi(0,0)\in \sublevelSet{V}(r')$ satisfies $\phi(t,j)\in \sublevelSet{V}(u')$ for all $(t,j)\in \dom{\phi}$ with $t+j\geq N'$. 

    To do so, define $\rho(x)\coloneqq \min\{\rho_\mathrm{C}(x), \rho_{\mathrm{D}}(x)\}$ for all $x\in \M$. Since $\rho_\mathrm{C}, \rho_\mathrm{D}$ are lower semicontinuous functions that are positive definite with respect to $\A$, $\rho$ is also lower semicontinuous and positive definite with respect to $\A$. Pick arbitrary $u', r' > 0$ and define $U'\coloneqq \sublevelSet{V}(u')$ and $R'\coloneqq \sublevelSet{V}(r')$. Without loss of generality, assume $u'\leq r'$ so that $U' \subset R'$. Let $m\coloneqq \min_{x\in \sublevelSet{V}([u', r'])}\rho(x)$. Note that $m > 0$. Then, for each $\phi\in \maximalSol{\hybrid}{R'}$, using~\eqref{eq:FI_sublevelSets} with \eqref{eq:ineq_manifoldCase} results in
    \begin{align*}%
        V(\phi(t,j)) \leq V(\phi(0,0)) - (t+j)m
    \end{align*}
    for each $(t,j)\in\dom{\phi}$ with $\phi(t,j)\in R'\setminus U'$. Since $\phi(0,0)\in R'$, it follows that $V(\phi(t,j))\leq r' - (t+j)m$ for all such $(t,j)$. Clearly, for $(t,j)$ such that $t+j = N'\coloneqq (r' - u')/m$, $V(\phi(t,j))\leq u'$. Using~\eqref{eq:FI_sublevelSets}, $\phi(t,j)\in U'$ for all $(t,j)\in\dom{\phi}$ such that $t+j\geq N'$. Furthermore, if $\operatorname{length} \dom{\phi} < N'$, $\rge{\phi}$ is contained in a compact set due to~\eqref{eq:FI_sublevelSets}. Therefore, $\A$ is UGpA for $\hybrid$ as Definition~\ref{def:uniformGlobalAtt} holds.
    
    {\color{mygreen}
    \medskip
    \noindent
    (Proof of item b:)
    The proof of UGS of $\A$ for $\hybrid$ follows exactly as shown in the proof of item a. For the proof of uniform global pre-attractive, consider any compact neighborhood $U$ of $\A$ such that $R$ is a compact neighborhood of $U$. Then, consider $r', u' > 0$ such that $\sublevelSet{V}(u') \subset U \subset R \subset \sublevelSet{V}(r')$. Define $m \coloneqq \min_{x\in \sublevelSet{V}([u', r'])}\rho_\mathrm{C}(x)$. Then, for each solution $\phi$ to $\hybrid$ with $\phi(0,0)\in R\setminus U$ so that $V(\phi(0,0))\leq r'$, we have
    \begin{align}
    \label{eq:V_decrease_item_b}
        V(\phi(t,j)) \leq V(\phi(0,0)) - tm \qquad \forall (t,j)\in\dom{\phi}.
    \end{align}
    Therefore, each sublevel set of $V$ is forward pre-invariant for $\hybrid$. Following item b, pick $\gamma\in\mathcal{K}_\infty$ and $N \geq 0$ such that $t \geq \gamma(t+j)-N$ holds for all $(t,j)\in \dom{\phi}$. Using this inequality with~\eqref{eq:V_decrease_item_b} yields
    \begin{align*}
        V(\phi(t,j)) \leq V(\phi(0,0)) - m \brackets{\gamma(t+j) - N }
    \end{align*}
    for all $(t,j)\in\dom{\phi}$. Then, for $(t,j)\in\dom{\phi}$ such that $t + j \geq T' \coloneqq \gamma^{-1}\brackets{\frac{r' - u'}{m} + N}$, we have $V(\phi(t,j))\leq u'$, i.e., $\phi(t,j)\in U$. If $\operatorname{length}\dom{\phi} < T'$, the existence of a compact neighborhood that contains $\phi(t,j)$ for all $(t,j)\in \dom{\phi}$ is guaranteed as $V$ is proper and its sublevel sets are forward pre-invariant for $\hybrid$. This proves uniform global pre-attractivity and completes the proof of item b.

    \medskip
    \noindent
    (Proof of item c:) The proof follows exactly like the proof of item b, with the roles of flows and jumps interchanged. It is therefore omitted.

    \medskip
    \noindent
    (Proof of item d.)
    Let $\phi$ be a maximal solution to $\hybrid$. Using the bounds in~\eqref{eq:V_increaseDecrease} with the assumption that $\lambda_c t + \lambda_d j \leq M - \gamma(t+j)$ for each $(t,j)\in\dom{\phi}$, we have that
    \begin{align}
        \label{eq:item-d-exp-bound}
        V(\phi(t,j)) \leq \exp(M-\gamma(t+j))V(\phi(0,0))    
    \end{align}
    holds for each $(t,j)\in\dom{\phi}$. 
    
    We first prove UGS. Pick any compact neighborhood $W$ of $\A$. Let
    \(
        a_W \coloneqq
        \inf_{x\in \overline{X}\setminus \interior W}V(x)>0.
    \)
    Pick $u>0$ such that $e^M u<a_W$, and define
    \(
        U\coloneqq \sublevelSet{V}(u).
    \)
    If $\phi\in \maximalSol{\hybrid}{U}$, then by \eqref{eq:item-d-exp-bound},
    \[
        V(\phi(t,j)) \leq e^M V(\phi(0,0)) \leq e^M u < a_W
    \]
    for all $(t,j)\in\dom{\phi}$. Then, as $L_V(a_W) \subset W$ by construction, $\rge{\phi}\subset W$. Therefore, $\A$ is ULS for $\hybrid$.
    
    Next, define $r_W\coloneqq \max_{x\in W}V(x)$ and $R\coloneqq \sublevelSet{V}(e^M r_W)$. Then, $R$ is a compact neighborhood of $\A$. For each $\phi\in\maximalSol{\hybrid}{W}$, \eqref{eq:item-d-exp-bound} yields
    \begin{align*}
        V(\phi(t,j))
        \leq e^M V(\phi(0,0))
        \leq e^M r_W ,
        &&
        \forall (t,j)\in\dom{\phi}.
    \end{align*}
    As a result, $\rge{\phi}\subset R$. Therefore, $\A$ is ULaS, and consequently, UGS, for $\hybrid$.
    
    We now prove uniform global pre-attractivity. Let $U$ and $R$ be compact
    neighborhoods of $\A$ with $U\subset R$. Choose $0 < u < r$ such that $\sublevelSet{V}(u)\cap X\subset U$ and $R\cap X\subset \sublevelSet{V}(r)$. If $\phi\in\maximalSol{\hybrid}{R}$, then
    \eqref{eq:item-d-exp-bound} yields, for each $(t,j)\in\dom{\phi}$, that $V(\phi(t,j))\leq e^{M-\gamma(t+j)}r$. Consequently, $V(\phi(t,j))\leq u$ whenever $(t,j)\in \dom{\phi}$ satisfy $t+j\geq T \coloneqq \frac{M+\ln r-\ln u}{\gamma}$. For all such $(t,j)\in \dom{\phi}$, 
    $\phi(t,j)\in U$. If $\operatorname{length}\dom{\phi}<T$, then
    \eqref{eq:item-d-exp-bound} implies
    $\rge{\phi}\subset \sublevelSet{V}(e^M r)$. Therefore
    $\A$ is UGpA for $\hybrid$.

    \medskip
    \noindent
    (Proof of item e:) Pick a maximal solution $\psi$ to $\hybrid$. For each $(t,j)\in\dom{\psi}$, define the sequence $\{t_i\}_{i=0}^{j+1}$ as in the proof of item a. Due to the existence of $\rho_{\mathrm{D}}\in\PD{\A}$ satisfying \eqref{eq:DeltaV_theorem}, the inequality in item e yields
    \begin{align}
        V(\psi(t,j)) &\leq \exp(\lambda t) V(\psi(0,0)) - \sum_{i=1}^{j} \rho_{\mathrm{D}}({\psi(t_i, i-1)}) \label{eq:Vdot_exp_0}
    \end{align}
    for all $(t,j)\in\dom{\psi}$. Now, pick a compact neighborhood $W$ of $\A$, and let $T\geq 0$ be as in item e such that $\sup_{t}\dom{\phi}\leq T$ for each $\phi\in \widehat{\mathcal{S}}_{\hybrid}(W\setminus \A)$. Consequently, for each $\phi\in \widehat{\mathcal{S}}_{\hybrid}(W\setminus \A)$,~\eqref{eq:Vdot_exp_0} yields that, for each $(t,j)\in\dom{\phi}$,
    \begin{align}
        V(\phi(t,j)) &\leq \exp(\lambda T) V(\phi(0,0)) - \sum_{i=1}^{j} \rho_{\mathrm{D}}({\psi(t_i, i-1)}) \label{eq:Vdot_exp_rhoD}
        \\
        &\leq \exp(\lambda T) V(\phi(0,0)) \label{eq:Vdot_exp}.
    \end{align}
    Then, the proof of UGS follows similarly to item d above. 
    
    To establish UGpA of $\A$ for $\hybrid$, recall the compact set $W$ above, and pick arbitrary $r' > 0$ such that $R' \coloneqq L_V(r') \subset W$. Pick also arbitrary $u'\in  (0, r')$ so that $U' \coloneqq L_V(u') \subset R'$. Pick any solution $\phi\in \maximalSol{\hybrid}{R'}$, and note that $\phi\in \maximalSol{\hybrid}{W}$. Let $m\coloneqq \min_{x\in \sublevelSet{V}([u', r'])}\rho_\mathrm{D}(x)$. As a result, \eqref{eq:Vdot_exp_rhoD} results in
    \begin{align*}
        V(\phi(t,j)) \leq \exp(\lambda T) V(\phi(0,0)) - jm 
    \end{align*}
    for all $(t,j)\in\dom{\phi}$ such that $\phi(t,j)\in R'\setminus U'$. Since $x(0,0)\in R'$, it follows from the above inequality that $V(x(t,j))\leq \exp(\lambda T) r' - jm$ for all such $(t,j)$. With $T' \coloneqq T + \brackets{\exp(\lambda T)r' - u'}/{m}$, UGpA of $\A$ for $\hybrid$ is established using similar arguments to the proof of item a.
    
    \medskip
    \noindent
    (Proof of item f:) The proof follows exactly like the proof of item e, with the roles of flows and jumps interchanged. It is therefore omitted.
    }
\end{proof}

Theorem~\ref{theorem:hybridLyapunovTheorem-manifolds} is stated for Lyapunov function candidates that are locally Lipschitz and proper, without reference to a distance function on $\M$. When a Riemannian structure is available, both properties of the Lyapunov function candidate suitably relate to the Riemannian distance; {}{\color{mygreen}\relax{}see Lemma~\ref{lemma:LocLipRiemannian} and Proposition~\ref{prop:Kinfty_lowerBoundOnV}.} Next, we illustrate Theorem \ref{theorem:hybridLyapunovTheorem-manifolds} using the following example.

\begin{example}[Quaternion stabilization, revisited]\label{ex:quaternion-stability}
    Recall $\hybrid$ from Example~\ref{ex:quaternion}. Denote the identity~element by $\mathbf{1}\coloneqq \begin{pmatrix}
        1 & \mathbf{0}_{1\times 3}^\top
    \end{pmatrix}^\top$ and define 
    \(
        \A \coloneqq \{(q,h)\in \M : q = h\mathbf{1}\}.
    \)
    Following~\cite{mayhew2011quaternion}, \(V(x) = 2(1-h\eta)\) for each $x\in \M$ defines a Lyapunov function candidate on $\M$ with respect to $\A$~for $\hybrid$.
    {}{\color{mygreen}\relax{}{\color{mygreen}Then, $V \in \PD{\A}$ is proper as it is a map from a compact set to~$\R{}$~\cite[Prop.~A.53]{Lee}.}}  During flows, {\color{mygreen}\relax{}{\color{mygreen}using $\dot{h} = 0$, we have}}
    \begin{align}
        \label{eq:Vdot_quaternion}
        \diffFunc{V_x}(F(x)) = -2h\dot{\eta} = - \varepsilon^\top K_{\varepsilon} \varepsilon < 0 \quad \forall x\in C\setminus\A,
    \end{align}
    where the last equality above follows from
    \begin{align*}
        \dot{\eta} &= [1 \;\;\; \mathbf{0}_{1\times 3}] \brackets{\frac{1}{2} q\otimes v(-hK_{\varepsilon}\varepsilon)} = \frac{1}{2} h \varepsilon^\top K_\varepsilon \varepsilon.
    \end{align*}
    For each $x\in \M$, let $\rho_{\mathrm{C}}(x)\coloneqq 0$ if $x\in \A$ and $\rho_{\mathrm{C}}(x)\coloneqq \varepsilon^\top K_{\varepsilon} \varepsilon$ otherwise. Note that $\rho_{\mathrm{C}}\in \PD{\A}$ as $K_\varepsilon$ is positive definite, and that $\rho_{\mathrm{C}}$ is lower semicontinuous as $\A$ is closed. Together with~\eqref{eq:Vdot_quaternion},
    \eqref{eq:V_dot_theorem} holds. 
    During jumps, we have that
    \(
        \Delta V(x) = V(G(x)) - V(x) 
        = 4h\eta \leq -4\delta
    \)
    for each $x\in D$.
    We note that $\A\cap D = \varnothing$. Then, setting $\rho_{\mathrm{D}}(x)\coloneqq 0$ for each $x\in \A$, and $\rho_{\mathrm{D}}(x)\coloneqq 4\delta$ for each $x\in \M\setminus \A$, 
    we note that $\rho_{\mathrm{D}}$ is lower semicontinuous since $\A$ is closed, and \eqref{eq:DeltaV_theorem} holds. It follows from Theorem~\ref{theorem:hybridLyapunovTheorem-manifolds} and the completeness of maximal solutions to $\hybrid$ from Example~\ref{ex:quaternion-solutions} that $\A$ is UGAS for $\mathcal{H}$.
\end{example}

\section{Hybrid Invariance Principle}
\label{sec:invariance}
In practice, the decrease conditions of Theorem \ref{theorem:hybridLyapunovTheorem-manifolds} may only be satisfied with $\dot{V} \leq 0$ or $\Delta V \leq 0$. Examples of such functions are abundant in applications, where $V$ commonly denotes the total energy of the system; see~\cite[Prop.~4.66]{BulloLewis}.
The hybrid invariance principle \cite[Ch. 8]{goebel_hybrid_2012} resolves this limitation when $\M = \R{n}$, but such a result for hybrid systems on manifolds is absent from the literature. To this end, in this section, we present a hybrid invariance principle for geometric hybrid systems. First, we establish properties of $\omega$-limit sets of complete and precompact solutions to geometric hybrid dynamical systems satisfying the geometric hybrid basic conditions. For an equivalent characterization when $\M = \R{n}$, see \cite[Ch.6]{goebel_hybrid_2012},~\cite{Sanfelice_Invariance}. 

\begin{definition}[Weak invariance]
\label{def:weakInvariance}
    Given a geometric hybrid system $\hybrid$, a nonempty set $K \subset \M$ is said to be
    \begin{enumerate}[leftmargin=15pt]
        \item \emph{weakly forward invariant} for $\hybrid$ if, for each $\xi \in K$, there exists a complete solution $\phi\in\maximalSol{\hybrid}{\xi}$ with $\rge{\phi} \subset K$;
        \item \emph{weakly backward invariant} for $\hybrid$ if, for each $\xi \in K$ and each $N > 0$, there exists $\phi\in\maximalSol{\hybrid}{K}$ such that, for some hybrid time $(t^\star, j^\star)\in\dom{\phi}$ with $t^\star + j^\star \geq N$, $\phi$ satisfies $\phi(t^\star, j^\star) = \xi$ and $\rge{\phi}|_{\leq t^\star + j^*}\subset K$;
        \item \emph{weakly invariant} for $\hybrid$ if it is both weakly forward invariant and weakly backward invariant. 
    \end{enumerate}
\end{definition}

\begin{definition}[$\omega$-limit set of a hybrid arc]
    \label{def:omegaLimitSet}
    The $ \omega$-limit set of a hybrid arc $\phi\!:\! \dom{\phi} \to \M$, denoted by $\Omega(\phi)$, is the set of all points $x\in \M$ for which there exists a sequence $\{(t_i, j_i)\}_{i = 1}^\infty\subset \dom{\phi}$ with $\lim_{i\to \infty} t_i + j_i = \infty$ such that $\lim_{i\to\infty}\phi(t_i, j_i)= x$. %
\end{definition}

\begin{proposition}[Properties of $\Omega(\phi)$]
    \label{prop:omegaLimitSets}
    Let $\hybrid$ be a geometric hybrid dynamical system satisfying the geometric hybrid basic conditions in Definition~\ref{ass:hybrid_basic_conditions}, and $\phi$ be a complete and precompact solution to $\hybrid$. Then, $\Omega(\phi)$ is nonempty, compact, weakly invariant, and, for each open neighborhood $\mathcal{U}$ of $\Omega(\phi)$, there exists $N > 0$ such that $\phi(t, j)\in \mathcal{U}$ for all $(t,j)\in\dom{\phi}$ such that $t + j \geq N$.
\end{proposition}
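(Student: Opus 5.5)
Our plan follows the Euclidean argument of \cite[Ch.~6]{goebel_hybrid_2012}, replacing distance-based estimates by compactness and neighborhood arguments, and using Theorem~\ref{theorem:basicConditions => nominalWellPosedness} as the sequential closedness tool. Let $K \coloneqq \overline{\rge{\phi}}$, which is compact since $\phi$ is precompact. Manifolds are second countable and Hausdorff, hence metrizable, so sequential arguments in the compact set $K$ are legitimate; we use this only as a topological device, without committing to any particular metric.

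\emph{Nonemptiness and compactness.} Since $\phi$ is complete, pick $(t_i,j_i)\in\dom{\phi}$ with $t_i+j_i\to\infty$. The points $\phi(t_i,j_i)$ lie in $K$, so a subsequence converges, and hence $\Omega(\phi)\neq\varnothing$. Moreover, $\Omega(\phi)=\bigcap_{N}\overline{\{\phi(t,j): t+j\geq N\}}$ is an intersection of closed subsets of $K$, so it is compact.

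\emph{Attractivity of $\Omega(\phi)$.} Argue by contradiction. Suppose there are an open neighborhood $\mathcal{U}$ of $\Omega(\phi)$ and times $(t_i,j_i)$ with $t_i+j_i\to\infty$ and $\phi(t_i,j_i)\notin\mathcal{U}$. These points lie in the compact set $K\setminus\mathcal{U}$, so a subsequence converges to a point of $K\setminus\mathcal{U}$. That limit belongs to $\Omega(\phi)$ by definition, which contradicts $\Omega(\phi)\subset\mathcal{U}$.

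\emph{Weak invariance.} This step is the main obstacle. Fix $\xi\in\Omega(\phi)$ with $\phi(t_i,j_i)\to\xi$. Define the shifted arcs $\phi_i(t,j)\coloneqq\phi(t+t_i,j+j_i)$ on the shifted domains; these are solutions to $\hybrid$ (condition $(S_0)$ holds because $C\cup D$ is closed and $\phi(t_i,j_i)\in\overline{C}\cup D$). All ranges lie in $K$, so the sequence is locally eventually precompact. The graphs $\graph{\phi_i}$ all contain points over hybrid time $(0,0)$ that converge to $(0,0,\xi)$, so the sequence does not escape to the horizon. By Lemma~\ref{lemma:sequentialCompactness_beer}, a subsequence converges graphically, and Theorem~\ref{theorem:basicConditions => nominalWellPosedness}(a) shows that the limit $\psi$ is a solution with $\psi(0,0)=\xi$.

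Two points require care. First, $\psi$ must be complete. Each $\dom{\phi_i}$ is unbounded, and local eventual precompactness together with \eqref{eq:domainRangeEquality} of Lemma~\ref{lemma:domainAndRange} gives that $\dom{\psi}$ is the limit of the unbounded domains. One must also check that $\psi$ is single valued, which follows from uniform local Lipschitz bounds in charts along flows (local precompactness of $F$). Second, $\rge{\psi}\subset\Omega(\phi)$: each value $\psi(t,j)$ is a limit of values $\phi_i(t'_i,j'_i)=\phi(t'_i+t_i,j'_i+j_i)$ taken at times going to infinity. This proves weak forward invariance, and maximality then follows from completeness.

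For weak backward invariance, given $N>0$, apply the same construction with the shift $(t_i,j_i)$ replaced by an earlier time $(s_i,k_i)\in\dom{\phi}$ chosen so that $t_i+j_i-(s_i+k_i)\in[N,N+1]$. Passing to a further subsequence, $(t_i-s_i,j_i-k_i)$ converges to some $(t^\star,j^\star)$ with $t^\star+j^\star\geq N$, and $j_i-k_i$ is eventually constant. The limit solution then satisfies $\psi(t^\star,j^\star)=\xi$, by graphical convergence, and has range in $\Omega(\phi)$. The delicate part, which we expect to take the most work, is verifying that $(t^\star,j^\star)\in\dom{\psi}$ and that the value there equals $\xi$. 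Here we would use the fact that $\graph{\psi}$ is the limit of the graphs together with single-valuedness of $\psi$.
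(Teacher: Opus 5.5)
Your proposal is correct and follows essentially the same route as the paper: compactness of $\overline{\rge{\phi}}$ gives nonemptiness and compactness, attractivity follows by contradiction, and weak invariance comes from shifted copies of $\phi$ passed to a graphical limit via Theorem~\ref{theorem:basicConditions => nominalWellPosedness}(a), whose range lies in $\Omega(\phi)$. The only difference is that the paper uses a single shift at an earlier time and obtains forward invariance by re-shifting the limit from $(t^\star,j^\star)$, whereas you use two shifts; also, the step you flag as delicate is immediate, since $(t_i-s_i,j_i-k_i,\phi(t_i,j_i))\to(t^\star,j^\star,\xi)$ already places $(t^\star,j^\star,\xi)$ in the outer limit of the graphs, which is $\graph{\psi}$.
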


\begin{proof}
    That $\Omega(\phi)$ is nonempty follows from compactness of $\overline{\rge{\phi}}$ and extraction of a convergent sequence $\{\phi(t_i, j_i)\}_{i=1}^\infty$, with $(t_i, j_i)\in\dom{\phi}$ and $t_i + j_i \to \infty$ such that $\lim_{i\to\infty} \phi(t_i, j_i)\in\Omega(\phi)$. That $\Omega(\phi)$ is closed follows from Definition~\ref{def:omegaLimitSet}. Then, as $\Omega(\phi)\subset \overline{\rge{\phi}}$ and $\overline{\rge{\phi}}$ is compact, $\Omega(\phi)$ is compact due to~\cite[Prop.~A.45]{Lee}. To show weak forward and backward invariance of $\Omega(\phi)$, pick $\xi\in\Omega(\phi)$ and a sequence $\{(t_i, j_i)\}_{i=1}^\infty \subset \dom{\phi}$ such that $t_i + j_i \to \infty$ and $\phi(t_i, j_i) \to \xi$. Pick any $\tau > 0$ and a sequence  $\{(t_i', j_i')\}_{i=1}^\infty \subset \dom{\phi}$ such that $t_i +  j_i + \tau-1 \leq t_i' + j_i' \leq t_i + j_i + \tau + 1$ for each sufficiently large $i$. We consider the sequence of truncated hybrid arcs $\phi_i(t,j) \coloneqq \phi(t + t_i', j+j_i')$ and note that $\{\phi_i\}_{i=1}^\infty$ is graphically convergent, locally eventually precompact, and $\lim_{i\to\infty}\phi_i(0,0) = \xi$. Then, using Theorem~\ref{theorem:basicConditions => nominalWellPosedness}(a), $\phi^{\star}\coloneqq \graphlim{i\to\infty}\phi_i$ is a complete solution to $\hybrid$. For each $i$, as $\tau \leq t_i - t_i' + j_i - j_i' \leq \tau+1$, we assume, by passing to a subsequence without relabeling, that the sequence $\{t_i - t_i', j_i - j_i'\}_{i=1}^\infty$ converges to $(t^\star, j^\star)$. Then, $\lim_{i\to\infty}\phi_i(t_i - t_i', j_i - j_i') = \phi^\star(t_i^\star, j_i^\star) = \xi$. Additionally, using Lemma~\ref{lemma:domainAndRange}, $\rge{\phi^\star} \subset \lim_{i\to\infty}\rge{\phi_i} = \Omega(\phi)$. Therefore, $\Omega(\phi)$ is weakly backward invariant. Weak forward invariance follows by considering a solution $\phi'(t,j)\coloneqq \phi^{\star}(t + t^\star, j+j^\star)$ and noting that $\phi'(0,0) = \xi$ and $\rge{\phi'}\subset \Omega(\phi)$. 

    Finally, that $\phi$ converges to $\U$ follows by constructing a sequence $\{(t_i, j_i)\}_{i=1}^\infty$ such that $\phi(t_i, j_i) \to x\in\M$ and $x\notin \Omega(\phi)$, which is a contradiction as $x\in \Omega(\phi)$ by  Def.~\ref{def:omegaLimitSet}. 
\end{proof}

To obtain the invariance principle, given a function $V : \mathrm{dom}\,V \to \R{}$ satisfying items~1 and~3 of Definition~\ref{def:lyapunovFunctionCandidate}, define
\begin{equation}
    \begin{aligned}
        V^{-1}(r) &\coloneqq \{x\in \M: V(x) = r\}\quad \forall r\in \R{},
        \\
        \dot{V}^{-1}(0) &\coloneqq \{x\in C : \dot{V}(x) = 0\},
        \\
        \Delta V^{-1}(0) &\coloneqq \{x \in D : \Delta V (x) = 0\}.
    \end{aligned}
\end{equation}
First, we use the above definitions to present a key result that characterizes the value of $V$ in the $\omega$-limit set of a hybrid arc. This result, which will lead to the hybrid invariance principle, extends~\cite[Lemma~4.1]{Sanfelice_Invariance} to the geometric setting.

\begin{lemma}
    \label{lemma:V_on_omegaLimitSet}
    Let $\phi : \dom{\phi} \to \M$ denote a hybrid arc, and suppose that a continuous function $V : \dom{V} \to \R{}$ with $\dom{V}\subset \M$ satisfies $V(\phi(t',j')) \leq V(\phi(t, j))$ for each $(t,j), (t', j')\in \dom{\phi}$ satisfying $t + j \leq t' + j'$. Then, there exists $r\in \R{}$ such that $\Omega(\phi) \subset V^{-1}(r)$. 
\end{lemma}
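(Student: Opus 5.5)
The plan is to exploit the monotonicity of $V$ along $\phi$: it forces $V\circ\phi$ to have a limit as $t+j\to\infty$, and continuity of $V$ then carries that limit to every point of $\Omega(\phi)$. If $\Omega(\phi)=\varnothing$ the claim is trivial for any $r\in\R{}$, so assume $\Omega(\phi)\neq\varnothing$. In that case $\phi$ admits sequences $(t_i,j_i)\in\dom{\phi}$ with $t_i+j_i\to\infty$, so $\dom{\phi}$ is unbounded. Implicit in the hypothesis is $\rge{\phi}\subset\dom{V}$; I will also need $\Omega(\phi)\subset\dom{V}$. This holds if $\dom{V}$ is closed, or in the intended setting where $\dom{V}$ contains an open set around $\overline{C}\cup D\cup G(D)$ as in Definition~\ref{def:lyapunovFunctionCandidate}. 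I will state this as the standing interpretation of ``continuous $V$ on $\dom{V}$'' evaluated at limit points.

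First, consider the function $s\mapsto \inf\{V(\phi(t,j)) : (t,j)\in\dom{\phi},\ t+j\geq s\}$. By hypothesis, $V(\phi(t',j'))\leq V(\phi(t,j))$ whenever $t+j\leq t'+j'$. Hence along any sequence $(t_i,j_i)$ with $t_i+j_i$ nondecreasing and unbounded, $V(\phi(t_i,j_i))$ is nonincreasing. Define
\[
r\coloneqq \inf_{(t,j)\in\dom{\phi}} V(\phi(t,j)) \in \R{}\cup\{-\infty\}.
\]
Monotonicity gives $\lim V(\phi(t_i,j_i)) = r$ along every sequence with $t_i+j_i\to\infty$. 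To see this, given $(t,j)$ with $V(\phi(t,j))<r+\epsilon$, all later hybrid times have values in $[r, r+\epsilon)$.

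Next, pick $x\in\Omega(\phi)$ with a sequence $(t_i,j_i)$, $t_i+j_i\to\infty$, such that $\phi(t_i,j_i)\to x$. Continuity of $V$ at $x$ gives $V(\phi(t_i,j_i))\to V(x)$. By the previous step this limit equals $r$. In particular $r$ is finite, since $V(x)\in\R{}$. Therefore $V(x)=r$ for every $x\in\Omega(\phi)$, so $\Omega(\phi)\subset V^{-1}(r)$.

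No step here is deep. The only subtle point is ensuring that the limit of $V\circ\phi$ is the same along every divergent sequence, which is exactly where the ordering by $t+j$ on a hybrid time domain is used. The nonempty case also fixes $r$ as a finite real number, so the choice of $r$ does not depend on the particular $x$ chosen. The main care point is the domain issue, namely that $x\in\dom{V}$, and I would handle it with the remark above.
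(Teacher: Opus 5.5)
Your argument is correct: the paper states this lemma without proof, deferring to the Euclidean version in~\cite{Sanfelice_Invariance}. Your monotone-limit-plus-continuity argument is the standard proof; it uses only continuity of $V$ and the ordering of hybrid times by $t+j$, so it transfers to $\M$ verbatim.

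Your domain caveat is genuinely necessary, not merely cautious: with $\dom{V}=\M\setminus\{x_0\}$, $V\equiv 0$, and an arc converging to $x_0$, the conclusion fails. In the intended application, however, the right justification is $\Omega(\phi^\star)\subset\overline{\rge{\phi^\star}}\subset\U\subset\dom{V}$. This follows from the hypothesis of Theorem~\ref{theorem:invariance} together with item~3 of Definition~\ref{def:lyapunovFunctionCandidate}. It does not come from an open set around $\overline{C}\cup D\cup G(D)$, which that definition does not provide.
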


\begin{theorem}[Hybrid invariance principle]
\label{theorem:invariance}
    Consider a geometric hybrid system $\hybrid = (C, F, D, G, \M)$ that satisfies the geometric hybrid basic conditions in Definition~\ref{ass:hybrid_basic_conditions}, a nonempty set $\U\subset \M$, and a function $V : \dom{V}\to \R{}$ satisfying items 1 and 3 of Definition~\ref{def:lyapunovFunctionCandidate} such that the following holds:
    \begin{align}\label{eq:VdotDeltaV_invariance}
        \begin{array}{rc}
            \dot{V}(x) \leq 0 & \quad \forall x \in C \cap \U,\\
            \Delta V(x) \leq 0 & \quad \forall x\in D \cap \U. 
        \end{array}
    \end{align}
    Let $\phi^\star$ be a complete and precompact solution to $\hybrid$ with $\overline{\rge{\phi^\star}} \subset \U$. Then, for some $r\in V(\U \cap (C \cup D \cup G(D)))$, the set $\Omega(\phi^\star)$ is the largest weakly invariant set in
    \begin{flalign}
    \label{eq:invariance}
        \hspace{-3pt}V^{-1}(r) \cap \U \!\cap\! \left[\dot{V}^{-1}(0) \cup \left( \Delta V^{-1}(0) \!\cap\! G(\Delta V^{-1}(0)) \right) \right].
    \end{flalign}
\end{theorem}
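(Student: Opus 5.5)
The plan is to follow the Euclidean argument of \cite{Sanfelice_Invariance} and \cite[Ch.~8]{goebel_hybrid_2012}. The manifold-specific ingredients are replaced by Lemma~\ref{lemma:bound-VdotDeltaV}, Lemma~\ref{lemma:V_on_omegaLimitSet} and Proposition~\ref{prop:omegaLimitSets}.

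\emph{Monotonicity of $V$ along $\phi^\star$.} Since $\rge{\phi^\star}\subset\U\cap(\overline{C}\cup D\cup G(D))\subset\dom{V}$ and $V$ is locally Lipschitz there, the map $t\mapsto V(\phi^\star(t,j))$ is locally absolutely continuous on each $I^j$. This uses Lemma~\ref{lemma:locAbsCont_coord} in a chart together with the Euclidean fact that a Lipschitz function composed with an absolutely continuous curve is absolutely continuous. Lemma~\ref{lemma:bound-VdotDeltaV} and \eqref{eq:VdotDeltaV_invariance} then give $\frac{d}{dt}V(\phi^\star(t,j))\le 0$ almost everywhere and $V(\phi^\star(t_{j+1},j+1))\le V(\phi^\star(t_{j+1},j))$ at jumps. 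Hence $V(\phi^\star(t',j'))\le V(\phi^\star(t,j))$ whenever $t+j\le t'+j'$. Lemma~\ref{lemma:V_on_omegaLimitSet} yields $r\in\R{}$ with $\Omega(\phi^\star)\subset V^{-1}(r)$.

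\emph{Weak invariance and localization of $\Omega(\phi^\star)$.} By Proposition~\ref{prop:omegaLimitSets}, $\Omega(\phi^\star)$ is nonempty, compact and weakly invariant. It also lies in $\overline{\rge{\phi^\star}}\subset\U$. By weak forward invariance, each $\xi\in\Omega(\phi^\star)$ is the initial point of a solution, so $\xi\in C\cup D$ because $C$ and $D$ are closed. This shows $r\in V(\U\cap(C\cup D\cup G(D)))$.

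\emph{Each $\xi\in\Omega(\phi^\star)$ lies in the bracket in \eqref{eq:invariance}.} Fix $\xi$ and take a complete $\psi\in\maximalSol{\hybrid}{\xi}$ with $\rge{\psi}\subset\Omega(\phi^\star)\subset V^{-1}(r)$, so that $V\circ\psi\equiv r$. Split into two cases according to whether $\psi$ jumps or flows first.
\begin{itemize}
\item If $\psi$ jumps first, then $\xi\in D$ and $V(\psi(0,1))-V(\xi)=0$. Together with $\Delta V\le 0$ this gives $\Delta V(\xi)=0$.
\item If $\psi$ flows on $[0,T]$ with $T>0$, then for almost all $t$ the computation in the proof of Lemma~\ref{lemma:bound-VdotDeltaV} gives $0=\frac{d}{dt}V(\psi(t,0))\le V^\circ(\psi(t,0),\dot\psi(t,0))\le\dot V(\psi(t,0))\le 0$. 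Here $\dot\psi(t,0)\in F(\psi(t,0))\cap\Tcone{C}{\psi(t,0)}$ for almost all $t$, because $\psi$ stays in $C$. So $\dot V(\psi(t_k,0))=0$ along a sequence $t_k\searrow 0$. To pass to $t=0$, I would use upper semicontinuity of $(x,v)\mapsto V^\circ(x,v)$, which is the chart version of \cite[Prop.~2.1.1]{Clarke1998} and is chart-independent by \cite[Lemma~3.3]{MOTREANU1982116}. I would combine this with outer semicontinuity and local precompactness of $F$ to extract limits $v_k\to v\in F(\xi)$, and then show $v\in\Tcone{C}{\xi}$. This gives $\dot V(\xi)\ge 0$, hence $\xi\in\dot V^{-1}(0)$.
\end{itemize}
When $\xi\notin\dot V^{-1}(0)$, every forward solution in $\Omega(\phi^\star)$ must jump first, so $\Delta V(\xi)=0$. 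Applying weak backward invariance, $\xi$ is reached from inside $\Omega(\phi^\star)$ either by a flow arc, which by the same limit argument run backward in time forces $\dot V(\xi)=0$ and contradicts the assumption, or by a jump from some $\eta\in\Omega(\phi^\star)\cap D$. In the latter case the first case applied to $\eta$ gives $\Delta V(\eta)=0$, so $\xi\in G(\Delta V^{-1}(0))$. In either case $\xi$ belongs to the set in \eqref{eq:invariance}.

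\emph{Maximality.} For the ``largest'' part I would argue as in \cite{Sanfelice_Invariance}. Since $\phi^\star$ converges to $\Omega(\phi^\star)$ by Proposition~\ref{prop:omegaLimitSets}, $\Omega(\phi^\star)$ is weakly invariant and sits inside \eqref{eq:invariance}. The statement is then the standard one that $\phi^\star$ approaches the largest weakly invariant subset of \eqref{eq:invariance}. I expect any stronger reading of ``largest'' to need separate care, and I would check it last.

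\emph{Main obstacle.} The delicate step is the limit $t_k\searrow 0$ in the flow case, specifically the claim that the limiting velocity $v$ is tangent to $C$ at $\xi$. I would first try to obtain $v\in\Tcone{C}{\xi}$ directly from the difference quotients $(\varphi(\psi(t,0))-\varphi(\xi))/t$ in a chart, via Definition~\ref{def:tangentConeManifolds}. This uses that these quotients are averages of $\widetilde F$-values, so by convexity and upper semicontinuity of $\widetilde F$ their limits lie in $\widetilde F(\varphi(\xi))$. If that fails, the fallback is to work with $\dot V$ evaluated at the nearby points $\psi(t_k,0)$ and argue via density of those times.
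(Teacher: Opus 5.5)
Your overall route matches the paper's. Monotonicity of $V$ along $\phi^\star$ plus Lemma~\ref{lemma:V_on_omegaLimitSet} gives $r$. Weak forward invariance is split into jump-first and flow-first cases, and weak backward invariance into jump-into and flow-into cases. You also supply the monotonicity check and the localization of $r$, which the paper skips.

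\textbf{The gap: the flow-into case.} The step that fails is ``by the same limit argument run backward in time''. Suppose $\psi$ flows into $\xi$ on $[t^\star-T,t^\star]$, and let $\widetilde\psi\coloneqq\varphi\circ\psi$. The quotients $(\widetilde\psi(t^\star)-\widetilde\psi(t^\star-h))/h$ do have cluster points $w\in\widetilde F(\varphi(\xi))$. But the points $\widetilde\psi(t^\star-h)$ lie behind $\varphi(\xi)$, so Definition~\ref{def:tangentConeManifolds} only yields $-w\in\TconeEuclidean{\varphi(C\cap U)}{\varphi(\xi)}$, not $w$. Since $\dot V(\xi)$ is a supremum over $F(\xi)\cap\Tcone{C}{\xi}$, and this set can be empty where flow must stop, one can have $\dot V(\xi)=-\infty$.

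No argument can repair this. Take $\M=\R{}$, $C=[0,1]$, $F\equiv\{1\}$, $D=\{1\}$, $G\equiv\{0\}$, $V\equiv0$, $\U=\R{}$.
\begin{itemize}
\item All hypotheses hold, and the solution $\phi^\star$ from $0$ (flow to $1$, jump to $0$, repeat) is complete and precompact with $\Omega(\phi^\star)=[0,1]$.
\item However, $F(1)\cap\Tcone{C}{1}=\{1\}\cap(-\infty,0]=\varnothing$, so $\dot V^{-1}(0)=[0,1)$.
\item Also $\Delta V^{-1}(0)\cap G(\Delta V^{-1}(0))=\{1\}\cap\{0\}=\varnothing$.
\end{itemize}
Hence the set in \eqref{eq:invariance} is $[0,1)$. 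It misses $1\in\Omega(\phi^\star)$ and has no nonempty weakly invariant subset. So even the weaker reading, ``$\phi^\star$ approaches the largest weakly invariant subset'', fails.

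The paper's proof has the same hole (``similar arguments as above follow''), so the statement itself needs repair. Two options work:
\begin{itemize}
\item State both the decrease condition and the zero set with $\sup_{v\in F(x)}V^\circ(x,v)$, as in the Euclidean principle. The backward case then works because $V^\circ$ takes a limsup over base points: base point $\varphi(\xi)-hw$ with step $h$ gives $V^\circ(\xi,w)\ge0$.
\item Replace $\dot V^{-1}(0)$ by its closure. Each $\psi(s)$ with $s<t^\star$ lies in $\dot V^{-1}(0)$ by the forward argument, so $\xi$ lies in the closure.
\end{itemize}

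\textbf{Flow-first case.} Your primary route does not work as planned. The limit $v$ of $\dot\psi(t_k,0)$ need not lie in $\Tcone{C}{\xi}$. The difference-quotient limit $w$ is in general a different vector, so tangency of $w$ says nothing about $v$.

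Use $w$ for everything. First, $w\in F(\xi)\cap\Tcone{C}{\xi}$ by the averaging argument you sketch. Second, $V\circ\psi$ is constant and $\widetilde V$ is $L$-Lipschitz near $\varphi(\xi)$, so $\widetilde V(\varphi(\xi)+t_kw)-\widetilde V(\varphi(\xi))\ge -L|\widetilde\psi(t_k)-\varphi(\xi)-t_kw|=o(t_k)$. Hence $V^\circ(\xi,w)\ge0$ directly, with no upper semicontinuity of $V^\circ$ needed.

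Your density fallback would only place nearby points in $\dot V^{-1}(0)$. That set is not closed in general, so it would not reach $\xi$.

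\textbf{Maximality.} Your caution about ``largest'' is justified. For $\dot x=0$ on $C=\R{}$ with $D=\varnothing$ and $V\equiv0$, the largest weakly invariant subset of \eqref{eq:invariance} is $\R{}$, whereas $\Omega(\phi^\star)$ is a single point. So the literal maximality claim is false, and the paper's contradiction argument does not prove it.
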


\medskip

{\color{mygreen}\relax{}\begin{proof}
    Following Proposition~\ref{prop:omegaLimitSets}, $\Omega(\phi^\star)$ is nonempty, compact, and weakly invariant, and $\phi^{\star}$ approaches $\Omega(\phi^\star)$ ; in particular, 
    \begin{align*}
        \limsup_{\substack{t + j \to \infty \\ (t,j)\in\dom{\phi^\star}}} \phi^\star(t, j) \subset \Omega(\phi^{\star}). 
    \end{align*}
    \noindent
    From Lemma~\ref{lemma:V_on_omegaLimitSet}, there exists $r\in V(\U\cap \brackets{C\cap D\cap G(D)})$ such that $V(\Omega(\phi^\star)) = r$. Consequently, $V$ remains constant along each solution $\phi$ to $\hybrid$ with $\rge{\phi}\subset \Omega(\phi^\star)$. Pick any such solution $\phi$ and pick any $(\underline{t}, \underline{j}), (t,j), (\bar{t}, \bar{j})\in \dom{\phi}$ satisfying {$\underline{t}+\underline{j} \leq t+j \leq \bar{t}+\bar{j}$}. Define $J \coloneqq \bar{j}-\underline{j}$ and let $\underline{t} = t_0 \leq t_1 \leq \dots \leq t_{J+1} = \bar{t}$ satisfy
    \(
        \dom{\phi} \cap \brackets{[\underline{t},\bar{t}]\times \{\underline{j}, \underline{j}+1,\dots, \bar{j}\}} = \bigcup_{i=0}^{J} \left([t_i, t_{i+1}]\times \{\underline{j} + i\} \right) 
    \).
    Using Definition~\ref{def:VdotDeltaV}, we obtain
    \begin{align}\label{eq:zeroChangeV_invariance}
        \sum_{i=0}^{J} \int_{t_i}^{t_{i+1}}\dot{V}({\phi(s,\underline{j}+i)})ds  + \sum_{i=1}^{J} \Delta V({\phi(t_i, \underline{j} + i-1)}) =  0. 
    \end{align}
    
    Pick any $\xi\in\Omega(\phi^\star)$. Since $\Omega(\phi^\star)$ is weakly forward invariant, suppose $\phi$ is a complete solution from $\xi$ satisfying $\rge{\phi}\subset \Omega(\phi^\star)$. If $(0,1)\in\dom{\phi}$, then setting $(\underline{t}, \underline{j})=(0,0)$ and $(\bar{t}, \bar{j}) = (0,1)$ and using \eqref{eq:zeroChangeV_invariance}, we obtain $\Delta V(\phi(0, 0)) = \Delta V(\xi) =  0$, resulting in
    \begin{align}\label{eq:OmegaLimitSetSubsetOfDeltaV}
        \quad \xi \in \Delta V^{-1}(0) \implies \Omega(\phi^\star)\subset \Delta V^{-1}(0).
    \end{align}
    If there exists $T>0$ such that $(T,0)\in\dom{\phi}$, then setting $(\underline{t}, \underline{j}) = (0,0)$, $(\bar{t}, \bar{j}) = (0, T)$, and using \eqref{eq:zeroChangeV_invariance} results in $\int_{0}^T \dot{V}(\phi(s,0)) ds = 0$. Due to \eqref{eq:VdotDeltaV_invariance}, it follows that $\dot{V}(\phi(t,0)) = 0$ for almost all $t\in [0,T]$. Therefore, $t\mapsto V(\phi(t,0))$ must be piecewise constant on $[0,T]$. However, since $V$ is locally Lipschitz on $\overline{C}\cap \U$, it follows that $V(\phi(t,0))$ is constant for all $t\in [0,T]$, resulting in 
    \begin{align}\label{eq:OmegaLimitSetSubsetOfVdot}
    \xi \in \dot{V}^{-1}(0) \quad \implies \quad \Omega(\phi^\star)\subset \dot{V}^{-1}(0).
    \end{align}

    Next, we use the weak backward invariance property of $\Omega(\phi^\star)$ to obtain similar inclusions as above. In particular, suppose that item 2 of Definition~\ref{def:weakInvariance} holds with the choice of $\phi$ above; in particular, there exist $\xi^\star\in \Omega(\phi^\star)$ and $(t^\star, j^\star)\in\dom{\phi^\star}$ satisfying $\phi(0,0) = \xi^\star$, $t^\star + j^\star \geq 1$, $\phi(t^\star, j^\star)=\xi$, and $\phi(t,j)\in \Omega(\phi^\star)$ for all $(t,j) \in \dom{\phi}$ with $t+j\leq t^\star + j^\star$. If there exists $(t^\star, j^\star-1)\in\dom{\phi}$, then setting $(\underline{t}, \underline{j})=(t^\star, j^\star-1)$, $(\bar{t}, \bar{j})=(t^\star, j^\star)$, and using \eqref{eq:zeroChangeV_invariance} results in
    \begin{align*}
        & \Delta V(\phi(t^\star, j^\star-1)) = 0 \implies \phi(t^\star, j^\star-1)\in \Delta V^{-1}(0).
    \end{align*}
    Consequently,
    \begin{align}\label{eq:OmegaLimitSetSubsetOfDeltaV_backward}
        \xi \in G(\Delta V^{-1}(0)) \implies \Omega(\phi^\star) \subset G(\Delta V^{-1}(0)). 
    \end{align}
    Similarly, if there exists $T>0$ such that $(t^\star - T, j^\star)\in\dom{\phi}$, we obtain $\int_{t^\star-T}^{t^\star}  \dot{V}(\phi(s, j^\star)) ds = 0$. Due to \eqref{eq:VdotDeltaV_invariance}, it follows that $\dot{V}(\phi(t,j^\star)) = 0$ almost everywhere on $[t^\star - T, t^\star]$. Then, similar arguments as above follow, resulting in $\Omega(\phi^\star)\subset \dot{V}^{-1}(0)$. Now, combining this result with \eqref{eq:OmegaLimitSetSubsetOfVdot}, \eqref{eq:OmegaLimitSetSubsetOfDeltaV}, and \eqref{eq:OmegaLimitSetSubsetOfDeltaV_backward}, and using the facts that $\overline{\rge{\phi^\star}} \subset \U$ and $V(\Omega(\phi^\star)) = r$, we obtain that $\Omega(\phi^\star)$ is contained in~\eqref{eq:invariance}. 
    
    Finally, that $\Omega(\phi^\star)$ is the \emph{largest} weakly invariant set in~\eqref{eq:invariance} follows by a simple contradiction argument, as otherwise, there exists a point in~\eqref{eq:invariance}, but not in $\Omega(\phi^\star)$, to which $\phi^\star$ converges. This is a contradiction by Definition~\ref{def:omegaLimitSet}. 
\end{proof}}

\begin{example}[Billiard on the M\"{o}bius band, revisited]
    \label{ex:mobius-stability}
    Recall the hybrid system $\hybrid$ from Example~\ref{ex:mobius-hybridDynamics}. Recall also from Example~\ref{ex:MobiusDef} that, for each $z = ({z_1}, {z_2})\in \rectangle_S$, $y_1 \coloneqq [z]_{\mobius}\in \mobius$. Let $\tilde{\A}\subset\mobius$ in Example~\ref{ex:mobius-hybridDynamics} be defined as $\tilde{\A} \coloneqq \{[{z_1}, {z_2}]_{\mobius}\in \mobius : z_1 \in \{0,1\}, z_2 = 0\}$, and note that $\tilde{\A}$ is a point in the interior of the constraint set $S\subset \mobius$ as defined in Example~\ref{ex:mobius-hybridDynamics}. 
    Recall the state $x = (y, q) \in \M$ of the hybrid system $\hybrid$, where $y = (y_1, y_2)\in \T{}{\mobius}$ and $q \in Q$. We certify that the set
    \[
    \A \coloneqq \{x \in \M : y_1 \in \tilde{\A}, y_2 = 0 \in \T{y_1}{\mobius}, q \in Q\}
    \]
    is asymptotically stable for $\hybrid = (C, F, D, G, \M)$ in~\eqref{eq:mobius-inclusion}.

    Before defining the Lyapunov function candidate, we note that the ``centerline'' of the M\"{o}bius strip and the constraint set $S$ is the unit circle $\mathbb{S}^1 \coloneqq [0,1]/\sim_{\scriptstyle\mathbb{S}^1}$, where the equivalence relation  $\sim_{\scriptstyle\mathbb{S}^1}$ defines the equivalence class $[\cdot]_{\scriptstyle\mathbb{S}^1}$ such that $[z_1]_{\scriptstyle\mathbb{S}^1}=\{ z_1\}$ for each $z_1\in (0,1)$, and $[0]_{\scriptstyle\mathbb{S}^1} = [1]_{\scriptstyle\mathbb{S}^1} = \{0,1\}$. 
        
    Letting $\U \coloneqq \M$, we define the Lyapunov function candidate $V:\M \to \R{}_{\geq 0}$ on $\U$ with respect to $\A$ for $\hybrid$~as 
    \vspace{-5pt}
    \begin{align}
    \label{eq:mobius-lyapunov}
        V(x) \coloneqq V_q(y_1) + \frac{1}{2}g_{\mobius}^{y_1}(y_2, y_2) \quad \forall x\in \M.
    \vspace{-5pt}
    \end{align}
    Here, $g_{\mobius}^{y_1}$ denotes the evaluation of the Riemannian metric $g_{\mobius}$ on $\mobius$ at $y_1$. For each $q\in Q$, the function $V_q : \mobius \to \R{}_{\geq0}$ is defined so that 
    $
        V_q(y_1) = V_q([z]_\mobius) \coloneqq U(\Gamma([z_1]_{\mathbb{S}^1},q)) + z_2^2/2
    $
    for each $z=(z_1, z_2)\in \rectangle$,
    where $U([z_1]_{\mathbb{S}^1})\coloneqq (1-\cos(2\pi z_1))/2$ for each $z_1\in [0,1]$, $\Gamma([z_1]_{\mathbb{S}^1}, q)\coloneqq [\left(z_1 + q\arcsin(U([z_1])/2) / \pi \right)\mathrm{mod}(1)]_{\mathbb{S}^1}$ for each $(z_1,q)\in [0,1]\times Q$, and $\mathrm{mod}$ denotes the modulo operator. Note that $V_q\in \PD{\widetilde{\A}}$ {is proper} and continuously differentiable for each $q\in Q$. This construction of the map~$\Gamma$ is motivated from synergistic control~\cite{Berkane2017synergistic}. Then, $V \in \PD{\A}$ and $V$ is smooth.

    At jumps from $D_1$, we have that, for each $x\in D_1$, 
    \vspace{-5pt}
    \begin{align*}
        & \Delta V(x) =  V_q(g_1(y_1)) - V_q(y_1) - \frac{1}{2}g_{\mobius}^{y_1}(y_2, y_2)
        \\
        & + \frac{1}{2} \sup_{\lambda\in [0, \lambda_{\max}]}g_{\mobius}^{g_1(y_1)}(P^{\gamma}_{0\mapsto 1}(y_2^{\parallel} - \lambda y_2^{\perp}), P^{\gamma}_{0\mapsto 1}(y_2^{\parallel} - \lambda y_2^{\perp})).
    \end{align*}
    Note that $V_q(g_1(y_1)) - V_q(y_1) = ((z_2 - \operatorname{sign}(z_2)\varepsilon)^2 - z_2^2)/2 = \varepsilon(\varepsilon-2)/2 < 0$, where the last equality~follows as $x\in D_1$ implies $|z_2| = 1$. For brevity, define $\delta_\varepsilon = \varepsilon(2 - \varepsilon)/2 > 0$. 
    Additionally, as parallel transport preserves the Riemannian inner product~\cite[Prop.~5.5(e)]{Lee_Riemannian}, we have that, for each $x\in~D_1$, $\Delta V(x) = -\delta_{\varepsilon} + \frac{1}{2} \sup_{\lambda\in [0, \lambda_{\max}]}\left (g_{\mobius}^{y_1}(y_2^{\parallel} - \lambda y_2^{\perp}, y_2^{\parallel} - \lambda y_2^{\perp}) - g_{\mobius}^{y_1}(y_2, y_2)\right ).$
    As $y_2 = y_2^{\parallel} + y_2^{\perp}$ and $g_{\mobius}^{y_1}(y_2^\parallel, y_2^\perp) = 0$ due to orthogonality, %
    \begin{align*}
        \Delta V(x) &= -\delta_{\varepsilon} + \frac{1}{2}\sup_{\lambda\in [0, \lambda_{\max}]}\left (\lambda^2 - 1\right )g_{\mobius}^{y_1}(y_2^{\perp}, y_2^{\perp})
        \\
        &= -\delta_{\varepsilon} + \frac{1}{2}\left (\lambda_{\max}^2 - 1\right )g_{\mobius}^{y_1}(y_2^{\perp}, y_2^{\perp})  \leq -\delta_{\varepsilon},
    \end{align*}
    for each $x\in D_1$, where the last inequality is obtained using $\lambda_{\max} < 1$ and $g_{\mobius}^{y_1}(y_2^{\perp}, y_2^{\perp}) \geq 0$.
    Alternatively, at jumps from $D_2$, $\Delta V(x) \leq -\delta$ for each $x\in D_2$. Therefore, for each $x \in D=D_1\cup D_2$, $\Delta V(x) \leq -\min\{\delta, \delta_{\varepsilon}\} < 0$.
    
    During flows, for each $x\in C$, we have
    \begin{align*}
        \dot{V}(x) &= \diffFunc{{V}_{y}}\left(\mathfrak{S}(y) + \vlift{y}{-\grad{V_q}{y_1} - b y_2}\right)
        \\
        & = -b g_{\mobius}^{y_1}(y_2, y_2) \leq 0,
    \end{align*}
    where the last inequality above follows from~\cite[Prop.~4.66]{BulloLewis} and by relating the first-order continuous-time dynamics in $\hybrid$ to second-order dynamics as in~\cite[\S4.6.3]{BulloLewis}. We present the details of the computation in {}{\color{mygreen}\relax{}Appendix~\ref{app:mobius-Vdot}}.

    Therefore,~\eqref{eq:VdotDeltaV_invariance} holds. Then, pick any maximal $\phi \in \mathcal{S}_{\hybrid}$. We show that $\phi$ is complete. Using Example~\ref{ex:mobius-solutions}, it suffices to show that item b in Prop.~\ref{prop:existence} does not hold. Indeed, suppose by contradiction that $\phi$ satisfies item b. Noting that the position $y_1$ and the logic variable $q$ belong to compact sets $S$ and $Q$, respectively, it must follow that the velocity~$y_2$~goes ``unbounded.'' Therefore, $\lim_{t+j \to \infty} V(\phi(t,j)) = \infty$. However, as $V(\phi(t,j)) \leq V(\phi(0,0))$ for each $(t,j)\in \dom{\phi}$ due to the bound on $\dot V$ and $\Delta V$, and since $V(\phi(0,0))$ is finite, we have a contradiction. Therefore, each maximal $\phi \in \mathcal{S}_{\hybrid}$ is complete.    

    {\color{mygreen}\relax{}{\color{mygreen}Next, we show that $\phi$ is also precompact. Let $v_0 \coloneqq \sqrt{V(\phi(0,0))}$. For each $(t,j)\in\dom{\phi}$, as $V(\phi(t,j))\leq v_0^2$, using~\eqref{eq:mobius-lyapunov} yields $\phi(t,j)\in K$, where
    \[
        K \coloneqq \left\{(y,q) \in \M : y_1 \in S, q \in Q, y_2 \in \mathbb{B}_{{v_0}}(0, y_1)\right\},
    \]
    and, for $r\geq 0$, $\mathbb{B}_{r}(0, y_1) \coloneqq \{v\in \T{y_1}{\mobius} : g_{\mobius}^{y_1}(v, v) \leq r^2\}$ denotes a compact ball of radius $r$ in the tangent space $\T{y_1}{\mobius}$. As $S$ and $Q$ are compact, and $\mathbb{B}_{v_0}(0, y_1)$ is compact for each $y_1\in S$, it follows that $K$ is compact. Then, $\rge\phi \subset K$ and, therefore, $\phi$ is precompact. }}
    
    \begin{figure}
    \centering
    \vspace{0.2cm}
        \begin{tikzpicture}
    
            \node[anchor=south west, inner sep=0] (img) 
                at (0,0) {{\includegraphics[width=\linewidth, trim = 0cm 11.88cm 6.8cm 0cm, clip]{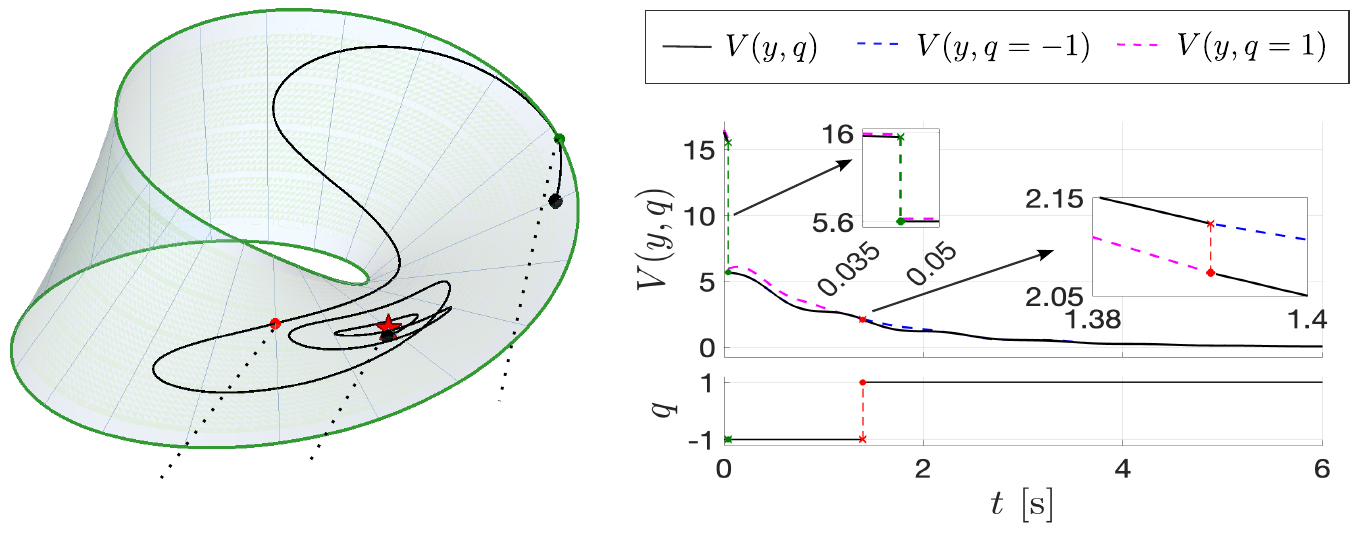}}};
            
            \begin{scope}[x={(img.south east)},y={(img.north west)}]
                \node at (0.38,0.2) {\scalebox{0.8}{ Jump from}};

                \node at (0.38,0.12) {\scalebox{0.8}{$D_1$}};

                \node at (0.1,0.08) {\scalebox{0.8}{ Jump from}};
                \node at (0.1,0.0) {\scalebox{0.8}{$D_2$}};

                \node at (0.22,0.08) {\scalebox{0.8}{$\A$}};

            \end{scope}
        \end{tikzpicture}
    \caption{\emph{Left.} A solution to~$\hybrid$ from Example~\ref{ex:mobius-hybridDynamics} that converges to $\A$. Jumps from the sets $D_1$ and $D_2$ are marked. \emph{Right.} $V(x)$ and $q$ vs time. A jump from the set $D_1$ occurs at $0.04$s. The second jump (from $D_2$) at $1.39$s toggles $q$ from $-1$ to $1$. \vspace{-0.4cm}
    }
    \label{fig:mobius-stability}
\end{figure}

    {} Then, for some $r \in V(C\cup D\cup G(D))$ and using Theorem~\ref{theorem:invariance}, $\phi$ converges to the largest weakly invariant set in~\eqref{eq:invariance}. As $\dot{V}^{-1}(0) = \{x \in C :  y_1 \in S, y_2 = 0\}$ and $\Delta V^{-1}(0) = \varnothing$,~\eqref{eq:invariance} reduces to $X \coloneqq V^{-1}(r) \cap \dot{V}^{-1}(0)$. Now, to obtain the largest weakly invariant set in $X$, pick $x\in X$ and note that $y_2 = 0$. Then, using~\eqref{eq:coord_spray}, $\mathfrak{S}(y) = 0$,~causing $F(x) = \begin{psmallmatrix}
        \vlift{y}{-\grad{V_q}{y_1}}
            \\
            0
    \end{psmallmatrix}$.
    If $\grad{V_q}{y_1} \neq 0$, then, using~\eqref{eq:coord_vlift}, each solution to $\hybrid$ from $x$ leaves $X$ as $y_2$ becomes nonzero along the solution. Therefore, the largest invariant set in $X$ is the set $\{x \in X : \grad{V_q}{y_1} = 0\} = \A$. Therefore, $\phi$ approaches $\A$. As $\phi \in \mathcal{S}_{\hybrid}$ was arbitrary, $\A$ is (globally) attractive for $\hybrid$. 
    Stability of $\A$ for $\hybrid$ follows as sublevel sets of $V$, when restricted to $C\cup D$, are compact and forward invariant for $\hybrid$. In particular, for each $v' > 0$, there exists $u' \in (0, v')$ such that each $\phi\in \maximalSol{\hybrid}{L_V(u')}$ satisfies $\rge{\phi}\in L_V(u') \subset L_V(v')$. Consequently, $\A$ is (globally) asymptotically stable for $\hybrid$. Figure~\ref{fig:mobius-stability} illustrates convergence of a sample solution to $\A$ in the presence of collisions at the boundary $\partial S$ and jumps due to synergistic switching. 
\end{example}

\section{Conclusion}
\label{sec:conclusion}
We have proposed a framework for the formulation and analysis of hybrid dynamical systems whose state evolves on a $C^1$-manifold, which we call geometric hybrid dynamical systems. The framework separates the minimal geometric structure required to analyze such systems from common assumptions that may carry information not naturally associated with the dynamics, and in particular it is independent of structure that is extrinsic to the underlying manifold, such as a Riemannian metric or an embedding into Euclidean space.

Future work includes: (i) formulating robustness properties of geometric hybrid systems with respect to structural perturbations, along with minimal assumptions on their data, in the spirit of~\cite[Ch.~6]{goebel_hybrid_2012}; and (ii) extending the framework to geometric hybrid dynamical systems with inputs, which would enable control-synthesis methods and input-to-state stability certificates with respect to measurable disturbances.

\section*{References\vspace{-14pt}}

\makeatletter
\renewcommand\@biblabel[1]{{\footnotesize[#1]}}
\let\oldbibliography\thebibliography
\renewcommand\thebibliography[1]{
  \oldbibliography{#1}
  \footnotesize
}
\makeatother

\bibliographystyle{ieeetr}
\bibliography{references}

\appendices

\inappendixtrue
\renewcommand{\thesection}{\Alph{section}}
\renewcommand{\thesubsection}{\thesection.\arabic{subsection}}

\makeatletter
\def\thesectiondis{\Alph{section}}
\def\thesubsectiondis{\arabic{subsection}.}
\makeatother

\section{Proof of Theorem~\ref{theorem:basicConditions => nominalWellPosedness}}
\label{app:proofs_nominal_wellposedness}

The proof of Theorem~\ref{theorem:basicConditions => nominalWellPosedness} follows using the next steps.
\begin{enumerate}[label=\arabic*), leftmargin=15pt]
    \item In Proposition~\ref{prop:limit_is_solution_inclusion}, we extend~\cite[Lemma 5.27]{goebel_hybrid_2012} for the {constrained differential inclusion (CDI)} described by $\hybrid' = (C, F, \varnothing, G, \M)$, where the jump set is empty, and show that the graphical limit of a locally eventually precompact and graphically convergent sequence of solutions with the \emph{same time domain} is also a {solution to the CDI. }
    \item In Proposition~\ref{prop:nominalWellPosedness_inclusion_localEventualBoundedness}, we show that the above result also holds for a sequence of solutions to $\hybrid'$ with different time domains. The extension to non-locally eventually precompact sequences of solutions is shown in~{} {\color{mygreen}\relax{}Proposition~\ref{prop:nominalWellPosedness_inclusion_nonlocalEventualBoundedness}.}
    \item Finally, using above results and also accounting for jumps in the solutions to $\hybrid$, we obtain Theorem~\ref{theorem:basicConditions => nominalWellPosedness}.
\end{enumerate}

\begin{proposition}
    \label{prop:limit_is_solution_inclusion}
    Suppose that $\hybrid' = (C, F, \varnothing, G, \M)$\footnote{The system $\hybrid'$ is a continuous-time system as the jump set $D$ is empty.} satisfies Definition~\ref{ass:hybrid_basic_conditions}. Consider a graphically convergent and locally eventually precompact sequence $\{\phi_{i}\}_{i=1}^\infty$ of solutions $\phi_{i}$~to $\hybrid'$, where $\dom{\phi_i} = I \subset \R{}_{\geq 0}$ is a connected, compact interval with $\interior{I}\neq\varnothing$. Then, $\phi \coloneqq \graphlim{i\to\infty}\phi_i$ is a solution to $\hybrid'$ with $\dom{\phi} = I$.
\end{proposition}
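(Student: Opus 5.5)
Since $\dom{\phi_i}=I$ for all $i$ and $D=\varnothing$, each $\phi_i$ is simply a locally absolutely continuous curve $I\to\M$ with $\phi_i(t)\in C$ on $\interior{I}$ and $\dot\phi_i(t)\in F(\phi_i(t))$ for almost all $t$. The plan is to reduce to the Euclidean result \cite[Lemma~5.27]{goebel_hybrid_2012} through charts, patching finitely many subintervals together.

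\emph{Step 1 (the limit is a single-valued curve on $I$).} Local eventual precompactness gives a compact $K\subset\M$ with $\phi_i(I)\subset K$ for all large $i$. Cover $K$ by finitely many relatively compact chart domains $(U_k,\varphi_k)$ such that each $\overline{U_k}$ lies inside a larger chart domain $U_k'$. By \ref{HBC:F}, $F$ is locally precompact relative to $C$. In each chart, $\widetilde F_k(y)=\diffFunc{(\varphi_k)}(F(\varphi_k^{-1}(y)))$ is then uniformly bounded, say by $M$, on $\varphi_k(C\cap\overline{U_k})$. A Lebesgue-number argument then gives $\eta>0$ such that any subinterval of $I$ of length at most $\eta$ is mapped by every large-index $\phi_i$ into a single $U_k$, with $\varphi_k\circ\phi_i$ being $M$-Lipschitz there. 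To get this, I would argue inductively along the subinterval: while the curve stays in $\overline{U_k}$ its coordinate speed is at most $M$, so it cannot travel further than $M\eta$, and I choose $\eta$ smaller than the distance from $\varphi_k(\text{Lebesgue-sized piece})$ to $\partial\varphi_k(U_k)$. This uniform equicontinuity step is where I expect the main obstacle, since no metric on $\M$ is available and all the estimates have to be done chartwise using compactness of $K$.

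Next, partition $I$ into finitely many closed subintervals $I_1,\dots,I_p$ of length at most $\eta$. On each $I_\ell$, pass to a subsequence (finitely many times) so that the $\phi_i|_{I_\ell}$ stay in a fixed chart $U_{k(\ell)}$. Then $\widetilde\phi_i\coloneqq\varphi_{k(\ell)}\circ\phi_i$ is a uniformly Lipschitz, uniformly bounded sequence. By Arzel\`a--Ascoli, a further subsequence converges uniformly. Graphical convergence of the full sequence forces the uniform limit to coincide with the graphical limit, so $\graph{\phi}$ is the graph of a single-valued continuous map on $I$ whose chart images are Lipschitz. Using that the charts are homeomorphisms and that the product topology on $\M\times\R{}$ is compatible with the $\R{n}\times\R{}$ topology on chart images, this also gives $\dom\phi=I$ (cf.\ Lemma~\ref{lemma:domainAndRange}, with equality of domains under eventual precompactness).

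\emph{Step 2 (the limit solves the inclusion).} On each $I_\ell$, the $\widetilde\phi_i$ solve $\dot y\in\widetilde F(y)$ with $y\in\varphi(C\cap U)$. As in the proof of Proposition~\ref{prop:existence}, $\widetilde F$ is outer semicontinuous, locally bounded and convex-valued relative to the closed (in the chart) set $\varphi(C\cap U)$, the latter because $\diffFunc{\varphi}$ is a linear isomorphism depending continuously on the base point. Hence \cite[Lemma~5.27]{goebel_hybrid_2012}, or directly weak-$L^1$ compactness of $\dot{\widetilde\phi}_i$ plus Mazur's lemma, gives $\dot{\widetilde\phi}(t)\in\widetilde F(\widetilde\phi(t))$ for almost all $t\in I_\ell$. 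Closedness of $C$ gives $\widetilde\phi(t)\in\varphi(C\cap U)$ on $\interior{I_\ell}$, and also on shared endpoints that are interior to $I$. Mapping back with $\diffFunc{(\varphi^{-1})}$ as in Proposition~\ref{prop:existence} shows $\frac{d\phi}{dt}\in F(\phi)$ almost everywhere on $I_\ell$. Lemma~\ref{lemma:locAbsCont_coord} then gives local absolute continuity of $\phi$ on each piece, hence on $I$ since continuity glues the pieces. Finally, $\phi(\min I)\in\overline C$ by closedness, so $\phi$ is a solution to $\hybrid'$ with $\dom{\phi}=I$.
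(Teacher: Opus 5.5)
Your proof is correct, but it is organized differently from the paper's.

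\emph{The paper's route.} It argues directly on the limit.
\begin{itemize}
\item It gets $\phi(t)\in C$ from $\phi_i(t_i)\to\phi(t)$ and closedness of $C$.
\item For absolute continuity, it takes a chart $(U_x,\varphi)$ with bounded coordinate speed for each $x\in\phi(I)$ (Lemma~\ref{lemma:LB_equivalence}). It then asserts $\phi_i(I_x)\subset U_x$ for large $i$ and passes the estimates $|\varphi(\phi_i(b))-\varphi(\phi_i(a))|\le k_x|b-a|$ to the limit.
\item The inclusion $\dot\phi\in F(\phi)$ comes from Clarke's convergence theorem applied in charts.
\end{itemize}
Along the way it treats the graphical limit as a single-valued continuous curve with $\phi_i(t_i)\to\phi(t)$ whenever $t_i\to t$, without proving this. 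The claim $\phi_i(I_x)\subset U_x$ likewise needs some uniform control on excursions.

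\emph{Your route.} You supply exactly that control up front. The finite cover of $K$ by relatively compact charts inside larger ones, the uniform speed bound $M$, and the exit-time argument give a uniform $\eta$. Arzel\`a--Ascoli on a finite partition, together with the fact that subsequences of a graphically convergent sequence share its limit, then yields single-valuedness, continuity, $\dom{\phi}=I$, and Lipschitz chart images in one step.

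\emph{What it costs.} Two pieces of bookkeeping need to be made explicit.
\begin{itemize}
\item Since $K$ carries no metric, the ``Lebesgue number'' should be realized by shrinking the cover to sets $V_k$ with $\overline{V_k}\subset U_k$ and $K\subset\bigcup_k V_k$. Then take $M\eta$ smaller than the distance from $\varphi_k(\overline{V_k})$ to $\R{n}\setminus\varphi_k(U_k)$.
\item You must check that the limits agree at shared endpoints of the $I_\ell$. This is easy, using the same equicontinuity estimate across the endpoint.
\end{itemize}

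\emph{The inclusion step.} The Euclidean convergence lemma (or weak-$L^1$ compactness plus Mazur) is interchangeable with the paper's use of Clarke's theorem. Take the constraint set to be $\varphi_k(C\cap\overline{U_k})$, which is compact and hence closed in $\R{n}$, rather than the merely relatively closed $\varphi(C\cap U)$. Your choice $\overline{U_k}\subset U_k'$ is what makes this, and mapping the limit back through the chart, legitimate.
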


\begin{proof}
    The proof follows in three steps.

    \noindent\emph{Step 1. $\phi(t)\in C$ for all $t\in I$:} Let $t\in I$. Since $I$ is closed, there exists  a sequence $\{t_i\}_{i=1}^\infty \subset I$ such that $\lim_{i\to\infty} t_i=t$.
    Consider the sequence $\{\phi_i(t_i)\}_{i=1}^\infty$. As $\{\phi_i\}_{i=1}^\infty$ is graphically convergent and locally eventually precompact, we have $\lim_{i\to \infty}\phi_i(t_i) = \phi(t)$.
    Since each $\phi_i(t_i)\in C$, and $C$ is closed by assumption, it follows that $\phi(t)=\lim_{i\to\infty}\phi_i(t_i)\in C$. Since $t\in I$ was arbitrarily, the conclusion holds for all $t\in I$. 
    
    \medskip
    \noindent\emph{{Step 2.} $\phi$ is locally absolutely continuous on $I$:} %
    Using Lemma~\ref{lemma:domainAndRange}, $\dom{\phi} = I$ and $\rge{\phi} = \lim_{i\to\infty}\rge{\phi_i}$. In~particular, this implies that $\phi(I)$ is closed, as it is the limit of a sequence of sets, which is always closed. Additionally, by the definition of local eventual precompactness, there exists a compact set $K\subset \M$ such that $\phi(I)=\lim_{i\to\infty}\phi_i(I)\subset K$. This implies that $\phi(I)$ is compact as it is a closed subset of a compact set \cite[Proposition~A.45]{Lee}.
    
    Next, as $F$ is locally precompact relative to $C$, {\color{mygreen}\relax{}it follows from Lemma~\ref{lemma:LB_equivalence} that, }for each $x\in \phi(I)\subset C$, there exists a chart $(U_x, \varphi)$ at $x$ and a constant $k_x > 0$ such that $\left(\diffFunc{\varphi_{y} \circ F}\right)(y) \subset k_x \mathbb{B}$ for each $y\in U_x$. Pick any compact connected component of $\phi^{-1}(U_x) \subset I$ and denote it by $I_x$. Note that, as $\phi$ is continuous and $U_x$ is open relative to $\M$, $\phi^{-1}(U_x)$ is open relative to $I$. Then, since $I_x$ is closed relative to $\phi^{-1}(U_x)$, it follows that $I_x\subset \interior{\phi^{-1}(U_x)}$. Consequently, since $\phi=\graphlim{i\to\infty}\phi_i$, there exists $i_0 > 0$ such that $\phi_i(I_x)\subset U_x$ for each $i > i_0$. As each $\phi_i$ is a solution to the differential inclusion $\dot{x}\in F(x) \;\; x\in C$, we have that 
    \(
        \frac{d(\varphi \circ \phi_i)}{dt}(t) \in F(\phi_i(t)) 
    \)
    for almost all $t\in I$. Then, using $\left(\diffFunc{\varphi_y \circ F}\right)(y) \subset k_x \mathbb{B}$ for each $y\in U_x$, we obtain 
    $\frac{d(\varphi \circ \phi_i)}{dt}(t) \in k_x \mathbb{B}$ for almost all $t\in I_x$.
    Pick any interval $[a,b]\in I_x$ with $a < b$. Then, the above inclusion yields
    \(
        |(\varphi_x\circ \phi_i)(b) - (\varphi_x\circ \phi_i)(a)|\leq k_x |b-a| 
    \)
    for all $i\geq i_0$. 
    
    Now, to show that $\varphi_n \circ \phi$ is absolutely continuous on $[a,b]$, choose arbitrary $\varepsilon > 0$ and let $\delta = \varepsilon/k_x$. Pick any countable collection of disjoint nonempty sets $[a_k, b_k]\subset [a,b]$ satisfying $\sum_{k}(b_k-a_k) \leq \delta$. Then, since $\phi$ is the graphical limit, pick sequences $\{a_k^i\}_{i=1}^\infty$ and $\{b_k^i\}_{i=1}^\infty$ that converge to $a_k$ and $b_k$, respectively, such that $\lim_{i\to\infty}\phi_i(a_k^i) = \phi(a_k)$ and $\lim_{i\to\infty}\phi_i(b_k^i) = \phi(b_k)$, and for each $i$, the countable collection of intervals $[a_k^i, b_k^i]$ is disjoint. The existence of such countable disjoint collection of intervals for each $i$ follows as the collection $\{[a_k, b_k]\}_k$ is disjoint. Then, for each $i > i_0$,
    \(
        \sum_{k}\left\vert \brackets{\varphi\circ\phi_i}(b_k^i) - \brackets{\varphi\circ\phi_i}(a_k^i)\right\vert \leq k_x \sum_{k}\left\lvert b_k^i - a_k^i\right\rvert.
    \)
    Taking limit as $i\to \infty$, the above inequality results in
    \begin{align*}
        \sum_{k}\left\vert \varphi(\phi(b_k)) - (\varphi(\phi(a_k))\right\vert \leq k_x \sum_{k} |b_k - a_k| &\leq k_x\delta = \varepsilon.
    \end{align*}
    Therefore, $\varphi\circ\phi$ is absolutely continuous on $[a,b]$. Since the choice of $[a,b]\subset I_x$ and the choice of $I_x\subset \varphi^{-1}(U_x)$ are arbitrary, it follows that $\varphi\circ\phi$ is locally absolutely continuous on each compact and connected component of $\phi^{-1}(U_x)$. Furthermore, since each connected component of $\phi^{-1}(U_x)$ can be written as a countable union of its connected, compact components, it follows that $\varphi\circ\phi$ is locally absolutely continuous on each connected component of $\varphi^{-1}(U_x)$. As $x\in \phi(I)$ is arbitrary and $\varphi$ is a $C^1$-diffeomorphism, {\color{mygreen}\relax{}it follows from Lemma~\ref{lemma:locAbsCont_coord} that}  $\phi$ is locally absolutely continuous on $I$.

    {}
    {\color{mygreen}\relax{}\medskip
    \noindent \emph{Step 3. $\phi$ satisfies $\frac{d\phi}{dt}(t) \in F(\phi(t))$ for almost all $t\in I$:} {\color{mygreen}Pick any $x \in \phi(I)$, let $(U_x, \varphi_x)$ be a coordinate chart coming from local precompactness of $F$, and let $[a,b]\subset I$ be any nonempty interval such that $\phi([a,b])\subset U_x$. Following the proof of Proposition~\ref{prop:existence}, consider the coordinate representation of the constrained differential inclusion $\hybrid'=(C, F, \varnothing, G, \M)$ as
    \begin{align}\label{eq:localFlowDynamics1}
        \dot{y} \in \widetilde{F}(y) \qquad  y\in \varphi_x(C\cap U_x),
    \end{align}
    where $y = \varphi_x(x)$ and $\widetilde{F}$ as in~\eqref{eq:localFlowDynamics} with the chart $(V, \varphi)$ therein replaced by $(U_x, \varphi_x)$. Similarly, we also obtain that $\widetilde{F}$ is convex valued and upper semicontinuous relative to $\varphi_x(C\cap U_x)$. Since each $\phi_i$ is a solution to $\hybrid'$, each $\varphi_x\circ \phi_i$, when restricted to $[a, b]$, is a solution to~\eqref{eq:localFlowDynamics1} such that $\varphi_x \circ\phi = \graphlim{i\to\infty}\varphi_x\circ \phi_i$.  Then, for each large enough $i$, $\varphi_x\circ \phi_i(a)$ is contained in a compact set. Consequently, \cite[Thm.~3.1.7]{clarke1990optimization} yields that 
    \(
        \frac{d (\varphi_x \circ \phi)}{dt}(t) = \diffFunc{(\varphi_x)_{\phi(t)}}(\dot{\phi}(t)) \in \widetilde{F}((\varphi_x\circ\phi)(t))
    \)
    for almost all $t\in [a,b]$. Applying the linear isomorphism $\brackets{\diffFunc{(\varphi_x)_{\phi(t)}}}^{-1}$ to both sides, the inclusion is preserved, and we obtain $\frac{d\phi}{dt}(t) \in F(\phi(t))$ for almost all $t\in [a,b]$. As $x\in \phi(I)$ and $[a,b]\subset \phi^{-1}(I)$ are arbitrary, $\phi \in \mathcal{S}_{\hybrid'}$.}}
\end{proof}

\begin{proposition}
\label{prop:nominalWellPosedness_inclusion_localEventualBoundedness}
    Suppose that $\hybrid' = (C, F, \varnothing, G, \M)$ satisfies the geometric hybrid basic conditions in Definition~\ref{ass:hybrid_basic_conditions}. Consider a graphically convergent and locally eventually precompact sequence $\{\phi_i\}_{i=1}^\infty$ of solutions $\phi_i : \dom{\phi_i}\to \M$ to $\hybrid'$. 
    Then, $\phi=\graphlim{i\to\infty}\phi_i$ is a solution to $\hybrid'$. 
\end{proposition}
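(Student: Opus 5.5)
The plan is to reduce to Proposition~\ref{prop:limit_is_solution_inclusion} by working on compact time windows. Since $\hybrid'$ has empty jump set, each $\dom{\phi_i}$ is an interval $[0,T_i]$ or $[0,T_i)$ with $T_i\in[0,\infty]$, viewed as a subset of $\R{}_{\geq0}\times\{0\}$. First I will use Lemma~\ref{lemma:domainAndRange}, and in particular the equality \eqref{eq:domainRangeEquality}, which holds under local eventual precompactness. It identifies $\dom{\phi}$ with $\lim_{i\to\infty}\dom{\phi_i}$. That limit is an interval $I$ containing $0$, with $\sup I=T\coloneqq\lim T_i$ after passing to a subsequence, and $T\in[0,\infty]$. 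The trivial case $T=0$ needs only that $\phi(0)\in C$, which holds by closedness of $C$ as in Step~1 of Proposition~\ref{prop:limit_is_solution_inclusion}. Single-valuedness of $\phi$ on $I$ will follow from the uniform local Lipschitz bounds in coordinates used below, as in Step~2 of that proposition.

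Next, I fix an arbitrary compact interval $[0,\tau]\subset I$ with $\tau<T$. For all $i$ large, $T_i>\tau$, so the truncations $\phi_i|_{[0,\tau]}$ are solutions to $\hybrid'$ on the common domain $[0,\tau]$. The step I expect to be the main obstacle is showing that these truncations are still graphically convergent, with limit $\phi|_{[0,\tau]}$. Graph truncation does not commute with set limits in general, since points near time $\tau$ might be lost. I will handle this by restricting to $[0,\tau']$ with $\tau<\tau'<T$. The sequence is locally eventually precompact, and $F$ is locally precompact, so Lemma~\ref{lemma:LB_equivalence} gives uniform coordinate velocity bounds along the $\phi_i$ on $[0,\tau']$. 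By compactness of the range these arcs are equicontinuous in charts, so a point $(t,\phi(t))$ of the limit graph is approached by $(t,\phi_i(t))$. Hence the inner limit of the truncated graphs contains $\operatorname{gph}\phi|_{[0,\tau]}$, and the outer limit is contained in it trivially. If this fails directly, I will instead extract a graphically convergent subsequence via Lemma~\ref{lemma:sequentialCompactness_beer}, since the truncated graphs lie in the compact set $[0,\tau]\times K$ and therefore cannot escape to the horizon. Its limit must coincide with $\phi|_{[0,\tau]}$ by the inclusions just described.

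With the truncations graphically convergent, Proposition~\ref{prop:limit_is_solution_inclusion} shows that $\phi|_{[0,\tau]}$ is a solution to $\hybrid'$ on $[0,\tau]$. Since $\tau<T$ is arbitrary, $\phi$ is locally absolutely continuous on $I\cap[0,T)$, satisfies $\dot\phi\in F(\phi)$ almost everywhere, and lies in $C$ there. If $T<\infty$ and $T\in I$, I apply the same argument on $[0,T]$, using sequences $t_i\to T$ with $t_i\le T_i$ in place of the fixed endpoint. If $T=\infty$, local absolute continuity only requires compact subintervals, so the window argument suffices. Together these establish that $\phi$ is a solution to $\hybrid'$, with condition $(S_0)$ holding because $\phi(0)\in C$.
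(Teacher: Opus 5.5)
Your proposal is correct and follows essentially the same route as the paper: identify $\dom{\phi}$ with $\lim_{i\to\infty}\dom{\phi_i}$ via Lemma~\ref{lemma:domainAndRange}, restrict to compact time windows on which all late $\phi_i$ are defined, and apply Proposition~\ref{prop:limit_is_solution_inclusion} to the truncations. Your equicontinuity argument for why the truncated graphs converge to the truncated limit fills in a step the paper only asserts. At a closed right endpoint $T<\infty$, Proposition~\ref{prop:limit_is_solution_inclusion} cannot be reapplied, because it needs a common domain that the $\phi_i$ need not share there; instead, pass the uniform coordinate velocity bound to the limit, which makes $\varphi\circ\phi$ Lipschitz near $T$.
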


\begin{proof}
    As $\dom{\phi_i}$ is connected for each $i$ and $\dom{\phi} = \lim_{i\to\infty} \dom{\phi_i} $ by Lem.~\ref{lemma:domainAndRange}, $\dom{\phi}$ is connected and closed. Pick any compact interval $[a,b]\subset \interior{\dom{\phi}}$. Then, $[a,b]\subset \interior{\lim_{i\to\infty}\dom{\phi_i}}$. By Def.~\ref{def:setConvergence}, there exists $i_0 > 0$ such that $[a,b]\subset\interior{\dom{\phi_i}}$ for all $i > i_0$. Consider~the sequence $\{\phi_i\big|_{[a,b]}\}_{i=1}^\infty$. Using graphical convergence and Lem.~\ref{lemma:domainAndRange}, $\phi\big|_{[a,b]} = \graphlim{i\to\infty}\phi_i\big|_{[a,b]}$. By Prop.~\ref{prop:limit_is_solution_inclusion},~$\phi\big|_{[a,b]}$ is a solution to $\hybrid'$. As $[a,b]\subset \interior{\dom{\phi}}$ is arbitrary, $\phi \in \mathcal{S}_{\hybrid'}$.
\end{proof}

{{
\begin{proposition}
    \label{prop:nominalWellPosedness_inclusion_nonlocalEventualBoundedness}
    Suppose that $\hybrid' = (C, F, \varnothing, G, \M)$ satisfies Definition~\ref{ass:hybrid_basic_conditions}. Consider a graphically convergent sequence $\{\phi_i\}_{i=1}^{\infty}$ of solutions $\phi_i$ to $\hybrid'$ that is not locally eventually precompact and does not escape to the horizon. Then, 
    \begin{enumerate}[label = (\roman*)]
        \item \label{item:Prop-i} there exists the smallest $t^* > 0$ such that, for each sequence $\{t_i\}_{i=1}^{\infty}$ of times $t_i \in \dom{\phi_i}$ with $\lim_{i\to\infty} t_i = t^*$, the sequence $\{\phi_i(t_i)\}_{i=1}^\infty$ escapes to the horizon;
        \item \label{item:Prop-ii} {\color{mygreen}\relax{}{\color{mygreen}the truncated graphical limit}} $\phi \coloneqq \left(\graphlim{i\to\infty}{\phi_i} \right)|_{< t^*}$ is a maximal solution to $\hybrid'$;
        \item \label{item:Prop-iii} for each sequence $\{t_i\}_{i=1}^\infty$ of times $t_i \in \dom{\phi}$ with $t_i \to t^*$, the sequence $\{\phi(t_i)\}_{i=1}^\infty$ escapes to the horizon.
    \end{enumerate}
\end{proposition}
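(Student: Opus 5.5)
The plan follows the Euclidean argument behind \cite[Lemma~6.9]{goebel_hybrid_2012}, with compact sets replacing bounded balls. We work only with compact subsets of $\M$ and the sets $\dom{\phi_i}\subset\R{}_{\geq 0}$, so no metric on $\M$ is needed.

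\emph{Construction of $t^*$.} Since the sequence $\{\phi_i\}$ is not locally eventually precompact, there is $m>0$ such that no compact $K$ and index $i_0$ contain all $\phi_i(t)$ with $t<m$, $i>i_0$. Define
\[
t^*\coloneqq\sup\{s\ge 0:\ \exists\, i_0,\ \exists\, K \text{ compact with } \phi_i(t)\in K\ \forall i>i_0,\ \forall t\in\dom{\phi_i}\cap[0,s]\}.
\]
Then $t^*\le m<\infty$. To show $t^*>0$, we use that the sequence does not escape to the horizon, so by Lemma~\ref{lemma:sequentialCompactness_beer} (after passing to a subsequence) some $\phi_i(0)$ lie eventually in a compact neighborhood $K_0$ of a point $\xi$. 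Take a chart $(U,\varphi)$ at $\xi$ with $\varphi(U)\supset \overline{B}$, a closed ball, and use the bound $\|\diffFunc{\varphi}\circ F\|\le k$ on a compact subset of $U$ (Lemma~\ref{lemma:LB_equivalence}). By a standard exit-time estimate in coordinates, each $\phi_i$ starting near $\xi$ stays in $\varphi^{-1}(\overline B)$ for a uniform time $\varepsilon/k$. This gives $t^*>0$.

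For each $s<t^*$, the restrictions $\phi_i|_{[0,s]}$ are locally eventually precompact. Proposition~\ref{prop:nominalWellPosedness_inclusion_localEventualBoundedness} then shows that the graphical limit restricted to $[0,s]$ is a solution. Taking the union over $s<t^*$ gives that $\phi=(\graphlim{}\phi_i)|_{<t^*}$ is a solution on $[0,t^*)$, with domain determined via Lemma~\ref{lemma:domainAndRange}.

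\emph{Escape at $t^*$, items (i) and (iii).} Suppose some $t_i\to t^*$ has $\phi_i(t_i)$ not escaping to the horizon. Then a subsequence stays in a compact $K_1$. We run the same local chart argument backward and forward around a cluster point $\eta\in K_1$. The uniform speed bound near $\eta$ forces $\phi_i$ to remain in a compact neighborhood of $\eta$ on $[t_i-\varepsilon,t_i+\varepsilon]$. Combined with precompactness on $[0,t^*-\varepsilon]$, this contradicts the definition of $t^*$. Passing to subsequences needs care, since local eventual precompactness is about the tail of the whole sequence. We handle this by noting that graphical convergence makes subsequence limits agree, and by arguing with a diagonal choice of $m$. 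Minimality of $t^*$ is built into the sup definition.

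For (iii), we argue the same way. If $\phi(t_i)$ stayed in a compact $K_1$ along $t_i\nearrow t^*$, graphical convergence would give $\phi_{k}(s_k)$ near $\phi(t_i)$ with $s_k\to t^*$, contradicting (i).

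\emph{Maximality, item (ii).} Any extension of $\phi$ beyond $t^*$ would make $\lim_{t\nearrow t^*}\phi(t)$ exist, and this limit lies in some compact neighborhood. That contradicts (iii).

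The main obstacle is the uniform local exit-time estimate and the bookkeeping that turns it into precompactness across a whole tail of the sequence. This is where the chart-based bound on $F$ from Lemma~\ref{lemma:LB_equivalence}, together with local compactness of $\M$, replaces the Euclidean ball arguments.
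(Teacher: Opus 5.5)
Your argument is correct and has the same skeleton as the paper's proof. The shared steps are: a threshold time $t^*$; positivity of $t^*$ from the behavior at time $0$; a solution on $[0,t^*)$ built from truncations via the locally eventually precompact case; escape at $t^*$ by contradiction with the choice of $t^*$; item (iii) by transferring back to the $\phi_i$ through graphical convergence; and maximality by contradiction.

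The differences are in the details:
\begin{itemize}
\item You define $t^*$ as the supremum of horizons on which the tail stays in a compact set. The paper takes the infimum of the times at which some subsequence escapes to the horizon. The two numbers coincide.
\item More substantively, the paper's contradiction step asserts a window $[t_i-\tau,t_i+\tau]$, with $\tau$ independent of $i$, on which $\phi_i$ stays in a larger compact set. It justifies this only by continuity of each $\phi_i$, which by itself gives a $\tau$ depending on $i$. Your use of the chart-based bound of Lemma~\ref{lemma:LB_equivalence} gives a uniform speed bound, hence equicontinuity of the coordinate representations near a cluster point. That is exactly what makes $\tau$ uniform, and the same exit-time estimate also gives $t^*>0$ directly rather than by contradiction with item (i).
\item You derive maximality from item (iii): an extension would force $\lim_{t\nearrow t^*}\phi(t)$ to exist. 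The paper derives it from item (i) and closedness of the graphical limit. Both work.
\end{itemize}

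Two points need tightening, and the second applies equally to the paper.

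First, after extracting a subsequence with $\phi_{i_k}(0)\to\xi$ (and likewise at $t^*$), pass to the whole tail. Note that $(0,\xi)\in\lim_{i}\operatorname{gph}\phi_i$, so there are $s_i\to 0$ with $\phi_i(s_i)\to\xi$ for every $i$. Then run the exit-time estimate forward and backward from $s_i$. Otherwise you control only a subsequence, which does not suffice for your supremum.

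Second, the inference ``the sequence does not escape to the horizon, hence a subsequence of $\{\phi_i(0)\}$ lies in a compact set'' is valid only if the hypothesis is read as a condition on the initial points. Nonescape of the graphs or ranges does not give it. For example, take $\mathcal{M}=C=\mathbb{R}$, $F(x)=\{x^2\}$ and $\phi_i(t)=-i/(1+it)$. The graphs converge to the graph of $t\mapsto -1/t$ on $(0,\infty)$ and do not escape. Yet every sequence escapes at $t=0$, and no sequence escapes at any $t>0$, so no $t^*>0$ exists. The paper's positivity argument (``$0\in\operatorname{dom}$ of the graphical limit'') relies on the same reading. Your proof is therefore correct under the interpretation the paper itself uses.
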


{} %
}}

{\color{mygreen}\relax{}{\color{mygreen}\begin{proof}
    The proof follows similarly to the proof of~\cite[Thm.~5.29]{goebel_hybrid_2012}. We present it here for completeness. 

    \medskip\noindent
    (i) As $\{\phi_i\}_{i=1}^{\infty}$ is not locally eventually precompact, by negating Definition~\ref{def:locallyEventuallyPrecompctHybridArcs}, there exists $t \geq 0$, a subsequence $\{\phi_{i_k}\}_{k=1}^\infty$, and points $t_k \in \dom{\phi_{i_k}}$ such that $\lim_{k\to\infty} t_k = t$ and the sequence $\{\phi_{i_k}(t_k)\}_{k=1}^\infty$ escapes to the horizon. Let $T$ be the set of all such $t$'s, and let $t^* \coloneqq \inf T$. Now, proceeding by contradiction, suppose that there exists a subsequence $\{\phi_{i_k}\}_{k=1}^\infty$ of $\{\phi_i\}_{i=1}^\infty$, points $t_k \in \dom{\phi_{i_k}}$ for each $k\in \{1,2,\ldots\}$ such that $t_k \to t^*$, and a compact set $K\subset \M$ such that $\phi_{i_k}(t_k)\in K$ for each $k\in\{1,2,\ldots \}$. Consequently, the subsequence $\{\phi_{i_k}|_{\leq t_k}\}_{k=1}^\infty$ is locally eventually precompact. Then, following~Lemma~\ref{lemma:domainAndRange} and the fact that $\{\phi_i\}_{i=1}^\infty$ is graphically convergent, we have that $t^* \in \dom{\left(\graphlim{k\to\infty}{\phi_{i_k}} \right)}$ and, therefore, that $\left(\operatorname{gph-limsup}_{k\to\infty} \phi_{i_k}\right)(t^*) \cap K \neq \varnothing$. As $\{\phi_{i_k}\}_{k=1}^\infty$ was chosen to be a subsequence of $\{\phi_{i}\}_{i=1}^\infty$ and the latter sequence converges, there exist points $t_i \in \dom{\phi_i}$ with $t_i \to t^*$ such that, for any compact set $K' \subset \M$ with $K \subset \interior{K'}$,  $\phi_{i}(t_i) \in K'$ for each large enough $i$. Next, pick any larger compact set $K'' \subset \M$ so that $K' \subset \interior{K''}$. Then, as each $\phi_i$ is continuous and $\phi_i(t_i) \in K'$ for each large enough $i$, there exists $\tau > 0$ such that
    \[
    \phi_i(t_i) \in K'' \qquad \forall t \in \dom{\phi_i} \cap [t_i - \tau, t_i + \tau]
    \]
    holds for each large enough $i$. Consequently, as $t_i \to t^*$, it follows that $T \cap (t^* - \tau, t^* + \tau) = \varnothing$. Therefore, $t^*$ cannot be the infimum of $T$, causing a contradiction. As a result, for any sequence of times $t_i \in \dom{\phi_i}$ with $t_i \to t^*$, the sequence $\{\phi_i(t_i)\}_{i=1}^\infty$ escapes to the horizon. As a result, we also have that $t^* \in T$. 
    
    To show that $t^* > 0$, note that $t^*\geq 0$ as $t_i \to t^*$ and each $t_i \geq 0$. Suppose by contradiction that $t^*=0$. Let $\phi' \coloneqq \graphlim{i\to\infty}{\phi_i}$. Then, pick any sequence $\{t_i\}_{i=1}^\infty$ of points $t_i \in \dom{\phi_i}$ such that $t_i \to t^*$, causing $\{\phi_i(t_i)\}_{i=1}^\infty$ to escape to the horizon. As the sequence $\{\phi_i\}_{i=1}^\infty$ is graphically convergent and does not escape to the horizon, and as $0 \in \dom{\phi_i}$ for each $i \in\{1,2,\ldots\}$ by Definition~\ref{def:solution}, it follows that $0 \in \dom{\left(\graphlim{i\to\infty}{\phi_i} \right)}$ and there exists a compact set $L \subset \M$ such that $\phi'(0) = \phi'(t^*) \subset L$. This contradicts the claim that $\{\phi_i(t_i)\}_{i=1}^\infty$ escapes to the horizon. Therefore, $t^*$ must be positive. 

    \medskip\noindent
    (ii) To show that $\phi$ is a maximal solution to $\hybrid'$, it suffices to show that, for each $t' \in [0, t^*)$, the restriction $\phi|_{\leq t'}$ is a solution to $\hybrid'$ and $\dom{\phi} = [0, t^*)$. Noting that the sequence $\{\phi_i|_{\leq t'}\}_{i=1}^\infty$ is locally eventually precompact, it follows from Proposition~\ref{prop:limit_is_solution_inclusion} that $\graphlim{i \to\infty}{\phi_i|_{\leq t'}}$ is a solution to $\hybrid'$. As the previous statement holds for each $t' \in [0, t^*)$, it follows that $\phi$ is a solution to $\hybrid'$ with $\dom{\phi} = [0, t^*)$. 

    To prove maximality of $\phi$, suppose by contradiction that $\phi$ is not a maximal solution to $\hybrid'$. Then, there exists a solution $\psi$ to $\hybrid'$ with $\dom{\psi} = [0, t^*]$ and $\psi(t) = \phi(t)$ for each $t\in \dom{\phi}$. Now, pick a sequence $\{t_i\}_{i=1}^\infty \subset [0, t^*)$ with $t_i \in \dom{\phi_i}$ and $t_i \to t^*$. As $\phi_i \to \phi$ graphically and $\graph{\psi} = \overline{\graph{\phi}}$ by construction, it follows that $\phi_i(t_i) \to \psi(t^*)$. This is a contradiction as $\{\phi_i(t_i)\}_{i=1}^\infty$ escapes to the horizon. Therefore, $\phi$ is maximal. 

    \medskip\noindent
    (iii) Pick any sequence $\{t_i\}_{i=1}^\infty \subset [0, t^*)$ with $t_i \to t^*$. Suppose by contradiction that $\{\phi(t_i)\}_{i=1}^{\infty}$ does not escape to the horizon; i.e., there exists a compact set $K \subset \M$ and a subsequence $\{t_{i_k}\}_{k=1}^\infty$ of $\{t_i\}_{i=1}^\infty$ such that $\phi(t_{i_k})\in K$ for each $k\in \{1,2,\ldots\}$. As $\phi_i \to \phi$ graphically, for each $k\in \{1,2,\ldots\}$, there exists a sequence $\{s_{k,n}\}_{n=1}^\infty$ with $s_{k,n}\in\dom{\phi_{i_k}}$, $\lim_{n\to\infty} s_{k,n} = t_{i_k}$, and $\lim_{n\to\infty}\phi_{i_k}(s_{k,n}) = \phi(t_{i_k}) \in K$. Then, for each compact $K' \subset \M$ with $K \subset \interior{K'}$, $\phi_{i_k}(s_{k,n})\in {K'}$ for each $k \in \{1,2,\ldots\}$ and each large enough $n$. Let $n(k)$ denote the large enough $n$ for each $k\in \{1,2,\ldots\}$, and suppose without loss of generality that $n : \{1,2,\ldots\} \to \{1,2,\ldots\}$ is an increasing function. Then, letting $l_k \coloneqq s_{k, n(k)}$, we have that 
    \begin{align}
    \label{eq:contradiction_nonlocalprecompact}
        \phi_{i_k}(l_k) \in {K'} \qquad \forall k\in \{1,2,\ldots\}.
    \end{align}
    As
    \(
        \lim_{k\to\infty}l_{k} = \lim_{k \to \infty} s_{k, n(k)} = \lim_{k\to\infty} t_{i_k} = t^*,
    \) 
    item~\ref{item:Prop-i} contradicts with~\eqref{eq:contradiction_nonlocalprecompact}. Therefore, $\{\phi_i(t_i)\}_{i=1}^\infty$ escapes to the horizon, and the proof is complete. 
\end{proof}
}}

{\color{mygreen}\relax{}Now, we are ready to present the proof of Theorem~\ref{theorem:basicConditions => nominalWellPosedness}. }
\subsubsection{Proof of Theorem~\ref{theorem:basicConditions => nominalWellPosedness}}
    (a) Suppose that the sequence $\{\phi_i\}_{i=1}^\infty$ is locally eventually precompact. Following Lemma~\ref{lemma:domainAndRange} and \cite[Ex.~12.11]{goebel_setvaluedAnalysis}, $\dom{\phi} = \lim_{i\to\infty}\dom{\phi_i}$ is a hybrid time domain. {\color{mygreen}\relax{}Note that $\dom{\phi}$ may not be compact.} Pick any $(T,J)\in\dom{\phi}$ and consider a sequence $0\leq t_0 \leq t_1 \leq \ldots \leq t_{J+1} = T$ such that
    \(
        \dom{\phi} \cap \brackets{[0, T] \times \{0, 1, \ldots, J\}} = \bigcup_{j=0}^J \brackets{[t_j, t_{j+1}]\times \{j\}}.
    \)
    Define $I_{\phi}^j \coloneqq [t_j, t_{j+1}]$ for each $j\in \{0,1,, \ldots, J\}$. Pick~any $j\in \{0,1,\ldots, J\}$ for which $\interior{I^j_{\phi}}\neq \varnothing$ and any~connected, compact interval $\Xi^j\subset \interior{I^j_{\phi}}$. Since $\dom{\phi} = \lim_{i\to\infty}\dom{\phi_i}$, it follows that there exists a connected, compact interval $I^j_{\phi_i}\subset \R{}_{\geq 0}$ such that $I^j_{\phi_i}\times \{j\}\subset \dom{\phi_i}$ and  $(I_{\phi}^j \times \{j\}) \cap (\Xi^j\times \{j\}) \neq \varnothing$ for each large enough $i$. Let $I_{\phi_i}^j$ be the largest such interval. Using $\dom{\phi} = \lim_{i\to\infty}\dom{\phi_i}$, it follows that $\lim_{i\to\infty}I^j_{\phi_i} = I^j_{\phi}$. Consequently, $\Xi^j \subset I^j_{\phi_i}$ for each large enough $i$. Consider the truncated sequence $\{\phi_i\big|_{\Xi^j\times \{j\}}\}_{i=1}^\infty$. It follows from graphical convergence of this sequence that $\phi\big|_{\Xi^j\times\{j\}} = \graphlim{i\to\infty}\phi_i \big|_{\Xi_j\times\{j\}}$. Following Proposition~\ref{prop:nominalWellPosedness_inclusion_localEventualBoundedness}, $\phi\big|_{\Xi^j\times\{j\}}$ solves the constrained differential inclusion $\dot{x}\in F(x) \; x\in C$. Since this holds for each $(T,J)\in\dom{\phi}$, each $j\in\{0,1,\ldots, J\}$, and each $\Xi^j\subset I^j_{\phi}$, $\phi$ satisfies the constrained differential inclusion in $\hybrid$. 

    {}
    {\color{mygreen}\relax{}{To show that $\phi$ also satisfies the constrained difference inclusion in $\hybrid$, pick any $(t,j)\in\dom{\phi}$ such that $(t,j+1)\in\dom{\phi}$. Then, $I_{\phi}^{j}$ is closed, and $I_{\phi}^{j+1}$ is closed on the left. Then, for some sequences $(t'_i, j), (t''_i, j+1)\in \dom{\phi_i}$, we have $\phi(t,j) = \lim_{i\to\infty}\phi_i(t'_i, j)$ and $\phi(t, j+1) = \lim_{i\to\infty} \phi_i(t''_i, j+1)$. As $\dom\phi_i$ is a hybrid time domain for each $i$, there exists $t_i \in [t'_i, t''_i]$ such that $(t_i, j), (t_i, j+1)\in \dom{\phi_i}$ for each large enough $i$, causing $\phi_i(t_i, j)\in D$ and $\phi_i(t_i, j+1)\in G(\phi_i(t_i, j))$ for all such $i$'s. As $\phi_i(t_i, j) \to \phi(t,j)$ and $\phi_i(t_i, j+1) \to \phi(t,j+1)$, it follows from closedness of $D$ that $\phi(t,j)\in D$. Next, as $G$ is outer semicontinuous relative to $D$, $(\phi_i(t_i, j), \phi_i(t_i, j+1)) \to (\phi(t,j), \phi(t,j+1)) \in \graph{G|_{D}}$, causing $\phi(t,j+1) \in G(\phi(t,j))$. Therefore, $\phi$ satisfies the constrained difference inclusion. }}

    (b) Now, suppose that the sequence $\{\phi_i\}_{i=1}^\infty$ is not locally eventually precompact. Let $j^*$ denote the smallest $J \in \mathbb{N}$ for which the sequence $\{\phi_i\}_{i=1}^\infty$, when truncated to $\R{}_{\geq 0} \times \{0,1,\ldots, J\}$, is not locally eventually precompact. For each $i\in \{1,2,\ldots\}$, define $a_i \coloneqq \inf\{t:  (t,j^*)\in \dom{\phi_i}\}$. Similarly, let $a \coloneqq \inf\{t:(t,j^*)\in \dom{\phi}\}$. Due to graphical convergence, for each large enough $i$, $a_i$ is finite and achieves the infimum, causing $(a_i, j^*)\in\dom{\phi_i}$ for all such $i$'s. If $j^* = 0$, then $a_i=0$ for each large enough $i$. If $j^* > 0$, then for each large enough $i$, $(a_i, j^* - 1) \in \dom{\phi_i}$. For such $i$'s, 
    \begin{align}
    \label{eq:jumpInclusion}
        \phi_i(a_i, j^*)\in G(\phi_i(a_i, j^*-1)). 
    \end{align}
    Now, consider the truncation of $\{\phi_i\}_{i=1}^\infty$ to $\R{}_{\geq 0}\times \{0,1,\ldots, j^*-1\}$, which is locally eventually precompact. Using the proof of item a above, the graphical limit of this restriction, i.e., $\phi|_{\R{}_{\geq 0} \times \{0,1,\ldots, j^*-1\}}$, is a solution to $\hybrid$. Local eventual precompactness also yields that $\phi_i(a_i, j^*-1) \to \phi(a, j^* - 1)$. Using this property with local precompactness of $G$, there exists an open neighborhood $U$ of $\phi(a, j^*-1)$ such that $\phi_i(a_i, j^*-1)\in U$  for each large enough $i$ and $G(U)$ is compact. Therefore, from~\eqref{eq:jumpInclusion}, the sequence $\phi_i(a_i, j^*)$ converges to $\phi(a, j^*)$. Using outer semicontinuity of $G$ relative to $D$, $\phi(a, j^*) \in G(\phi(a, j^*-1))$. Finally, as $\phi_i(a_i, j^*) \to \phi(a, j^*)$, the sequence $\{\phi_i\}_{i=1}^\infty$, restricted to $\R{}_{\geq a}\times \{j^*\}$, does not escape to the horizon. 

{\color{mygreen}\relax{}Then, Proposition~\ref{prop:nominalWellPosedness_inclusion_nonlocalEventualBoundedness} completes the proof, with $\tau$ from the theorem statement equal to $a+t^*+j^*$, where $t^*$ is obtained from Proposition~\ref{prop:nominalWellPosedness_inclusion_nonlocalEventualBoundedness}. }
\hfill $\blacksquare$
\section{Auxiliary Results on Geometric Notions}
\label{appendix:B}

\begin{lemma}[Coordinate independence of Definition~\ref{def:tangentConeManifolds}]
\label{lemma:coordInvariant-tangentCone}
    Given a $C^1$-manifold $\M$, a nonempty set $S\subset \M$, a point $x\in S$, and two coordinate charts $(U, \varphi)$ and $(V, \psi)$ of $\M$ at $x$,
    \(
        (\diffFunc{\varphi_x})^{-1} (\TconeEuclidean{\varphi(S\cap U)}{\varphi(x)}) = (\diffFunc{\psi_x})^{-1} (\TconeEuclidean{\psi(S\cap V)}{\psi(x)}).
    \)
\end{lemma}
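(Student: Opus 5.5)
The plan is to reduce everything to the transition map between the two charts and use the chain rule for Euclidean tangent cones under $C^1$-diffeomorphisms. Let $W\coloneqq U\cap V$, an open neighborhood of $x$, and let $\theta\coloneqq \psi\circ\varphi^{-1}:\varphi(W)\to\psi(W)$ be the transition map. Since $\M$ is a $C^1$-manifold, $\theta$ is a $C^1$-diffeomorphism between open subsets of $\R{n}$. Moreover, $\psi=\theta\circ\varphi$ on $W$, so the chain rule gives $\diffFunc{\psi_x}=D\theta(\varphi(x))\circ\diffFunc{\varphi_x}$. The claimed identity is therefore equivalent to
\begin{align*}
    \TconeEuclidean{\psi(S\cap V)}{\psi(x)} = D\theta(\varphi(x))\bigl(\TconeEuclidean{\varphi(S\cap U)}{\varphi(x)}\bigr).
\end{align*}

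First, I localize. The Euclidean tangent cone depends only on the set near the base point: any sequence $x_i\to\varphi(x)$ eventually lies in the open set $\varphi(W)$. Hence $\TconeEuclidean{\varphi(S\cap U)}{\varphi(x)}=\TconeEuclidean{\varphi(S\cap W)}{\varphi(x)}$, and likewise $\TconeEuclidean{\psi(S\cap V)}{\psi(x)}=\TconeEuclidean{\psi(S\cap W)}{\psi(x)}$. In addition, $\psi(S\cap W)=\theta(\varphi(S\cap W))$.

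Second, I prove the inclusion ``$\supset$''. Take $w\in\TconeEuclidean{\varphi(S\cap W)}{\varphi(x)}$ with witnesses $x_i\in\varphi(S\cap W)$ and $\tau_i\searrow0$ such that $(x_i-\varphi(x))/\tau_i\to w$. Set $y_i\coloneqq\theta(x_i)\in\psi(S\cap W)$, so that $y_i\to\psi(x)$. Since $\theta$ is differentiable at $\varphi(x)$,
\begin{align*}
    \theta(x_i)-\theta(\varphi(x)) = D\theta(\varphi(x))(x_i-\varphi(x)) + o(|x_i-\varphi(x)|).
\end{align*}
The difference quotients $(x_i-\varphi(x))/\tau_i$ converge, so they are bounded, and thus $o(|x_i-\varphi(x)|)/\tau_i\to0$. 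Dividing by $\tau_i$ gives $(y_i-\psi(x))/\tau_i\to D\theta(\varphi(x))w$. This shows $D\theta(\varphi(x))w$ lies in the right-hand cone.

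Third, the reverse inclusion follows by the same argument applied to $\theta^{-1}$, which is $C^1$ with $D\theta^{-1}(\psi(x))=D\theta(\varphi(x))^{-1}$. Applying $(\diffFunc{\psi_x})^{-1}=(\diffFunc{\varphi_x})^{-1}\circ D\theta(\varphi(x))^{-1}$ to both sides of the displayed identity then yields the statement.

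The only delicate point is the boundedness of the difference quotients in the second step, which is needed to absorb the little-$o$ term. It is immediate from the convergence of those quotients, so I expect no real obstacle. Note that only differentiability of $\theta$ at a single point is used, together with invertibility of its derivative.
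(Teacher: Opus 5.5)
Your proof is correct and follows essentially the same route as the paper. You reduce, via the chain rule, to showing that the derivative of the transition map $\psi\circ\varphi^{-1}$ at $\varphi(x)$ carries one Euclidean tangent cone onto the other, push witnessing sequences through the transition map for one inclusion, and apply the same argument to its inverse for the other. Your version is somewhat more careful than the paper's on two points it leaves implicit: you restrict to $U\cap V$ so that the transition map is actually defined along the witnessing sequence, and you justify the limit through the $o$-term and the boundedness of the difference quotients.
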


{
\begin{proof}
    For brevity, let $A\coloneqq \varphi(S\cap U)$, $B \coloneqq \psi(S\cap V)$, $z \coloneqq \varphi(x)$, and $w \coloneqq \psi(x)$. Using~\cite[Ex.~2.14(b)]{Lee}, $\varphi$ and $\psi$ are $C^1$-diffeomorphisms onto their images, and so is $h \coloneqq \psi \circ \varphi^{-1}$. Then, using~\cite[Prop.~3.6(b,d)]{Lee}, it suffices to show that
    \(
        \diffFunc{h_{z}}(\TconeEuclidean{A}{z}) = \TconeEuclidean{B}{w}.
    \)
    We show $\diffFunc{h_z}(\TconeEuclidean{A}{z}) \subset \TconeEuclidean{B}{w}$. 
    Pick any $v \in \TconeEuclidean{A}{z}$. Following~\cite[\S4.1.1]{AubinFrankowska2009}, there exist sequences $a_i \in A$ and $\tau_i > 0$ with $\tau_i\searrow 0$ such that 
    \(
        a_i \to z, \text{and } (a_i - z)/{\tau_i}\to v.
    \)
    Since $h$ is differentiable at $z$,
    \(
        \lim_{i \to\infty}{(h(a_i) - h(z))}{/\tau_i} = \diffFunc{h_z}(\lim_{i\to\infty} {(a_i - z)}{/\tau_i}) = \diffFunc{h_z}(v). 
    \)
    As $a_i \in A$ implies $h(a_i)\in B$ and $h(a_i) \to h(z) = w  \in B$, we have $\diffFunc{h_z}(v)\in \TconeEuclidean{B}{w}$. As $v\in \TconeEuclidean{A}{z}$ is arbitrary, $\diffFunc{h_z}(\TconeEuclidean{A}{z})\subset \TconeEuclidean{B}{w}$. That $\diffFunc{h_z}(\TconeEuclidean{A}{z}) \supset \TconeEuclidean{B}{w}$ follows similarly by replacing $h$ with $h^{-1}$.
\end{proof}
}

\begin{proposition}[Rademacher's theorem on $C^1$-manifolds]
    \label{prop:rademacher}
    A locally Lipschitz function $f : \M \to \N$ between~$C^1$-manifolds is differentiable almost everywhere on $\M$.
\end{proposition}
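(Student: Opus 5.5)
The plan is to reduce the statement to the Euclidean Rademacher theorem through charts, using second countability to pass from local statements to a global one. First I would fix what ``almost everywhere on $\M$'' means without a metric: a set $Z\subset\M$ has measure zero if, for every chart $(U,\varphi)$ of the maximal $C^1$-atlas, $\varphi(Z\cap U)$ has Lebesgue measure zero in $\R{n}$. This notion is chart independent. The transition maps $\psi\circ\varphi^{-1}$ are $C^1$, hence locally Lipschitz, and locally Lipschitz maps between open subsets of $\R{n}$ send Lebesgue-null sets to Lebesgue-null sets. It therefore suffices to check null-ness in the charts of any countable atlas.

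Next, for each $x\in\M$, Definition~\ref{def:Lipschitz} provides charts $(U_x,\varphi_x)$ at $x$ and $(W_x,\psi_x)$ at $f(x)$ such that $\tilde f_x\coloneqq\psi_x\circ f\circ\varphi_x^{-1}$ is locally Lipschitz on $\varphi_x(U_x\cap f^{-1}(W_x))$. Before using this I would check that $U_x\cap f^{-1}(W_x)$ is an open neighborhood of $x$. This needs continuity of $f$, which I expect to be the main subtlety, since the definition as stated does not assume it. I would first argue that $U_x\cap f^{-1}(W_x)$ is open in $U_x$; if this cannot be extracted from the definition, I would read the definition with continuity implicit, as in \cite{MOTREANU1982116} and \cite{kvalheim2021existence}. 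Continuity then follows locally from local Lipschitz continuity of the coordinate representation. With this settled, I would shrink $U_x$ to $O_x\coloneqq U_x\cap f^{-1}(W_x)$. The sets $O_x$ form an open cover of $\M$. By second countability, i.e., the Lindel\"of property, there is a countable subcover $\{O_k\}_{k\in\mathbb{N}}$ with charts $(O_k,\varphi_k)$ and $(W_k,\psi_k)$.

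On each $\varphi_k(O_k)$, which is open in $\R{n}$, the Euclidean Rademacher theorem gives a Lebesgue-null set $N_k$ such that $\tilde f_k$ is (Fr\'echet) differentiable on $\varphi_k(O_k)\setminus N_k$. Applied componentwise, this covers maps into $\R{m}$. At a point $y\in O_k\setminus\varphi_k^{-1}(N_k)$ the map $f$ is differentiable in the manifold sense, with differential $\diffFunc{f_y}\coloneqq(\diffFunc{\psi_k})^{-1}\circ D\tilde f_k(\varphi_k(y))\circ\diffFunc{(\varphi_k)_y}$. This notion does not depend on the charts: composing with $C^1$ transition maps preserves differentiability at a point by the chain rule.

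Finally, set $Z\coloneqq\bigcup_k\varphi_k^{-1}(N_k)$. The map $f$ is differentiable on $\M\setminus Z$. It remains to show that $Z$ is null. Take any chart $(V,\chi)$. Each set $\chi(V\cap\varphi_k^{-1}(N_k))=(\chi\circ\varphi_k^{-1})(N_k\cap\varphi_k(V\cap O_k))$ is the image of a null set under a $C^1$ map. Such a map is locally Lipschitz, so the image is null; here the countable cover by compact pieces handles the word ``locally.'' A countable union of null sets is null, so $Z$ is null, which completes the argument.
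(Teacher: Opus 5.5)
Your proposal is correct and follows essentially the same route as the paper: second countability yields a countable cover by chart domains, the Euclidean Rademacher theorem is applied in each chart, and the union of the resulting null sets is shown to be null via chart-invariance of null sets (for which the paper simply cites \cite[Lemma~6.6]{Lee}). Your bookkeeping is tighter than the paper's---taking the countable subcover from the sets $U_x\cap f^{-1}(W_x)$ attached to the chart pairs of Definition~\ref{def:Lipschitz} guarantees both that these sets cover $\M$ and that each coordinate representation is locally Lipschitz on an open set---and reading continuity of $f$ into the definition is justified, since read literally the definition does not force $U\cap f^{-1}(W)$ to be open: the indicator function of $\mathbb{Q}$ on $\R{}$ satisfies it yet is nowhere differentiable.
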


A similar result appears in [38, Cor. B.5], where the argument is presented within a single coordinate chart. Our proof complements this result by employing~\cite[Lem.~6.6]{Lee} to obtain the corresponding manifold-wide conclusion.

{
\begin{proof}
    Following~\cite[Prop.~A.16]{Lee}, pick any countable open cover $\{U_{\alpha}\}_{\alpha\in \mathcal{I}}$ of $\M$, where $\cal I$ is a countable index set and $(U_{\alpha}, \varphi_{\alpha})$ is a chart at $x_{\alpha}\in \M$ for each $\alpha\in\cal I$. From any open cover $\{W_y\}_{y\in \N}$ of $\N$ obtained from charts $\{(W_y, \psi_y)\}_{y\in \N}$, we use local Lipschitz continuity of $f$ and Def.~\ref{def:Lipschitz} to pick a countable subcover $\{W_\alpha\}_{\alpha\in\cal I}$ that covers $f(\M)\subset \N$, where $(W_\alpha, \psi_{\alpha})$ is a chart at $f(x_{\alpha})$. 

    Using~\cite[Thm.~3.1.6]{Federer1996}, for each $\alpha\in \mathcal{I}$, there exists a Lebesgue measure zero set $\widetilde{S}_{\alpha} \subset \varphi_{\alpha}(U_{\alpha} \cap f^{-1}(W_{\alpha}))$ such that $\psi_{\alpha} \circ f \circ \varphi_{\alpha}^{-1}$ is differentiable at each $\widetilde{x}\in \varphi_{\alpha}(U_{\alpha} \cap f^{-1}(W_{\alpha}))\setminus \widetilde{S}_{\alpha}$. Let $S\coloneqq \bigcup_{\alpha\in\mathcal{I}}\varphi^{-1}_{\alpha}(\widetilde{S}_{\alpha}) \subset \M$. Then, for~each $x\in\M\setminus S$, there exists a chart $(U, \varphi)$ at $x \in \M$ and $(W,\psi)$ at $f(x)\in \N$ such that $\psi \circ f\circ \varphi^{-1}$ is differentiable at $\varphi(x)$. Finally, $S$ has Lebesgue measure zero due to~\cite[Lemma 6.6]{Lee}. 
\end{proof}

}

Finally, to present an equivalence between the UGS notion in Definition~\ref{def:UGS} and its Riemannian counterpart defined using a $\mathcal{K}_{\infty}$ function, we recall the following notions for a Riemannian manifold $(\M, g_{\M})$. If $\M$ is connected, its metric induces the geodesic distance
    \(
    d_\M(x,y),%
    \)
which is the infimum of length of all $C^1$-curves joining $x$ and $y$. If $\M$ is disconnected, such curves do not exist. Instead, we fix a distance $d_\M$ whose open balls generate the manifold topology~\cite[Cor.~13.30]{Lee}; such a distance is called \emph{compatible} with the topology of $\M$. For a nonempty closed set $\A\subset\M$, define
    \(
    \distfromA{x}\coloneqq\inf_{y\in\A}d_\M(x,y)
    \)
and $\mathbb{B}_{r}(\A) \coloneqq \{x\in \M : \distfromA{x}\leq r\}$ for each $r \geq 0$. 
We say that $\M$ is a \emph{complete Riemannian manifold} if each Cauchy sequence converges under the given distance function, namely, if for each sequence $\{x_i\}_{i=1}^{\infty} \subset \M$ such that, for each $\varepsilon > 0$, there exists $i_0 > 0$ satisfying $d_{\M}(x_i, x_j) < \varepsilon$ for each $i, j > i_0$, the sequence $\{x_i\}_{i=1}^\infty$ converges.

{
First, we enforce the following mild assumption on $d_{\M}$ while allowing $\M$ to be potentially disconnected. The disconnectedness of $\M$ enables application of the forthcoming result to the hybrid systems setting, where the underlying state space is often disconnected to model finitely many logic modes.

\begin{assumption} \label{ass:disconnectedManifold}
    Given a Riemannian manifold $(\M, g_{\M})$ with a distance function $d_{\M}$ that is compatible with the topology of $\M$,
    \begin{enumerate}[label=(\roman*)]
        \item \label{item:i-RiemannianAssumption} the restriction of $d_{\M}$ to each connected component\footnote{A \emph{connected component} of a manifold $\M$ is a subset that is not properly contained in any larger connected subset of $\M$. 
        } of $\M$ yields the Riemannian distance function, and
        \item \label{item:ii-RiemannianAssumption} $\M$ has finitely many connected components.
    \end{enumerate}
\end{assumption}

The next result on compactness of metric inflations~is crucial for the forthcoming UGS equivalence; see~\cite{jirwankarTAC2026} for details. 
\begin{lemma}\label{lemma:metric_inflation_compact}
    Let $(\M, g_{\M})$ be a complete Riemannian manifold satisfying
    Assumption~\ref{ass:disconnectedManifold} and $\A \subset \M$ be nonempty and compact. Then, the following hold:
    \begin{enumerate}[label={(\roman*)}]
        \item For each $r > 0$, $\mathbb{B}_r(\cal A)$ is a compact neighborhood of $\cal A$.
        \item For each neighborhood $W$ of $\cal A$, there exists $r > 0$ such
            that $\mathbb{B}_r(\cal A) \subset W$.
    \end{enumerate}
\end{lemma}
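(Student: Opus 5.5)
Both items follow from the Hopf--Rinow theorem applied componentwise, combined with Assumption~\ref{ass:disconnectedManifold}. Since $\M$ has finitely many connected components $\M_1,\ldots,\M_p$, each open and closed in $\M$, and $\A$ is compact, $\A$ meets only finitely many components. Each intersection $\A_k\coloneqq\A\cap\M_k$ is compact, being closed in $\A$. The first step is to understand $\mathbb{B}_r(\A)$ componentwise. Because $d_\M$ is compatible with the topology and the components are open, any component $\M_k$ disjoint from $\A$ satisfies
\[
\inf_{x\in\M_k,\,y\in\A} d_\M(x,y) > 0 .
\]
Indeed, $\A$ is compact and contained in the closed set $\M\setminus\M_k$, so the continuous function $\distfromA{\cdot}$ is bounded away from zero on a neighborhood structure of $\A$. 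A cleaner route is to note directly that $\mathbb{B}_r(\A)\cap\M_k$ is closed. Let $\rho>0$ be smaller than the minimal separation between $\A$ and the components not meeting $\A$. For $r<\rho$, $\mathbb{B}_r(\A)$ is contained in the finite union of components meeting $\A$. On each such $\M_k$, the restricted metric is the Riemannian distance by item~(i) of the assumption. I will check that for $x\in\M_k$, $\distfromA{x}\le r$ forces the distance to $\A_k$ to be at most $r$ once $r$ is small, again using separation between $\A_k$ and $\A\setminus\A_k$.

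For item~(i), I must handle every $r>0$, not only small $r$, so I will argue directly. The set $\mathbb{B}_r(\A)$ is closed because $\distfromA{\cdot}$ is $1$-Lipschitz, hence continuous. It is bounded: $\A$ has finite diameter, so $\mathbb{B}_r(\A)\subset \{x: d_\M(x,a_0)\le r+\operatorname{diam}\A\}$ for a fixed $a_0\in\A$. Intersecting with each of the finitely many components $\M_k$ gives a closed and bounded subset of $\M_k$. Here the distance is the Riemannian one, and $\M_k$ is complete, since a Cauchy sequence in $\M_k$ is Cauchy in $\M$, converges, and its limit lies in the closed set $\M_k$. By Hopf--Rinow, the set is therefore compact. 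A finite union of compact sets is compact. It is a neighborhood of $\A$ because it contains the open set $\{x:\distfromA{x}<r\}\supset\A$.

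For item~(ii), I will argue by contradiction. If no $r$ works, pick $x_i\notin\interior{W}$ with $\distfromA{x_i}\le 1/i$. All $x_i$ then lie in the compact set $\mathbb{B}_1(\A)$ from item~(i), so a subsequence converges to some $x$. Continuity gives $\distfromA{x}=0$, hence $x\in\A$ since $\A$ is closed. On the other hand, $x\in \M\setminus\interior{W}$, because that set is closed, and this contradicts $\A\subset\interior{W}$.

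The main obstacle is the compactness in item~(i), namely correctly invoking Hopf--Rinow on each component. This needs two facts: that completeness of $(\M,d_\M)$ passes to each component with its Riemannian distance, which uses item~(i) of the assumption, and that only finitely many components occur, which is item~(ii). Without the finiteness, a union of bounded pieces over infinitely many components could fail to be compact.
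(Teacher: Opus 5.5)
Your proof is correct. For item (i) you follow essentially the paper's route: closedness from continuity of $\distfromA{\cdot}$, the neighborhood property from the open set $\{x:\distfromA{x}<r\}$, and boundedness of each piece $\mathbb{B}_r(\A)\cap\M_k$ via the triangle inequality through $\A$. The paper fixes $x_0\in\M_k$ and uses $\max_{a\in\A}d_\M(a,x_0)$, where you use $\operatorname{diam}\A$. The remaining steps also match: completeness of each component inherited from $\M$, Hopf--Rinow on each component, and finiteness of the union.

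You depart from the paper in item (ii). The paper argues directly. It chooses an open $V$ with $\A\subset V\subset W$, notes that the continuous function $a\mapsto\inf_{z\in\M\setminus V}d_\M(z,a)$ attains a positive minimum $2r$ on the compact set $\A$, and then uses the triangle inequality to get $\mathbb{B}_r(\A)\subset V$. You instead argue by contradiction. You take $x_i\notin\interior{W}$ with $\distfromA{x_i}\le 1/i$ and extract a convergent subsequence from the compact set $\mathbb{B}_1(\A)$ given by item (i).

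Both arguments are valid. The paper's is purely metric: it uses neither completeness, nor Hopf--Rinow, nor finiteness of components, so item (ii) holds in any metric space. Yours is shorter but inherits all the hypotheses of item (i). You could remove that dependence by extracting the subsequence from nearest points $a_i\in\A$ of $x_i$ instead. Your version also handles the degenerate case $V=\M$ automatically, whereas there the paper's distance to $\M\setminus V$ is $+\infty$.

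Finally, your exploratory first paragraph (separation of $\A$ from the components it misses, restriction to small $r$) is never used, and its justification is garbled. Simply delete it.
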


\begin{proposition}%
\label{prop:UGS}\!\!\textit{(Riemannian Equivalence of UGS notion)}
    Let $(\M, g_{\M})$ be a complete Riemannian manifold endowed with a distance function $d_{\M}$ that satisfies Assumptions~\ref{ass:disconnectedManifold}. A nonempty, compact set $\A\subset\M$ is UGS for $\mathcal{H}$ if and only if there exists $\alpha\in\mathcal{K_\infty}$ such that each $\phi \in \mathcal{S}_{\hybrid}$ satisfies 
    $\distfromA{\phi(t,j)} \leq \alpha(\distfromA{\phi(0,0)})$  for all $(t,j)\in \dom{\phi}$.
\end{proposition}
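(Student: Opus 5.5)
The proof is a two-way translation between neighborhoods of $\A$ and the metric inflations $\mathbb{B}_r(\A)$, using Lemma~\ref{lemma:metric_inflation_compact}. On the metric side I will use the comparison function $\beta(r)\coloneqq \sup\{\distfromA{\phi(t,j)} : \phi\in\mathcal{S}_{\hybrid}(\mathbb{B}_r(\A)),\ (t,j)\in\dom{\phi}\}$ for $r\geq 0$, with $\beta(r)=0$ when no such solution exists. I will show that UGS is equivalent to $\beta$ being finite for every $r$ and satisfying $\beta(r)\to 0$ as $r\searrow 0$. Once that is done, the $\mathcal{K}_\infty$ bound follows by the standard majorization argument.

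\emph{($\Leftarrow$)} Assume the $\mathcal{K}_\infty$ bound with function $\alpha$.
\begin{itemize}
\item For ULS, take any neighborhood $U$ of $\A$. Lemma~\ref{lemma:metric_inflation_compact}(ii) gives $\varepsilon>0$ with $\mathbb{B}_\varepsilon(\A)\subset U$. Choose $\delta>0$ with $\alpha(\delta)\leq\varepsilon$, and set $W\coloneqq\mathbb{B}_\delta(\A)$, which is a neighborhood of $\A$ by Lemma~\ref{lemma:metric_inflation_compact}(i). Every $\phi\in\maximalSol{\hybrid}{W}$ then satisfies $\distfromA{\phi(t,j)}\leq\alpha(\delta)\leq\varepsilon$, so $\rge\phi\subset U$.
\item For ULaS, take a compact neighborhood $W$. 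The function $\distfromA{\cdot}$ is continuous, so $r\coloneqq\max_{x\in W}\distfromA{x}<\infty$. The set $U\coloneqq\mathbb{B}_{\alpha(r)}(\A)$ is a compact neighborhood by Lemma~\ref{lemma:metric_inflation_compact}(i), and the bound gives $\rge\phi\subset U$ for every $\phi\in\maximalSol{\hybrid}{W}$. The case $r=0$ is trivial.
\end{itemize}

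\emph{($\Rightarrow$)} Assume UGS.
\begin{itemize}
\item Finiteness: $\mathbb{B}_r(\A)$ is a compact neighborhood by Lemma~\ref{lemma:metric_inflation_compact}(i). ULaS gives a compact $U$ containing the range of every solution starting there, so $\beta(r)\leq\max_U\distfromA{\cdot}<\infty$.
\item Decay at zero: given $\varepsilon>0$, apply ULS to the neighborhood $\mathbb{B}_\varepsilon(\A)$. This yields a neighborhood $W$, and Lemma~\ref{lemma:metric_inflation_compact}(ii) gives $\delta>0$ with $\mathbb{B}_\delta(\A)\subset W$. Hence $\beta(r)\leq\varepsilon$ for $r\leq\delta$.
\end{itemize}
The function $\beta$ is nondecreasing since the inflations are nested, and $\beta(0)=0$ by taking $\varepsilon\searrow0$ in the decay estimate. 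It remains to dominate $\beta$ by a $\mathcal{K}_\infty$ function. Define $\hat\beta(r)\coloneqq\frac1r\int_r^{2r}\beta(s)\,ds$. This function is continuous on $(0,\infty)$, satisfies $\hat\beta\geq\beta$, and $\hat\beta(r)\leq\beta(2r)\to0$ as $r\searrow 0$. Then $\alpha(r)\coloneqq\hat\beta(r)+r$ is continuous, is strictly increasing because $\hat\beta$ is nondecreasing (as $\hat\beta(r)=\int_1^2\beta(r\sigma)\,d\sigma$), and is unbounded. Finally, any $\phi\in\mathcal{S}_{\hybrid}$ with $\distfromA{\phi(0,0)}=r$ lies in $\mathcal{S}_{\hybrid}(\mathbb{B}_r(\A))$, so $\distfromA{\phi(t,j)}\leq\beta(r)\leq\alpha(r)$.

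\emph{Main obstacle.} The delicate point is that the construction needs the inflations $\mathbb{B}_r(\A)$ to be compact neighborhoods of $\A$ and to shrink into arbitrary neighborhoods, on a manifold that may be disconnected. This is exactly where completeness and Assumption~\ref{ass:disconnectedManifold} enter, via Lemma~\ref{lemma:metric_inflation_compact}, which we take as given. Once that lemma is available, the only remaining technical step is the $\mathcal{K}_\infty$ majorization of the nondecreasing, finite, vanishing-at-zero function $\beta$, handled by the integral averaging above.
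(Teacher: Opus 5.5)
Your proof is correct. The ($\Leftarrow$) direction is essentially the paper's. The paper starts from a compact neighborhood $W$, introduces the radii $\underline{w}\le\overline{w}$, and takes $U=\mathbb{B}_{\alpha^{-1}(\underline{w})}(\mathcal{A})$. You instead treat an arbitrary neighborhood for ULS directly via Lemma~\ref{lemma:metric_inflation_compact}(ii), which matches the wording of Definition~\ref{def:UGS} a little more literally.

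The ($\Rightarrow$) direction takes a different route in how the comparison function is built.
\begin{itemize}
\item \emph{The paper's construction.} It keeps the two halves of UGS separate. From ULS it defines the modulus $\mathfrak{u}(\varepsilon)$, the largest admissible $\delta$. It minorizes $\mathfrak{u}$ by some $\zeta\in\mathcal{K}$ using \cite[Lemma~2.5]{CLARKE199869} and inverts to obtain $\underline{\alpha}=\zeta^{-1}$ near zero. From ULaS it defines $\mathfrak{v}(\delta)$, the smallest admissible $\varepsilon$, and majorizes it on $[\zeta^\star,\infty)$ by a piecewise-affine $\overline{\alpha}\in\mathcal{K}_\infty$ (Lemma~\ref{lemma:Kinfty_upperBound}). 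The final bound is $\max\{\underline{\alpha},\overline{\alpha}\}$.
\item \emph{Your construction.} You merge both pieces of information into the single worst-case gain $\beta(r)$. ULaS then amounts to finiteness of $\beta$, and ULS amounts to $\beta(0^+)=0$. You majorize once, by $\int_1^2\beta(r\sigma)\,d\sigma+r$.
\end{itemize}

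Your route is shorter and self-contained. It also sidesteps two technical points the paper has to manage. First, the paper must invert $\zeta$, whose range $[0,a)$ may be bounded, so $\zeta^{-1}$ is not a priori defined on all of $\mathbb{R}_{\ge 0}$. Second, it must glue two bounds together at the threshold $\zeta^\star$.

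The paper's route, in exchange, mirrors the ULS/ULaS decomposition of Definition~\ref{def:UGS} and Remark~\ref{remark:lyapunovStabilityCompactSets}. The local (topological) and global (boundedness) contributions to $\alpha$ therefore stay visibly separate, and it reuses standard comparison-function lemmas rather than an ad hoc averaging.
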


{
    A similar result in the Euclidean setting is presented in~\cite[Thm.~1]{Andriano1997-LagrangeStability}; see also~\cite{Angeli2025_LyapunovLagrange} for a different topological version.  
}

{
\begin{proof}
    $(\Leftarrow)$ Assume that there exists $\alpha\in \cal K_{\infty}$ such that $\distfromA{\phi(t,j)}\le \alpha(\distfromA{\phi(0,0)})$ for all $\phi\in \cal S_{\cal H}$ and $(t,j)\in \dom \phi$.  Let $W$ be a compact neighborhood of $\A$, and let $\underline{w} \coloneqq \sup \{r' \ge 0 : \mathbb{B}_{r'}(\cal A) \subset W\}$ and $\overline{w} \coloneqq \inf\{r'\ge0: W\subset \mathbb{B}_{r'}(\A)\} =\sup_{x\in W}|x|_{\cal A}$. By definition, $0 < \underline{w}\le\overline{w}$ and $ \mathbb{B}_{\underline{w}}(\A)  \subset W \subset \mathbb{B}_{\overline{w}}(\A)$. By compactness of $W$, $\overline{w}<\infty$. Let $ U\coloneqq  \mathbb{B}_{\alpha^{-1}(\underline{w})}(\A)$. Then, by assumption and given that $\alpha\in \cal K_{\infty}$, for each $\phi\in \cal S_{\cal H}(U)$, $\rge{\phi} \subset \mathbb{B}_{\underline{w}}(\A) \subset W$. Thus, from Definition~\ref{def:UGS}, $\A$ is ULyS for $\hybrid$. Furthermore, for each $\phi\in \cal S_{\cal H}(W)$ it follows that $|\phi(t,j)|_{\cal A}\le\alpha(|\phi(0,0)|_{\cal A}) \le\sup_{x\in W}\alpha(|x|_{\cal A})\le \alpha(\overline{w})$ for all $(t,j)\in\dom{\phi}$. Consequently, $\rge{\phi}\in \mathbb{B}_{\alpha(\overline{w})}(\A) \eqqcolon X$. Since $X$ is compact by Lemma~\ref{lemma:metric_inflation_compact}, $\A$ is ULaS for $\hybrid$. Therefore, $\A$ is UGS for $\hybrid$.

    \vspace{-2pt}
    \medskip\noindent
    $(\Rightarrow)$ 
    Let $\cal A$ be UGS for $\mathcal{H}$. Define $\frak u :
    \R{}_{>0} \to \R{}_{>0}$ as
    \vspace{-6pt}
    \begin{equation*}
        \mathfrak{u}(\varepsilon) \coloneqq \sup\left\{\delta\in (0, \varepsilon] : 
            \forall \phi \in \cal S_{\cal H}\left(\mathbb{B}_{\delta}(\cal A)\right), 
            \rge{\phi}\subset \mathbb{B}_{\varepsilon}(\cal A)
        \right\}
        \vspace{-6pt}
    \end{equation*}
    for each $\varepsilon> 0$. As $\A$ is ULyS for $\hybrid$, Lemma~\ref{lemma:metric_inflation_compact} causes $\mathfrak{u}$ to be well-defined. {\color{mygreen}\relax{}{\color{mygreen}By construction, each $\phi \in \mathcal{S}_{\hybrid}$ is such that 
    \(
        \distfromA{\phi(0,0)} \leq \frak{u}(\varepsilon)
    \)
    implies
    \(
        \distfromA{\phi(t,j)} \leq \varepsilon
    \)
    for each $(t,j)\in\dom{\phi}$ and each $\varepsilon\in \R{}_{>0}$.}}
   Additionally, $\frak u $ is  nondecreasing, possibly discontinuous, and $\lim_{s\to 0}\mathfrak{u}(s) = 0$. {\color{mygreen}\relax{}{\color{mygreen}Then, by~\cite
   [Lemma~2.5]{CLARKE199869},}} {}there exists $\zeta\in \mathcal{K}$ such that $\zeta(s) \leq \mathfrak{u}(s)$ for all $s\in \dom{\mathfrak{u}}$. Note that $\rge{\zeta} = [0, a)$ for some $a\in \R{}_{>0}\cup \{\infty\}$.
   Define $\underline{\alpha}\coloneqq \zeta^{-1} \in \mathcal{K}$ so that $\dom{\underline{\alpha}} =\rge{\zeta} = [0, a)$. {}
   {\color{mygreen}\relax{}{\color{mygreen}Since $\zeta(\varepsilon)\le \frak{u}(\varepsilon)$ for each $\varepsilon>0$ by construction, and $\underline{\alpha}=\zeta^{-1}$, we obtain for all $\varepsilon >0$,  all $\phi \in \cal S_{\cal H}$ and all $(t,j)\in\dom{\phi}$ that}}
        \vspace{-6pt}
        \begin{multline}
            \distfromA{\phi(0,0)}= \zeta(\varepsilon) \leq \mathfrak{u}(\varepsilon)  \implies 
            \\
            \distfromA{\phi(t,j)} \leq \varepsilon = \underline{\alpha}(\zeta(\varepsilon)) = \underline{\alpha}(\distfromA{\phi(0,0)}).
            \label{eq:proof:UGS:Lyapunov}
        \end{multline}
    In particular, picking $\zeta^\star\in (0, a)$,~\eqref{eq:proof:UGS:Lyapunov} holds for each $\varepsilon\in (0,\zeta^\star]$.
    Following a similar procedure with ULaS of $\A$ in Definition~\ref{def:UGS}, let $\mathfrak{v} : \R{}_{>0} \to \R{}_{>0}$ be defined by
    {}
    {\color{mygreen}\relax{}{\begin{equation*}
        \mathfrak{v}(\delta) \coloneqq \inf\left\{\varepsilon \in [\delta, \infty) :  \begin{array}{c}
            \text{for each } \phi\in \cal{S}_{\cal H}\left(\mathbb{B}_{\delta}(\cal A)\right),
            \\
            \rge{\phi} \subset \mathbb{B}_{\varepsilon}(\cal A)
        \end{array}\right\}
    \end{equation*}}}
    for each $\delta > 0$. Note that $\frak v$ is well-defined by Definition~\ref{def:UGS}. Then, each $\phi \in \mathcal{S}_{\hybrid}$ is such that $\distfromA{\phi(0,0)} \leq \delta$ implies $\distfromA{\phi(t,j)} \leq \frak{v}(\delta)$ for each $(t,j)\in\dom{\phi}$. 
    We also have that $\frak v$ is nondecreasing, possibly discontinuous, and $\lim_{s\to \infty}\frak{v}(s) = \infty$. Recalling $\zeta^\star${\color{mygreen}\relax{} {\color{mygreen}and using Lemma~\ref{lemma:Kinfty_upperBound}}}, there exists $\overline{\alpha} \in \mathcal{K}_{\infty}$ satisfying $\overline{\alpha}(s) \geq \frak{v}(s)$ for all $s\in [\underline{\alpha}^{-1}(\zeta^\star), \infty)$. 

    By construction, $\underline{\alpha}^{-1}(s) \leq s$ for each $s\geq 0${\color{mygreen}\relax{}{\color{mygreen}, as $\underline{\alpha}^{-1}(s) = \zeta(s)\leq \mathfrak{u}(s) \leq s$ for each $s\geq 0$, where the last inequality follows from the very definition of $\mathfrak{u}$}}. Consequently, $\underline{\alpha}^{-1}(\zeta^\star) \leq \zeta^\star$, causing $\overline{\alpha}(s)\geq \mathfrak{v}(s)$ for each $s \in [\zeta^\star, \infty)$. Then, for each $\delta \geq \zeta^\star$, each $\phi\in \mathcal{S}_{\hybrid}$, and each $(t,j)\in\dom{\phi}$,
    \begin{multline}
        \distfromA{\phi(0,0)} = \delta \implies 
        \\
        \distfromA{\phi(t,j)} \leq \frak v(\delta) 
        \leq \overline{\alpha}(\delta)
        = \overline{\alpha}(\distfromA{\phi(0,0)}).
        \label{eq:proof:UGS:Lagrange}
    \end{multline}
    
    Finally, to unify the bounds in \eqref{eq:proof:UGS:Lyapunov} and \eqref{eq:proof:UGS:Lagrange}, define $\alpha(s) \coloneqq \max\{\underline{\alpha}(s), \overline{\alpha}(s)\}$ for each $s\geq 0$. As $\underline{\alpha}, \overline{\alpha}\in \mathcal{K}_{\infty}$, it follows {\color{mygreen}\relax{}{\color{mygreen}from~\cite[p.~342]{Kellett2014}}} that $\alpha\in \mathcal{K}_{\infty}$. Then, using \eqref{eq:proof:UGS:Lyapunov} and \eqref{eq:proof:UGS:Lagrange}, each $\phi\in\mathcal{S}_{\hybrid}$ satisfies $\distfromA{\phi(t,j)}\leq \alpha(\distfromA{\phi(0,0)})$ for all $(t,j)\in\dom{\phi}$.
\end{proof}
}

}

{\color{mygreen}

\section{Hybrid Controller for Quaternion Stabilization in Example~\ref{ex:quaternion}}
\label{app:QuaternionController}

We recall the hybrid control law from~\cite{mayhew2011quaternion} that achieves the desired objective in Example~\ref{ex:quaternion}. Recalling the flow set $C$ and the jump set $D$, the hybrid controller is a dynamic control law with state $h\in H$, input $q$, and output $\xi$, satisfying the following hybrid dynamics:
\begin{align*}
    \left\{
    \begin{array}{cccl}
        \dot{h} & = & 0 & (q, h) \in C \\
        h^+ & = & -h & (q, h) \in D \\
        \xi & = & -h K_{\varepsilon} \varepsilon &
    \end{array}
    \right.
\end{align*}
where $K_{\varepsilon}\in \R{3\times 3}$ is a symmetric, positive definite matrix. The above hybrid controller, when interconnected with the continuous-time quaternion kinematics, results in the system $\hybrid = (C, F, D, G, \M)$ as described in Example~\ref{ex:quaternion}.

\section{Example~\ref{ex:mobius-solutions} Continued: Continuity of $\Phi$}
\label{app:Phi_continuous}

    Note that the map $(x,\lambda)\mapsto (y_1, y_2^{||}-\lambda y_2^{\perp}) \in \T{}{\mobius}$ is continuous as the projection of $y_2$ onto its components that are parallel and perpendicular to the \emph{smooth} boundary $\partial S$ is continuous. Next, consider the family of curves $H : \partial S \times [0,1] \to \mobius$, defined as $H(y_1, t) \coloneqq [z_1, z_2 - t \operatorname{sign}(z_2)\varepsilon]_{\mobius}$ for each $(y_1,t)\in \partial S\times [0,1]$. As $\operatorname{sign}(z_2)$ is locally constant and equal to either one or negative one, the map $H$ is continuous. Then, as $g_1(y_1) = H(y_1, 1)$ for each $y_1\in \partial S$, it follows that $g_1$ is continuous as well. Therefore, to establish continuity of $\Phi$, it suffices to show that $\T{y_1}{\mobius} \ni (y_1, v) \mapsto \mathfrak{P}(y_1, v)$ is continuous. 

    To this end, note that, for each $y_1\in D_1$ and each $v\in \T{y_1}{\mobius}$, we have that $\mathfrak{P}(y_1, v) = X(H(y_1, 1))$ such that $X(H(y_1,0)) = v$. Recall that $X$ is the vector field parallel along the curve $\gamma_{y_1} = H(y_1, \cdot)$. Following~\cite[p.~106]{Lee_Riemannian}, $t\mapsto X(H(y_1, t))$ is obtained by solving the following ODE in the local coordinate representation of $X(H(y_1, \cdot))$:
    \begin{align}
    \label{eq:ODE}
        \dot{\widetilde{X}} = A({\gamma_{y_1}}, \dot{\gamma}_{y_1}) \widetilde{X}, \quad \widetilde{X}(0) = \widetilde{v},
    \end{align}
    where $\widetilde{v}$ denote the coordinate representation of $v\in \T{y_1}{\mobius}$, and $A : \T{}{\mobius} \to \R{2\times 2}$ is the map whose $(k,j)$-th element\footnote{The $(k,j)$-th element is the element in the $k$'th row and $j$'th column.} is denoted $A^k_j(z,w)\coloneqq -\Gamma^{k}_{ij}(z)\widetilde{w}^i$ for each $(z,w)\in\T{}{\mobius}$. Now, as the Christoffel symbols $\Gamma^{k}_{ij}$ of the Levi-Civita connection are smooth functions, the map $A$ is smooth. Therefore, following~\cite[Theorem~D.1(c)]{Lee}, solutions to~\eqref{eq:ODE} depend continuously on their initial conditions. Then, as $\mathfrak{P}(y_1, v) = X(H({y_1},1))$ for each $(y_1, v)\in \T{}{\mobius}$, it follows that $(y_1, v)\mapsto \mathfrak{P}(y_1, v)$ is a continuous function. Consequently, we have that $\Phi$ in~\eqref{eq:Phi} is continuous.

\section{Compendium of Auxiliary Results}

In this appendix, we present several useful auxiliary results related to set-valued analysis, graphical convergence, comparison functions, a version of the Hopf-Rinow theorem on complete Riemannian manifolds, and geodesic convexity on Riemannian manifolds. 

\subsection{Auxilliary results on graphical convergence}

\begin{lemma}
    \label{lemma:escapeToHorizon}
    Consider a sequence of sets $\{S_n\}_{n=1}^\infty$ on a topological manifold $\M$. The sequence escapes to the horizon if and only if $\lim_{n\to\infty}S_n = \varnothing$.
\end{lemma}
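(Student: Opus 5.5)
The plan is to prove both implications directly from Definition~\ref{def:setConvergence} and the definition of escaping to the horizon, using only that a topological manifold is locally compact and Hausdorff: every point has a compact neighborhood, and every compact set is closed.

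\emph{($\Rightarrow$)} Suppose $\{S_n\}$ escapes to the horizon. Since $\liminf_{n\to\infty} S_n \subset \limsup_{n\to\infty} S_n$ always holds, it suffices to show that the outer limit is empty. Fix $x\in\M$ and choose a compact neighborhood $K$ of $x$; local compactness of $\M$ guarantees such a $K$ exists, and $K\in\neighborhood(x)$. Escaping to the horizon gives $n_0$ with $S_n\cap K=\varnothing$ for all $n\ge n_0$. Taking $V=K$ and the index $n_0$ in the definition of the outer limit then violates its defining condition, so $x\notin\limsup_{n\to\infty} S_n$. As $x$ was arbitrary, both limits are empty, so the sequence converges and $\lim_{n\to\infty} S_n=\varnothing$.

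\emph{($\Leftarrow$)} Suppose $\lim_{n\to\infty} S_n=\varnothing$; in particular, $\limsup_{n\to\infty} S_n=\varnothing$. Argue by contradiction: assume there is a compact $K\subset\M$ such that $S_n\cap K\neq\varnothing$ for infinitely many $n$. For each $x\in K$, the point $x$ is not in the outer limit, so there exist an open $V_x\ni x$ and an index $i_x$ with $V_x\cap S_n=\varnothing$ for all $n>i_x$. The sets $\{V_x\}_{x\in K}$ form an open cover of $K$, so extract a finite subcover $V_{x_1},\dots,V_{x_m}$ and set $n_0\coloneqq\max_k i_{x_k}$. Then for every $n>n_0$ we get $S_n\cap K\subset\bigcup_k (S_n\cap V_{x_k})=\varnothing$, which contradicts the assumption. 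Hence the sequence escapes to the horizon.

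The argument is elementary. The only step needing care is ($\Leftarrow$), where we must check that the neighborhoods in the definition of the outer limit can be taken open, so that compactness of $K$ applies. This holds because every $V\in\neighborhood(x)$ contains an open set around $x$, and shrinking $V$ preserves emptiness of $V\cap S_n$. The convention that the $S_n$ are nonempty in the definition of escaping to the horizon plays no role in either direction.
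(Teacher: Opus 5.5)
Your proof is correct and takes essentially the same route as the paper. For ($\Rightarrow$) you both use a compact (in the paper, open precompact) neighborhood of a point to contradict membership in the outer limit; for ($\Leftarrow$) you both argue by contradiction from a compact $K$ meeting infinitely many $S_n$. The one difference is that the paper simply asserts $K\cap\limsup_{n\to\infty}S_n\neq\varnothing$, whereas your finite-subcover argument actually proves this compactness step, and does so without appealing to sequential compactness.
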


\begin{proof}
    $(\Leftarrow)$ The limit of the sequence exists, and $\lim_{n\to \infty}S_n  = \varnothing$. Therefore, $\limsup_{n\to \infty} S_n = \varnothing$ and $\liminf_{n\to\infty} S_n = \varnothing$. Then, to prove that the sequence escapes to the horizon, assume the opposite, i.e., the sequence does not escape to the horizon. Then, there exists a compact set $K\subset \M$ such that, for each $n_0 > 0$, there exists $n \geq n_0$ satisfying $S_n \cap K \neq \varnothing$. As a result, $K\ \cap\limsup_{n\to\infty}S_n \neq \varnothing$, causing $\limsup_{n\to\infty}S_n \neq \varnothing$, which is a contradiction.

    $(\Rightarrow)$ Reasoning by contradiction, assume that there exists $x\in \M$ such that $x\in\lim_{n\to\infty} S_n$. Note that, via \cite[Prop. 4.64, Lemma 4.65]{lee2010introduction}, there exists an open precompact neighborhood $U$ of $x$. Since, in particular, $x\in \limsup_{n\to\infty} S_n$, it follows that for each $n_0 > 0$ there exists $n>n_0$ satisfying \(U\cap S_n \neq \varnothing.\) However, as the sequence $\{S_n\}_{n=1}^\infty$ escapes to the horizon, there exists $n_0>0$ such that $\overline{U}\cap S_n=\varnothing$ for each $n > n_0$, yielding a contradiction. Therefore, it must be that $\limsup_{n\to \infty}S_n = \varnothing$.
\end{proof}

\begin{lemma}
    \label{lemma:convergentSubsequence}
    For each sequence of hybrid arcs $\{\phi_i\}_{i=1}^\infty$, where $\phi_i : \dom{\phi_i}\to \M$, such that there exists a compact set $K\subset \M$ satisfying $\phi_i(0,0)\in K$ for each $i\in \mathbb{N}$, there exists a subsequence that converges graphically. 
\end{lemma}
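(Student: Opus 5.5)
The plan is to reduce the statement to the sequential compactness result already available for sequences of sets, Lemma~\ref{lemma:sequentialCompactness_beer}, applied to the graphs $\graph{\phi_i}\subset \R{2}\times\M$. Here $\R{2}$ carries the hybrid time coordinates $(t,j)$, and we view $\R{}_{\geq 0}\times\mathbb{N}$ as a subset of $\R{2}$. The product $\R{2}\times\M$ is again a topological manifold: it is second countable, Hausdorff, and locally Euclidean of dimension $n+2$. Hence the lemma applies, and every sequence of nonempty sets in it either escapes to the horizon or has a subsequence converging to a nonempty set. Each graph is nonempty because $(0,0)\in\dom{\phi_i}$. By Definition~\ref{def:graphicalConvergence}, convergence of the graphs in $\R{2}\times\M$ is exactly graphical convergence of the hybrid arcs, viewed as set-valued maps $\R{2}\rightrightarrows\M$ that are empty off their domains.

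It therefore suffices to rule out escape to the horizon, and this is where the compact set $K$ is used. Consider the compact set $\{(0,0)\}\times K\subset\R{2}\times\M$. Since $\phi_i(0,0)\in K$ for every $i$, the point $((0,0),\phi_i(0,0))$ lies in $\graph{\phi_i}\cap(\{(0,0)\}\times K)$. So this intersection is nonempty for all $i$, and the sequence $\{\graph{\phi_i}\}$ cannot escape to the horizon. Lemma~\ref{lemma:sequentialCompactness_beer} then yields a subsequence $\{\graph{\phi_{i_k}}\}$ converging to a nonempty set $S$. Consequently $\{\phi_{i_k}\}$ converges graphically, with limit the map whose graph is $S$.

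The main obstacle is verifying that the hypotheses of Lemma~\ref{lemma:sequentialCompactness_beer} are met in this form: the ambient space must be a topological manifold, and the lemma's conclusion must be exactly set convergence in the sense of Definition~\ref{def:setConvergence}. Both points are covered by the product construction above and by the reference to \cite[Theorem 5.2.12]{Beer1993}, which relies on local compactness and second countability. If one prefers to stay inside $(\R{}_{\geq 0}\times\mathbb{N})\times\M$, that set is closed in $\R{2}\times\M$. Limits of sets contained in a closed set remain in it, so nothing changes.

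One might also want the limit to be a hybrid arc, but the statement only asks for graphical convergence, so this is not needed here. Whether the limit is a hybrid arc (or a solution) is addressed separately by Theorem~\ref{theorem:basicConditions => nominalWellPosedness}.
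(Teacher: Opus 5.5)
Your proof is correct and follows the paper's approach: you use $K$ to rule out escape to the horizon for the graphs, then apply Lemma~\ref{lemma:sequentialCompactness_beer} to extract a subsequence converging to a nonempty set. Two of your points are more careful than the paper's version. You check non-escape directly with the compact set $\{(0,0)\}\times K$, whereas the paper cites Lemma~\ref{lemma:escapeToHorizon}. You also work in the genuine manifold $\R{2}\times\M$ rather than $\R{}_{\geq 0}\times\mathbb{N}\times\M$, which the paper uses even though Lemma~\ref{lemma:sequentialCompactness_beer} is stated for topological manifolds.
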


\begin{proof}
    The sequence of hybrid arcs generates a sequence of sets $\{\graph{\phi_i}\}_{i=1}^{\infty}$ on $\R{}_{\geq 0}\times \mathbb{N}\times \M$. Due to the existence of a compact set $K$ as in the statement of the lemma, applying 
    Lemma~\ref{lemma:escapeToHorizon}
    proves that the sequence $\{\graph{\phi_i}\}_{i=1}^\infty$ does not escape to the horizon. Then, due to Lemma~\ref{lemma:sequentialCompactness_beer}, there exists a subsequence of $\{\graph{\phi_i}\}_{i=1}^\infty$ that converges to a nonempty set. Alternatively, the sequence of hybrid arcs $\{\graph{\phi_i}\}_{i=1}^{\infty}$ has a graphically convergent subsequence. 
\end{proof}

\subsection{Auxiliary Results on Comparison Functions}

\begin{lemma}
\label{lemma:Kinfty_upperBound}
    Let $\gamma:\R{}_{>0} \to \R{}_{>0}$ be a nondecreasing, possibly discontinuous function satisfying $\lim_{s\to\infty}\gamma(s) = \infty$. Then, for each $r > 0$, there exists $\alpha\in \mathcal{K}_{\infty}$ such that $\gamma(s)\leq \alpha(s)$ for each $s \geq r$. 
\end{lemma}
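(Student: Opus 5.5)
The plan is an explicit piecewise-linear majorant, built on the interval grid $[r+k, r+k+1]$ for $k\in\mathbb{N}$. Since $\gamma$ is nondecreasing, on each interval $[r+k, r+k+1]$ it is bounded above by its right-endpoint value $\gamma(r+k+1)$, which is finite because $\gamma$ is real-valued on $\R{}_{>0}$. So it suffices to build a continuous, strictly increasing, unbounded function $\alpha$ with $\alpha(0)=0$ and $\alpha(r+k)\geq \gamma(r+k+1)$ for every $k\in\mathbb{N}$. Then, for each $s\in[r+k,r+k+1]$, monotonicity of both functions gives $\gamma(s)\leq\gamma(r+k+1)\leq \alpha(r+k)\leq\alpha(s)$.

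The construction uses a strictly increasing sequence of target values. Define $c_k \coloneqq \gamma(r+k+1) + k + 1$ for $k\in\mathbb{N}$. Because $\gamma$ is nondecreasing and $k+1$ increases strictly, $c_{k+1}>c_k>0$, and $c_k\to\infty$. Then set $\alpha(s)\coloneqq c_0\, s/r$ for $s\in[0,r]$, and on each $[r+k,r+k+1]$ let $\alpha$ interpolate linearly between $\alpha(r+k)=c_k$ and $\alpha(r+k+1)=c_{k+1}$. By construction $\alpha$ is continuous, zero at zero, strictly increasing (every linear piece has positive slope), and unbounded since $c_k\to\infty$. Hence $\alpha\in\mathcal{K}_\infty$, and $\alpha(r+k)=c_k\geq \gamma(r+k+1)$, which gives the required bound by the first paragraph.

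No real obstacle is expected. The only care needed is to compare $\gamma$ on each interval with its value at the right endpoint, so that the possible discontinuities of $\gamma$ do not matter. The added $k+1$ in $c_k$ is what guarantees strict increase when $\gamma$ is only nondecreasing or constant on stretches. The hypothesis $\lim_{s\to\infty}\gamma(s)=\infty$ is not actually needed for the existence of $\alpha$; it just reflects the setting of the Lagrange part of Proposition~\ref{prop:UGS}, where $\alpha$ is used.
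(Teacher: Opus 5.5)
Your proof is correct and follows essentially the same route as the paper: a piecewise-affine majorant on the grid $r+k$ whose node values dominate $\gamma$ at the next grid point, with the added $k+1$ forcing strict increase. Your anchoring $\alpha(r)=\gamma(r+1)+1$ is actually the one the argument needs. The paper's choice of nodes $s_0=r/2$ and $y_0=\gamma(r)+1$ does not guarantee $\gamma\le\alpha$ on $[r,r+1]$ when $\gamma$ rises sharply there, so your version is the clean form of the same construction.
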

\begin{proof}
    Pick any $r > 0$, and consider the sequence $\{s_i\}_{i=1}^{\infty} \subset \R{}_{>0}$ such that $s_i \coloneqq r + i$ for each $i\in\mathbb{N}\setminus\{0\}$. Let $s_0 \coloneqq r/2$. For each $i\in\mathbb{N}\setminus\{0\}$, define also $y_i \coloneqq \gamma(s_{i+1}) + i+1$, and let $y_0\coloneqq \gamma(r) + 1$. Let $s_{-1}\coloneqq 0$ and $y_{-1}\coloneqq 0$. Then, we define $\alpha$ as a piecewise affine function such that
    \begin{align*}
        \alpha(s) \coloneqq \left\{ 
        \begin{array}{cl}
            y_{-1} + s \frac{(y_0 - y_{-1})}{(s_0 - s_{-1})} & \text{if } s\in [s_{-1}, s_{0}] \\
            y_{0} + s \frac{(y_1 - y_0)}{(s_1 - s_0)} & \text{if } s\in [s_0, s_{1}]\\
            \vdots & \\
            y_{i} + s \frac{(y_{i+1} - y_{i})}{(s_{i+1} - s_{i})} & \text{if } s\in [s_i, s_{i+1}]\\
            \vdots & \\
        \end{array}
        \right. \qquad \forall s\geq 0. 
    \end{align*}

    Note that $\alpha$ is continuous, zero at zero, strictly increasing, and satisfies $\lim_{s \to \infty} \alpha(s) = \infty$. Therefore, $\alpha\in\mathcal{K}_{\infty}$. To verify that $\gamma(s)\leq \alpha(s)$ for each $s\geq 0$, pick any $i \in \{-1, 0, 1, 2, \ldots\}$ and any $s\in [s_{i}, s_{i+1}]$. Note that 
    \begin{align*}
        \alpha(s) &= y_i + s \frac{(y_{i+1} - y_i)}{(s_{i+1} - s_i)} \\
        & \geq y_i \\
        &= \gamma(s_{i+1}) + i + 1  \\
        &\geq \gamma(s). 
    \end{align*}
    This completes the proof. 
\end{proof}

\subsection{Metric Inflations of Compact Subsets of Riemannian Manifolds}

\begin{lemma}\label{lemma:metric_inflation_compact}
    Let $(\M, g_{\M})$ be a complete Riemannian manifold satisfying Assumption~\ref{ass:disconnectedManifold}, and let $\cal A \subset \M$ be nonempty and compact and define the closed metric inflation of $\cal A$ as
    \begin{align} \label{eq:closedMetricInflation}
         \mathbb{B}_r(\cal A) \coloneqq \{x \in \M : \inf_{a\in \cal A}d_{\cal M}(x,a) \leq r\},
    \end{align}
    where $d_{\M} : \M \times \M \to \R{}_{\geq 0}$ is the Riemannian distance metric. Then, the following hold:
    \begin{enumerate}[label=\emph{(\roman*)}]
        \item For each $r > 0$, $\mathbb{B}_r(\cal A)$ is a compact neighborhood of $\cal A$.
        \item For each neighborhood $W$ of $\cal A$, there exists $r > 0$ such
            that $\mathbb{B}_r(\cal A) \subset W$.
    \end{enumerate}
\end{lemma}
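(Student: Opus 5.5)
We reduce both items to the metric properties of each connected component, using Assumption~\ref{ass:disconnectedManifold} and the Hopf--Rinow theorem. Write $\M=\M_1\cup\cdots\cup\M_p$ for the finitely many connected components. Each $\M_k$ is open and closed, because a manifold is locally connected. Since $d_\M$ is compatible with the topology, each $\M_k$ is a complete metric space under $d_\M|_{\M_k}$: a Cauchy sequence in $\M_k$ converges in $\M$, and its limit lies in $\M_k$ because $\M_k$ is closed. By item~\ref{item:i-RiemannianAssumption}, $d_\M|_{\M_k}$ is the Riemannian distance of the connected Riemannian manifold $\M_k$. Hopf--Rinow then gives that closed and bounded subsets of $\M_k$ are compact.

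\emph{Item (i).} The map $x\mapsto\distfromA{x}$ is $1$-Lipschitz with respect to $d_\M$, so $\mathbb{B}_r(\A)$ is closed. It contains the open set $\{x:\distfromA{x}<r\}\supset\A$, so it is a neighborhood of $\A$. For compactness, we write $\mathbb{B}_r(\A)=\bigcup_k\bigl(\mathbb{B}_r(\A)\cap\M_k\bigr)$. Each piece is closed in $\M_k$, so it suffices to show each piece is bounded.
\begin{itemize}
\item If $\A\cap\M_k\neq\varnothing$, the set $\A\cap\M_k$ is compact, since $\M_k$ is closed. Hence it has finite diameter $D_k$. We need every $x\in\mathbb{B}_r(\A)\cap\M_k$ to lie within $r$ of $\A\cap\M_k$, and this is the delicate point. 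The near points of $\A$ might a priori lie in another component, where $d_\M$ is only some compatible metric. The fix is that $\A$ meets only finitely many components, each $\A\cap\M_l$ is compact, and distinct components are separated by open sets. Under the Riemannian-metric reading of Lemma~\ref{lemma:metric_inflation_compact}, the convention is that points in different components are at infinite or suitably large distance. So we take $r$ relative to the in-component distance; otherwise we bound $\sup_{x}d_\M(x,a_0)$ directly using the triangle inequality and the finitely many compact sets $\A\cap\M_l$. Either way the piece is bounded by $r+D_k$, or by $r+\max_l\operatorname{diam}$ plus the separation constants.
\item If $\A\cap\M_k=\varnothing$, the same triangle-inequality bound through a fixed point of $\A$ shows the piece is bounded.
\end{itemize}
Hopf--Rinow then makes each piece compact, and the finite union is compact.

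\emph{Item (ii).} Argue by contradiction. Suppose that for every $n$ there is $x_n\in\mathbb{B}_{1/n}(\A)\setminus W$. Then all $x_n$ lie in the compact set $\mathbb{B}_1(\A)$ from item (i), so a subsequence converges to some $x$. By continuity of $\distfromA{\cdot}$, $\distfromA{x}=0$, and since $\A$ is compact (hence closed), $x\in\A$. But $W$ contains an open set $O\supset\A$, so $x_n\in O\subset W$ for large $n$, a contradiction.

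The main obstacle is the boundedness step in item (i) across components, where $d_\M$ is not Riemannian. Assumption~\ref{ass:disconnectedManifold}\ref{item:ii-RiemannianAssumption} (finitely many components) is exactly what makes the finite-union bound work.
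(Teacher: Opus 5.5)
Your plan for (i) is the paper's: split $\mathbb{B}_r(\A)$ along the finitely many components, show each piece is closed and bounded in $(\M_k,d_\M|_{\M_k})$, apply Hopf--Rinow, and take the finite union. The gap is the boundedness step. You flag it as the main obstacle but never settle it.

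The bound $r+D_k$ in your first bullet presupposes that each $x\in\mathbb{B}_r(\A)\cap\M_k$ lies within $r$ of $\A\cap\M_k$. This is false under the lemma's hypotheses. Take $\M\coloneqq\R{}\times\{1,2\}$ with the standard distance on each copy and $d_\M((s,1),(t,2))\coloneqq|s|+|t|+1$. This is a complete metric, compatible with the topology, that satisfies Assumption~\ref{ass:disconnectedManifold}. With $\A\coloneqq\{(0,1),(100,2)\}$, the point $(0,2)$ has $\distfromA{(0,2)}=1$, yet it lies at distance $100$ from $\A\cap(\R{}\times\{2\})$.

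Your fallback ``convention'' that distinct components are at infinite distance is not available either. The lemma fixes a single finite metric $d_\M:\M\times\M\to\R{}_{\geq0}$. Redefining it across components, or measuring $r$ ``relative to the in-component distance,'' changes the set $\mathbb{B}_r(\A)$, so you would be proving a different statement. The remaining option, with its undefined ``separation constants,'' is only gestured at.

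There is in fact nothing delicate here. Because $d_\M$ is one metric on all of $\M$, the triangle inequality holds across components, and it does not matter where the near point of $\A$ lies. Fix $x_0\in\M_k$ and set $r_0\coloneqq\max_{a\in\A}d_\M(a,x_0)<\infty$, which is finite by compactness of $\A$. For $x\in\mathbb{B}_r(\A)\cap\M_k$, pick $a_x\in\A$ with $d_\M(x,a_x)=\distfromA{x}\le r$. Then $d_\M(x,x_0)\le r+r_0$. Since $d_\M|_{\M_k}$ is the Riemannian distance of $\M_k$, the piece is bounded there.

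This is exactly the paper's estimate; the paper bounds the diameter by $2(r+r_0)$. It is also what your second bullet does, and it covers both of your cases at once, so the case split is unnecessary. Relatedly, Assumption~\ref{ass:disconnectedManifold}\ref{item:ii-RiemannianAssumption} plays no role in the bound, since a compact $\A$ meets only finitely many components anyway. It is needed only so that the final union of compact pieces is finite, which you do use correctly.

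Your proof of (ii) is correct but takes a different route from the paper. You argue by contradiction using sequential compactness of $\mathbb{B}_1(\A)$, so your argument inherits completeness and Hopf--Rinow from (i). The paper instead takes an open $V$ with $\A\subset V\subset W$ and sets $r\coloneqq\frac12\min_{a\in\A}|a|_{\M\setminus V}>0$. The triangle inequality then gives $\mathbb{B}_r(\A)\subset V$, an argument that works in any metric space. Your contradiction argument could also avoid (i): since $d_\M(x_n,a_n)\le 1/n$, extract a convergent subsequence of the nearest points $a_n\in\A$ instead.
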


\begin{proof}(i) First, we show that $\mathbb{B}_r(\cal A)$ is a closed neighborhood of $\cal A$. Since $x \mapsto |x|_{\cal A}$ is continuous, $\mathbb{B}_r(\cal A)$ is closed as the preimage of the closed set $[0,r]$. For each $r > 0$ and each $x \in \cal A$, $|x|_{\cal A} = 0 < r$, so $\cal A \subset \{x \in X : |x|_{\cal A} < r\} \subset \mathbb{B}_r(\cal A)$. The set $\{x \in X : |x|_{\cal A} < r\} $ is open as the preimage of $[0,r)$, which is open in $\mathbb{R}_{\geq 0}$, under the continuous map $x \mapsto |x|_{\cal A}$. Therefore, $\cal A \subset \interior\mathbb{B}_r(\cal A)$, which implies that $\mathbb{B}_r(\cal A)$ is a neighborhood of $\cal A$ for each $r>0$. 

    By assumption, there exist $K \in \mathbb{N}$ and open sets $\{M_i\}_{i=1}^K$ with $\cal M = \bigcup_{i=1}^K M_i$ and $M_i \cap M_j = \varnothing$ for each $i \neq j$, namely the connected components of $\cal M$. For each $i \in \{1,\ldots,K\}$, $M_i = \cal M \setminus \bigcup_{j \neq i} M_j$ is also closed, and $\mathbb{B}_r(\cal A) = \bigcup_{i=1}^K (\mathbb{B}_r(\cal A) \cap M_i)$.

    Now, we claim that $\mathbb{B}_r(\cal A) \cap M_i$ is closed and bounded in $(M_i, d_g)$ for each $i \in \{1,\ldots,K\}$. Since $\mathbb{B}_r(\cal A)$ is closed in $\cal M$ and $M_i$ carries the subspace topology, $\mathbb{B}_r(\cal A) \cap M_i$ is closed in $M_i$. To show boundedness, fix $x_0 \in M_i$ and define $r_0 \coloneqq \max_{a \in \cal A}d(a,x_0) \in \mathbb{R}_{\geq 0}$, which is finite by compactness of $\cal A$ and continuity of $y \mapsto d_{\cal M}(y,x_0)$. Note that, since $\cal A$ is compact, for each $x \in \mathbb{B}_r(\cal A) \cap M_i$ there exists $a_x^\star \in \cal A$ with $d_{\cal M}(x,a_x^\star) = |x|_{\cal A} \leq r$. By compatibility of $d_{\cal M}$ with the Riemannian distance $d_g$, for each $x, y \in \mathbb{B}_r(\cal A) \cap M_i$,
    \begin{align*}
       &d_g(x,y) = d_{\cal M}(x, y)\\
                &\le d_{\cal M}(x,x_0) + d_{\cal M}(x_0,y)\\
                &\le d_{\cal M}(x,a^\star_x) + d_{\cal M}(a^\star_x, x_0) + d_{\cal M}(x_0,a_y^\star) + d_{\cal M}(a_y^\star, y)\\
                &\leq 2(r + r_0).
    \end{align*}
    Thus, $\mathbb{B}_r(\cal A)\cap M_i$ is closed and bounded in $(M_i,d_g)$.

    Since $\cal M$ is complete by assumption, each connected component $M_i$ is a complete connected Riemannian manifold. Thus, by the Hopf--Rinow theorem \cite[Thm.~16.17]{gallierDifferentialGeometry2020}, $\mathbb{B}_r(\cal A) \cap M_i$ is compact in $M_i$ for each $i \in \{1, 2, \ldots, K\}$. Now, fix $i\in\{1,2,\ldots, K\}$ and let $\{U_\alpha\}_{\alpha\in\Lambda}$ be an open cover of $\mathbb{B}_r(\cal A)\cap M_i$ by sets open in $\cal M$. Then, $\{U_\alpha\cap M_i\}_{\alpha\in\Lambda}$ is an open cover of $\mathbb{B}_r(\cal A)\cap M_i$ by sets open in $M_i$. Since $\mathbb{B}_r(\cal A)\cap M_i$ is compact in $M_i$, there exist finitely many indices $\{\alpha_j\}_{j=1}^m$ such that $\mathbb{B}_r(\cal A)\cap M_i\subset \bigcup_{j=1}^n (U_{\alpha_j}\cap M_i)\subset \bigcup_{j=1}^n U_{\alpha_j}$. Thus, $\{U_{\alpha_j}\}_{j=1}^n$ is a finite subcover of $\mathbb{B}_r(\cal A)\cap M_i$ in $\cal M$. Therefore, $\mathbb{B}_r(\cal A)\cap M_i$ is compact in $\cal M$ for each $i\in\{1,2,\ldots,K\}$. This implies that, for each $r\ge0$, $\mathbb{B}_r(\cal A)=\bigcup_{i=1}^K(\mathbb{B}_r(\cal A)\cap M_i)$ is a finite union of compact subsets of $\cal M$, hence compact.\medbreak
    
    \noindent(ii) Let $U \in \cal N_{\cal A}$, where $\cal N_{\cal A}$ defines the set of all neighborhoods of $\A$. By definition of neighborhood, there exists an open set $V$ with $\cal A \subset V \subset U$. Since $V$ is open, $\cal M \setminus V$ is closed and disjoint from $\cal A$. Thus, given that $\cal A$ is compact and $x \mapsto |x|_{\cal M \setminus V}$ is continuous there exists $\hat{a}\in \cal A$ such that $|\hat{a}|_{\cal M\setminus V}=\inf_{a\in \cal A} |a|_{\cal M\setminus V}$. Additionally, $|\hat{a}|_{\cal M \setminus V} > 0$ since $\hat{a}\in V$ and $\cal M\setminus V$ is closed. Let $r \coloneqq \frac{1}{2}|\hat{a}|_{\cal M \setminus V} > 0$, which implies that
    \begin{equation*}
        0<2r \le |a|_{\cal M\setminus V}=  \inf_{z\in \cal M\setminus V} d_{\cal M}(z,a)\le d_{\cal M}(z,a)
    \end{equation*}
    for all  $(z,a)\in (\cal M\setminus V) \times \cal A$.
     On the other hand, note that for each $y \in \mathbb{B}_{r}(\cal A)$ there exists  $a^\star \in \cal A$ such that $d_{\cal M}(y, a^\star) = |y|_{\cal A} \leq r$. Therefore, for each $z \in X \setminus V$ and each $y\in \mathbb{B}_{r}(\cal A)$, 
    \begin{align*}
        d_{\cal M}(z,y) &= d_{\cal M}(z,y) + d_{\cal M}(y,a^\star) - d_{\cal M}(y,a^\star)\\
               &\ge d_{\cal M}(z,a^\star)- d_{\cal M}(y,a^\star)\\
               &\ge 2r - r >0,
    \end{align*}
    which implies that $y \in V$ for each $y\in \mathbb{B}_{r}(\cal A)$. Therefore, for each $U\in \cal N_{\cal A}$, there exists $r>0$ such that $\mathbb{B}_{r}(\cal A) \subset \interior U \subset U$.
\end{proof}

\subsection{Geodesic Convexity}

Due to space constraints, we use some notions from Riemannian geometry without defining them here. In particular, for definitions of a \emph{geodesic}, a \emph{minimizing geodesic}, and a \emph{geodesic ball} on a Riemannian manifold $(\M, g_{\M})$, we refer the reader to~\cite[Ch.~6]{Lee_Riemannian}.

\begin{definition}[Geodesic convexity]
    \label{def:geodesicConvexity}
    Let $(\M, g_{\M})$ be a Riemannian manifold. A set $U \subset \M$ is geodesically convex if for each $p, q \in U$, there is a unique minimizing geodesic segment from $p$ to $q$, and the image of this geodesic segment lies entirely in $U$. 
\end{definition}

\begin{lemma}
\label{lemma:disconnectedManifold}
    Let $(\M, g_{\M})$ be a Riemannian manifold. Then, $\M$ has countably many connected components $\{\M_i\}_{i\in I}$ for some nonempty index set $I\subset \mathbb{N}$, and for each $i\in I$, $\M_i$ is a Riemannian manifold together with the metric $g_{\M}$ restricted to $\M_i$. 
\end{lemma}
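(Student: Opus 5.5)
The first claim is a topological statement about the structure of a second-countable manifold. The second is a restriction statement about the Riemannian metric. I will prove them in that order.

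\emph{Countably many components.} Since $\M$ is locally Euclidean, every point has a connected open neighborhood, namely the preimage of a Euclidean ball under a chart. So $\M$ is locally connected, and its connected components $\{\M_i\}_{i\in I}$ are open, pairwise disjoint, and cover $\M$. This gives at least one component when $\M\neq\varnothing$, so $I$ is nonempty.
\begin{itemize}
    \item By second countability, fix a countable basis $\mathcal{B}$ of the topology.
    \item Each nonempty open $\M_i$ contains some basis element $B_i\in\mathcal{B}$.
    \item Because the components are disjoint, the assignment $i\mapsto B_i$ is injective.
\end{itemize}
Hence $I$ injects into the countable set $\mathcal{B}$, and after relabeling we may take $I\subset\mathbb{N}$.

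\emph{Each component is a Riemannian manifold.} Each $\M_i$ is an open subset of $\M$, so it is an open submanifold. I will check the required properties directly.
\begin{itemize}
    \item It is Hausdorff and second countable, since both properties pass to subspaces.
    \item It is locally Euclidean of the same dimension: for a chart $(U,\varphi)$ of $\M$, the restriction $(U\cap\M_i,\varphi|_{U\cap\M_i})$ is a chart of $\M_i$, because $U\cap\M_i$ is open.
    \item The restricted charts remain $C^k$-compatible, since their transition maps are restrictions of the original transition maps to open sets. Taking the maximal atlas containing them gives $\M_i$ a smooth structure for which the inclusion $\iota:\M_i\hookrightarrow\M$ is a diffeomorphism onto an open set.
    \item Consequently $\T{x}{\M_i}=\T{x}{\M}$ via $\diffFunc{\iota_x}$, an isomorphism.
\end{itemize}

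I then define $g_i\coloneqq\iota^*g_{\M}$, so that $g_i(x)(v,w)=g_{\M}(x)(v,w)$ for $v,w\in\T{x}{\M_i}$. This is a symmetric positive definite bilinear form at each point. It is as regular as $g_{\M}$, because in any restricted chart its coefficient functions are exactly those of $g_{\M}$. Therefore $(\M_i,g_i)$ is a Riemannian manifold.

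The only step needing care is openness of the components, which relies on local connectedness. All other steps are routine.
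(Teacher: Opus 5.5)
Your proof is correct and follows essentially the same route as the paper. The paper gets the openness and countability of the components by citing Lee's Proposition~1.11, which you reprove directly from local connectedness and second countability, and then, as you do, it restricts the $C^1$ structure and the metric $g_{\M}$ to each open component.
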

\begin{proof}
    Using~\cite[Prop.~1.11]{Lee}, we establish that $\M$ has countably many connected components and that $\M$ is a topological manifold. Pick any $i\in I$. The restriction of the smooth structure and the Riemannian metric on $\M$ to $\M_i$ renders $\M_i$ a Riemannian manifold. 
\end{proof}

\begin{lemma}
    \label{lemma:geodesicBall}
    Consider a Riemannian manifold $(\M, g_{\M})$ with a distance metric $d_{\M}$ that satisfies Assumption~\ref{ass:disconnectedManifold}. Consider also a nonempty, open set $U \subset \M$. For each $p \in U$, there exists a geodesically convex set $X \subset U$ that is contained in the connected component of $\M$ containing $p$ such that $p\in \interior{X}$. 
\end{lemma}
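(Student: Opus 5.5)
The plan is to reduce everything to the connected component of $p$ and then invoke the classical existence of geodesically convex balls on a connected Riemannian manifold.

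First, by Lemma~\ref{lemma:disconnectedManifold}, let $\M_p$ denote the connected component of $\M$ containing $p$. Each component of a manifold is open, since manifolds are locally connected. Hence $\M_p$ is an open submanifold and, with the restricted metric $g_{\M}|_{\M_p}$, it is a connected Riemannian manifold. By item~(i) of Assumption~\ref{ass:disconnectedManifold}, the restriction of $d_{\M}$ to $\M_p$ is exactly the Riemannian distance of $(\M_p, g_{\M}|_{\M_p})$. Therefore minimizing geodesics, uniqueness of minimizing segments, and geodesic balls computed in $\M_p$ coincide with those computed in $\M$ with respect to $d_{\M}$. The set $U' \coloneqq U \cap \M_p$ is open in $\M_p$ and contains $p$.

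Second, I would apply the standard result on convex neighborhoods (e.g., \cite[Ch.~6]{Lee_Riemannian}, Whitehead's theorem). For each point $p$ of a Riemannian manifold there is $\varepsilon_0>0$ such that, for every $\varepsilon\in(0,\varepsilon_0)$, the geodesic ball $B_\varepsilon(p)$ is geodesically convex in the sense of Definition~\ref{def:geodesicConvexity}. That is, any two of its points are joined by a unique minimizing geodesic of $\M_p$, and this geodesic lies in the ball. Since $U'$ is open and the metric topology of $d_{\M}$ agrees with the manifold topology (compatibility), there is $\varepsilon_1>0$ with $B_{\varepsilon_1}(p)\subset U'$. I would take $X\coloneqq B_\varepsilon(p)$ for some $\varepsilon<\min\{\varepsilon_0,\varepsilon_1\}$. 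Then $X\subset U'\subset U$, $X\subset \M_p$, and $X$ is open, so $p\in X=\interior{X}$.

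The main obstacle is the consistency check that geodesic convexity of $X$ relative to $\M_p$ is the same as geodesic convexity relative to $\M$. A minimizing geodesic between two points of $X$ in $\M$ is a connected curve, so it must lie in $\M_p$. By Assumption~\ref{ass:disconnectedManifold}(i), its length equals the $d_{\M}$-distance, so minimality and uniqueness transfer verbatim between $\M_p$ and $\M$.

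The Whitehead result is usually stated for smooth metrics. If only a $C^1$ structure is assumed, I would note that Riemannian geodesics already require enough smoothness for the exponential map to be a local diffeomorphism. I would simply assume the regularity used in \cite{Lee_Riemannian}.
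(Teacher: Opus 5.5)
Your proposal is correct and follows essentially the same route as the paper: restrict to the connected component of $p$, use compatibility of $d_{\M}$ to fit a small ball inside $U\cap\M_p$, and take a geodesic ball of small enough radius, which is geodesically convex by the convex-neighborhood theorem \cite[Thm.~6.17]{Lee_Riemannian}. You leave one step implicit: that small geodesic balls coincide with metric balls, which the paper justifies with \cite[Cor.~6.13]{Lee_Riemannian} and which is what lets compatibility of $d_{\M}$ place $B_{\varepsilon_1}(p)$ inside $U'$. In the other direction, your explicit check that convexity in $\M_p$ transfers to $\M$ fills in a point the paper only asserts.
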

\begin{proof}
    Using Lemma~\ref{lemma:disconnectedManifold}, let $\M = \cup_{i\in I} \M_i$, where $ I$ is a countable index set and each $\M_i$, which is a connected component of $\M$, is a connected Riemannian manifold. 
    Pick $p\in U$, and suppose that $p \in \M_i$ for some $i\in I$. By Lemma~\ref{lemma:geodesicBall} and Assumption~\ref{ass:disconnectedManifold}, $\M_i$ is a Riemannian manifold with distance metric $d_{\M_i}$, which is a restriction of $d_{\M}$ to $\M_i$. 

    Let $U' \coloneqq U \cap \M_i$ be a connected open set in $\M_i$ containing $p$. As $d_{\M_i}$ is compatible with the topology of $\M_i$, there exists $r' > 0$ small enough such that
    \begin{align}\label{eq:geodesicBallInclusion}
        \mathbb{B}_{r'}(p) \subset U',
    \end{align}
    where $\mathbb{B}_{r}(p) \coloneqq \{x\in \M_i : d_{\M_i}(x, p)\leq r\}$ denotes the closed metric ball in $\M_i$ of radius $r$ at $p$. 

    Next, using~\cite[Thm.~6.17]{Lee_Riemannian}, there exists $\epsilon > 0$ such that every open geodesic ball in $\M_i$, centered at $p$ of radius $r \leq \epsilon$, denoted by $B_{r}(p) \subset \M$, is geodesically convex. Additionally, $B_{r}(p)$ is also geodesically convex in $\M$.
    By~\cite[Cor.~6.13]{Lee_Riemannian} and connectedness of $\M_i$, the open geodesic ball $B_{r}(p)$ is equal to $\mathbb{B}^\circ_{r}(p) \coloneqq \interior{\mathbb{B}_r(p)}$. Using~\cite[Thm.~6.17]{Lee_Riemannian}, there exists $r'' > 0$ such that $B_{r''}(p)$ is geodesically convex in $\M_i$, and therefore, in $\M$. Let $r \coloneqq \min\{r', r''\}$ and $X \coloneqq B_{r}(p)$. Using~\eqref{eq:geodesicBallInclusion}, 
    \(
        X \subset U' \subset U.  
    \)
    The proof is complete by noting that $X$ is geodesically convex and open so that $p\in \interior{X}$.
\end{proof}

\begin{lemma}
    \label{lemma:boundsOnRiemannianDistance}
    Consider a Riemannian manifold $(\M, g_{\M})$ with a distance metric $d_{\M}$ that satisfies Assumption~\ref{ass:disconnectedManifold}. For each $x_0 \in \M$ and each coordinate chart $(U, \varphi)$ on $\M$ at $x_0$, there exist a geodesically convex set $X \subset U$ satisfying $x_0 \in \interior{X}$, and constants $c, C > 0$ with $c \leq C$ such that, for each $x_1, x_2 \in X$, 
    \begin{align}
        \label{eq:distanceInequalities}
        c |\varphi(x_1) - \varphi(x_2)| \leq d_{\M}(x_1, x_2) \leq C |\varphi(x_1) - \varphi(x_2)|.
    \end{align}
\end{lemma}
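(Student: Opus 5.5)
The plan is to localize to a small geodesically convex neighborhood in the connected component containing $x_0$, and then compare the Riemannian length of curves with the Euclidean length of their coordinate images. Let $\M_i$ be the connected component of $\M$ containing $x_0$. By Assumption~\ref{ass:disconnectedManifold}\ref{item:i-RiemannianAssumption}, on $\M_i$ the distance $d_\M$ coincides with the Riemannian distance $d_g$. Since $U\cap\M_i$ is open, I would first choose a compact coordinate ball $\overline{B}\subset U\cap\M_i$ with $\varphi(\overline{B})=\overline{B_\rho(\varphi(x_0))}$. This set is Euclidean-convex in coordinates. I would then apply Lemma~\ref{lemma:geodesicBall} to the open set $\interior{B}$ to obtain a geodesically convex $X\subset\interior{B}$ with $x_0\in\interior{X}$.

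\emph{Metric comparison in coordinates.} On the compact set $\varphi(\overline{B})$, write the metric as a continuous positive-definite matrix field $G(y)$. By compactness there are $0<\lambda\le\Lambda$ with $\lambda|w|^2\le w^\top G(y)w\le\Lambda|w|^2$ for all $y\in\varphi(\overline B)$ and $w\in\R{n}$.

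\emph{Upper bound.} Fix $x_1,x_2\in X$. The straight segment between $\varphi(x_1)$ and $\varphi(x_2)$ lies in $\varphi(\overline B)$ by convexity of the ball. Its preimage under $\varphi$ is an admissible $C^1$ curve joining $x_1$ and $x_2$, and its Riemannian length is at most $\sqrt{\Lambda}\,|\varphi(x_1)-\varphi(x_2)|$. Since $d_g$ is an infimum over such curves, this gives the right-hand inequality in~\eqref{eq:distanceInequalities} with $C=\sqrt\Lambda$.

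\emph{Lower bound.} By geodesic convexity of $X$ there is a minimizing geodesic $\gamma$ from $x_1$ to $x_2$ whose image lies in $X\subset B$. Its length therefore equals $d_g(x_1,x_2)$. Along $\gamma$ the lower metric bound applies, so
\[
d_g(x_1,x_2)=\int|\dot\gamma|_g\ \ge\ \sqrt{\lambda}\int|(\varphi\circ\gamma)'|\ \ge\ \sqrt\lambda\,|\varphi(x_1)-\varphi(x_2)|.
\]
This gives the left-hand inequality with $c=\sqrt\lambda$, and $c\le C$ is automatic.

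The main obstacle is the lower bound. An arbitrary near-minimizing curve may leave the chart domain, where no metric comparison is available. Geodesic convexity of $X$ is exactly what rules this out, since it guarantees that a minimizer stays inside $B$. This is why $X$ must be chosen inside the compact coordinate ball rather than just inside $U$, and why the construction of $X$ via Lemma~\ref{lemma:geodesicBall} is applied to $\interior{B}$. One also needs that $X$ lies in a single component, so that $d_\M=d_g$ there; the construction of Lemma~\ref{lemma:geodesicBall} provides this.
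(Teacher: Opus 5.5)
Your proposal is correct and follows essentially the same route as the paper. You restrict to the component $\M_i$, take a compact coordinate ball inside $U\cap\M_i$, choose a geodesically convex $X$ inside it via Lemma~\ref{lemma:geodesicBall}, compare $g_\M$ with the Euclidean metric on the compact coordinate image, and use the straight segment for the upper bound and the minimizing geodesic in $X$ for the lower bound. Your choice of the ball directly in $\varphi$-coordinates, so that $\varphi(\overline{B})$ is a Euclidean ball, is slightly cleaner than the paper's appeal to a regular coordinate ball, whose convexity is only guaranteed in its own chart rather than in $\varphi$.
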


\begin{proof}
    Let $n = \dim{\M}$. Pick any $x_0 \in \M$ and any coordinate chart $(U, \varphi)$ on $\M$ at $x_0$. Let $\M_i$ be the connected component of $\M$ that contains $x_0$, and let $d_{\M_i}$ denote the restriction of $d_{\M}$ to $\M_i$. Let $V \coloneqq U \cap \M_i$, and note that $(V, \varphi)$ is a coordinate chart at $x_0$ with $V \subset \M_i$. Following Lemma~\ref{lemma:coordballs}, pick also a regular coordinate ball $U'$ on $\M_i$ such that $x_0 \in U'$ and $U' \subset \overline{U'} \subset V$. 

    To obtain the upper bound in~\eqref{eq:distanceInequalities}, pick a geodesically convex set $X \subset U'$ according to Lemma~\ref{lemma:geodesicBall}. Pick any $x_1, x_2 \in X $, and denote their coordinate representations by $y_1 \coloneqq \varphi(x_1)$ and $y_2\coloneqq \varphi(x_2)$. Since $U'$ is a regular coordinate ball on $\M$, $\varphi(U') \subset \R{n}$ is convex. Then, we define the straight-line segment $\tilde{\gamma} : [0,1] \to \varphi(U')$ connecting $y_1$ and $y_2$ as $\tilde{\gamma}(s) \coloneqq s y_1 + (1-s)y_2$ for each $s\in [0,1]$. Under the $C^1$-diffeomorphism $\varphi$, we obtain a $C^1$-function $\gamma \coloneqq \varphi^{-1} \circ \tilde{\gamma}$ on $U'$ connecting $x_1$ and $x_2$. We denote the length of the function $\gamma$ under the metric $g_{\M}$ by
    \begin{align}
        \label{eq:length_gamma}
        L_{g_{\M}}(\gamma) \coloneqq \int_{0}^{1} |{\gamma}'(s)|_{g_{\M}} ds. 
    \end{align}
    Recall that, as $x_1, x_2 \in X$ and $X$ is geodesically convex, there exists a minimizing $C^1$-function, parametrized over an interval with image in $X$, that connects $x_1$ and $x_2$. Then, as $\M_i$ is connected, the length of this minimizing function is the Riemannian distance $d_{\M_i}(x_1, x_2)$ between $x_1$ and $x_2$; see~\cite[p.~36]{Lee_Riemannian}. Consequently, we have that
    \begin{align}
    \label{eq:distanceAndLengths}
        d_{\M_i}(x_1, x_2) &\leq L_{g_{\M}}(\gamma).
    \end{align}
    Since $\varphi$ is a $C^1$-diffeomorphism from $V$ onto its image, we use~\cite[Prop.~13.9]{Lee} to induce a Riemannian metric on $\varphi(V)\subset \R{n}$, denoted by $\bar{g}$, such that $(V, g_{\M})$ is isometric to $(\varphi(V), \bar{g})$; see~\cite[Ch.~2]{Lee_Riemannian} for more details about Riemannian isometries. Using~\cite[Ex.~13.24]{Lee} and~\eqref{eq:length_gamma}, it follows that $L_{g_{\M}}(\gamma) = L_{\bar{g}}(\tilde{\gamma})$. Consequently,~\eqref{eq:distanceAndLengths} yields
    \begin{align}
        d_{\M_i}(x_1, x_2) &\leq L_{\bar{g}}(\tilde{\gamma}) \nonumber
        \\
        &\leq \int_{0}^{1} |\tilde{\gamma}'(s)|_{\bar{g}} ds. \label{eq:intermediateStep}
    \end{align}
    Then, applying~\cite[Lemma~13.28]{Lee} with the compact set $K$ therein replaced by $\varphi(\overline{U'})$, there exist constants $c, C > 0$ with $c \leq C$ such that, for each $y\in \varphi(\overline{U'})$ and each $v \in \T{y}{\R{n}}$,
    \begin{align}
    \label{eq:EuclideanMetricComparison}
        c |v| \leq |v|_{\bar{g}} \leq C |v|,
    \end{align}
    where, recall that, $|\cdot|$ denotes the standard Euclidean norm. Then, using~\eqref{eq:EuclideanMetricComparison} with~\eqref{eq:intermediateStep} yields
     \begin{align*}
        d_{\M_i}(x_1, x_2) &\leq \int_{0}^{1} |\tilde{\gamma}'(s)|_{\bar{g}} ds
        \\
        &\leq C \int_{0}^{1} |\tilde{\gamma}'(s)| ds
        \\
        &\leq C | \varphi(x_1) - \varphi(x_2)|,
    \end{align*}
    where the last inequality is using the fact that $\tilde{\gamma}$ is a straight-line segment connecting $y_1 = \varphi(x_1)$ to $y_2 = \varphi(x_2)$, so its length equals $|y_1 - y_2|$. 

    Finally, to prove the lower bound in~\eqref{eq:distanceInequalities}, pick any $x_1, x_2\in X$ and denote by $\gamma: [0,1] \to X$ the minimizing geodesic connecting $x_1$ to $x_2$ with its range in $X$. Denote by $\tilde{\gamma} \coloneqq \varphi \circ \gamma : [0, 1] \to \varphi(X)$ its coordinate representation that connects $y_1 \coloneqq \varphi(x_1)$ to $y_2 \coloneqq \varphi(x_2)$. Then, 
    \begin{align*}
        d_{\M_i}(x_1, x_2) &= L_{g_{\M}}(\gamma)
        \\
        & = L_{\bar{g}}(\tilde{\gamma})
        \\
        & = \int_{0}^{1} |\tilde{\gamma}'(s)|_{\bar{g}} ds
        \\
        & \geq c \int_{0}^{1} |\tilde{\gamma}'(s)| ds
        \\
        &\geq c |\varphi(x_1) - \varphi(x_2)|.
    \end{align*}
    As $d_{\M_i}(x_1, x_2) = d_{\M}(x_1, x_2)$ from Assumption~\ref{ass:disconnectedManifold}, the proof is complete. 
    
\end{proof}

\section{Equivalent Characterizations}

In this appendix, we present useful equivalent coordinate-based characterizations of the coordinate independent notions of local precompactness and local abolute continuity. 

\subsection{Local Precompactness}
\label{appendix:localPrecompactness}

We provide a useful characterization of local precompactness of the map $F : \M \rightrightarrows \T{}{\M}$ in terms of the coordinate charts of $\M$. Before doing so, we introduce the following auxiliary results. 

\begin{lemma}[Regular Coordinate Balls]
\label{lemma:coordballs}
    Let $\M$ be a topological manifold. Then, for any point $x \in M$ and any open neighborhood $U$ of $x$, there exists a regular coordinate ball $B$ such that $x \in B \subset \overline{B} \subset U$. 
\end{lemma}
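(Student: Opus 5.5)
This is the standard existence of regular coordinate balls. I will work entirely in a single chart: find a small Euclidean ball around the image of $x$, and pull back a slightly larger concentric ball so that its closure stays inside $U$. Recall that a regular coordinate ball is an open set $B\subset\M$ for which there exist a coordinate ball $B'\supset\overline{B}$ and a homeomorphism $\psi:B'\to B_{r'}(0)\subset\R{n}$ with $\psi(B)=B_r(0)$ and $\psi(\overline{B})=\overline{B_r(0)}$ for some $0<r<r'$.

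\emph{Step 1.} Since $\M$ is locally Euclidean, pick a chart $(W,\varphi)$ at $x$. Replace it by $(W\cap U,\varphi|_{W\cap U})$, which is still a chart because $W\cap U$ is open. Translate so that $\varphi(x)=0$. Then $\varphi(W\cap U)$ is an open subset of $\R{n}$ containing $0$, so there is $r'>0$ with $B_{2r'}(0)\subset\varphi(W\cap U)$.

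\emph{Step 2.} Set $B'\coloneqq\varphi^{-1}(B_{r'}(0))$ and $\psi\coloneqq\varphi|_{B'}$. For any $0<r<r'$, set $B\coloneqq\varphi^{-1}(B_r(0))$. Clearly $x\in B$ and $B\subset B'\subset U$.

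\emph{Step 3.} It remains to show $\overline{B}\subset U$ and $\psi(\overline{B})=\overline{B_r(0)}$, where $\overline{B}$ is the closure in $\M$. This is the only real obstacle, because closure in $\M$ may differ from closure computed within the chart domain. I would handle it as follows.

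First, the set $K\coloneqq\varphi^{-1}(\overline{B_r(0)})$ is compact, being the continuous image under $\varphi^{-1}$ of a compact set. Since $\M$ is Hausdorff, $K$ is closed in $\M$. As $B\subset K$, this gives $\overline{B}\subset K\subset B'\subset U$.

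Conversely, $\varphi^{-1}$ is continuous, so $K=\varphi^{-1}(\overline{B_r(0)})$ is contained in the closure of $\varphi^{-1}(B_r(0))=B$, that is, $K\subset\overline{B}$. Hence $\overline{B}=K$, and therefore $\psi(\overline{B})=\overline{B_r(0)}$.

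Steps 1--3 show that $B$ is a regular coordinate ball with $x\in B\subset\overline{B}\subset U$. Note that the Hausdorff property used in Step 3 is part of the paper's definition of a topological manifold, so nothing beyond that definition is needed.
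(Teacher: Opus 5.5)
Your proof is correct, but it takes a different route from the paper's. The paper first invokes the fact that regular coordinate balls form a basis for the topology of $\M$ (Lee, Prop.~4.60). This gives a regular coordinate ball $R$ with $x\in R\subset U$ and an associated chart $(R',\varphi)$ with $\varphi(R)=\rho\cdot\mathrm{int}\,\mathbb{B}$. The paper then shrinks concentrically to $B\coloneqq\varphi^{-1}(r\cdot\mathrm{int}\,\mathbb{B})$ with $|\varphi(x)|<r<\rho$. There it simply asserts $\overline{B}\subset R$. That assertion is valid because $\overline{B}\subset\overline{R}\subset R'$, so the closure in $\M$ agrees with the closure inside the chart domain $R'$, where $\varphi$ is a homeomorphism. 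You instead build the regular ball from scratch out of an arbitrary chart restricted to $W\cap U$. You also confront directly the point the paper leaves implicit, namely that closure in $\M$ need not agree with closure computed in the chart. You settle it by showing $\overline{B}=\varphi^{-1}(\overline{B_r(0)})$, using compactness of $\varphi^{-1}(\overline{B_r(0)})$ and the Hausdorff property. In effect you reprove the special case of the cited basis result that is needed. That result itself depends on Hausdorffness in the same way; it fails for the line with two origins. Your argument is therefore self-contained and shows exactly where the Hausdorff axiom enters. The paper's argument is shorter, but it delegates that subtlety to the cited proposition.
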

\begin{proof}
By \cite[Prop 4.60]{lee2010introduction}, the collection of all regular coordinate balls forms a countable basis for the topology of $\mathcal{M}$. By definition of a basis, for any $x \in \mathcal{M}$ and open set $U$ containing $x$, there exists a regular coordinate ball $R$ such that $x \in R \subset U$.

By definition of a regular coordinate ball~\cite[p.~15]{Lee}, there exists a coordinate chart $(R', \varphi)$ with $\overline{R} \subset R'$ and positive real numbers $\rho < \rho'$ such that $\varphi(R) = \rho\cdot\mathrm{int}\mathbb{B}$, $\varphi(\overline{R}) = \rho\mathbb{B}$, and $\varphi(R') = \rho'\cdot\mathrm{int}\mathbb{B}$. Since $x \in R$, we have $\varphi(x) \in \rho\cdot\mathrm{int}\mathbb{B}$. Let $d = |\varphi(x)|$, choose any $r$ with $d < r < \rho$ and define $B \coloneqq \varphi^{-1}(r\cdot\mathrm{int}\mathbb{B})$.

Given that $r < \rho$, we have $\overline{B} \subset R \subset U$, and since $d < r$, we have $x \in B$. Also, $\overline{B} \subset R \subset R'$ means $B$ is a regular coordinate ball with associated coordinate chart $(R', \varphi)$. Therefore, there exists a regular coordinate ball $B$ with $x \in B \subset \overline{B} \subset U$.
\end{proof}

The following lemma formalizes the discussion presented in \cite[Chapter 8]{gallierDifferentialGeometry2020} regarding the local trivialization of tangent bundles. The map $\diffFunc{\varphi}$ replaces the map $\theta^{-1}$ therein.
\begin{lemma}[Local Trivialization of Tangent Bundle]
    \label{lemma:trivialization}
    Let $\M$ be a $C^1$-manifold. For any coordinate chart $(U, \varphi)$ where $\varphi: U \to \mathbb{R}^n$, the map $\Phi: \T{}{U} \to U \times \mathbb{R}^n$ defined by $\Phi(v) = (x, \diffFunc{\varphi_x}(v))$ for $v \in \T{x}{\M}$ is a homeomorphism.
\end{lemma}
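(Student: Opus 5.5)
The plan is to verify directly that $\Phi$ is a continuous bijection whose inverse is also continuous, with $\T{}{U}$ carrying the subspace topology inherited from the natural topology of $\T{}{\M}$ (\cite[Prop.~3.18]{Lee}). The key observation is that this natural topology is, by construction, the one that makes the ``coordinate bundle charts'' of $\T{}{\M}$ into homeomorphisms. Concretely, for the chart $(U,\varphi)$ one defines $\widetilde{\varphi}:\T{}{U}\to \varphi(U)\times\R{n}$ by $\widetilde{\varphi}(v)\coloneqq(\varphi(x),\diffFunc{\varphi_x}(v))$ for $v\in\T{x}{\M}$, and the topology on $\T{}{\M}$ is declared so that each such $\widetilde{\varphi}$ is a homeomorphism onto the open set $\varphi(U)\times\R{n}$. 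For a $C^1$-manifold one must check that the transition maps $\widetilde{\psi}\circ\widetilde{\varphi}^{-1}(y,w)=(\psi\circ\varphi^{-1}(y),D(\psi\circ\varphi^{-1})(y)w)$ are continuous. They are, since $\psi\circ\varphi^{-1}$ is $C^1$, so its Jacobian depends continuously on $y$; they are only $C^0$, but continuity is all a topology requires. I would state this first, citing the construction in \cite[Prop.~3.18]{Lee} and noting that only continuity of the Jacobian is used.

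With this in hand, $\Phi$ factors as
\[
\Phi=(\varphi^{-1}\times \mathrm{id}_{\R{n}})\circ\widetilde{\varphi},
\]
because $\Phi(v)=(x,\diffFunc{\varphi_x}(v))=(\varphi^{-1}(\varphi(x)),\diffFunc{\varphi_x}(v))$. The map $\varphi^{-1}\times\mathrm{id}:\varphi(U)\times\R{n}\to U\times\R{n}$ is a homeomorphism, being a product of homeomorphisms under the product topology \cite[p.~86]{Munkres2000}. Hence $\Phi$ is a composition of homeomorphisms and is therefore a homeomorphism. As a sanity check on bijectivity, note that $\diffFunc{\varphi_x}$ is a linear isomorphism $\T{x}{\M}\to\R{n}$ for each $x$ \cite[Prop.~3.6]{Lee}. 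Thus $\Phi$ is injective, since fibers over distinct $x$ are disjoint, and within a fiber the map is an isomorphism. It is surjective, since $(x,w)$ has preimage $(\diffFunc{\varphi_x})^{-1}(w)$.

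The main obstacle is justifying the topology on $\T{}{\M}$ in the $C^1$ (rather than smooth) setting, since \cite{Lee} works with smooth manifolds. I would handle this by rerunning the vector-bundle chart lemma argument: the sets $\widetilde{\varphi}^{-1}(W)$, with $W$ open in $\varphi(U)\times\R{n}$ and $(U,\varphi)$ ranging over the atlas, form a basis. This basis is well defined precisely because the transition maps above are homeomorphisms between open subsets of $\R{2n}$, which follows from the $C^1$ regularity of the chart transitions. Hausdorffness and second countability are not needed for the lemma, so the argument reduces to the factorization above.
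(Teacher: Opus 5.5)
Your proof is correct and follows essentially the same route as the paper. The paper gives no separate argument; it defers to the standard tangent-bundle construction in Gallier (Ch.~8). There, $\T{}{\M}$ is topologized so that the bundle charts $v\mapsto(\varphi(x),\diffFunc{\varphi_x}(v))$ are homeomorphisms, and the trivialization is $(\varphi^{-1}\times\mathrm{id})$ composed with such a chart, which is exactly your factorization. Your observation that, in the $C^1$ setting, the bundle transition maps are only continuous, and that this is all the topology requires, is the right caveat and fills in a detail the paper leaves implicit.
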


Using the above results, local precompactness of a set-valued map can be equivalently characterized as follows.

\begin{lemma}
\label{lemma:LB_equivalence}
Let $\M$ be an $n$-dimensional $C^1$ manifold. A set-valued map $F: \M \rightrightarrows \T{}{\M}$ is locally precompact if and only if for each point $x \in \M$, there exists a coordinate chart $(U, \varphi)$ at $x$ and a constant $K > 0$ such that $( \diffFunc{\varphi_y} \circ F)(y)\subset K\mathbb{B}$ for all $y \in U$, where $\mathbb{B} \coloneqq \{v\in \R{n} : |v|\leq 1\}$.
\end{lemma}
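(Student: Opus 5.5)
The plan is to reduce both directions to the local trivialization in Lemma~\ref{lemma:trivialization}, combined with the facts that compact sets in $\R{n}$ are bounded and closed balls are compact. Recall that $F$ is locally precompact if each $x\in\M$ has a neighborhood $\U$ with $F(\U)$ precompact in $\T{}{\M}$.

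\emph{($\Rightarrow$)} Fix $x\in\M$ and let $\U$ be a neighborhood of $x$ with $\overline{F(\U)}$ compact in $\T{}{\M}$. Pick any chart $(U',\varphi)$ at $x$. By Lemma~\ref{lemma:coordballs}, pick a regular coordinate ball $U$ with $x\in U\subset\overline{U}\subset U'\cap\interior{\U}$, so that $(U,\varphi|_U)$ is a chart at $x$. Let $\pi:\T{}{\M}\to\M$ be the canonical projection. Then $F(U)\subset \pi^{-1}(U')=\T{}{U'}$, and $K_0\coloneqq\overline{F(U)}\cap \pi^{-1}(\overline{U})$ is a closed subset of the compact set $\overline{F(\U)}$, hence compact. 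It lies in $\T{}{U'}$ and contains $F(U)$. Apply the homeomorphism $\Phi$ from Lemma~\ref{lemma:trivialization} (for the chart $(U',\varphi)$) and then the continuous projection onto $\R{n}$. The image of $K_0$ is a compact subset of $\R{n}$, hence contained in $K\mathbb{B}$ for some $K>0$. This gives $\diffFunc{\varphi_y}(F(y))\subset K\mathbb{B}$ for every $y\in U$.

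\emph{($\Leftarrow$)} Fix $x$, and take the chart $(U,\varphi)$ and the constant $K$ from the hypothesis. By Lemma~\ref{lemma:coordballs}, choose an open $B$ with $x\in B\subset\overline{B}\subset U$ and $\overline{B}$ compact. Then $F(B)\subset \Phi^{-1}(\overline{B}\times K\mathbb{B})$. The set $\overline{B}\times K\mathbb{B}$ is compact in $U\times\R{n}$, so its preimage under the homeomorphism $\Phi$ is compact in $\T{}{U}$, and therefore compact in $\T{}{\M}$ because the inclusion is continuous. Since $\T{}{\M}$ is Hausdorff, this compact set is closed, so $\overline{F(B)}$ is a closed subset of a compact set and hence compact. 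Thus $B$ is the required neighborhood.

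The main obstacle is Lemma~\ref{lemma:trivialization}, which I am taking as given: the homeomorphism claim rests on the fact that the natural topology on $\T{}{\M}$ is the one making these local trivializations homeomorphisms, and on $\T{}{U}$ being open in $\T{}{\M}$. The other delicate point is in ($\Rightarrow$). Compactness must be taken inside $\T{}{U'}$, where $\Phi$ is defined, which is why I shrink to a ball with closure in $U'$ and intersect with $\pi^{-1}(\overline{U})$ rather than working with $\overline{F(U)}$ directly.
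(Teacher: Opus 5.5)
Your proof is correct and follows essentially the same route as the paper. In $(\Rightarrow)$ you shrink to a regular coordinate ball, intersect the compact set with $\pi^{-1}$ of the ball's closure so that the trivialization $\Phi$ of Lemma~\ref{lemma:trivialization} applies, and project to $\R{n}$; in $(\Leftarrow)$ you pull $\overline{B}\times K\mathbb{B}$ back through $\Phi$, while also making explicit two small steps the paper leaves implicit (working inside $\interior{\U}$, and using Hausdorffness of $\T{}{\M}$ to get precompactness).
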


\begin{proof}
    ($\Rightarrow $) Suppose $F$ is locally precompact, and let $x \in \M$ be arbitrary. Then there exists a neighborhood $U_1$ of $x$ and $K\subset T\M$ compact such that $F(U_1)\subset K$. Let $(U_2, \varphi)$ be a coordinate chart containing $x$ and define $U \coloneqq U_1 \cap U_2$.
    
    By Lemma \ref{lemma:coordballs}, there exists a regular coordinate ball $B$ with $x \in B\subset \overline{B} \subset U$. Define
    \begin{align*}
        F(\overline{B}) &\coloneqq\{(x,f)~:~x\in \overline{B},~f\in F(x)\}.
    \end{align*}
    Now, note that for any $(x,f)\in F(\overline{B})$, it follows that $x \in \overline{B}$ which implies that $F(\overline{B})\subset \pi^{-1}(\overline{B})$, where $\pi:\T{}{\M} \to \M$ is the canonical projection. Additionally, for any $(x,f)\in F(\overline{B})$ it follows that $(x,f)\in F(U)$, since $\overline{B}\subset U$. Therefore, we have that
    \begin{align*}
        F(\overline{B})\subset F(U) \cap \pi^{-1}(\overline{B})\subset K\cap \pi^{-1}(\overline{B}).
    \end{align*}
    Since $\overline{B}$ is closed, as it is compact, and $\pi$ is continuous, it follows that $\pi^{-1}(\overline{B})$ is closed. Then, $F(\overline{B})$ is contained in a compact set since $\pi^{-1}(\overline{B})\cap K\eqqcolon \tilde{K}$ is the intersection of a compact set with a closed set. 
    
    Now, by Lemma \ref{lemma:trivialization}, there exists a local trivialization given by $\Phi: \T{}{U} \to U \times \mathbb{R}^n$ given by $\Phi((x,f)) = (x, d\varphi_x(f))$ for $(x,f) \in \T{}{\M}$ with $x \in U$ and $f \in \T{x}{\M}$. Since $\tilde{K}$ is compact and $\Phi$ is a homeomorphism, it follows that $\Phi(\tilde{K})$ is compact. 
    
    Let $\pi_2: U \times \mathbb{R}^n \to \mathbb{R}^n$ be the canonical projection onto the second factor. Since $\pi_2$ is continuous, and $\Phi(\tilde{K})$ is compact, it follows that $\pi_2(\Phi(\tilde{K}))$ is a compact subset of $\mathbb{R}^n$. Therefore, there exists $K' > 0$ such that:
    \begin{align}
        \pi_2\left(\Phi(\tilde{K})\right)\subset K'\mathbb{B}\subset \mathbb{R}^n.
    \end{align}
    Given that $F(B)\subset F(\overline{B}) \subset \tilde{K}$, we obtain that $\pi_2\left(\Phi(F(B))\right)\subset \pi_2(\Phi(\tilde{K}))\subset K'\mathbb{B}$. This implies that for all $x \in B$ it follows that $(d\varphi_x \circ F)(B)\subset K'\mathbb{B}$. 
    
    Since $x$ was chosen arbitrarily, the previous result holds for all $x\in \M$. In other words, for all $x\in \M$, there exists a coordinate chart $(B,\varphi|_B)$ at $x$ and a constant $K'>0$ such that $\left(\diffFunc{\varphi_x} \circ F\right)(B)\subset K'\mathbb{B}$ for all $x\in B$, i.e., that $F$ is locally coordinate bounded.
    
    \noindent($\Leftarrow$) Suppose there exists a coordinate chart $(U, \varphi)$ at $x$ and a constant $K > 0$ such that $(\diffFunc{\varphi_y} \circ F)(y)\subset K\mathbb{B}$ for all $y \in U$.
    
    Then, by Lemma \ref{lemma:coordballs}, there exists a regular coordinate ball $B$ with $x \in B\subset \overline{B} \subset U$. Consider the set
    \begin{align*}
    F(\overline{B}) = \{(x,v) \in \T{}{\M} : x \in \overline{B}, v \in F(x)\}.
    \end{align*}
    By Lemma \ref{lemma:trivialization}, there exists a local trivialization $\Phi: \T{}{U} \to U \times \mathbb{R}^n$ induced by the homeomorphism $(x,f)\mapsto (x,\diffFunc{\varphi_x}(f))$. Using this map, and the fact that $(\diffFunc{\varphi_x}\circ F)(\overline{ B})\subset (\diffFunc{\varphi_x}\circ F)(U)\subset K\mathbb{B}$, we obtain that
    \begin{align*}
        \Phi\left(F(\overline{B})\right) &= \left\{(x, v) \in \overline{B} \times \mathbb{R}^n : \begin{array}{c}
            x \in \overline{B}, v=\diffFunc{\varphi_x}(f),
            \\
            f\in F(x)
        \end{array}
        \right\} \\
        &\subset \{(x, v) \in \overline{B} \times \mathbb{R}^n : x \in \overline{B},~|v|\le K\}\\
                  &= \overline{B} \times K\mathbb{B}.
    \end{align*}
    The set $\overline{B} \times K\mathbb{B}$ is compact in $U \times \mathbb{R}^n$ since $\overline{B}$ and $K\mathbb{B}$ are compact, and the cartesian product of compact sets is compact. Thus, since $\Phi$ is a homeomorphism, it follows that $\Phi^{-1}(\overline{B} \times K\mathbb{B})\eqqcolon\tilde{K}$ is compact.
    
    Therefore, there exists an neighborhood $B$ of $x$ and a compact set $\tilde{K}\subset T\M$ such that
    \begin{align}
        F(B)\subset F(\overline{B})\subset \tilde{K}.
    \end{align}
    
    Since $x\in\M$ was chosen arbitrarily, the result holds for all $x\in\M$. Thus, we obtain that $F$ is locally precompact.
    \end{proof}

\subsection{Local Absolute Continuity}

\begin{proposition}\label{prop:CkAtAnyChart}
    Let $\mathcal{M}$ be a $C^k$-manifold and $f\in C^k(\M)$. Then, for any $x\in \M$ and any chart $(V,\psi)$ at $x$ it follows that $f \circ \psi^{-1}: \psi(V) \rightarrow \mathbb{R}$ is a $C^k$-function.
    \end{proposition}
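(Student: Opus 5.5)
The plan is to reduce the statement to the definition of a $C^k$ map between manifolds and to $C^k$-compatibility of charts in the maximal atlas. By the definition of $C^k$ maps in Section~\ref{sec:differentialGeometry}, since $f\in C^k(\M)$ (viewing $\R{}$ with its standard chart $\mathrm{id}$), for the given $x$ there exists some chart $(U,\varphi)$ at $x$ such that $f\circ\varphi^{-1}:\varphi(U)\to\R{}$ is $C^k$ in the Euclidean sense. If the target chart supplied by the definition is some chart $(W,\chi)$ of $\R{}$ rather than the identity, we first note that $\chi$ is itself a $C^k$-diffeomorphism onto its image, so composing with $\chi^{-1}$ recovers $f\circ\varphi^{-1}$ as a $C^k$ map, possibly after shrinking $U$ to $U\cap f^{-1}(W)$.

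Next, given an arbitrary chart $(V,\psi)$ at $x$, work on the open set $U\cap V$, which contains $x$. Both charts belong to the maximal $C^k$-atlas, so they are $C^k$-compatible: the transition map $\varphi\circ\psi^{-1}:\psi(U\cap V)\to\varphi(U\cap V)$ is a $C^k$-diffeomorphism. On $\psi(U\cap V)$ we then write
\[
f\circ\psi^{-1} = (f\circ\varphi^{-1})\circ(\varphi\circ\psi^{-1}),
\]
a composition of $C^k$ maps between open subsets of Euclidean spaces, which is $C^k$ by the chain rule (each partial derivative of order $\le k$ is a polynomial expression in continuous partials of the two factors).

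The main obstacle is that the statement asks for $f\circ\psi^{-1}$ to be $C^k$ on all of $\psi(V)$, not just near $\psi(x)$. Since $C^k$ regularity is local, I will repeat the argument above at every point $p\in V$ rather than only at $x$. At each $p$, the definition of $f\in C^k(\M)$ gives a chart $(U_p,\varphi_p)$ with $f\circ\varphi_p^{-1}$ of class $C^k$. The composition argument then shows that $f\circ\psi^{-1}$ is $C^k$ on the open neighborhood $\psi(U_p\cap V)$ of $\psi(p)$. These neighborhoods cover $\psi(V)$, and being $C^k$ is determined by the behavior of partial derivatives near each point. Hence $f\circ\psi^{-1}$ is $C^k$ on $\psi(V)$, which completes the argument.
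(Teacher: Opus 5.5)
Your proposal is correct and follows essentially the same route as the paper. Both arguments apply the definition of $f\in C^k(\M)$ at each point of $V$, conclude that $f\circ\psi^{-1}$ is $C^k$ on the open neighborhood $\psi(U_p\cap V)$, and then use that $C^k$ regularity is local. Your explicit appeal to the $C^k$ transition map $\varphi_p\circ\psi^{-1}$ and the chain rule just spells out the step the paper leaves implicit, namely its ``since both are charts at $y$'' remark.
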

    
    \begin{proof} Let $f\in C^k(\M)$. Consider $x\in \M$ and a chart $(V, \psi)$ at $x$.\smallbreak
                
        For each $p \in \psi(V)$, define $y = \psi^{-1}(p) \in V$. Since $y\in V$, it follows that $(V, \psi)$ is a chart at $y$. Additionally, by definition of a $C^k$-function, since $f\in C^k(\M)$ there exists a chart $(U_y, \varphi_y)$ at $y$ such that $f \circ \varphi_y^{-1}: \varphi_y(U_y) \rightarrow \mathbb{R}$ is a $C^k$-function.\smallbreak
            
        Define $W_y\coloneqq U_y\cap V$.  Since $\psi$ is a homeomorphism, it is in particular an open map \cite[Ex.~16.4]{Munkres2000}. Therefore, $\psi(W_y)$ is an open neighborhood of $p = \psi(y)$. Additionally, 
         since $f \circ \varphi_y^{-1}$ is a $C^k$-function on $\varphi_y(U_y)$ and both $(U_y, \varphi_y)$ and $(V, \psi)$ are charts at $y$, it follows that $f \circ \psi^{-1}: \psi(W_y) \rightarrow \mathbb{R}$ is also a $C^k$-function.\smallbreak
     
        Thus, for all $p \in \psi(V)$, there exists an open neighborhood $\psi(W_y)$ of $p$ where $f \circ \psi^{-1}$ is a $C^k$-function. This implies that $f \circ \psi^{-1}: \psi(V) \rightarrow \mathbb{R}$ is $C^k$ on $\psi(V)$.
\end{proof}

Let $\mathcal{J}$ denote the set of intervals of the form $(a,d)$, $[b,d)$, $(a,c]$, and $[b,c]$, where $a, d \in \R{} \cup \{\pm \infty\}$, $b, c\in \R{}$, and $a \leq b \leq c \leq d$. 

\begin{lemma}[Local Absolute Continuity]
\label{lemma:locAbsCont_coord}
    Let $\mathcal{M}$ be an $n$-dimensional $C^k$-manifold. A function $f:\mathcal{J}\ni I\to \mathcal{M}$ is locally absolutely continuous if and only if for every $x \in \rge{f} \subset \mathcal{M}$ there exists a chart $(U,\varphi)$ at $x$, such that the restriction of $\varphi \circ f$ to each connected component of $f^{-1}(U)$ is locally absolutely continuous.
\end{lemma}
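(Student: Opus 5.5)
The plan is to prove both implications directly from Definition~\ref{def:localAbsoluteContinuity}. The two tools are Proposition~\ref{prop:CkAtAnyChart}, which lets us move between $C^1$ functions on $\M$ and $C^1$ functions in a chart, and the Euclidean fact that composing a locally absolutely continuous curve with a $C^1$ map gives a locally absolutely continuous curve. That fact holds because a $C^1$ map is Lipschitz on compact sets, and a locally absolutely continuous curve has compact image on every compact subinterval.

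\emph{($\Rightarrow$)} Let $f$ be locally absolutely continuous and fix $x\in\rge f$ and a chart $(U,\varphi)$ at $x$.
\begin{enumerate}
\item Write $\varphi=(\varphi^1,\dots,\varphi^n)$. Each component $\varphi^k$ is $C^1$ on $U$ but not on all of $\M$, so we cannot apply Definition~\ref{def:localAbsoluteContinuity} to it directly.
\item Given a connected component $J$ of $f^{-1}(U)$ and a compact interval $[a,b]\subset J$, the set $f([a,b])$ is compact in $U$.
\item Using Lemma~\ref{lemma:coordballs} and a $C^1$ bump function, choose $\chi:\M\to[0,1]$ that is $C^1$, equals $1$ on a neighborhood of $f([a,b])$, and has support in $U$. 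The functions $\psi^k\coloneqq\chi\varphi^k$, extended by zero, are then $C^1$ on $\M$.
\item On $[a,b]$ we have $\psi^k\circ f=\varphi^k\circ f$, so each component of $\varphi\circ f$ is absolutely continuous there. Since $[a,b]\subset J$ was arbitrary, $\varphi\circ f$ is locally absolutely continuous on $J$.
\end{enumerate}

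\emph{($\Leftarrow$)} Assume the chart condition holds and let $\psi:\M\to\R{}$ be $C^1$ and $[a,b]\subset I$ compact. We must show that $\psi\circ f$ is absolutely continuous on $[a,b]$.
\begin{enumerate}
\item The main obstacle is that $f$ is only known to be continuous implicitly. So first, for each $s\in[a,b]$, use the chart $(U,\varphi)$ at $f(s)$. The component of $f^{-1}(U)$ containing $s$ must be relatively open in $I$ for the argument to start. On that component $f=\varphi^{-1}\circ(\varphi\circ f)$ with $\varphi\circ f$ continuous, so $f$ is continuous near $s$ relative to the component. We need the components of $f^{-1}(U)$ to be open, i.e.\ continuity of $f$. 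We will read continuity as implicit in the hypothesis (as in the cited source~\cite{BulloLewis}), or deduce it by taking $U$ small and arguing within the component containing $s$.
\item Once $f$ is continuous, compactness of $[a,b]$ gives a partition $a=s_0<\dots<s_m=b$ such that each $f([s_{l},s_{l+1}])$ lies in a single chart domain $U_l$, inside one component of $f^{-1}(U_l)$. This is a Lebesgue number argument.
\item On each piece, $\psi\circ f=(\psi\circ\varphi_l^{-1})\circ(\varphi_l\circ f)$. Here $\psi\circ\varphi_l^{-1}$ is $C^1$ by Proposition~\ref{prop:CkAtAnyChart}, and $\varphi_l\circ f$ is absolutely continuous with compact image.
\item Hence $\psi\circ f$ is absolutely continuous on each $[s_l,s_{l+1}]$. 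A finite concatenation of absolutely continuous functions on adjacent intervals is absolutely continuous, which finishes this direction.
\end{enumerate}
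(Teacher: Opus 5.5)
\paragraph{The gap.} The genuine gap is in step 1 of ($\Leftarrow$). Your fallback of deducing continuity of $f$ from the chart condition cannot work, for any choice of charts: without continuity, the reverse implication is false as stated. Take $\M=\R{}$, $I=[0,1]$ and
\[
f(t)=\begin{cases} 0, & t\in[0,\tfrac12),\\ 1, & t\in[\tfrac12,1].\end{cases}
\]
At $x=0$ use the chart $((-\tfrac12,\tfrac12),\mathrm{id})$, and at $x=1$ use $((\tfrac12,\tfrac32),\mathrm{id})$. The preimages $[0,\tfrac12)$ and $[\tfrac12,1]$ are connected, and $\varphi\circ f$ is constant on each, so the hypothesis holds. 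Yet $f$ is discontinuous, so $\psi\circ f$ with $\psi(x)=x$ is not absolutely continuous.

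The hypothesis constrains $f$ only on each component separately. The component containing a point (here $[\tfrac12,1]\ni\tfrac12$) need not be a relative neighborhood of that point in $I$, and choosing $U$ smaller only shrinks the components. The paper's own Step~1 makes exactly this faulty inference: continuity of $\varphi\circ f$ on the component containing $t_0$ yields only one-sided continuity in this example. So you should commit to your first option and state continuity of $f$ as an explicit extra hypothesis of ($\Leftarrow$), noting that the lemma is false without it. With that added, your steps 2--4 are correct and follow the paper's route: a Lebesgue-number partition, Proposition~\ref{prop:CkAtAnyChart}, composition with a map that is Lipschitz on compacts, and finite gluing. The paper carries out the gluing by an explicit $\varepsilon$--$\delta$ interval-splitting argument, whereas you cite the concatenation fact.

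\paragraph{The forward direction.} In ($\Rightarrow$), your bump-function localization improves on the paper. The paper applies Definition~\ref{def:localAbsoluteContinuity} directly to the chart components $\varphi_i$, even though these are defined only on $U$ and are not $C^1$ functions on $\M$. However, your step 2 uses compactness of $f([a,b])$, that is, continuity of $f$, which is not part of Definition~\ref{def:localAbsoluteContinuity}. You can close this with the same device:
\begin{itemize}
\item For $t_0\in I$, take a $C^1$ function $\chi$ equal to $1$ near $f(t_0)$, with compact support in a chart domain $U$.
\item Then $\chi\circ f$ and the components of $(\chi\varphi)\circ f$ (extended by zero) are continuous.
\item Hence for $t$ near $t_0$ you have $\chi(f(t))>0$, so $f(t)\in U$ and $\varphi(f(t))=(\chi\varphi)(f(t))/\chi(f(t))\to\varphi(f(t_0))$.
\item Since $\varphi$ is a homeomorphism onto its image, this gives $f(t)\to f(t_0)$.
\end{itemize}
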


\begin{proof}
\noindent($\Rightarrow$) Suppose $f:\mathcal{J}\ni I \to \mathcal{M}$ is locally absolutely continuous.

Let $x \in \rge{f}$ and let $(U,\varphi)$ be any chart at $x$ with component functions $\varphi_i: U \to \mathbb{R}$, $i \in\{ 1,\ldots,n\}$. By Definition~\ref{def:localAbsoluteContinuity}, each composition $\varphi_i \circ f$ is locally absolutely continuous on $[a,b]$, which implies that $f$ is continuous.\smallbreak

Since $f$ is continuous and $U$ is open, $f^{-1}(U)$ is open relative to $I$, so $f^{-1}(U) = I \cap V$ for some open set $V$ in $\mathbb{R}$. Any open subset of $\mathbb{R}$ is a countable union of disjoint open intervals \cite[Proposition 0.21]{folland_real_1999}, so
    \begin{align}\label{eq:proof:lac:aux0}
        f^{-1}(U) = I \cap \left(\bigcup_{k=1}^\infty (a_k,b_k)\right)
    \end{align}
    for some collection of disjoint open intervals $\{(a_k,b_k)\}_{k=1}^\infty$.\smallbreak

Let $J\subset I$ be any connected component of $f^{-1}(U)$. Via \eqref{eq:proof:lac:aux0}, $J$ is an interval that is open relative to $I$. For each component function $\varphi_i$ of the chart map, since $\varphi_i\in C^k(U)$, it follows that the composition $\varphi_i \circ f$ is locally absolutely continuous on $I$ by assumption. When restricted to $J$, this local absolute continuity is preserved because any compact subinterval of $J$ is also a compact subinterval of $I$. Therefore, each component $\varphi_i \circ f|_J$ is locally absolutely continuous, which means the vector-valued function $\varphi \circ f|_J$ is locally absolutely continuous as well.\medbreak

\noindent ($\Leftarrow$) Suppose that for every $y \in \rge{f}$ there exists a chart $(U,\varphi)$ at $y$, such that the restriction of $\varphi \circ f$ to each connected component of $f^{-1}(U)$ is locally absolutely continuous.\smallbreak

\noindent\emph{Step 1. Continuity of $f$.}\\
For any $t_0 \in I$, let $y_0 = f(t_0)$ and choose a chart $(U,\varphi)$ at $y_0$ satisfying the assumption. Since $\varphi \circ f$ is locally absolutely continuous on the connected component of $f^{-1}(U)$ containing $t_0$, it is continuous at $t_0$. As $\varphi$ is a homeomorphism onto its image, $f$ must be continuous at $t_0$. Since $t_0$ was arbitrary, $f$ is continuous on $I$.\smallbreak

\noindent\emph{Step 2: Open cover of a compact subinterval.}\\
Let $\psi \in C^k(\mathcal{M})$, and $K\coloneqq[a,b]\subset I$ be an arbitrary compact subinterval. Since $f$ is continuous, $f(K)$ is compact in $\mathcal{M}$. By assumption, for each $y \in \rge{f}$, there exists a chart $(U_y, \varphi_y)$ at $y$ such that the restriction of $\varphi \circ f$ to each connected component of $f^{-1}(U_y)$ is locally absolutely continuous. Additionally, $\bigcup_{y \in f(K)} U_y$ forms an open cover of $f(K)$. By the compactness of $f(K)$, there exists a finite subcover of $f(K)$, $\bigcup_{j=1}^m U_{y_j}$. It follows that
    \begin{align}\label{eq:proof:lac:aux1}
        \bigcup_{j=1}^m f^{-1}(U_{y_j})\cap K
    \end{align}
forms an open cover of $K$ in the subspace topology of $K\subset I$. Indeed, $f^{-1}(U_{y_j}) \cap K$ is open relative to $K$ because $U_{y_j}$ is open in $\mathcal{M}$ and $f$ is continuous. Via \cite[Proposition 0.21]{folland_real_1999}, any open subset of $\mathbb{R}$ is a countable union of disjoint open intervals. Thus, each $f^{-1}(U_{y_j})$ can be written as:
    $
        f^{-1}(U_{y_j}) = \bigcup_{k=1}^\infty \left(a_k^{(j)},b_k^{(j)}\right),
    $
    for some collection of open intervals $\left\{(a_k^{(j)},b_k^{(j)})\right\}_{k=1}^\infty$.
    Therefore, the intersection with $K$ gives:
    \begin{align}\label{eq:proof:lac:aux2}
    f^{-1}(U_{y_j}) \cap K &= \left(\bigcup_{k=1}^{\infty} \left(a_k^{(j)},b_k^{(j)}\right)\right) \cap K\notag\\
    &= \bigcup_{k=1}^{\infty} \left(\left(a_k^{(j)},b_k^{(j)}\right) \cap K\right).
    \end{align}
    
    While this expression involves an infinite union, only finitely many terms can be non-empty. To see why, suppose infinitely many intervals have non-empty intersection with $K$. For each such interval $\left(a_k^{(j)},b_k^{(j)}\right) \cap K \neq \emptyset$, choose a point $t_k \in \left(a_k^{(j)},b_k^{(j)}\right) \cap K$. Since $K = [a,b]$ is compact, the sequence $\{t_k\}_{k=1}^\infty$ has a convergent subsequence $\{t_{k_i}\}_{i=1}^\infty$ with limit $t^* \in K$. Given any $\varepsilon > 0$, there exists $i_0 \in \mathbb{N}$ such that for all $i \geq i_0$, $t_{k_i}\in t^* + \varepsilon\mathbb{B}$. This means infinitely many terms of the sequence lie within the $\varepsilon$-neighborhood of $t^*$.  However, since the points $t_{k_i}$ come from disjoint intervals, and any $\varepsilon$-neighborhood can only intersect finitely many disjoint intervals, we reach a contradiction. Therefore, we can write:
        \begin{align}\label{eq:proof:lac:axu3}
            f^{-1}(U_{y_j}) \cap K = \bigcup_{k=1}^{n_j} V_k^{(j)},
        \end{align}
        for some $n_j>0$ and where $V_k^{(j)}\coloneqq \left(a_k^{(j)},b_k^{(j)}\right)\cap K$. Using \eqref{eq:proof:lac:aux1} and \eqref{eq:proof:lac:axu3} we obtain that the set
        \begin{align*}
            \mathcal{U}\coloneqq \bigcup_{j=1}^m\bigcup_{k=1}^{n_j}V_k^{(j)}
        \end{align*}
      is an open cover of $K=[a,b]\subset I$.\medbreak
      \noindent\emph{Step 3. Construction of a closed partition for the compact subinterval.}\\
        Given that $K\subset \mathbb{R}$ with the absolute value $|\cdot|$ constitutes a compact metric space, it follows via \cite[Lemma 7.18]{lee2010introduction} that the cover $\mathcal{U}$ has a Lebesgue number. Namely, for the cover $\mathcal{U}$ of $K$ there exists $\ell>0$, called the Lebesgue number, such that for every subset $K'\subset K$ with $\mathrm{diam}(K')\coloneqq\sup\{|k_1-k_2|:k_1,k_2\in K'\} < \ell$, there exists an element of the cover $V_{k}^j\in\mathcal{U}$ satisfying $V_k^{(j)}\supset K'$.

        Using this fact, consider the following partition of $K=[c,d]$:
        \begin{align*}
            K = \bigcup_{i=1}^{N} [t_{i-1}, t_{i}],
        \end{align*}
        where $\{t_i\}_{i=0}^N\subset K$ are chosen such that $|t_{i}-t_{i-1}|<\ell$ for all $i\in \{1,2,\cdots, N\}$, and where $\ell>0$ is the Lebesgue number associated with the cover $\mathcal{U}$. This partition can be obtained, for instance, by letting $N=\left\lceil  2|d-c|/\ell\right\rceil$, where $\lceil\cdot\rceil$ denotes the ceil function, $t_{0}=c$, $t_N=d$, and $t_{i} =  t_{i-1} + \ell/2$ for all $i\in\{1,2,\ldots, N-1\}$.

        Since $\mathrm{diam}([t_{i-1},t_{i}])<\ell$, by \cite[Lemma 7.18]{lee2010introduction}, for all $i\in\{1,2,\cdots,N\}$ there exists one $V_k^{(j)}\in \mathcal{U}$ such that $[t_{i-1},t_{i}]\subset V_{k}^j$. Using \eqref{eq:proof:lac:axu3}, this implies that for all $i\in \{1,2,\cdots,N\}$ there exists a $j\in \{1,\cdots,m\}$ such that
        \begin{align*}
            [t_{i-1},t_{i}]\subset f^{-1}(U_{y_j}).
        \end{align*}    

\noindent\emph{Step 4: Local absolute continuity of $f$ on each component of the partition.}\\
Since each $[t_{i-1},t_{i}]$ is connected and contained in some $f^{-1}(U_{y_{j}})$, it must lie within a single connected component of $f^{-1}(U_{y_{j}})$. Thus, by assumption, $\varphi_{y_j} \circ f$ is locally absolutely continuous on $[t_{i-1},t_{i}]$.  Thus,  $(\varphi_{y_j} \circ f)([t_{i-1},t_{i}])$ is compact. 

On the other hand, since $\psi \in C^k(\mathcal{M})$, it follows by Proposition \ref{prop:CkAtAnyChart} that the composition $\psi \circ \varphi_{y_j}^{-1}: \varphi_{y_j}(U_{y_j}) \to \mathbb{R}$ is a $C^k$-function. Thus, $\psi \circ \varphi_{y_j}^{-1}$ is Lipschitz on $(\varphi_{y_j} \circ f)([t_{i-1},t_{i}])$. Using the fact that composition of Lipschitz functions with absolutely continuous function is absolutely continuous, it follows that $\left(\psi \circ \varphi_{y_j}^{-1}\right)\circ \left(\varphi_{y_j} \circ f\right) = \psi \circ f$ is absolutely continuous on each $[t_i,t_{i+1}]$.\medbreak

\noindent\emph{Step 5: Absolute continuity on $K$.}\\
If $N=1$, i.e. $K$ is covered by a single interval $[t_{i-1},t_{i}]$, then $\psi \circ f$ is directly absolutely continuous on $K$.

For $N > 1$, let $\varepsilon>0$ be arbitrary. Then, by the local absolute continuity of $\psi\circ f$ on each $[t_{i-1}, t_i]$, there exists $\delta_i > 0$ such that for any countable collection of disjoint intervals $\{[t_{i-1}^{(k)}, t_i^{(k)}]\}_k \subset [t_{i-1}, t_i]$ with $\sum_k (t_i^{(k)} - t_{i-1}^{(k)}) \le \delta_i$, it follows that
\begin{align}\label{eq:proof:lac:auxLACInSubintervals}
    \sum_k |(\psi \circ f)(t_i^{(k)}) - (\psi \circ f)(t_{i-1}^{(k)})| \le \frac{\varepsilon}{N}.
\end{align}

Define $\delta \coloneqq \min\{\delta_1, \delta_2, \ldots, \delta_N\} > 0$, and consider an arbitrary countable collection of disjoint intervals $\{[a_p, b_p]\}_p \subset K$ with $ \sum_p (b_p - a_p) \le \delta$. We can partition this collection $\{[a_p, b_p]\}_p$ into another collection  $\{[c_l, d_l]\}_l$ where each interval lies within some $[t_{i-1}, t_i]$. Indeed, for any interval $[a_j, b_j]$ spanning multiple subintervals, split it at the partition points $t_i, t_{i+1}, \ldots, t_{k-1}$ contained within it, creating intervals $[a_j, t_i], [t_i, t_{i+1}], \ldots, [t_{k-1}, b_j]$. The resulting collection $\{[c_l, d_l]\}_l$  is still countable, as the parition points $\{t_i\}_{i=1}^N$ are finite. Also, each element of $\{[c_l, d_l]\}_l$ is contained within a single $[t_{i-1}, t_i]$. Since we only added division points we have that
    \begin{align*}
        \sum_l (d_l - c_l) = \sum_p (b_p - a_p) \le \delta.
    \end{align*}
For each $i \in \{1, 2, \ldots, N\}$, define $\mathcal{I}_i = \{l : [c_l, d_l] \subset [t_{i-1}, t_i]\}$. Then, for all $i\in \{1, 2, \ldots, N\}$ it follows that
    \begin{align}\label{eq:proof:lac:aux4}
        \sum_{l \in \mathcal{I}_i} (d_l - c_l) \le \sum_l (d_l - c_l) \le \delta=\min\{\delta_1,\delta_2,\ldots,\delta_N\}\le \delta_i.
    \end{align}
Since $\{[c_l, d_l]\}_{l\in \mathcal{I}_i}$ is a countable collection of disjoint intervals of $[t_{i-1}, t_i]$ satisfying \eqref{eq:proof:lac:aux4}, by using \eqref{eq:proof:lac:auxLACInSubintervals} we obtain that 
    \begin{align*}
        \sum_{l \in \mathcal{I}_i} |(\psi \circ f)(d_l) - (\psi \circ f)(c_l)| \le \frac{\varepsilon}{N}
    \end{align*}
for all $i\in \{1, 2, \ldots, N\}$. Therefore:
    \begin{align}\label{eq:proof:lac:aux5}
        &\sum_l |(\psi \circ f)(d_l) - (\psi \circ f)(c_l)| \notag
        \\&= \sum_{i=1}^N \sum_{l \in \mathcal{I}_i} |(\psi \circ f)(d_l) - (\psi \circ f)(c_l)| \le \sum_{i=1}^N \frac{\varepsilon}{N} = \varepsilon.
    \end{align}
Now, for the original collection of intervals $\{[a_p, b_p]\}_p$, we relate their total variation to that of the refined collection $\{[c_l, d_l]\}_l$. For each $p$, define the set $\mathcal{J}_p \coloneqq \{l : [c_l, d_l] \subset [a_p, b_p]\}$. Note that by construction $[a_p,b_p]=\bigcup_{l\in \mathcal{J}_p}[c_l,d_l]$, and the set $\mathcal{J}_p$ is finite since $\{[c_l,d_l]\}_{l}$ was derived from $\{[a_p,b_p]\}_{p}$ by splitting at finite parition points. Then, by using the triangle inequality we obtain:
\begin{align}\label{eq:proof:lac:aux6}
    |(\psi \circ f)(b_p) - (\psi \circ f)(a_p)|&
    \leq \sum_{l \in \mathcal{J}_p} |(\psi \circ f)(d_l) - (\psi \circ f)(c_l)|
\end{align}
for all $p$. Thus, using \eqref{eq:proof:lac:aux5} and \eqref{eq:proof:lac:aux6}, for any $\varepsilon>0$ and any arbitrary countable collection of disjoint intervals $\{[a_p, b_p]\}_p$ satisfying $\sum_p (b_p - a_p) \le \delta$, where $\delta$ is as defined below \eqref{eq:proof:lac:auxLACInSubintervals}, it follows that
\begin{align*}
    \sum_p |(\psi \circ f)(b_p) &- (\psi \circ f)(a_p)| 
    \\
    &\le\sum_p \sum_{l \in \mathcal{J}_p} |(\psi \circ f)(d_p) - (\psi \circ f)(c_l)| 
    \\
    &= \sum_l |(\psi \circ f)(d_l) - (\psi \circ f)(c_l)|
    \\
    &\leq \varepsilon,
\end{align*}
which proves that $\psi \circ f$ is absolutely continuous on $K$. Since $K = [a,b] \subset I$ was an arbitrary compact subinterval $\psi\circ f$ is absolutely continuous on $I$ by~\cite[Def.~A.20]{HybridFeedbackControl}.

Given that $\psi \in C^k(\mathcal{M})$ was arbitrary, $f$ is locally absolutely continuous by Definition~\ref{def:localAbsoluteContinuity}.
\end{proof}

\subsection{Locally Lipschitz Functions}

The following result is a generalization of~\cite[Prop.~B.4]{gissler2025irreducibilityconvergenceclassnonsmooth} where the manifolds $\M$ and $\N$ are assumed to be connected, and only one-sided implication is shown.

\begin{lemma}
\label{lemma:LocLipRiemannian}
\textit{(Riemannian Equivalence of Locally Lipschitz Functions)}
    Consider $C^1$-Riemannian manifolds $(\M, g_{\M})$ and $(\N, g_{\N})$ with distance functions $d_{\M}$ and $d_{\N}$, respectively, that satisfy Assumption~\ref{ass:disconnectedManifold}. Then, a function $f: \M \to \N$ is locally Lipschitz in the sense of Definition~\ref{def:Lipschitz} if and only if, for each $x_0\in \M$, there exist $k >0 $ and an open set $O\subset \M$ with $x_0\in O$ such that %
    {\color{mygreen}\relax{}\vspace{-0.1cm}
    \begin{equation}\label{eq:lipschitzRiemannian}
        d_{\N}(f(x_1), f(x_2)) \leq k d_{\M}(x_1, x_2) \qquad \forall x_1,x_2\in O. 
    \end{equation}}
    {} 
\end{lemma}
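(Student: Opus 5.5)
The plan is to prove both directions locally, comparing $d_\M$ and $d_\N$ with Euclidean distances in charts via Lemma~\ref{lemma:boundsOnRiemannianDistance}. That lemma states that around any point $x_0$ and for any chart $(U,\varphi)$ at $x_0$, there exist a geodesically convex set $X\subset U$ with $x_0\in\interior{X}$ and constants $0<c\le C$ such that $c|\varphi(x_1)-\varphi(x_2)|\le d_\M(x_1,x_2)\le C|\varphi(x_1)-\varphi(x_2)|$ for all $x_1,x_2\in X$. The analogous statement holds on $\N$.

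($\Rightarrow$) Fix $x_0\in\M$ and take charts $(U,\varphi)$ at $x_0$ and $(W,\psi)$ at $f(x_0)$ as in Definition~\ref{def:Lipschitz}. Then $\widetilde f\coloneqq\psi\circ f\circ\varphi^{-1}$ is locally Lipschitz on $\varphi(U\cap f^{-1}(W))$.

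Local Lipschitz continuity in charts implies continuity of $f$ at $x_0$, so $f^{-1}(W)$ contains a neighborhood of $x_0$. Apply Lemma~\ref{lemma:boundsOnRiemannianDistance} at $f(x_0)$ with chart $(W,\psi)$ to obtain $Y\subset W$ with $f(x_0)\in\interior{Y}$ and constants $c_\N,C_\N$. Apply it again at $x_0$ with the chart $(U\cap f^{-1}(\interior{Y}),\varphi)$ to obtain $X$ and constants $c_\M,C_\M$.

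Next shrink $X$: choose an open set $O\ni x_0$ with $O\subset\interior{X}$ such that $\varphi(O)$ lies in a neighborhood on which $\widetilde f$ is $L$-Lipschitz. A small coordinate ball suffices here. For $x_1,x_2\in O$ we then chain the bounds:
\[
d_\N(f(x_1),f(x_2))\le C_\N|\widetilde f(\varphi(x_1))-\widetilde f(\varphi(x_2))|\le C_\N L|\varphi(x_1)-\varphi(x_2)|\le \tfrac{C_\N L}{c_\M}d_\M(x_1,x_2).
\]
This gives \eqref{eq:lipschitzRiemannian} with $k=C_\N L/c_\M$.

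($\Leftarrow$) Fix $x_0$ and take $O,k$ as in the hypothesis. First, \eqref{eq:lipschitzRiemannian} together with compatibility of $d_\M,d_\N$ with the topologies gives continuity of $f$ on $O$.

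Choose a chart $(W,\psi)$ at $f(x_0)$ and apply Lemma~\ref{lemma:boundsOnRiemannianDistance} to get $Y$ with constants $c_\N,C_\N$. Choose a chart $(U,\varphi)$ at $x_0$ with $U\subset O\cap f^{-1}(\interior Y)$, which is open by continuity, and apply the lemma to get $X$ with constants $c_\M,C_\M$. Restricting the chart to $\interior X$, every $z_1,z_2\in\varphi(\interior X)$ satisfies
\[
|\widetilde f(z_1)-\widetilde f(z_2)|\le c_\N^{-1}d_\N(\cdot,\cdot)\le c_\N^{-1}k\,d_\M(\cdot,\cdot)\le c_\N^{-1}kC_\M|z_1-z_2|.
\]
So $\widetilde f$ is Lipschitz on the chart domain, which is exactly Definition~\ref{def:Lipschitz}.

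The main obstacle is the bookkeeping of domains: the chart on $\M$ must map into the set $Y$ where the comparison on $\N$ holds, and it must stay inside the set where the Euclidean Lipschitz bound is valid. This is handled with continuity of $f$ and Lemma~\ref{lemma:coordballs} to shrink to regular coordinate balls. Disconnectedness needs no separate treatment, since Lemma~\ref{lemma:boundsOnRiemannianDistance} already works inside the component containing the point, and small neighborhoods lie in a single component because components are open under Assumption~\ref{ass:disconnectedManifold}.
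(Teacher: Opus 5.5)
Your proof is correct and takes essentially the paper's route. In each direction you chain the two-sided chart/distance comparison of Lemma~\ref{lemma:boundsOnRiemannianDistance} on $\M$ and on $\N$ with the Lipschitz bound, and your bookkeeping (fixing $Y\subset W$ first, then taking the $\M$-chart inside $f^{-1}(\interior{Y})$) is tidier than the paper's, whose direction labels are also swapped relative to the statement; the one step you share with the paper is the claim that Definition~\ref{def:Lipschitz} forces continuity of $f$, which requires reading $\varphi(U\cap f^{-1}(W))$ as an open set, since under the relative reading the indicator function of $(0,\infty)$ on $\R{}$ satisfies the definition at $0$ yet violates the metric bound there.
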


\begin{proof}
    $(\Rightarrow)$ First, we show that if \eqref{eq:lipschitzRiemannian} is satisfied, $f$ is continuous. Indeed, consider a sequence $\{x_i\}_{i=1}^\infty$ satisfying $x_i \to x \in \M$. Due to~\eqref{eq:lipschitzRiemannian}, $d_{\M}(x_i, x) \to 0$ and, therefore, $d_{\N}(f(x_i), f(x)) \to 0$. Then, as $d_{\N}$ is compatible with the topology of $\N$ by Assumption~\ref{ass:disconnectedManifold}, $f(x_i) \to f(x)$ in the topology of $\N$. Consequently, $f$ is continuous at $x$. As $x\in\M$ was arbitrary, $f$ is continuous. 

    Pick any $x_0\in \M$, and let $\M_i$ be the connected component of $\M$ that contains $x_0$. Let $d_{\M_i}$ denote the restriction of $d_{\M}$ to $\M_i$ according to Assumption~\ref{ass:disconnectedManifold}. Consider the open set $O\subset \M$ containing $x_0$ such that~\eqref{eq:lipschitzRiemannian} holds for some $k > 0$. Consequently,~\eqref{eq:lipschitzRiemannian} also holds for each $x_1, x_2 \in O' \coloneqq O \cap \M_i$. 

    Now, pick a coordinate chart $(U', \varphi)$ of $\M$ at $x_0$ such that $U'\subset O'$, and a coordinate chart $(W', \psi)$ of $\N$ at $f(x_0)$ such that, without loss of generality, $W'\subset f(U')$. Note that, as $\M_i$ is connected and $f$ is continuous, $f(\M_i) \subset \N$ is also connected. Consequently, $W' \subset f(\M_i)$ belongs to the connected component of $\N$ containing $f(x_0)$. Additionally, as $f$ is continuous and $W'$ is open, $f^{-1}(W') \subset U'$ is open. 
    
    Applying Lemma~\ref{lemma:boundsOnRiemannianDistance} to manifolds $(\M, g_{\M})$ and $(\N, g_{\N})$ for charts $(U', \varphi)$ and $(V', \psi)$, respectively, let $X \subset U'$ and $Y \subset W'$, respectively, denote the corresponding geodesically convex neighborhoods of $x_0$ and $f(x_0)$, respectively. As $x_0 \in \interior{X}$ and $x_0 \in f^{-1}(\interior{Y})$, it follows that that $X \cap f^{-1}(Y) \neq \varnothing$. Then, from the above application of Lemma~\ref{lemma:boundsOnRiemannianDistance}, there exist constants $C_{\M}, c_{\N}> 0$ such that, for each $x_1, x_2\in X \cap f^{-1}(Y)$,
    \begin{gather*}
        d_{\M}(x_1, x_2) \leq C_{\M} |\varphi(x_1) - \varphi(x_2)|,
        \\
        c_{\N} |\psi(f(x_1)) - \psi(f(x_2))| \leq d_{\N}(f(x_1), f(x_2)).
    \end{gather*}
    Using~\eqref{eq:lipschitzRiemannian} with the above inequalities yields
    \begin{align*}
        |\psi(f(x_1)) - \psi(f(x_2))| \leq \frac{k C_{\M}}{c_{\N}} |\varphi(x_1) - \varphi(x_2)|. 
    \end{align*}
    Writing $y_1 \coloneqq \varphi(x_1)$, $y_2 \coloneqq \varphi(x_2)$, and noting that $\varphi$ is invertible as it is a $C^1$-diffeomorhism onto its image, we have
    \begin{align}
    \label{eq:coordinateLipschitzBounds}
        |\psi \circ f \circ \varphi^{-1}(y_1) - \psi \circ f \circ \varphi^{-1}(y_2)| \leq \frac{k C_{\M}}{c_{\N}} |y_1 - y_2|. 
    \end{align}

    Consider an open neighborhood $U$ of $x_0$ such that $U \subset X\cap f^{-1}(Y)$. Such an open neighborhood exists as $\interior{X}$ and $\interior{Y}$ (see Lemma~\ref{lemma:geodesicBall}) are open sets and $f$ is continuous. Similarly, consider also an open neighborhood $W$ of $f(x_0)$ such that $W \subset Y$.  Then,~\eqref{eq:coordinateLipschitzBounds} holds for each $y_1, y_2 \in \varphi(U)$. Consequently, $\psi \circ f \circ \varphi^{-1}$ is locally Lipschitz. 

    $(\Leftarrow)$ Suppose that $f$ is locally Lipschitz in the sense of Definition~\ref{def:Lipschitz}. Continuity of $f$ can be established using continuity of its local representation in each pair of coordinate charts on $\M$ and $\N$. Pick any $x_0 \in \M$. Then, pick a coordinate chart $(U, \varphi)$ on $\M$ at $x_0$, and a coordinate chart $(W, \psi)$ on $\N$ at $f(x_0)$ according to Definition~\ref{def:Lipschitz} such that the coordinate representation $\tilde{f}\coloneqq \psi\circ f\circ \varphi^{-1}$ is locally Lipschitz; in particular, there exist an open set $\hat{U}\subset \varphi(U \cap \M_i)$ and $k'>0$ such that 
    \begin{align}
    \label{eq:localLipschitz}
        |\tilde{f}(y_1) - \tilde{f}(y_2)| \leq k' |y_1 - y_2| && \forall y_1, y_2 \in \hat{U}.
    \end{align}

    Since $\varphi$ is continuous and $\hat{U}$ is open, $\varphi^{-1}(\hat{U}) \subset U \cap \M_i$ is open. Let $O' \subset \varphi^{-1}(\hat{U})$ denote an open set containing $x_0$. Applying Lemma~\ref{lemma:boundsOnRiemannianDistance}, there exist a geodesically convex set $X\subset O'$ with $x_0 \in \interior{X}$, and constants $c_{\M}, C_{\N} > 0$ such that, for each $x_1, x_2 \in X$, 
    \begin{gather*}
        c_{\M}|\varphi(x_1) - \varphi(x_2)| \leq d_{\M}(x_1, x_2),
        \\
        d_{\N}(f(x_1), f(x_2)) \leq C_{\N}|\psi(f(x_1)) -  \psi(f(x_2))|.
    \end{gather*}

    Using the above inequalities with~\eqref{eq:localLipschitz} with the fact that $\varphi$ and $\psi$ are $C^1$-diffeomorphisms onto their image, and writing $y_1 = \varphi(x_1)$ and $y_2 = \varphi(x_2)$, we have that for each $x_1, x_2\in X$, 
    \begin{align}
    \label{eq:localLipschitz_Riemann}
        d_{\N}(f(x_1), f(x_2)) \leq \frac{k' C_{\N}}{c_{\M}} d_{\M}(x_1, x_2).
    \end{align}
    Since $x_0 \in \interior{X}$, there exists an open neighborhood $O\subset X$ of $x_0$ such that~\eqref{eq:localLipschitz_Riemann} holds for each $x_1, x_2\in O$. Setting $k\coloneqq k' C_{\N}/c_{\M}$ completes the proof.
\end{proof}

\subsection{Riemannian Equivalence of Proper Functions}

\begin{proposition}
\label{prop:Kinfty_lowerBoundOnV}
\textit{(Riemannian Equivalence of Proper Functions)}
    Let $(\M, g_{\M})$ be a complete Riemannian manifold that satisfies Assumption~\ref{ass:disconnectedManifold}. Given a nonempty, compact set $\A\subset \M$, a continuous function $V\in \PD{\A}$ is proper if and only if there exists $\alpha\in\mathcal{K}_{\infty}$ such that $\alpha(\distfromA{x}) \leq V(x)$~for~each~$x\in \M$.
\end{proposition}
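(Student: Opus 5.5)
The two directions rest on one tool: Lemma~\ref{lemma:metric_inflation_compact}, which says that for a complete Riemannian manifold satisfying Assumption~\ref{ass:disconnectedManifold}, closed metric inflations $\mathbb{B}_r(\A)$ of the compact set $\A$ are compact.

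\emph{($\Leftarrow$).} Suppose $\alpha(\distfromA{x})\le V(x)$ for all $x\in\M$, with $\alpha\in\mathcal{K}_\infty$.
\begin{itemize}
\item Fix $c\ge 0$. Every $x$ with $V(x)\le c$ satisfies $\alpha(\distfromA{x})\le c$, hence $\distfromA{x}\le\alpha^{-1}(c)$. So $\sublevelSet{V}(c)\subset\mathbb{B}_{\alpha^{-1}(c)}(\A)$.
\item This inflation is compact by Lemma~\ref{lemma:metric_inflation_compact}(i).
\item $\sublevelSet{V}(c)$ is closed because $V$ is continuous.
\item A closed subset of a compact set is compact, so $V$ is proper. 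For $c<0$ the sublevel set is empty.
\end{itemize}

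\emph{($\Rightarrow$).} Suppose $V$ is proper, and define for $s\ge0$
\[
\omega(s)\coloneqq \inf\{V(x): x\in\M,\ \distfromA{x}\ge s\},
\]
with $\omega(s)=\infty$ if the set is empty.

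\begin{itemize}
\item $\omega$ is nondecreasing and $\omega(0)=0$, since $V\ge 0$ and $V$ vanishes on $\A$.
\item \textbf{Positivity.} Fix $s>0$ and suppose $\omega(s)=0$. Pick $x_k$ with $\distfromA{x_k}\ge s$ and $V(x_k)\to0$. Then $\{x_k\}\subset \sublevelSet{V}(1)$ eventually, which is compact, so a subsequence converges to some $x^\star$. Continuity gives $V(x^\star)=0$, and continuity of $\distfromA{\cdot}$ gives $\distfromA{x^\star}\ge s$. This contradicts $V\in\PD{\A}$. So $\omega(s)>0$ for every $s>0$.
\item \textbf{Radial unboundedness.} For each $c$, the set $\sublevelSet{V}(c)$ is compact and $\distfromA{\cdot}$ is continuous, so $\distfromA{\cdot}$ is bounded on it, say by $s_c$. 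For $s>s_c$, every $x$ with $\distfromA{x}\ge s$ has $V(x)>c$, so $\omega(s)\ge c$. Hence $\omega(s)\to\infty$.
\item \textbf{The inequality.} By construction, $V(x)\ge\omega(\distfromA{x})$ for all $x$.
\end{itemize}

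The remaining step, and the main obstacle, is to produce $\alpha\in\mathcal{K}_\infty$ with $\alpha\le\omega$. The difficulty is that $\omega$ may be discontinuous, and it may be $+\infty$, which happens when $\distfromA{\cdot}$ is bounded, for instance on a compact $\M$. I would handle it in two moves.

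\begin{enumerate}
\item Replace $\omega$ by $\min\{\omega(s),s\}$. This is finite, nondecreasing, positive on $(0,\infty)$, and zero at zero.
\item Take its lower regularization in the standard way,
\[
\alpha(s)\coloneqq \frac{1}{s}\int_0^s \min\{\omega(\tau),\tau\}\,d\tau\cdot\frac{s}{1+s}.
\]
Monotonicity of $\min\{\omega,\cdot\}$ gives $\alpha(s)\le\min\{\omega(s),s\}$. The integrand is positive on $(0,\infty)$, so $\alpha$ is continuous and strictly increasing.
\end{enumerate}

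Alternatively, a lower-bound analogue of Lemma~\ref{lemma:Kinfty_upperBound}, or \cite[Lemma~2.5]{CLARKE199869} as used in Proposition~\ref{prop:UGS}, would give $\alpha\in\mathcal{K}$ directly.

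Unboundedness of $\alpha$ needs care, because the truncation by $\tau$ and the factor $s/(1+s)$ can destroy it. I would therefore bypass the averaging and build $\alpha$ piecewise linearly below $\min\{\omega,\mathrm{id}\}$ on a geometric grid, in the style of the construction in Lemma~\ref{lemma:Kinfty_upperBound} but mirrored downward. On this grid $\alpha$ increases to $\infty$ because $\min\{\omega(s),s\}\to\infty$. If $\distfromA{\cdot}$ is bounded by some $\bar s$, the inequality $\alpha(\distfromA{x})\le V(x)$ only needs to hold for $s\le\bar s$, so $\alpha$ may be extended arbitrarily beyond $\bar s$.
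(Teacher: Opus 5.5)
Your proof is correct. The ($\Leftarrow$) direction matches the paper's, but the ($\Rightarrow$) direction takes a different route.

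Your $\omega(s)=\inf\{V(x):\distfromA{x}\ge s\}$ is exactly the paper's $\gamma$, but the paper reaches it in two stages. It first defines $\rho(r)$ as the infimum of $V$ over the level set $\{\distfromA{x}=r\}$. It then asserts that $\rho$ is positive definite, radially unbounded and lower semicontinuous, the last by appeal to \cite[Lemma~1]{PaulACC2025}. It transfers these properties to $\gamma(r)=\inf_{s\ge r}\rho(s)$. Finally, it regularizes with the $1$-Lipschitz inf-convolution $\sigma(r)=\inf_{s\ge 0}(|r-s|+\gamma(s))$ and invokes \cite[Lemma~1]{Kellett2014} to obtain a $\mathcal{K}_\infty$ minorant.

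You instead prove positivity of $\omega$ directly, by a subsequence argument inside the compact set $\sublevelSet{V}(1)$. You then build the minorant by hand: truncation by the identity, followed by averaging. This gives several advantages:
\begin{itemize}
\item The argument is self-contained and needs no semicontinuity lemma.
\item It treats the case $\omega=+\infty$ explicitly. This is the case of bounded $\distfromA{\cdot}$, e.g.\ compact $\M$, where the paper's $\rho$ is an infimum over empty level sets.
\item It shows that ($\Rightarrow$) uses only properness of $V$ and continuity of $\distfromA{\cdot}$. Completeness and Assumption~\ref{ass:disconnectedManifold} are needed only for ($\Leftarrow$).
\end{itemize}
The paper's route is shorter on the page because it delegates these steps to known lemmas.

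Two small points on your last step. Write $\hat\omega\coloneqq\min\{\omega,\mathrm{id}\}$ and $\alpha(s)=\frac{1}{1+s}\int_0^s\hat\omega(\tau)\,d\tau$.

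\emph{Strict monotonicity.} Positivity of the integrand alone does not give this, because the prefactor $\frac{1}{1+s}$ decreases. It does follow from monotonicity of $\hat\omega$: for $0<s_1<s_2$ one gets $(1+s_1)(1+s_2)\bigl(\alpha(s_2)-\alpha(s_1)\bigr)\ge (s_2-s_1)\hat\omega(s_1)>0$.

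\emph{Unboundedness.} Your concern here is unfounded. Since $\hat\omega\to\infty$, you have $\frac{1}{s}\int_0^s\hat\omega\ge\frac12\hat\omega(s/2)\to\infty$, while $s/(1+s)\to 1$. So the averaged $\alpha$ is already in $\mathcal{K}_\infty$, and the piecewise-linear fallback you only sketch is not needed.
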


\begin{proof}
    ($\Leftarrow$) Pick any $c \geq 0$. To prove that the $c$-sublevel set of $V$, denoted by $\sublevelSet{V}(c)$, is compact, it is sufficient due to Lemma~\ref{lemma:metric_inflation_compact} to show that $L_V(c)$ is closed and bounded.\footnote{A subset $S$ of a Riemannian manifold $(\M, g_{\M})$ is bounded if there exist $x\in \M$ and $r \geq 0$ such that $S\subset \mathbb{B}_{r}(x)$.} Since $V$ is continuous, the preimage of a closed set is closed. Therefore, $\sublevelSet{V}(c)$ is closed relative to $\M$. Next, the existence of $\alpha_1\in\mathcal{K}_\infty$ ensures that, for each $x\in L_V(c)$, $\distfromA{x}\leq \alpha^{-1}(V(x))\leq \alpha^{-1}(c)$. Therefore, $L_V(c)$ is bounded as $L_V(c) \subset \mathbb{B}_{\alpha^{-1}(c)}(\cal A)$. Following Lemma~\ref{lemma:metric_inflation_compact}, $\mathbb{B}_{\alpha^{-1}(c)}(\cal A)$ is compact. Consequently, for each $c \geq 0$, $L_V(c)$ is compact as it is a closed subset of a compact set~\cite[Prop.~A.45(e)]{Lee}. 
    
    ($\Rightarrow$) Define $\rho : \R{}_{\geq 0} \to \R{}_{\geq 0}$ as
    \begin{align}
    \label{eq:rho}
        \rho(r) \coloneqq \inf_{ \{x \in \M : \distfromA{x} = r \}} V(x)  \qquad \forall r\geq 0. 
    \end{align}
    Note that $\rho\in \PD{0}$ as $V\in \PD{\A}$.
    Next, we show that $\rho(r) \to \infty$ as $r \to \infty$. Suppose by contradiction that there exists a finite $k \geq 0$ such that $\liminf_{r\to \infty}\rho(r) = k$. Then, as
    \begin{align*}
        \liminf_{r\to \infty} \rho(r) &= \lim_{r\to \infty} \inf_{s > r} \rho(s)
        \\
        &= \lim_{r\to \infty} \inf_{s>r} \inf_{ \{ x\in \M : \distfromA{x} = s \} } V(x)
        \\
        &= \lim_{r\to \infty} \inf_{ \{ x\in \M : \distfromA{x} \geq  r \} } V(x)
        \\
        &= \liminf_{\distfromA{x}\to \infty} V(x),
    \end{align*}
    it follows that $\liminf_{\distfromA{x}\to\infty} V(x) = k$. Consequently, there exist $k' > k$ and a sequence $\{x_i\}_{i=1}^\infty$ of points $x_i \in \M$ with $\distfromA{x_i} \to\infty$ such that $V(x_i) \leq k'$ for each large enough $i$, causing $x_i \in L_V(k')$ for each large enough $i$. As $V$ is proper, $L_V(k')$ is compact. Then, using continuity of the map $x\mapsto \distfromA{x}$, there exists $c > 0$ such that $L_V(k') \subset \mathbb{B}_{c}(\cal A)$. This is a contradiction since $\distfromA{x_i} \to \infty$. Consequently, the above choice of $k\geq 0$ does not exist, causing $\rho(r) \to \infty$ as $r \to \infty$.

    Finally, following similar arguments to \cite[Lemma~1]{PaulACC2025}, $\rho$ is lower semicontinuous. Now, we construct a function $\gamma : \R{}_{\geq 0} \to \R{}_{\geq 0}$ as
    \begin{align}
    \label{eq:gamma_def}
        \gamma(r) \coloneqq \inf_{s \geq r} \rho(s) \qquad \forall r \geq 0. 
    \end{align}
    From positive definiteness and radial unboundedness of $\rho$, we have $\gamma\in \PD{0}$ and $\liminf_{r\to\infty} \gamma(r) = \infty$. Note also that $\gamma$ is nondecreasing, and for each $r\geq 0$, $\gamma(r)\leq \rho(r)$ by~\eqref{eq:gamma_def}. Furthermore, as $\rho$ is lower semicontinuous, $\gamma$ is lower semicontinuous as well.

    Now, we construct a continuous function $\sigma : \R{}_{\geq 0} \to \R{}_\geq $ that lower bounds $\gamma$. Indeed, let
    \begin{align*}
        \sigma(r) \coloneqq \inf_{s\geq 0} \left( |r - s| + \gamma(s) \right) \qquad \forall r \in \R{}_{\geq 0}. 
    \end{align*}
    Following~\cite[Prop.~1]{PaulACC2025}, $\sigma$ is continuous, positive definite, and satisfies $\sigma(r)\leq \gamma(r)$ for each $r\geq 0$. Additionally, as $\liminf_{r\to\infty}\gamma(r) = \infty$, it follows that $\liminf_{r\to\infty}\sigma(r) = \infty$. Then, using~\cite[Lemma~1]{Kellett2014}, there exists $\alpha\in\mathcal{K}_{\infty}$ such that
    \begin{align}
    \label{eq:comparison}
        \alpha(r) \leq \sigma(r) \leq \gamma(r) \leq \rho(r) \qquad \forall r \geq 0. 
    \end{align}
    Finally, from~\eqref{eq:rho}, $\rho(\distfromA{x})\leq V(x)$ for each $x\in \M$. Using this fact with~\eqref{eq:comparison} completes the proof. 
\end{proof}

\section{Example~\ref{ex:mobius-stability} Continued: \texorpdfstring{$\dot{V}$}{V} computation}
\label{app:mobius-Vdot}
    
    We present additional details from Example~\ref{ex:mobius-stability} to establish that $\dot{V}(x)\leq 0$ for each $x\in C$. Indeed, for each $x\in C$,
    \begin{align*}
        \dot{V}(x) &= \diffFunc{{V}_{y}}\left(\mathfrak{S}(y) + \vlift{y}{-\grad{V_q}{y_1} - b y_2}\right).
    \end{align*}
    Noting that $\diffFunc{V}_y$ is a linear map, we obtain, for each $x\in C$,
    \begin{align}
    \label{eq:mobius_Vdot}
        \dot{V}(x) &= \diffFunc{{V}_{y}}\left(\mathfrak{S}(y)\right) + \diffFunc{{V}_{y}} \left(\vlift{y}{-\grad{V_q}{y_1} - b y_2}\right).
    \end{align}
     Now, recalling~\eqref{eq:mobius-lyapunov}, the first term above yields
    \begin{align}
        \diffFunc{V_y}(\mathfrak{S}(y)) & = \diffFunc{(V_q)_{y_1}}(y_2) + \diffFunc{\left(\frac{1}{2}g_{\mobius}^{y_1}(y_2, y_2)\right)_{y}}(\mathfrak{S}(y)) \nonumber
        \\
        &= \diffFunc{(V_q)_{y_1}}(y_2) + \diffFunc{K}_{y}(\mathfrak{S}(y)) \notag
        \\
        &= g_{\mobius}^{y_1}(\grad{V_q}{y_1}, y_2) + \diffFunc{K}_{y}(\mathfrak{S}(y))\label{eq:mobius_PE_rateOfChange-1}
    \end{align}
    {where $K : \T{}{\mobius} \to \R{}_{\geq 0}$ denotes the kinetic energy function, defined as $K(y)\coloneqq g_{\mobius}^{y_1}(y_2, y_2)/2$ for each $y\in \T{}{\mobius}$. First, we simplify the second term in~\eqref{eq:mobius_PE_rateOfChange-1}. Recall from the definition of geodesic sprays that the integral curves of $\mathfrak{S}$ are geodesics on $\mobius$. Let $t\mapsto \psi(t)$ denote an integral curve such that $\psi(0) = y_1$ and $\dot{\psi}(0) = y_2$. Define $\eta(t) \coloneqq (\psi(t), \dot{\psi}(t))$ for each $t\in\dom{\psi}$. Note that $\eta(t)\in\T{}{\mobius}$ for each $t\in\dom{\psi}$. By definition of differential of a map, we have that
    \begin{align}
        \diffFunc{K}_{y}(\mathfrak{S}(y)) &= \left.\frac{d}{dt} K(\eta(t)) \right|_{t = 0} \notag
        \\
        &= \left. \frac{1}{2} \frac{d}{dt} \left(g_{\mobius}^{\psi(t)}(\dot{\psi}(t), \dot{\psi}(t))\right) \right|_{t = 0} \label{eq:Kdot}
    \end{align}
    Now, by definition of the Levi-Civita connection~\cite[Def.~15.8]{gallierDifferentialGeometry2020},
    \begin{align*}
        \left. \frac{d}{dt} g_{\mobius}^{\psi(t)}(\dot{\psi}(t), \dot{\psi}(t)) \right|_{t = 0} &= g_{\mobius}^{{\psi}(0)}(\nabla_{\dot{\psi}(0)} \dot{\psi}(0), \dot{\psi}(0))
        \\
        & \quad \qquad + g_{\mobius}^{{\psi}(0)}(\dot{\psi}(0), \nabla_{\dot{\psi}(0)} \dot{\psi}(0))
        \\
        &= 2 g_{\mobius}^{y_1}(\dot{\psi}(0), \nabla_{\dot{\psi}(0)}\dot{\psi}(0))
    \end{align*}
    where the last equality follows from symmetry of the metric $g_{\mobius}^{y_1}$. Then, as $\psi$ is a geodesic by construction, the definition of a geodesic causes $\nabla_{\dot{\psi}(0)}\dot{\psi}(0) = 0$. Therefore, we have that 
    \[
        \left. \frac{d}{dt} g_{\mobius}^{\psi(t)}(\dot{\psi}(t), \dot{\psi}(t)) \right|_{t = 0} = 0. 
    \]
    Consequently, from~\eqref{eq:Kdot}, $\diffFunc{K}_y(\mathfrak{S}(y)) = 0$. Substituting this equality in~\eqref{eq:mobius_PE_rateOfChange-1}, we have that, for each $x\in C$,
    \begin{align}
    \label{eq:mobius_PE_rateOfChange}
        \diffFunc{V_y}(\mathfrak{S}(y)) = g_{\mobius}^{y_1}(\grad{V_q}{y_1}, y_2).
    \end{align}}
    Next, letting $v \coloneqq -\grad{V_q}{y_1} - by_2$  for brevity, we simplify the second term in~\eqref{eq:mobius_Vdot} by constructing a smooth curve $\gamma: (-1, 1) \to \T{}{\mobius}$ as $\gamma(s)\coloneqq (y_1, y_2 + sv)$ for each $s\in \dom{\gamma}$, so that $d\gamma(0)/ds =\vlift{y}{v}$ by definition of $\operatorname{vlft}$. Then, 
    \begin{align}
        \diffFunc{V_y}(\vlift{y}{v}) &= \left.\frac{d}{ds} V(\gamma(s)) \right|_{s=0} \nonumber
        \\
        &= \left. \frac{d}{ds} V_q(y_1) \right|_{s = 0} + \left. \frac{1}{2}\frac{d}{ds} g_{\mobius}^{y_1}(y_2 + s v, y_2 + s v) \right|_{s=0}. \nonumber
    \end{align}
    As $V_q(y_1)$ is independent of $s$, the first term above equals zero. Then, using the chain rule on the second term yields
    \begin{align}
        \diffFunc{V_y}(\vlift{y}{v}) &= \left. \frac{1}{2}\frac{d}{ds} g_{\mobius}^{y_1}(y_2 + s v, y_2 + s v) \right|_{s=0} \nonumber
        \\
        &= \frac{1}{2} \left( g_{\mobius}^{y_1}(y_2, v) + g_{\mobius}^{y_1}(v, y_2) \right) \nonumber
        \\
        &= g_{\mobius}^{y_1}(y_2, v), \label{eq:mobius_KE_rateOfChange}
    \end{align}
    {where the last inequality above follows as the Riemannian inner product is symmetric. }
    Then, substituting~\eqref{eq:mobius_PE_rateOfChange} and~\eqref{eq:mobius_KE_rateOfChange} into~\eqref{eq:mobius_Vdot}, we have, for each $x\in C$,
    \begin{align}
        \dot{V}(x) &= g_{\mobius}^{y_1}(\grad{V_q}{y_1}, y_2) + g_{\mobius}^{y_1}(y_2, -\grad{V_q}{y_1} - by_2). \nonumber
    \end{align}
    {As the Riemannian inner product is additive, the second term above simplifies as $g_{\mobius}^{y_1}(y_2, -\grad{V_q}{y_1} - by_2)= g_{\mobius}^{y_1}(-\grad{V_q}{y_1}, y_2) + g_{\mobius}^{y_1}(y_2, - by_2)$. Then, using bilinearity of the Riemannian inner product, we obtain that}
    \begin{align}
        \dot{V}(x) &= g_{\mobius}^{y_1}(\grad{V_q}{y_1}, y_2) - g_{\mobius}^{y_1}(\grad{V_q}{y_1}, y_2) \nonumber
        \\ 
        & \phantom{=} + g_{\mobius}^{y_1}(y_2, - by_2) \nonumber
        \\
        &= -b g_{\mobius}^{y_1}(y_2, y_2) \leq 0.
    \end{align}

}

\end{document}
